\newtheorem{theorem}{Theorem}[section]
\newtheorem{lemma}[theorem]{Lemma}
\newtheorem{proposition}[theorem]{Proposition}
\newtheorem{corollary}[theorem]{Corollary}
\theoremstyle{remark}
\theoremstyle{definition}
\newtheorem{remark}[theorem]{Remark}
\newtheorem{example}[theorem]{Example}
\newtheorem{definition}[theorem]{Definition}
\newtheorem{problem}[theorem]{Problem}
\def\CC{\mathbb{C}} 
\def\GG{\mathbf{G}}
\def\PP{\mathbb{P}} 
\def\TT{\mathbf{T}}
\def\RR{\mathbb{R}} 
\def\VV{\mathbb{V}} 
\def\bb{\mathbf{b}}
\def\cc{\mathbf{c}}
\def\hh{\mathbf{h}}
\def\vv{\mathbf{v}}
\newcommand{\edge}[1]{\arrow[#1, no head]}
\newcommand{\shortedge}[1]{\stackrel{#1}{\rule[.5ex]{1.2em}{0.5pt}}}
\newcommand{\mutation}[1]{\stackrel{#1}{\rule[.5ex]{2em}{0.5pt}}}
\newcommand{\vmutation}[1]{  {\scriptstyle #1}{\Bigl|\Bigr.}  }
\definecolor{darkred}{rgb}{1,0,0}        
\definecolor{lightred}{rgb}{1,0.4,0}     
\definecolor{darkblue}{cmyk}{1,0.4,0,0.4}  %blue
\definecolor{lightblue}{cmyk}{1,0.4,0,0}  %blue
\definecolor{darkgreen}{cmyk}{1,0.5,1,0}  
\definecolor{lightgreen}{cmyk}{1,0,1,0}  
\definecolor{lightgray}{rgb}{0.5,0.5,0.5}  
\newcommand{\red}[1]{{\color{red} #1\color{black}}}
\newcommand{\blue}[1]{{\color{blue} #1\color{black}}}
\newcommand{\green}[1]{{\color{lightgreen} #1}}
\newcommand{\lightgreen}[1]{\color{lightgreen}{#1}\color{black}}
\title{Incidences and tilings}
\numberwithin{equation}{section}
\begin{document}

\author{Sergey Fomin}
\address{\hspace{-.3in} Department of Mathematics, University of Michigan,
Ann Arbor, MI 48109, USA}
\email{fomin@umich.edu}

\author{Pavlo Pylyavskyy}
\address{\hspace{-.3in} Department of Mathematics, University of Minnesota,
Minneapolis, MN 55414, USA}
\email{ppylyavs@umn.edu}

\date{\today}

\thanks{Partially supported by NSF grants DMS-2054231 (S.~F.)
and DMS-1949896 (P.~P.).
}

\subjclass{
Primary
51A20, % Configuration theorems in linear incidence geometry
Secondary
05E14, %(2020-now) Combinatorial aspects of algebraic geometry
14N20, %(2000-now) Configurations and arrangements of linear subspaces
51M15. %(1980-now) Geometric constructions in real or complex geometry
}

\keywords{Linear incidence geometry, tiled surface, incidence theorem.}

\begin{abstract}
We show that various classical theorems of real/complex linear incidence geometry, 
such as the theorems of Pappus, Desargues, M\"obius, and so on, 
can be interpreted as special cases of a single ``master theorem''
that involves an arbitrary tiling of a closed oriented surface by quadrilateral tiles. 
This yields a general mechanism for producing new incidence theorems and generalizing the known~ones. 
\end{abstract}

%\ \vspace{-.1in}
\maketitle

%\ \vspace{-.3in}

\rightline{\emph{Science is what we understand well enough to explain to a computer.}}

\rightline{\emph{Art is everything else we do.}}
\medskip

\rightline{\emph{-- D.~E.~Knuth \rm\cite{knuth-A+B}}}

\vspace{10pt}
\begin{center}
\includegraphics[scale=0.33, trim=0.1cm 0cm 0cm 0.7cm, clip]{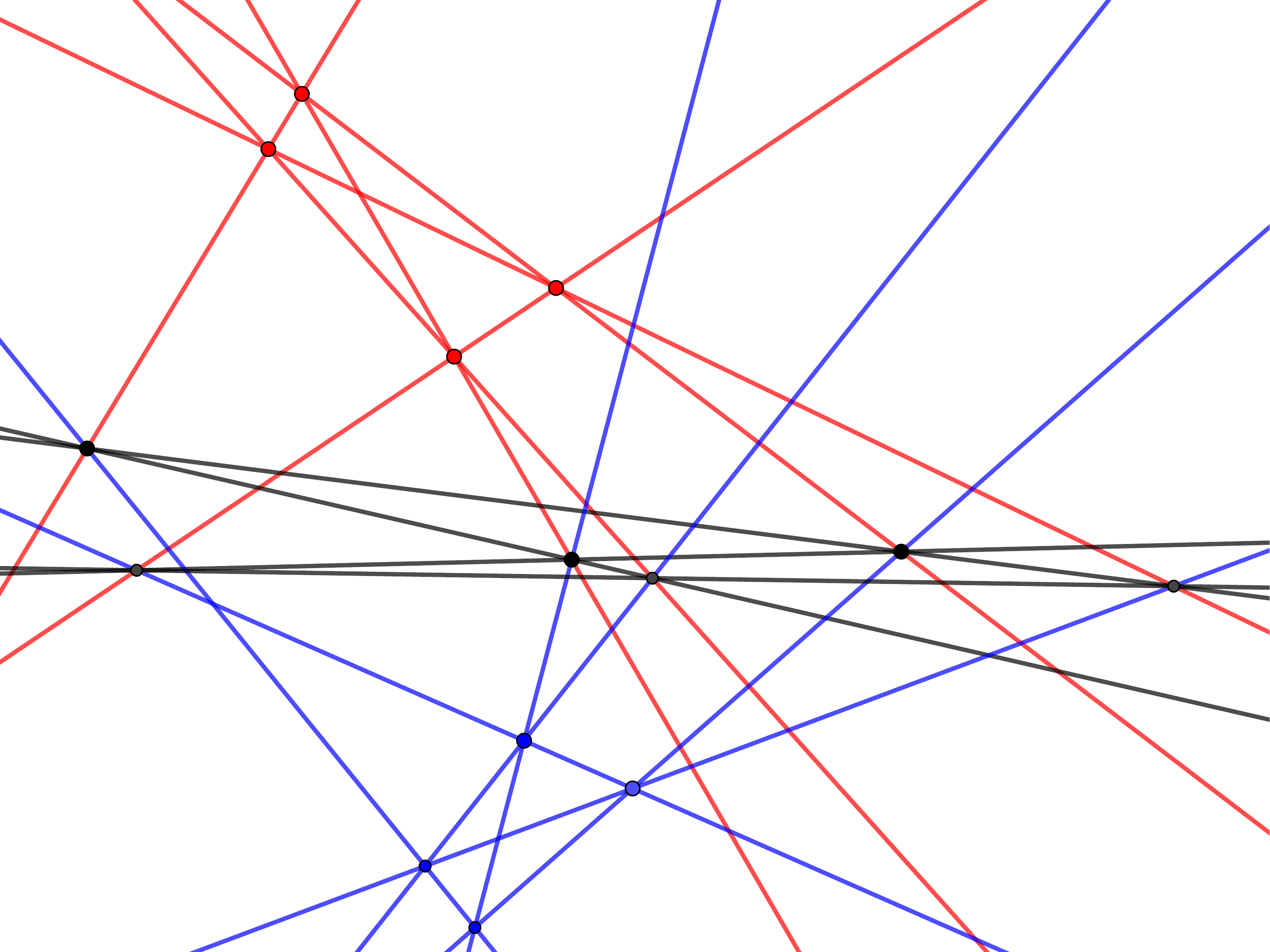}
\end{center}
\vspace{-5pt}

%\tableofcontents

\section{Introduction}

Linear incidence geometry studies configurations of points, lines, and (hyper)planes
from the perspective of their relative position (e.g., whether a point lies on a line, whether two lines in 3-space intersect, etc.).
This venerable subject goes back at least to the times of Euclid, % (\emph{ca.}\ 300 BC),
with important contributions by Pappus, Desargues, M\"obius, Hilbert, and many~others;
see, e.g., %Pambuccian and Schacht 
\cite{baltus, faghihi, marchisotto, pambuccian-schacht, van-der-waerden-awakening} for historical accounts. 

A typical theorem of (real or complex) linear incidence geometry asserts that, 
given a finite configuration, say of points and lines in the projective plane, 
a particular collection of incidence constraints between these geometric objects 
implies another incidence constraint, under appropriate genericity assumptions. \linebreak[3]
For~compendiums of such incidence theorems, see for example %\cite[Chapter~6]{chou-gao-zhang}
\cite{chou-gao-zhang, li-wu, pickert, richter-gebert-mechanical}. 

In recent decades, the advent of computational commutative algebra
led to design and  implementation of highly effective algorithmic approaches to automated proofs 
of incidence theorems, 
see, e.g., \cite{chou-gao-zhang, hongbo-li, li-wu, richter-gebert-mechanical, cinderella, sturmfels-automated}. 
Nowadays any such theorem can be  proved by a computer, with minimal human input. 

%\medskip

These developments, however, leave unanswered a key question:
where do all these incidence theorems come from to begin with?
That is, 
%In this paper, we address a complementary challenge: %where do these theorems come from?
is there a systematic way to \emph{generate} them? 
%to develop general \hbox{machinery} for \emph{producing} 
%incidence theorems? 
%To this end, we establish 
In this paper, we %propose a strategy for tackling 
tackle this question by establishing 
a kind of \hbox{``master theorem''} of real/complex linear incidence geometry,
from which various---perhaps all---incidence theorems can be obtained as special cases. 
%^Another benefit of this approach is that it makes clear 
As a result, we obtain a unifying perspective on
\emph{why} all of these incidence theorems hold. 
%This is in sharp contrast to automated proofs that are generally unilluminating. 

%can be interpreted as its special cases. 

%In fact, we were unable to find a single incidence theorem that does not arise in this way, cf.\ Remark~\ref{rem:master}. 

%\medskip

To give the reader a quick taste of our master theorem (cf.\ Theorem~\ref{th:master}), 
we formulate right away its simplified version for the case of the projective plane:

\begin{theorem}
\label{th:main-plane}
Consider a tiling of a closed oriented surface by quadrilateral~tiles. 
Assume that the vertices of the tiling are colored black and white, 
so that every edge connects vertices of different color. 
Associate to each black (resp., white) vertex a~point (resp., a line) in the real/complex projective plane, 
so that all these points and lines are distinct and no point lies on any of the lines associated with adjacent vertices. 
For each tile
\begin{equation*}
\begin{tikzpicture}[baseline= (a).base]
\node[scale=1] (a) at (0,0){
\begin{tikzcd}[arrows={-stealth}, sep=small, cramped]
A  \edge{r}  \edge{d}& \ell \edge{d}& \\[3pt]
m \edge{r} & B
\end{tikzcd}
};
\end{tikzpicture}
\end{equation*}
%in the tiling 
(here $A$ and $B$ are points and $\ell$ and $m$ are lines), consider the incidence condition
\begin{equation*}
%\label{eq:coherence-intro}
\tag{$*$}
\text{the points $A$, $B$, and $\ell\cap m$ are collinear.}
\end{equation*}
If condition {\rm($*$)} holds for all tiles but one, then it also holds for the remaining~tile. 
\end{theorem}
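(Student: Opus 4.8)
The plan is to translate the geometric statement into a single multiplicative identity valid on any closed oriented surface, and then read off the theorem as an immediate consequence. Working in homogeneous coordinates, I would represent each point by a nonzero vector in a $3$-dimensional space and each line by a nonzero covector, so that a point $P$ lies on a line $\ell$ precisely when the pairing $\langle \ell, P\rangle$ vanishes. The genericity hypothesis---that no point lies on a line attached to an adjacent vertex---guarantees that $\langle \ell, P\rangle \neq 0$ for every edge $e$ joining a point $P$ and a line $\ell$; I would attach this nonzero scalar $v(e) = \langle \ell, P\rangle$ to the edge $e$.

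The first step is to rewrite condition $(*)$ algebraically. Representing the intersection point $\ell \cap m$ by the cross product $\ell \times m$, collinearity of $A$, $B$, $\ell\cap m$ is the vanishing of $\det[A, B, \ell\times m]$. Expanding this determinant as $(A\times B)\cdot(\ell\times m)$ and applying the Lagrange identity, I would obtain that $(*)$ is equivalent to $\langle \ell, A\rangle \langle m, B\rangle = \langle m, A\rangle\langle \ell, B\rangle$, that is, to the relation $r(T) = 1$ for the tile ratio
\[
 r(T) = \frac{\langle \ell, A\rangle\,\langle m, B\rangle}{\langle m, A\rangle\,\langle \ell, B\rangle} = \frac{v(A,\ell)\,v(B,m)}{v(A,m)\,v(B,\ell)}.
\]
Thus each tile contributes a Laurent monomial in the edge variables $v(e)$, with the four edges of the tile appearing with exponents $+1, -1, +1, -1$ in cyclic order.

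The heart of the proof is a telescoping argument. Orienting the surface and traversing the boundary of each tile counterclockwise, I would record that an edge $e=(P,\ell)$ enters $r(T)$ with exponent $+1$ exactly when it is traversed from its point-endpoint to its line-endpoint, and with exponent $-1$ otherwise; one checks this reproduces the monomial above. Now form the product $\prod_T r(T)$ over all tiles. Every edge lies on exactly two tiles, and---this is the step I expect to be the main obstacle, and the place where orientedness is essential---the two tiles sharing an edge induce opposite traversal directions on it, so the same edge is traversed point$\to$line on one side and line$\to$point on the other. Hence each factor $v(e)$ occurs once with exponent $+1$ and once with exponent $-1$, and the entire product collapses to $\prod_T r(T) = 1$.

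With the identity $\prod_T r(T)=1$ in hand (valid whenever all $v(e)\neq 0$), the theorem is immediate: if condition $(*)$, i.e.\ $r(T)=1$, holds for every tile except one, then the product over the remaining tiles already equals $1$, forcing $r$ of the last tile to equal $1$ as well, so $(*)$ holds there too. The only points requiring care beyond the orientation bookkeeping are routine: checking the sign in the Lagrange identity and confirming that the genericity assumption keeps every $v(e)$ nonzero so that all ratios are well defined.
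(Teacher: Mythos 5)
Your proposal is correct and is essentially the paper's own proof: condition $(*)$ is reformulated as the vanishing mixed cross-ratio condition $r(T)=\frac{\langle A,\ell\rangle\langle B,m\rangle}{\langle A,m\rangle\langle B,\ell\rangle}=1$ (Proposition~\ref{pr:coherence-algebraic}), and the theorem follows from the telescoping identity $\prod_T r(T)=1$, which holds because each edge lies on exactly two tiles that traverse it in opposite directions, placing its pairing once in a numerator and once in a denominator. The only cosmetic difference is that you verify the equivalence of $(*)$ with $r(T)=1$ via the Lagrange identity for $\det[A,B,\ell\times m]$, whereas the paper argues geometrically by restricting to the line $(AB)$; both are routine and the core argument is identical.
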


Once this statement has been discovered, the proof is short and straightforward. 
Even so, the theorem turns out to be surprisingly powerful. 
As a proof-of-concept test, 
we consider various widely known theorems of classical real/complex two- and three-dimensional 
linear incidence geometry. 
In each case, we show how to interpret a given incidence theorem 
as a special case of  Theorem~\ref{th:main-plane} (resp., its 3D version). 
%What is nontrivial---and surprising---is that various well-known 
% theorems of real/complex two- and three-dimensional linear incidence geometry can be interpreted as special cases of Theorem~\ref{th:main-plane} (resp., its three-dimensional version). 
In~particular, we present direct tiling-based proofs for the following classical theorems:
\begin{itemize}[leftmargin=.2in]
\item
in dimension two: 
\begin{itemize}[leftmargin=.2in]
\item 
the Desargues theorem (Theorem~\ref{th:desargues}); 
\item
the Pappus theorem (Theorem~\ref{th:pappus}); 
\item
the complete quadrangle  theorem (Theorems~\ref{th:complete-quad} and~\ref{th:harmonic-points-theorem}); 
\item
the permutation theorem (Theorem~\ref{th:perm-thm}); 
\item
Saam's theorems (Theorems~\ref{th:saam-perspectivities} and~\ref{th:saam-5}); 
\item
the Goodman-Pollack  theorem (Theorem~\ref{th:goodman-pollack}); 
\end{itemize} \pagebreak[3]
\item
in dimension three: 
\begin{itemize}[leftmargin=.2in]
\item
the bundle theorem (Theorem~\ref{th:bundle}); 
\item
the sixteen points theorem (Theorem~\ref{th:16-points});
\item
the %cube theorem of 
M\"obius theorem and 
the octahedron theorem (Theorems~\ref{th:cube} and~\ref{th:octahedron}). 
\end{itemize}
\end{itemize}
To illustrate, Figure~\ref{fig:bipartite-tilings} shows 
two tilings (of the sphere and the torus, respectively)
that yield the theorems of Desargues and Pappus. 
See Section~\ref{sec:first-applications} %the proofs of Theorems~\ref{th:desargues} and~\ref{th:pappus} 
for detailed explanations. 

It is possible to obtain a given incidence theorem 
from substantially different tilings.
For example, the theorem of Pappus can be obtained from two non-isomorphic tilings of the torus, 
cf.\ Figures~\ref{fig:pappus-torus} and~\ref{fig:pappus2-tiling}. 

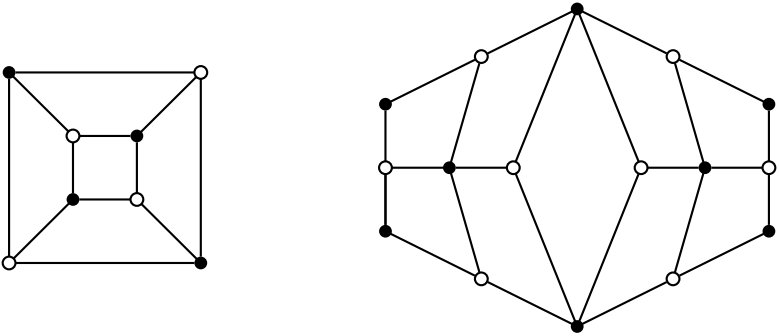
\begin{figure}[ht]
\begin{center}
\vspace{5pt}
\setlength{\unitlength}{1.2pt}
\begin{picture}(60,65)(0,-20)
\thicklines

\put(2,0){{\line(1,0){56}}}
\put(2,60){{\line(1,0){56}}}
\put(22,20){{\line(1,0){16}}}
\put(22,40){{\line(1,0){16}}}

\put(0,2){{\line(0,1){56}}}
\put(60,2){{\line(0,1){56}}}
\put(20,22){{\line(0,1){16}}}
\put(40,22){{\line(0,1){16}}}

\put(1.4,1.4){{\line(1,1){18}}}
\put(58.6,58.6){{\line(-1,-1){18}}}
\put(18.6,41.4){{\line(-1,1){18}}}
\put(41.4,18.6){{\line(1,-1){18}}}

\put(0,0){\circle{4}}
\put(0,60){\circle*{4}}
\put(60,60){\circle{4}}
\put(60,0){\circle*{4}}
\put(20,20){\circle*{4}}
\put(20,40){\circle{4}}
\put(40,20){\circle{4}}
\put(40,40){\circle*{4}}
\end{picture}
\qquad\qquad\qquad
\begin{picture}(120,100)(0,0)
\thicklines

\put(2,50){{\line(1,0){16}}}
\put(22,50){{\line(1,0){16}}}
\put(82,50){{\line(1,0){16}}}
\put(102,50){{\line(1,0){16}}}

\put(0,52){{\line(0,1){16}}}
\put(0,48){{\line(0,-1){16}}}
\put(120,52){{\line(0,1){16}}}
\put(120,48){{\line(0,-1){16}}}
\put(0,48){{\line(0,-1){16}}}
\put(29.4,82.9){{\line(-2,-7){9}}}
\put(29.4,17.1){{\line(-2,7){9}}}
\put(90.6,17.1){{\line(2,7){9}}}
\put(90.6,82.9){{\line(2,-7){9}}}

\put(28,16){{\line(-2,1){28}}}
\put(32,14){{\line(2,-1){28}}}
\put(28,84){{\line(-2,-1){28}}}
\put(32,86){{\line(2,1){28}}}
\put(88,86){{\line(-2,1){28}}}
\put(92,84){{\line(2,-1){28}}}
\put(88,14){{\line(-2,-1){28}}}
\put(92,16){{\line(2,1){28}}}

\put(40.8,52){{\line(2,5){19}}}
\put(40.8,48){{\line(2,-5){19}}}
\put(79.2,52){{\line(-2,5){19}}}
\put(79.2,48){{\line(-2,-5){19}}}

\put(60,0){\circle*{4}}
\put(30,15){\circle{4}}
\put(90,15){\circle{4}}
\put(0,30){\circle*{4}}
\put(120,30){\circle*{4}}
\put(0,50){\circle{4}}
\put(40,50){\circle{4}}
\put(80,50){\circle{4}}
\put(120,50){\circle{4}}
\put(20,50){\circle*{4}}
\put(100,50){\circle*{4}}
\put(0,70){\circle*{4}}
\put(120,70){\circle*{4}}
\put(30,85){\circle{4}}
\put(90,85){\circle{4}}
\put(60,100){\circle*{4}}

\end{picture}
%\vspace{-5pt}
\end{center}
\caption{Tiled surfaces yielding the theorems of Desargues (left) and Pappus (right).
Left: the tiling of the sphere that topologically corresponds to the surface of the cube. 
Right: glue the opposite sides of the shown hexagonal fundamental domain to obtain a tiling of the torus. 
}
\vspace{-15pt}
\label{fig:bipartite-tilings}
\end{figure}

It is tempting, if perhaps too bold, to conjecture that \emph{any} theorem
of linear incidence geometry can be obtained as a \hbox{special case of our master theorem}.
A~result of this kind appears to be out of reach, due to the absence of any reasonable classification
of incidence theorems, even in the case of the projective plane. 
Indeed, the universe of incidence theorems is too ``wild'' to be amenable to 
an explicit description, see 
\cite[Sections 8.3--8.4]{oriented-matroids}, 
\cite[Corollary 1.4]{is-the-missing-axiom-lost}, 
\cite{mnev-configurations, sturmfels-decidability, vamos}, \cite[Theorem~6.5.17]{oxley}. 
On the other hand, such complexity-theoretic results do not necessarily preclude the existence of a universal master theorem. 
That's because the task of identifying a (possibly non-unique) instance of %(a specialization of) 
a master theorem that yields a given incidence theorem
may not be easily accomplished. 
This is indeed the case for our master theorem:
we do not know an algorithm that inputs an incidence theorem 
and outputs a tiled Riemann surface from which this theorem or its generalization (cf.\ Remark~\ref{rem:remove-redundancies}) 
can be obtained. 
%Thus, for each of the classical configuration theorems appearing in this paper, 
%the reduction to the master theorem required its own customized solution. 
Thus, finding a tiling-based proof for a given incidence theorem remains a form of art rather than science, 
by Knuth's definition in the epigraph.

%Also see Remark~\ref{rem:remove-redundancies} for a discussion of some of the arising algorithmic challenges. 

%In~any case, the class of configuration theorems that arise from tiled surfaces is very wide.  
%In particular, for every single theorem concerning point-and-line configurations on the projective plane that we examined,
%we were able to find a proof directly based on Theorem~\ref{th:main-plane}. 
%\end{remark}

\smallskip

By design, our master theorem leads to the discovery of new (and generalizations of old)
incidence theorems. 
See, in particular, Theorems~\ref{th:complete-quad-generalization}, \ref{th:planar-bundle-generalized}, 
\ref{th:saam-perspectivities}, \ref{th:dual-stars-of-david}, \ref{th:5-points-theorem}, \ref{th:hexagon-piercing}, 
\ref{th:3d-pappus}, \ref{th:generalized-perm-theorem}, and~\ref{th:pappus-split1}. 

\smallskip

We refer the reader to \cite{artin, baker, casas-alvero, coxeter-non-euclidean, coxeter-projective, hilbert-foundations, hilbert-cohn-vossen, moufang, pickert, richter-gebert-book, seidenberg-book} for general background in classical projective geometry. 

\smallskip

\pagebreak[3]

To simplify exposition and  curb the size of the paper, we restrict our treatment in several ways. 
First, while we realize the importance of more general frameworks of axiomatic projective geometry, 
we decided to focus in this paper on the classical setting of complex or real projective spaces. 
%(One notable exception is Theorem~\ref{th:fano}, which deals with the Fano configuration in characteristic~2.)
Many of our results can be generalized to geometries over other fields and even noncommutative skew fields. 
Second, we confine our treatment to configurations that consist exclusively of projective subspaces 
%as opposed to more general geometric objects like 
and do not involve conics/quadrics or
%algebraic 
varieties of higher degree.
In~future work, we plan to extend our framework to more general incidence theorems. 

Third, we do not attempt to formulate the weakest genericity assump\-tions under which each of our  theorems holds. 
This somewhat narrows, although not in an essential way, the class of incidence theorems under consideration. 
We habitually use the term ``generic'' to describe choices (of points, lines, planes, etc.) that belong to a Zariski dense subset 
of the appropriate configuration space.  

The fact that we confine our treatment to ``generic forms'' of incidence theorems 
might provide another explanation for why Mn\"ev universality %and related results 
does not preclude the existence of a comprehensive master theorem of linear incidence geometry. 

%\smallskip

While we don't make it explicit, our approach is rooted in \hbox{\emph{classical invariant theory}}. 
From the perspective of this theory, 
incidence geometry can be viewed as~a study of particular kinds of identities in certain  rings of invariants. 
For example, collinearity of triples of points on the projective plane can be encoded by the vanishing of~the corresponding Pl\"ucker coordinates ($3\times 3$ minors of a $3\times n$ matrix). 
Various identities that Pl\"ucker coordinates satisfy lead to representability restrictions for the corresponding matroids. 
These identities and restrictions can be very complicated, as the aforementioned 
complexity/universality results attest. %\linebreak[3]
To bypass these difficulties,~our approach implicitly relates to a different ring of invariants, namely
the ring generated by pairings between vectors and covectors. %\linebreak[3]
In the case of $\operatorname{SL}_3$ (i.e., the case of the projective plane), 
the ideal of relations among these pairings is generated by $4\times 4$ 
Gramian determinants, each of which has 24 terms of degree~4. 
(By comparison, the \linebreak[3]
Grassmann-Pl\"ucker relations for Pl\"ucker coordinates have degree~2 and involve just 3 terms.) 
It seems reasonable to ignore these Gramian relations and treat the pairings \linebreak[3]
as if they were algebraically independent. 
Identity verification becomes easy, 
while the difficulty shifts towards reformulating incidence theorems in terms of such pairings. 
%( towards the problem of identifying the instance of our master theorem that yields a given incidence theorem). 

\smallskip

\enlargethispage{.2cm}

The paper is organized as follows. 
Sections \ref{sec:master-theorem}--\ref{sec:3D-incidence-geometry} constitute the core of the paper. 
In Section~\ref{sec:master-theorem}, \hbox{we establish the general} form of our master theorem 
(Theorem~\ref{th:master}). 
In Sections~\ref{sec:first-applications}--\ref{sec:3D-incidence-geometry},
we demonstrate the power of this theorem
by providing numerous applications to linear incidence geometry over the real or complex field,
first in the plane (Sections~\ref{sec:first-applications}--\ref{sec:coherent-polygons})
and then in 3-space (Section~\ref{sec:3D-incidence-geometry}). 
%Many of these applications have been mentioned above.
We also provide (see Section~\ref{sec:first-applications})
an interpretation of our approach 
%how our main construction can be interpreted 
in the more conventional language of Levi graphs. 

%Incidence theorems in the real/complex projective plane
%are derived in Sections~\ref{sec:first-applications}--\ref{sec:coherent-polygons},
%whereas three-dimensional applications are given in Section~\ref{sec:3D-incidence-geometry}. 

Section~\ref{sec:combinatorial-reformulations} is devoted to combinatorial reformulations of our master theorem. 
The first reformulation uses nodal (multi-)curves on an oriented surface~$\Sigma$; 
such curves are in one-to-one correspondence with quadrilateral tilings of~$\Sigma$. 
The second reformulation involves graphs properly embedded into~$\Sigma$,
so that each face is simply-connected. 
This generalizes 
%In the special case when the number of white (or black) vertices in the tiling is equal to $d-1$, where
%$d$ denotes the dimension of the ambient projective space of the configuration at hand,  we recover 
a beautiful construction proposed by D.~G.~Glynn~\cite{glynn}.

In Section~\ref{sec:deducing-new-incidence-theorems}, 
we discuss ways to obtain new incidence theorems from existing~ones using the tiling technique. 
In particular, we show that \emph{any} incidence theorem associated to a tiling~$\TT$ can be generalized 
in multiple ways by inserting new tiles into~$\TT$. 
%In particular, we show how to generalize the Pappus theorem and the permutation theorem. 

\pagebreak[3]

Several geometric corollaries of the master theorem are presented in Section~\ref{sec:geometric-ramifications}.
We~explain how an arbitrary triangulation of an oriented surface gives rise to an incidence theorem
in the plane. 
Both Desargues' and Pappus' theorems can be obtained in this way. 
%We illustrate this result with examples that yield the theorems of Desargues and Pappus. 
A three-dimensional analogue of this result yields the M\"obius theorem.  
We also show that when the dual graph of a triangulation is Hamiltonian,
the corresponding incidence theorem can be presented as a closure porism (Schlie\ss ungs\-satz).  
%\comment{mention Theorem~\ref{th:hamiltonicity}}

In Section~\ref{sec:consistency}, we prove that our tilings exhibit 3D/4D consistency in the sense of 
Bobenko-Suris~\cite{bobenko-suris-book} 
and provide solutions of Zamolodchikov's tetrahedron equation. 

Variations of our main construction are discussed in Sections~\ref{sec:anticoherent}--\ref{sec:generalizations}, where we in particular outline  
connections with Ceva's theorem, harmonic quadruples, 
basic Schubert Calculus, and circles on the M\"obius plane.

\smallskip

\section*{Acknowledgments}

%Besides classical projective geometry and invariant theory, 
We took inspiration from the work of J\"urgen Richter-Gebert~\cite{richter-gebert-mechanical, richter-gebert-ceva} and 
%J.~Richter-Gebert's work on binomial proofs of incidence theorems~\cite{richter-gebert-mechanical, richter-gebert-ceva}
Bernd Sturm\-fels~\cite{sturmfels-automated, sturmfels-algorithms-in-invariant-theory}
on invariant-theoretic approaches to incidence geometry
and the work of Alexander Bobenko and Yuri Suris~\cite{bobenko-suris, bobenko-suris-book}
on integrable systems on \hbox{quad-graphs}.

We thank Alexander Barvinok and Michael Shapiro for enlightening discussions. 
We are grateful to Joe Buhler and Jeff Lagarias for their comments on the earlier version of the paper. 

The main results of this paper were presented at the conference ``Cluster algebras and Poisson geometry''
(Levico Terme, Italy) in June 2023.
We thank the organizers and participants of this conference for their substantive feedback. 

We used \texttt{GeoGebra} for drawing point-and-line configurations. 

%\newpage

\section{The master theorem}
\label{sec:master-theorem}

%\subsection{Coherent tiles}

Let $\PP$ be a real or complex finite-dimensional projective space of dimension~${\ge2}$. 
(In this paper, we focus on applications where $\dim\PP=2$ or $\dim\PP=3$, i.e., $\PP$~is a plane or a 3-space.)  

We denote by $\PP^*$ the set of hyperplanes in~$\PP$. 
In particular, when $\PP$ is a plane, the elements of $\PP^*$ are lines. 
A~point $A\in \PP$ and a hyperplane $\ell\in\PP^*$ are called \emph{incident} to each other if $A\in\ell$. 

We denote by $(AB)$ (resp., $(ABC)$) the line passing through two distinct points $A$ and~$B$ 
(resp., the plane passing through distinct points $A, B, C$).  

\begin{definition}%[Coherent tiles]
\label{def:coherent-tile}
Throughout this paper, a \emph{tile} is a topological quadrilateral 
(that is, a closed oriented disk with four marked points on its boundary)
whose vertices are clockwise labeled $A, \ell, B, m$, 
where $A,B\in\PP$ are points and $\ell,m\in\PP^*$ are hyperplanes:
\begin{equation}
\label{eq:AlBm}
\begin{tikzpicture}[baseline= (a).base]
\node[scale=1] (a) at (0,0){
\begin{tikzcd}[arrows={-stealth}, cramped, sep=12]
A  \edge{r}  \edge{d}& \ell \edge{d} \\[2pt]
m \edge{r} & B
\end{tikzcd}
};
\end{tikzpicture}
\end{equation}
Such a tile is called \emph{coherent} if 
\begin{itemize}[leftmargin=.2in]
\item 
neither $A$ nor $B$ is incident to either $\ell$ or~$m$; and 
\item
either $A=B$ or $\ell=m$ or
else the line $(AB)$ and the codimension~$2$ subspace $\ell\cap m$ have a nonempty intersection. 
%$(AB)\cap\ell\cap m\neq \varnothing$. 
\end{itemize}
\end{definition}

\begin{remark}
\label{rem:coherence-plane}
In the case of the projective plane ($\dim\PP=2$),
a coherent tile involves two points $A,B$ and two lines $\ell, m$ not incident to them  
such that either $A=B$ or $\ell=m$ or
else the line $(AB)$ passes through the point $\ell\cap m$.
See Figure~\ref{fig:tile}. 
\end{remark}

\begin{figure}[ht]
%\vspace{-.1in}
\begin{center}
\includegraphics[scale=0.55, trim=0cm 7.5cm 2cm 0.5cm, clip]{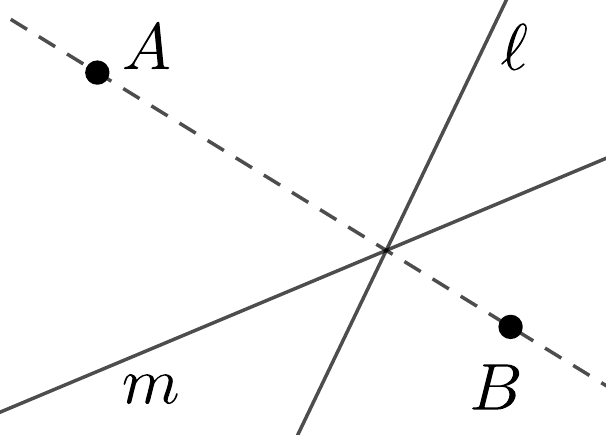}
\end{center}
\vspace{-5pt}
\caption{Definition of a coherent tile. 
}
\label{fig:tile}
\end{figure}

%\vspace{-.2in}

\begin{remark}
%\label{rem:}
The notion of coherence introduced in Definition~\ref{def:coherent-tile} 
does not depend on the (interlacing) linear order in which the points $A,B$ and the lines $\ell,m$ are listed.
That is, rotating or reflecting a tile does not affect its coherence.  
\end{remark}

The notion of coherence can be reformulated algebraically in terms of \emph{cross-ratios}.
This will require a bit of preparation. 
Assume that~$\PP$ is the projectivization of a real or complex vector space~$\VV$,
so that points in $\PP$ are identified with one-dimensional subspaces in~$\VV$.
%More generally, $k$-dimensional projective subspaces in~$\PP$ correspond to $(k+1)$-dimensional subspaces in~$\VV$. 
Hyperplanes in~$\PP$ correspond to codimension~1 subspaces of~$\VV$, so they 
can be identified with one-dimensional subspaces in the dual space~$\VV^*$.

%\pagebreak[3]

For a vector $\mathbf{V}\!\in\!\VV$ and a covector $\hh\!\in\!\VV^*$, let $\langle \mathbf{V},\hh\rangle$ denote their pairing. 

\begin{definition}
\label{def:mixed-cross-ratio}
Let $\mathbf{A}, \mathbf{B}\!\in\!\VV$ be vectors and let $A,B\!\in\!\PP$ be the corresponding~points.
Let $\mathbf{\boldsymbol \ell}, \mathbf{m}\in\VV^*$ be covectors and let $\ell,m\in\PP^*$ be the corresponding hyperplanes. 
Assume that all four pairings 
$\langle \mathbf{A}, \mathbf{\boldsymbol \ell} \rangle$,
$\langle \mathbf{A}, \mathbf{m} \rangle$, 
$\langle \mathbf{B}, \mathbf{\boldsymbol \ell} \rangle$, 
$\langle \mathbf{B}, \mathbf{m} \rangle$
are nonzero;
equivalently, neither $A$ nor $B$ is incident to $\ell$~or~$m$.
The \emph{mixed cross-ratio} $(A,B;\ell,m)$ is defined~by 
\begin{equation}
\label{eq:mixed-cross-ratio}
(A,B;\ell,m)=
\dfrac{\langle \mathbf{A}, \mathbf{\boldsymbol \ell} \rangle\langle \mathbf{B}, \mathbf{m} \rangle}{\langle \mathbf{A}, \mathbf{m} \rangle\langle \mathbf{B}, \mathbf{\boldsymbol \ell} \rangle}. 
\end{equation}
We note that $(A,B;\ell,m)$ does not depend on the choice of vectors 
$\mathbf{A}, \mathbf{B}$ and covectors $\mathbf{\boldsymbol \ell}, \mathbf{m}$
representing the points 
 $A,B$ and the hyperplanes $\ell,m$, respectively. 
In fact, 
\begin{equation}
\label{eq:ABlm=ABLM}
(A,B;\ell,m)=(A, B; L, M), 
\end{equation}
the ordinary cross-ratio of four collinear points $A$, $B$, $L\!=\!(AB)\cap\ell$ and $M\!=\!(AB)\cap m$. 
\end{definition}

The instances $\dim\PP=2$ and $\dim\PP=3$ of the notion of mixed cross-ratio 
has been studied since at least early 20th century, 
see \cite[(4.49), (4.71)]{coxeter-non-euclidean} and references therein. 

\begin{proposition}
\label{pr:coherence-algebraic}
Let $A,B\in\PP$ be points and $\ell,m\in\PP^*$ be hyperplanes such that neither $A$ nor $B$ is incident to $\ell$~or~$m$. 
The tile
\begin{equation}
\label{eq:AlBm-again}
\begin{tikzpicture}[baseline= (a).base]
\node[scale=1] (a) at (0,0){
\begin{tikzcd}[arrows={-stealth}, sep=small, cramped]
A  \edge{r}  \edge{d}& \ell \edge{d} \\[3pt]
m \edge{r} & B
\end{tikzcd}
};
\end{tikzpicture}
\end{equation}
(cf.\ \eqref{eq:AlBm}) is coherent if and only if 
\begin{equation}
\label{eq:cross-ratio=1}
(A,B;\ell,m)=1.
\end{equation}
\end{proposition}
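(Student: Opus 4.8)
The plan is to split into the degenerate and generic cases dictated by Definition~\ref{def:coherent-tile}, and in the generic case to translate the geometric coherence condition into the coincidence of two explicit points on the line $(AB)$.

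First I would dispose of the degenerate cases. If $A=B$ or $\ell=m$, then the tile is coherent by definition (the non-incidence hypotheses are assumed throughout), and substituting $\mathbf{A}=\mathbf{B}$, respectively $\boldsymbol\ell=\mathbf{m}$, into \eqref{eq:mixed-cross-ratio} gives $(A,B;\ell,m)=1$ at once. Thus both sides of the asserted equivalence hold in these cases, and it remains to treat the generic situation $A\neq B$ and $\ell\neq m$.

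Here the key step is a geometric reduction. Since $A\notin\ell$, the line $(AB)$ is not contained in $\ell$, so it meets $\ell$ in a single point $L=(AB)\cap\ell$; likewise $A\notin m$ yields a single point $M=(AB)\cap m$. I claim that $(AB)$ meets the codimension~$2$ subspace $\ell\cap m$ if and only if $L=M$: any point of $(AB)\cap\ell\cap m$ lies in $(AB)\cap\ell=\{L\}$ and in $(AB)\cap m=\{M\}$, forcing $L=M$, while conversely a coincident point $L=M$ lies on $(AB)$ and on both $\ell$ and $m$. Hence, in the generic case, coherence is equivalent to $L=M$.

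Finally I would identify $L=M$ with \eqref{eq:cross-ratio=1}. Representing the two intersection points by the vectors $\mathbf{L}=\langle\mathbf{B},\boldsymbol\ell\rangle\,\mathbf{A}-\langle\mathbf{A},\boldsymbol\ell\rangle\,\mathbf{B}$ and $\mathbf{M}=\langle\mathbf{B},\mathbf{m}\rangle\,\mathbf{A}-\langle\mathbf{A},\mathbf{m}\rangle\,\mathbf{B}$ (which lie on $\ell$, resp.\ $m$, by construction), and using that $\mathbf{A},\mathbf{B}$ are linearly independent because $A\neq B$, the equality $L=M$ in $\PP$ amounts to proportionality of $\mathbf{L}$ and $\mathbf{M}$, i.e.\ to $\langle\mathbf{A},\boldsymbol\ell\rangle\langle\mathbf{B},\mathbf{m}\rangle=\langle\mathbf{A},\mathbf{m}\rangle\langle\mathbf{B},\boldsymbol\ell\rangle$, which by \eqref{eq:mixed-cross-ratio} is exactly $(A,B;\ell,m)=1$. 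Conceptually, this is just the identity \eqref{eq:ABlm=ABLM}: the mixed cross-ratio equals the ordinary cross-ratio $(A,B;L,M)$ of four collinear points, which equals $1$ precisely when $L=M$. Chaining the equivalences completes the proof. The only point requiring care is the single-intersection claim in the generic case, which rests essentially on the standing non-incidence hypotheses; I expect this to be the main (if modest) obstacle.
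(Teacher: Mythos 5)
Your proposal is correct and follows essentially the same route as the paper's proof: reduce coherence to the coincidence of the points $L=(AB)\cap\ell$ and $M=(AB)\cap m$, and then identify that coincidence with the vanishing of $\langle\mathbf{A},\boldsymbol\ell\rangle\langle\mathbf{B},\mathbf{m}\rangle-\langle\mathbf{A},\mathbf{m}\rangle\langle\mathbf{B},\boldsymbol\ell\rangle$. You are merely more explicit than the paper about the degenerate cases $A=B$, $\ell=m$ and about the vector computation behind the step from $L=M$ to $(A,B;\ell,m)=1$, which the paper asserts more tersely via \eqref{eq:ABlm=ABLM}.
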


\begin{proof}
If the tile \eqref{eq:AlBm-again} is coherent, 
then the line $(AB)$ passes through~a point lying on $\ell\cap m$ (cf.\ Figure~\ref{fig:tile}).
This implies that
$\frac{\langle \mathbf{A}, \mathbf{\boldsymbol \ell} \rangle}{\langle \mathbf{A}, \mathbf{m} \rangle}
\!=\!
\frac{\langle \mathbf{B}, \mathbf{\boldsymbol \ell} \rangle}{\langle \mathbf{B}, \mathbf{m} \rangle}$,
and~\eqref{eq:cross-ratio=1} follows, cf.\ \eqref{eq:mixed-cross-ratio}. 

On the other hand, if the tile \eqref{eq:AlBm-again} is not coherent, then 
the points $L=(AB)\cap\ell$ and $M=(AB)\cap m$ on the line~$(AB)$ are different from each other. 
Consequently $\frac{\langle \mathbf{A}, \mathbf{\boldsymbol \ell} \rangle}{\langle \mathbf{A}, \mathbf{m} \rangle}
\neq
\frac{\langle \mathbf{B}, \mathbf{\boldsymbol \ell} \rangle}{\langle \mathbf{B}, \mathbf{m} \rangle}$, 
contradicting~\eqref{eq:cross-ratio=1}. 
\end{proof}

We are now prepared to state and prove our master theorem.

\begin{theorem}
\label{th:master}
Consider a tiling of a closed oriented surface by quadrilateral~tiles. 
Assume that the vertices of the tiling are colored black and white, 
so that every edge connects vertices of different color. 
Associate to each black (resp., white) vertex a point (resp., a hyper\-plane) in the real/complex 
finite-dimensional projective space~$\PP$. 
Assume that for each edge $A-h$ of the tiling, the point~$A$ does not lie on the hyperplane~$h$. 
If all tiles but one are coherent, then the remaining tile is coherent as~well. 
\end{theorem}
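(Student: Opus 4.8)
The plan is to convert the geometric coherence condition into a single multiplicative identity and then read off the conclusion from it. By Proposition~\ref{pr:coherence-algebraic}, a tile $T$ with clockwise vertices $A,\ell,B,m$ is coherent if and only if its mixed cross-ratio equals~$1$. The hypothesis that no point lies on a hyperplane joined to it by an edge guarantees that every pairing attached to an edge of the tiling is nonzero, so each tile carries a well-defined nonzero scalar
\[
w(T)=(A,B;\ell,m)=\dfrac{\langle \mathbf{A}, \mathbf{\boldsymbol \ell} \rangle\,\langle \mathbf{B}, \mathbf{m} \rangle}{\langle \mathbf{A}, \mathbf{m} \rangle\,\langle \mathbf{B}, \mathbf{\boldsymbol \ell} \rangle},
\]
independent of the chosen vector/covector representatives. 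Thus coherence of $T$ is equivalent to $w(T)=1$, and this includes the degenerate cases $A=B$ and $\ell=m$, where $w(T)=1$ automatically.

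The core of the argument is the claim that $\prod_T w(T)=1$, the product being taken over \emph{all} tiles of the tiling. Granting this, if $w(T)=1$ for every tile but one, the remaining factor is forced to equal $1$ as well, so the last tile is coherent, which is exactly the assertion of the theorem. To prove the product identity, I would fix once and for all a representing vector $\mathbf{A}$ for each black vertex and a representing covector $\mathbf{h}$ for each white vertex; then $\prod_T w(T)$ becomes a product of integer powers of the edge pairings $\langle \mathbf{A},\mathbf{h}\rangle$, and it suffices to show that each such pairing occurs with total exponent zero.

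The exponent bookkeeping is where orientation enters. Reading off the numerator of $w(T)$, its two factors $\langle \mathbf{A}, \mathbf{\boldsymbol \ell}\rangle$ and $\langle \mathbf{B},\mathbf{m}\rangle$ correspond to the two edges of $T$ that are traversed from a point to a hyperplane as one traces $\partial T$ clockwise, while the denominator factors correspond to the two edges traversed from a hyperplane to a point; so an edge contributes its pairing to the numerator precisely when it is traversed black-to-white. Now I use that the surface is closed and oriented: every edge lies on exactly two tiles, and, taking the clockwise boundary orientation of each tile to be the one induced by the surface orientation, the two tiles induce \emph{opposite} orientations on their common edge. Hence a shared edge $A-h$ is traversed black-to-white in one of its two tiles and white-to-black in the other, so $\langle \mathbf{A},\mathbf{h}\rangle$ appears once in a numerator and once in a denominator, and its total exponent is zero. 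Summing over all edges yields $\prod_T w(T)=1$. This is really a cocycle/telescoping argument of the same flavor as $\partial^2=0$ in cellular homology.

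The one genuinely delicate point, which I expect to be the main obstacle, is the orientation compatibility invoked above: one must confirm that the clockwise labelling $A,\ell,B,m$ prescribed in the definition of a tile can be chosen to agree with the boundary orientation induced from the surface, so that adjacent tiles really do disagree on each shared edge. This matters because reversing the orientation of a single tile amounts to reflecting it, which sends $(A,B;\ell,m)$ to its reciprocal and would destroy the cancellation; the coherence condition $w(T)=1$ is invariant under this reflection, but the individual factor is not, so the telescoping genuinely relies on a globally consistent orientation. Once this compatibility is recorded, the remainder is a routine verification.
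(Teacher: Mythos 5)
Your proof is correct and follows essentially the same route as the paper's: apply Proposition~\ref{pr:coherence-algebraic} to each tile and observe that the product of all mixed cross-ratios telescopes to~$1$, because each edge pairing $\langle \mathbf{P},\mathbf{h}\rangle$ appears in the numerator of the unique tile where $P$ immediately precedes $h$ clockwise and in the denominator of the unique tile where it precedes $h$ counterclockwise, these being the two tiles sharing that edge. The orientation-compatibility point you flag is exactly the (implicit) content of the paper's phrase "clockwise (resp., counterclockwise) traversal," and your observation that coherence, but not the individual factor $w(T)$, is reflection-invariant is a fair account of why the global orientation is genuinely needed.
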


\begin{proof}
The theorem follows by combining Proposition~\ref{pr:coherence-algebraic}
with the observation that the product of mixed cross-ratios $(A,B;\ell,m)$ over all
tiles \eqref{eq:AlBm}/\eqref{eq:AlBm-again} in the tiling~is equal to~1. 
To see why the latter statement is true, replace each point~$P\in\PP$ (resp., hyperplane~$h\in\PP^*$) appearing in the tiling
by an appropriate vector~$\mathbf{p}\in\VV$
(resp., covector~$\mathbf{h}\in\VV^*$).
Then for each edge $P\!-\!h$ in the tiling, 
the pairing $\langle \mathbf{P}, \mathbf{h}\rangle$ appears in the numerator (resp., denominator) of the mixed cross-ratio 
for the unique tile in which $P$ immediately precedes~$h$
in the clockwise (resp., counterclockwise) traversal of the boundary of the tile.
\end{proof}

\begin{remark}
%\label{rem:}
In the case $\dim\PP=2$ of Theorem~\ref{th:master}, we recover Theorem~\ref{th:main-plane}. 
\end{remark}

\begin{remark}
%\label{rem:}
There exist tilings of oriented surfaces by quadrilateral tiles 
whose vertices cannot be properly colored in two colors, 
as our construction demands. 
See Figure~\ref{fig:non-bipartite-tiling} for a (counter)example. 

On the other hand, any tiling of a \emph{sphere} by quadrilateral tiles
necessarily has a requisite 2-coloring. 
Indeed, in the spherical case, every simple cycle in the 1-skeleton of the tiling 
can be paved by tiles, hence it cannot be odd. 
\end{remark}

When we want to emphasize that a given tiling~$\TT$ of an oriented surface
comes with a proper coloring of its vertices in two colors
(as in Theorem~\ref{th:master}), we say that~$\TT$ is a \emph{bicolored quadrilateral tiling}. 

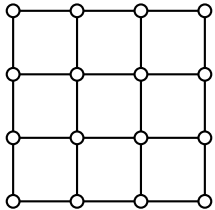
\begin{figure}[ht]
\begin{center}
\vspace{5pt}
\setlength{\unitlength}{1.2pt}
\begin{picture}(60,62)(0,0)
\thicklines

\multiput(2,0)(0,20){4}{{\line(1,0){16}}}
\multiput(22,0)(0,20){4}{{\line(1,0){16}}}
\multiput(42,0)(0,20){4}{{\line(1,0){16}}}

\multiput(0,2)(20,0){4}{{\line(0,1){16}}}
\multiput(0,22)(20,0){4}{{\line(0,1){16}}}
\multiput(0,42)(20,0){4}{{\line(0,1){16}}}

\multiput(0,0)(20,0){4}{\circle{4}}
\multiput(0,20)(20,0){4}{\circle{4}}
\multiput(0,40)(20,0){4}{\circle{4}}
\multiput(0,60)(20,0){4}{\circle{4}}

\end{picture}
%\vspace{-5pt}
\end{center}
\caption{A tiling of the torus that does not yield an incidence theorem. 
(The opposite sides of the shown fundamental domain should be glued to each other.)  
The 1-skeleton of this tiling is not bipartite since it contains 3-cycles. 
}
\vspace{-15pt}
\label{fig:non-bipartite-tiling}
\end{figure}

\newpage

\section*{Trivial instances of the master theorem}

A bicolored quadrilateral tiling of an oriented surface must be sufficiently ``ample'' 
in order for the associated incidence theorem to be nontrivial. 
We illustrate this by several examples of tilings whose associated theorems amount to tautologies. 

\begin{example}
\label{eg:tiling-tautology-1}
Tile a sphere~$S^2$ by two quadrilateral tiles by drawing a circle on~$S^2$ 
and splitting this circle into a 4-cycle. (That is, place four vertices onto the circle.)  
The resulting two tiles will be mirror images of each other:
\begin{equation}
\label{eq:mirror-images}
\begin{tikzpicture}[baseline= (a).base]
\node[scale=1] (a) at (0,0){
\begin{tikzcd}[arrows={-stealth}, sep=small, cramped]
A  \edge{r}  \edge{d}& \ell \edge{d} \\[3pt]
m \edge{r} & B
\end{tikzcd}
};
\end{tikzpicture}
\qquad \qquad
\begin{tikzpicture}[baseline= (a).base]
\node[scale=1] (a) at (0,0){
\begin{tikzcd}[arrows={-stealth}, sep=small, cramped]
\ell  \edge{r}  \edge{d}& A \edge{d} \\[3pt]
B \edge{r} & m
\end{tikzcd}
};
\end{tikzpicture}
\end{equation}
If one of these tiles is coherent, then so is the other---which is obvious because the two coherence statements coincide. 
\end{example}

\begin{example}
\label{eg:tiling-tautology-2}
Split a $2\times 2k$ square into $4k$ unit squares and glue its opposite sides to each other 
to get a tiling of the torus by $4k$ tiles, shown below for $k=1, 2$:
\begin{equation}
\label{eq:torus-2x2k}
\begin{tikzpicture}[baseline= (a).base]
\node[scale=1] (a) at (0,0){
\begin{tikzcd}[arrows={-stealth}, sep=small, cramped]
A \edge{r}  \edge{d}& \ell \edge{r} \edge{d} & A \edge{d} \\
m \edge{r} \edge{d} & B \edge{d} \edge{r} & m \edge{d} \\
A \edge{r} & \ell \edge{r} & A
\end{tikzcd}
};
\end{tikzpicture}
\qquad\qquad
\begin{tikzpicture}[baseline= (a).base]
\node[scale=1] (a) at (0,0){
\begin{tikzcd}[arrows={-stealth}, sep=small, cramped]
A \edge{r}  \edge{d}& \ell \edge{r} \edge{d} & C \edge{d} \edge{r} & p \edge{r} \edge{d} & A \edge{d} \\
m \edge{r} \edge{d} & B \edge{d} \edge{r} & n \edge{d} \edge{r} & D \edge{r} \edge{d} & m \edge{d} \\
A \edge{r} & \ell \edge{r} & C \edge{r} & p \edge{r} & A
\end{tikzcd}
};
\end{tikzpicture}
\end{equation}
The associated incidence theorem is vacuous since the coherence statements encoded by the tiles appearing in the same column are identical to each other. 
Thus, if all tiles but one are coherent, then the remaining tile is coherent just because the conclusion of this implication is contained among its conditions. 
\end{example}

\begin{example}
\label{eg:tiling-tautology-3}
The incidence theorem associated with the tiling of the torus 
\begin{equation}
\label{eq:torus-XX}
\begin{tikzpicture}[baseline= (a).base]
\node[scale=1] (a) at (0,0){
\begin{tikzcd}[arrows={-stealth}, sep=10, cramped]
a \edge{rr, dotted} \edge{rd} \edge{dd, dotted} && b \edge{rr, dotted} \edge{rd} \edge{ld} && a \edge{ld} \edge{dd, dotted} \\
& C \edge{rd} \edge{ld} && D \edge{rd} \edge{ld} \\
a \edge{rr, dotted} && b \edge{rr, dotted} && a
\end{tikzcd}
};
\end{tikzpicture}
\end{equation}
(as above, the opposite sides of the rectangular fundamental domain should be glued to each other)
is trivial for a different reason: 
each of the four tiles has two vertices with the same label, so all four  
coherence statements are vacuous. 
\end{example}

\begin{example}
\label{eg:tiling-3-tiles}
The incidence theorem associated with the tiling of the sphere 
\begin{equation}
\label{eq:sphere-3-tiles}
\begin{tikzpicture}[baseline= (a).base]
\node[scale=1] (a) at (0,0){
\begin{tikzcd}[arrows={-stealth}, sep=5, cramped]
A \edge{rr} \edge{dd} && \ell \edge{ld} \edge{dd} \\
& B \edge{ld} \\
m \edge{rr} && C
\end{tikzcd}
};
\end{tikzpicture}
\end{equation}
is rather shallow. 
If $\ell=m$ or the points $A,B,C$ are not distinct, then the statement is vacuous. 
Otherwise, the theorem says that if $A,B,C\notin\ell$ and
the point $D=\ell\cap m$ lies on the lines $(AB)$ and $(BC)$, then $D\in (AC)$. 
This is clearly true because $B\neq D$ and therefore $(AB)=(BC)=(AC)$. 
\end{example}

\newpage

\section{First applications
%The main theorems of linear projective geometry
}
\label{sec:first-applications}

In the words of S.~A.~Amitsur \cite[Section~A.2]{amitsur-PI}, 
\emph{the three main theorems in (linear) projective geometry are the complete quadrilateral theorem,  
the Desargues theorem, and the Pappus theorem.}
We begin by explaining how each of these three theorems can be obtained 
by direct application of our master theorem (Theorem~\ref{th:main-plane}/\ref{th:master}). 

\section*{The Desargues theorem}

\begin{theorem}[G.~Desargues, \emph{ca.}\ 1639] 
\label{th:desargues}
Let $a,b,c$ be distinct concurrent lines~on the complex/real projective plane. 
Pick generic points \hbox{$A_1, A_2\!\in\! a$, $B_1, B_2\!\in\! b$, $C_1, C_2\!\in\! c$}.
Then the points $A=(B_1C_1)\cap (B_2C_2)$, 
$B=(A_1C_1)\cap (A_2C_2)$,
$C=(A_1B_1)\cap (A_2C_2)$
are collinear. See Figure~\ref{fig:desargues-geogebra}. 
%Let $A_1B_1C_1$ and $A_2B_2C_2$ be two triangles such that 
%the lines $(A_1A_2)$, $(B_1B_2)$, $(C_1C_2)$ are concurrent. 
%Then the corresponding sides of these triangles intersect at collinear points. 
\end{theorem}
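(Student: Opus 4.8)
The plan is to realize the Desargues configuration as the bicolored quadrilateral tiling of the sphere coming from the surface of the cube (Figure~\ref{fig:bipartite-tilings}, left), and then to apply Theorem~\ref{th:main-plane}. The cube graph is bipartite, so its eight vertices split into four black and four white, and its six square faces are the tiles; hence I need to attach four points and four lines. I would take the center of perspectivity to be $O=a\cap b\cap c$ and the three ``axis'' points to be $A=(B_1C_1)\cap(B_2C_2)$, $B=(A_1C_1)\cap(A_2C_2)$, $C=(A_1B_1)\cap(A_2B_2)$, and then assign the black vertices (points) to be $A_1,A_2,B,C$ and the white vertices (lines) to be $b$, $c$, $(B_1C_1)$, $(B_2C_2)$. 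The edges of the cube determine which two points and which two lines surround each face.

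Next I would read off the six coherence conditions, one per face, using Remark~\ref{rem:coherence-plane}: a face with points $P,Q$ and lines $\ell,m$ is coherent iff the line $(PQ)$ passes through $\ell\cap m$. With the assignment above, four of these conditions are forced by the very definitions of $B$ and $C$: for instance $b\cap(B_1C_1)=B_1$, so coherence of the relevant face reads $B_1\in(A_1C)$, which holds because $C\in(A_1B_1)$; the other three are identical in spirit (using $b\cap(B_2C_2)=B_2$, $c\cap(B_1C_1)=C_1$, $c\cap(B_2C_2)=C_2$). A fifth face carries points $A_1,A_2$ and lines $b,c$, and its coherence reads that $(A_1A_2)=a$ passes through $b\cap c=O$, i.e.\ exactly the concurrence hypothesis. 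The sixth face (the inner square) carries points $B,C$ and lines $(B_1C_1),(B_2C_2)$, whose intersection is $A$; its coherence reads $A\in(BC)$, which is precisely the assertion that $A,B,C$ are collinear.

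Having arranged that five of the six tiles are coherent---one by hypothesis, four by construction---I would invoke Theorem~\ref{th:main-plane} (equivalently Theorem~\ref{th:master} with $\dim\PP=2$) to conclude that the sixth tile is coherent, which is the desired collinearity. To make the application legitimate I would verify the side conditions: for each edge $P-h$ the point $P$ must not lie on the line $h$, and on the final tile I need $B\neq C$ and $(B_1C_1)\neq(B_2C_2)$, so that coherence genuinely means $A\in(BC)$ rather than one of the degenerate alternatives in Definition~\ref{def:coherent-tile}. All of these hold on a Zariski-dense set of choices, which is exactly what the genericity in the statement provides.

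The main obstacle is not this verification, which is mechanical once the labels are fixed, but the \emph{discovery} of the correct assignment of the ten points and ten lines of the Desargues $10_3$ configuration to the four points and four lines of the cube---deciding which objects become tile vertices and which appear only as the derived points $\ell\cap m$ and lines $(PQ)$. The symmetry of the cube does not reflect the evident symmetries of the Desargues picture (permuting $A,B,C$ and swapping the two triangles), so some trial is unavoidable; the guiding heuristic is that the single ``hypothesis'' face should encode the concurrence $a\cap b\cap c$, while the ``conclusion'' face should have the three collinear points among its vertices, with their common point $A$ realized as the intersection of its two line-vertices.
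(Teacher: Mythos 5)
Your proposal is correct and is essentially identical to the paper's own proof: the authors use the same cube tiling of the sphere with black vertices $A_1,A_2,B,C$ and white vertices $b,c,(B_1C_1),(B_2C_2)$, and they read off the same six coherence conditions (four forced by the definitions of $B$ and $C$, one encoding the concurrence of $a,b,c$, and the last one giving $A\in(BC)$) before invoking the master theorem.
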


\begin{figure}[ht]
\begin{center}
\vspace{-.1in}
\includegraphics[angle=0, scale=0.5, trim=0cm 1cm 2cm 0cm, clip]{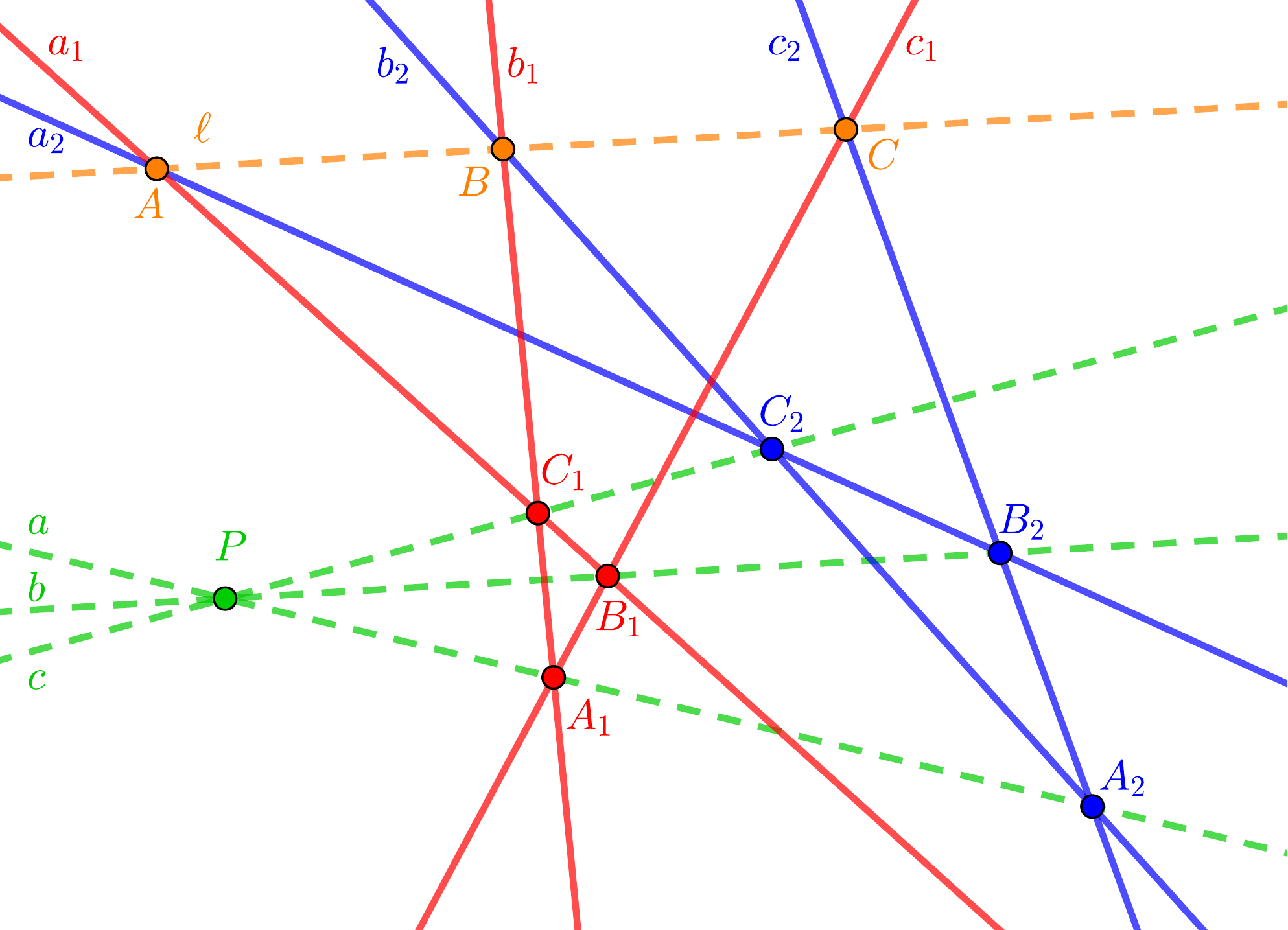}
\end{center}
\vspace{-.1in}
\caption{The Desargues theorem. 
}
\label{fig:desargues-geogebra}
\end{figure}

\vspace{-.2in}

\begin{proof}
Denote
\begin{align}
%\begin{array}{lll}
\nonumber
a&=(A_1A_2), \quad \ b=(B_1B_2),\quad \ c=(C_1C_2), \\[5pt]
\label{eq:a1b1c1}
a_1&=(B_1C_1), \quad b_1=(A_1C_1), \quad c_1=(A_1B_1), \\[5pt]
\label{eq:a2b2c2}
a_2&=(B_2C_2), \quad b_2=(A_2C_2), \quad c_2=(A_2B_2),
%%\end{array}
\end{align}
so that
$A=a_1\cap a_2$, $B=b_1\cap b_2$, $C=c_1\cap c_2$. 

Let us formulate the pertinent conditions in terms of $a_1, a_2, A_1, A_2, b, c, B, C$: 
\begin{itemize}[leftmargin=.2in]
\item 
$A_1$, $A_2$, and $b\cap c$ are collinear (given); 
\item 
$A_1$, $B$, and $a_1\cap c$ are collinear (given); 
\item
$A_1$, $C$, and $a_1\cap b$ are collinear (given); 
\item 
$A_2$, $B$, and $a_2\cap c$ are collinear (given); 
\item
$A_2$, $C$, and $a_2\cap b$ are collinear (given); 
\item
$B$, $C$, and $a_1\cap a_2$ are collinear (to be proved). 
\end{itemize}
These six conditions correspond to the six tiles in the tiling of the sphere shown in~Figure~\ref{fig:desargues}. 
The claim now follows by the master theorem.
%Theorem~\ref{th:main-plane}/\ref{th:master}. 
\end{proof}

\begin{figure}[ht]
\vspace{-10pt}
\begin{equation*}
\begin{tikzpicture}[baseline= (a).base]
\node[scale=1] (a) at (0,0){
\begin{tikzcd}[arrows={-stealth}, cramped, sep=15]
A_2 \edge{rrr} \edge{rd} \edge{ddd}&&& a_2  \edge{ddd}\edge{ld}\\[0pt]
& b  \edge{r}  \edge{d}& C \edge{d}& \\[0pt]
& A_1 \edge{r} \edge{ld} & a_1 \edge{rd}& \\[0pt]
c \edge{rrr} & & & B
\end{tikzcd}
};
\end{tikzpicture}
\end{equation*}
\vspace{-18pt}
\caption{The tiling of the sphere used in the proof of the Desargues theorem. 
}
\label{fig:desargues}
\end{figure}
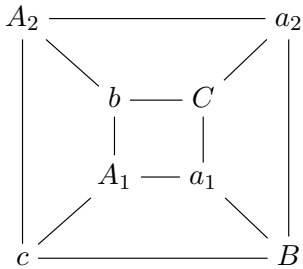

\vspace{-.15in}

For another version of essentially the same argument, see the proof of Corollary~\ref{cor:tileable-hexagons}. 

A more combinatorial viewpoint towards incidence geometry of point-and-line configurations in the projective plane, 
going back at least to the work of H.~S.~M.~Coxeter, 
involves the notion of a Levi graph (or incidence graph); 
see, e.g.,~\cite{pisanski-servatius} for an excellent introduction to this approach.
Let $\mathcal{C}$ be a configuration of points and lines in the real/complex projective plane. 
(In this paper, we use the word \emph{configuration} to describe a collection of geometric objects,
with no \emph{a~priori} combinatorial constraints.) 
The \emph{Levi graph} of~$\mathcal{C}$ 
is a bipartite graph whose vertices correspond to the points and lines of~$\mathcal{C}$
and whose edges record the incidences between them:
whenever a point $P$ lies on a line~$\ell$, the Levi graph includes the edge $P-\ell$. 
(We will use this notion a bit loosely, ignoring ``accidental'' incidences that do not affect the validity of the corresponding
incidence theorems; these accidents can be eliminated by imposing appropriate genericity assumptions.)

Let us illustrate this viewpoint in the case of the Desargues theorem.
In light of Definition~\ref{def:coherent-tile}/Remark~\ref{rem:coherence-plane},
we replace each tile %~\eqref{eq:AlBm} 
\begin{equation*}
%\label{eq:AlBm-again}
\begin{tikzpicture}[baseline= (a).base]
\node[scale=1] (a) at (0,0){
\begin{tikzcd}[arrows={-stealth}, sep=15, cramped]
A  \edge{r}  \edge{d}& \ell \edge{d} \\[0pt]
m \edge{r} & B
\end{tikzcd}
};
\end{tikzpicture}
\end{equation*}
by the ``quadripod''
\begin{equation}
\label{eq:quadripod}
\begin{tikzpicture}[baseline= (a).base]
\node[scale=1] (a) at (0,0){
\begin{tikzcd}[arrows={-stealth}, cramped, sep=10]
A  \edge{drr}&&& \ell \edge{dll} \\[2pt]
& N \edge{r} \edge{ld} & c \edge{rd} \\
m  &&& B
\end{tikzcd}
};
\end{tikzpicture}
\end{equation}
which encodes the coherence of the tile in terms of the Levi graph:
we want the point $N=\ell\cap m$ to lie on the line $c=(AB)$. 
(Note that the edges $A-\ell-B-m-A$ from the boundary of the tile do not appear in the Levi graph,
as they do not correspond to incidences in the configuration.)

Applying this procedure to the tiling in Figure~\ref{fig:desargues},
we obtain the Levi graph of the Desargues configuration, shown in Figure~\ref{fig:desargues-graph-from-tiling}. 
An alternative rendering of this graph, matching the presentation given in \cite[Fig.~5.25]{pisanski-servatius}, 
is given in Figure~\ref{fig:desargues-graph}. 

\pagebreak

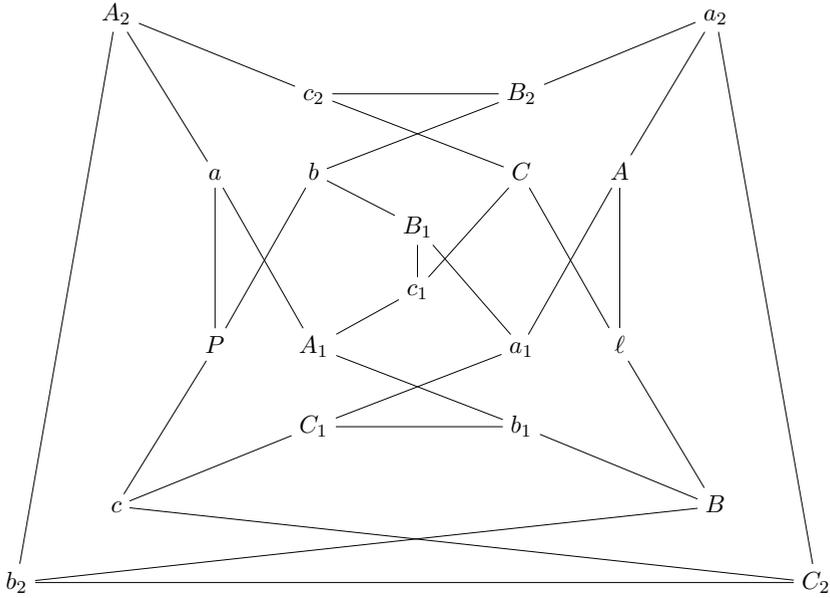
\begin{figure}[ht]
\vspace{-5pt}
\begin{equation*}
\begin{tikzpicture}[baseline= (a).base]
\node[scale=0.85] (a) at (0,0){
\begin{tikzcd}[arrows={-stealth}, cramped]
& A_2 \edge{ldddddddd} \edge{rdd} \edge{rrd} &&&&&& a_2 \edge{lld} \edge{ldd} \edge{rdddddddd} \\
&&& c_2 \edge{rr}  \edge{rrd} && B_2 \edge{lld} \\
&& a \edge{ddd} \edge{rddd} & b \edge{lddd} \edge{rd} && C \edge{ldd} \edge{rddd} & A \edge{lddd} \edge{ddd} \\[-10pt]
&&&& B_1 \edge{d} \edge{rdd} \\[-5pt]
&&&& c_1 \edge{ld} \\[-10pt]
&& P \edge{ldd} & A_1 \edge{rrd} && a_1 \edge{lld} & \ell \edge{rdd} \\
&&& C_1 \edge{rr} \edge{lld} && b_1 \edge{rrd} \\
& c \edge{drrrrrrr} &&&&&& B \edge{dlllllll} \\
b_2 \edge{rrrrrrrr} &&&&&&&& C_2
\end{tikzcd}
};
\end{tikzpicture}
\end{equation*}
\vspace{-5pt}
\caption{Obtaining the Levi graph of the Desargues configuration from the tiling in Figure~\ref{fig:desargues}. 
For example, at the top of the picture, the 5~edges incident to the vertices $c_2$ and/or~$B_2$ come from the coherent tile with vertices $A_2, a_2, C, b$ 
at the top of Figure~\ref{fig:desargues}. 
%It can be interpreted as saying that the lines $(A_2C)$, $a_2$, and~$b$
%are concurrent, or that the points $A_2$, $C$, and $a_2\cap b$ are collinear. 
}
\label{fig:desargues-graph-from-tiling}
\end{figure}

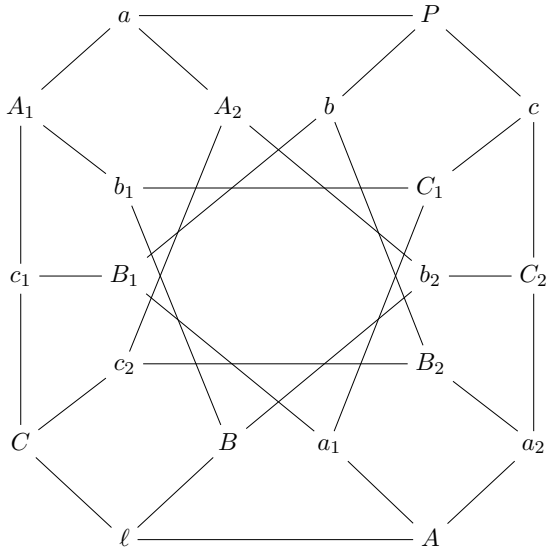
\begin{figure}[ht]
\vspace{-.25in}
\begin{equation*}
\begin{tikzpicture}[baseline= (a).base]
\node[scale=0.85] (a) at (0,0){
\begin{tikzcd}[arrows={-stealth}, cramped]
& a \edge{rrr} \edge{rd} \edge{ld}&&& P  \edge{dr}\edge{dl}\\[.1in]
A_1 \edge{rd} \edge{dd}&& A_2 \edge{lddd} \edge{rrdd}& b \edge{lldd} \edge{rddd}&& c\edge{ld}\edge{dd}\\[.0in]
& b_1 \edge{rrr} \edge{rddd}&&& C_1 \edge{lddd} \\[.05in]
c_1 \edge{r} \edge{dd}& B_1 \edge{rrdd}&&& b_2 \edge{r} \edge{lldd} & C_2 \edge{dd}\\[.05in]
& c_2 \edge{rrr} \edge{ld}&&& B_2 \edge{rd}\\[.0in]
C \edge{rd} && B \edge{ld} & a_1 \edge{rd} && a_2 \edge{ld} \\[.1in]
& \ell \edge{rrr} &&& A
\end{tikzcd}
};
\end{tikzpicture}
\end{equation*}
\vspace{-.15in}
\caption{Alternative rendering of the Levi graph of the Desargues configuration. 
}
\label{fig:desargues-graph}
\end{figure}

\vspace{-.1in}

We will soon see that, in general, additional steps may be required to get the Levi graph 
of an incidence theorem from the corresponding tiling.
This can be seen from the simple fact that each tile gives rise to 5~edges 
whereas the number of edges in a Levi graph of an incidence theorem does not have to be divisible by~5.

\clearpage

\section*{The Pappus theorem}

We consider the following version of the classical Pappus theorem. 

\begin{theorem}[Pappus of Alexandria, around 340 CE]
\label{th:pappus}
Let $P_1,P_2,P_3,P_4,P_5,P_6$ be six generic points on the real/complex plane. 
Then any two of the following three conditions (cf.\ Figure~\ref{fig:pappus1}) imply the third:
\begin{itemize}[leftmargin=.2in]
\item 
the lines $(P_1P_2)$, $(P_3P_4)$, $(P_5P_6)$ intersect at a common point~$A$; 
\item
the lines $(P_2P_3)$, $(P_4P_5)$, $(P_6P_1)$ intersect at a common point~$B$; 
\item
the lines $(P_1P_4)$, $(P_2P_5)$, $(P_3P_6)$ intersect at a common point~$C$. 
\end{itemize}
\end{theorem}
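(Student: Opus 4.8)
The plan is to follow the template of the Desargues proof above: turn each concurrency condition into a collinearity, read off the corresponding coherent tiles, fit them into a bicolored quadrilateral tiling of the torus, and invoke the master theorem (Theorem~\ref{th:master}). The relevant tiling is the one whose hexagonal fundamental domain appears on the right of Figure~\ref{fig:bipartite-tilings}.

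First I would name the nine lines $\ell_1=(P_1P_2)$, $\ell_2=(P_3P_4)$, $\ell_3=(P_5P_6)$, $\ell_4=(P_2P_3)$, $\ell_5=(P_4P_5)$, $\ell_6=(P_6P_1)$, $\ell_7=(P_1P_4)$, $\ell_8=(P_2P_5)$, $\ell_9=(P_3P_6)$ and the three meeting points $A$, $B$, $C$. Each concurrency then becomes a collinearity that, by Remark~\ref{rem:coherence-plane} and Proposition~\ref{pr:coherence-algebraic}, is the coherence of a single tile: writing $A=\ell_1\cap\ell_2$, the concurrency of $\ell_1,\ell_2,\ell_3$ says that $\ell_3$ passes through $\ell_1\cap\ell_2$, i.e.\ that the tile with clockwise vertices $P_5,\ell_1,P_6,\ell_2$ is coherent (its defining line $(P_5P_6)=\ell_3$ passes through $\ell_1\cap\ell_2=A$). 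The genericity of $P_1,\dots,P_6$ guarantees the required non-incidences---no point lying on an adjacent line---so every tile is admissible. I would record the analogous tiles for the meetings at $B$ and at $C$, together with the connecting tiles needed to close up the hexagonal domain into a torus; the intersection points produced by these tiles (the points $N=\ell\cap m$ of the quadripods~\eqref{eq:quadripod}) account for the points of the Pappus configuration that are not already tiling vertices.

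With the tiling in place, the master theorem applies: the product of the mixed cross-ratios over all tiles equals~$1$, so coherence of every tile but one forces coherence of the last. The freedom to choose which tile is the exceptional one reflects the symmetry of the statement among the three conditions; taking as exceptional a tile that records the third concurrency, and using that the tiles recording the first two concurrencies (together with the connecting tiles, which are coherent for generic $P_i$) are coherent, yields the third concurrency. The analogous choices give the other two implications, so any two of the three conditions imply the third.

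The hard part will be the combinatorial verification that the hexagonal fundamental domain of Figure~\ref{fig:bipartite-tilings} really glues into a properly two-colored quadrilateral tiling of the torus whose tile coherences reproduce exactly the incidences of the Pappus configuration---and, crucially, that the resulting theorem is \emph{not} a tautology of the kind seen in Examples~\ref{eg:tiling-tautology-2}--\ref{eg:tiling-tautology-3}. Since the nine collinear triples of a $(9_3)$ configuration are not all logically independent, one must check that the tiles encode nine genuinely distinct coherences (via the auxiliary quadripod points) rather than three conditions repeated thrice, for otherwise the cross-ratio product would telescope vacuously. As the introduction stresses, exhibiting this tiling is the genuinely creative step; once it is given, checking the coloring and the non-incidences and appealing to Theorem~\ref{th:master} is routine.
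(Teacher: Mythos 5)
Your overall strategy---encode the incidences as coherences of quadrilateral tiles on a torus and invoke Theorem~\ref{th:master}---is indeed the paper's strategy, but there is a genuine gap: the tiling is never constructed, and the tiles you do write down are not the tiles of the tiling you cite. In Figure~\ref{fig:pappus-torus} (the right-hand tiling of Figure~\ref{fig:bipartite-tilings}) the white vertices are \emph{not} the nine configuration lines $\ell_1,\dots,\ell_9$: they are three auxiliary lines $a=(BC)$, $b=(AC)$, $c=(AB)$, where $A=(P_3P_4)\cap(P_5P_6)$, $B=(P_2P_3)\cap(P_1P_6)$, $C=(P_1P_4)\cap(P_2P_5)$ are defined unconditionally. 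Each of the nine tiles then asserts ``$P_i$, $P_j$, and $x\cap y$ are collinear'' with $x,y\in\{a,b,c\}$; six of these hold by the very definition of $A,B,C$, and the remaining three are exactly the three concurrency conditions. In particular each concurrency is spread over \emph{three} tiles sharing the quadripod point $A$ (resp.\ $B$, $C$), not packed into one tile. Your tile $P_5,\ell_1,P_6,\ell_2$ with $\ell_1=(P_1P_2)$, $\ell_2=(P_3P_4)$ does correctly encode the first concurrency as a single coherence, but it cannot be a tile of that figure, which has only three white vertices.

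If you instead pursue your own encoding---black vertices $P_1,\dots,P_6$ and white vertices among the lines $(P_iP_j)$, one tile per concurrency---you must supply the ``connecting tiles'' that close up the surface, and each of them imposes its own collinearity $(P_iP_j)\ni\ell\cap m$. These are not automatic. The non-incidence hypothesis of Theorem~\ref{th:master} forbids an edge between $P_i$ and a line through $P_i$, so for instance a tile $P_4,\ell_1,P_5,\ell_4$ (with $\ell_1\cap\ell_4=P_2$) would demand $P_2\in(P_4P_5)$, which fails for generic data; a tile whose two white lines meet outside the configuration imposes a condition that is not among the hypotheses at all. Moreover, an Euler-characteristic count ($V=F$ for a quadrangulated torus) shows you would need as many tiles as vertices, hence on the order of a dozen such connecting tiles, with no indication that they can all be made vacuous or hypothesis-implied. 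Since exhibiting the tiling and matching its nine coherences to the nine collinearities of the configuration is---as you yourself note---the entire content of the argument, deferring it leaves the proof incomplete. The repair is to introduce $a,b,c$ as above and verify the nine tiles of Figure~\ref{fig:pappus-torus} directly (or to use the alternative $4$-point/$5$-line tiling of Figure~\ref{fig:pappus2-tiling}, which is closer in spirit to your labeling by configuration lines but requires its own careful construction).
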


\begin{figure}[ht]
%\vspace{-.1in}
\includegraphics[angle=0, scale=0.4, trim=1.5cm 1cm 2cm 1cm, clip]{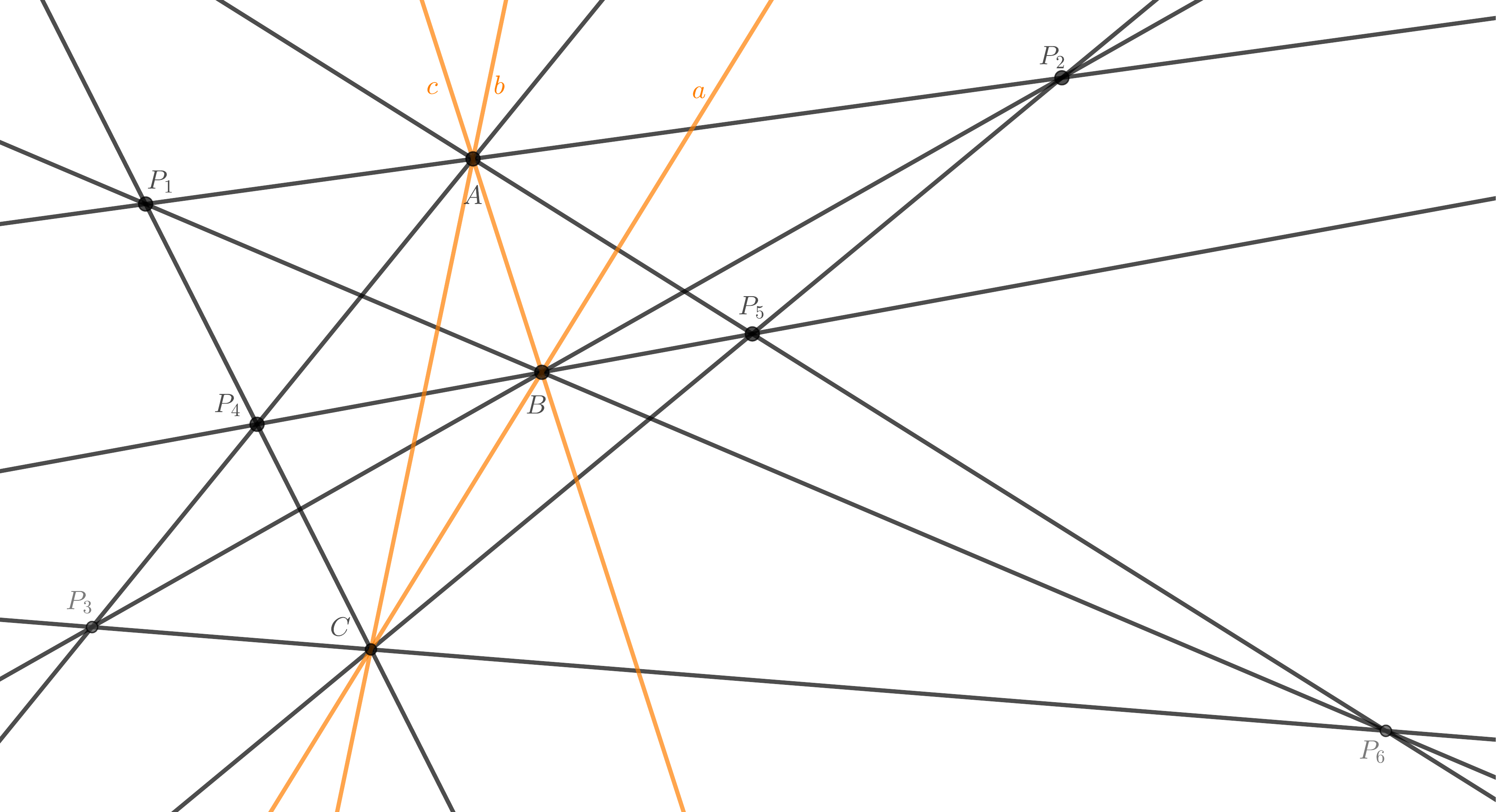}
%
%\begin{equation*}
%\begin{tikzpicture}[baseline= (a).base]
%\node[scale=1] (a) at (0,0){
%\begin{tikzcd}[arrows={-stealth}, cramped]
%P_1 \edge{rr} \edge{rd} \edge{rrd} && A\edge{rd} \edge{rr}&& P_2 \\
%& P_4 \edge{ru} \edge{rd}\edge{r}& B \edge{r} \edge{rrd} \edge{rru} & P_5\edge{rd} \edge{ru} \\
%P_3 \edge{ru} \edge{rr} \edge{rru} && C\edge{rr} \edge{ru} && P_6
%\end{tikzcd}
%};
%\end{tikzpicture}
%\end{equation*}
\vspace{-.1in}
\caption{The Pappus theorem. 
}
\label{fig:pappus1}
\end{figure}

\vspace{-.2in}

\begin{proof}
Define the points $A\!=\!(P_3P_4)\cap(P_5P_6), B\!=\!(P_2P_3)\cap(P_1P_6), {C\!=\!(P_1P_4)\cap(P_2P_5)}$
and
the lines $a\!=\!(BC)$, $b\!=\!(AC)$, $c\!=\!(AB)$, so that $A\!=\!b\cap c$, $B\!=\!a\cap c$, $C\!=\!a\cap b$. 
%\begin{equation*}
%A=b\cap c, \ \ B=a\cap c, \ \ C=a\cap b. 
%\end{equation*}
By~construction, we have:
\begin{itemize}[leftmargin=.2in]
\item
$P_3$, $P_4$, and $b\cap c$ are collinear; 
\item
$P_5$, $P_6$, and $b\cap c$ are collinear; 
\item
$P_2$, $P_3$, and $a\cap c$ are collinear; 
\item
$P_6$, $P_1$, and $a\cap c$ are collinear; 
\item
$P_1$, $P_4$, and $a\cap b$ are collinear; 
\item
$P_2$, $P_5$, and $a\cap b$ are collinear. 
\end{itemize}
The conditions appearing in the theorem become:
\begin{itemize}[leftmargin=.2in]
\item 
$P_1$, $P_2$, and $b\cap c$ are collinear; 
\item
$P_4$, $P_5$, and $a\cap c$ are collinear; 
\item
$P_3$, $P_6$, and $a\cap b$ are collinear. 
\end{itemize}
These 9 conditions correspond to the 9 tiles in the tiling of the torus shown in Figure~\ref{fig:pappus-torus}. 
The claim follows by the master theorem. 
\end{proof}

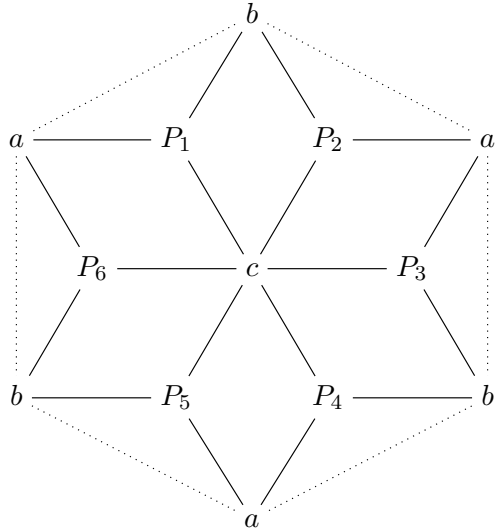
\begin{figure}[ht]
\vspace{-.15in}
\begin{equation*}
\begin{tikzpicture}[baseline= (a).base]
\node[scale=1] (a) at (0,0){
\begin{tikzcd}[arrows={-stealth}, sep=small, cramped]
&&   && b \edge{ld} \edge{rd} \edge{rrrd, dotted} &&  \\[.32in]
& a \edge{rr} \edge{rd} \edge{rrru, dotted} \edge{dd, dotted}&& P_1 \edge{rd} && P_2 \edge{rr} \edge{ld} && a \edge{ld} \edge{dd, dotted} \\[.32in]
 && P_6 \edge{rr} \edge{ld} && c \edge{rr} \edge{rd} \edge{ld}&& P_3 \edge{rd} & &  \\[.32in]
& b \edge{rr} \edge{rrrd, dotted} && P_5 \edge{rd} && P_4 \edge{rr} \edge{ld} && b  \\[.32in]
&&  && a \edge{rrru, dotted} && 
\end{tikzcd}
};
\end{tikzpicture}
\end{equation*}
\vspace{-.15in}
\caption{The tiling of the torus used in the proof of the Pappus theorem.
Opposite sides of the shown hexagonal fundamental domain should be glued to each other.
There are no edges between the vertices $a$ and~$b$. }
\vspace{-.2in}
\label{fig:pappus-torus}
\end{figure}

\vspace{-.15in}

To obtain the Levi graph of the Pappus configuration from the tiling in Figure~\ref{fig:pappus-torus}, 
we need to perform the following steps:
\begin{itemize}[leftmargin=.3in]
\item[(1)]
insert the quadripod~\eqref{eq:quadripod} into each tile, as before; 
\item[(2)]
identify multiple occurrences of each of the points $A, B, C$ %$A=b\cap c$, $B=a\cap c$, $C=a\cap B$ 
to get Figure~\ref{fig:pappus-graph-intermediate}; 
\item[(3)]
remove bivalent vertices $a,b,c$ and the edges incident to them 
(bivalent vertices do not impose any restrictions on the rest of the configuration); 
\item[(4)]
rearrange the drawing to get Figure~\ref{fig:pappus-graph}. (Compare with \cite[Fig.~5.23]{pisanski-servatius}.)
\end{itemize}

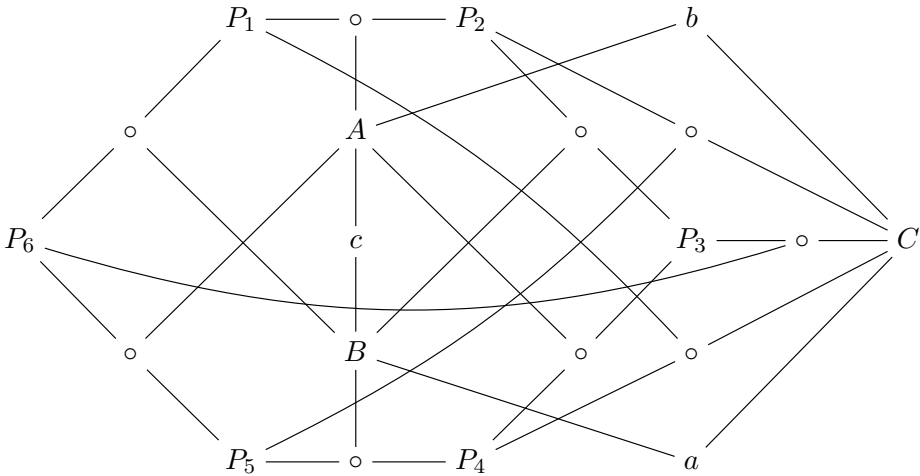
\begin{figure}[ht]
\vspace{-.1in}
\begin{equation*}
%\hspace{-10pt}
\begin{tikzpicture}[baseline= (a).base]
\node[scale=1] (a) at (0,0){
\begin{tikzcd}[arrows={-stealth}, cramped]
&& P_1 \edge{r} \edge{rrrrddd, bend left =10} \edge{dl} & \circ \edge{r} \edge{d} & P_2 \edge{rrd} \edge{dr} && b \edge{dlll}\edge{ddrr}  \\[.1in]
& \circ\edge{ddrr} \edge{dl} && A \edge{ddrr}\edge{d} && \circ \edge{dr} & \circ \edge{drr} \\[.1in]
P_6 \edge{dr} \edge{rrrrrrr, bend right =17}&&& c \edge{d} &&& P_3 \edge{dl} \edge{r}& \circ \edge{r}& C \edge{dll} \\[.1in]
& \circ \edge{uurr}\edge{dr} && B\edge{uurr} \edge{d} \edge{drrr}&& \circ \edge{dl} & \circ \\[.1in]
&& P_5\edge{r} \edge{rrrruuu, bend right =10} & \circ \edge{r}& P_4\edge{rru} && a \edge{uurr}
\end{tikzcd}
};
\end{tikzpicture}
\end{equation*}
\vspace{-.1in}
\caption{The intermediate Levi graph obtained from the tiling shown in Figure~\ref{fig:pappus-torus}. 
Unlabeled vertices marked~``$\circ$'' correspond to lines. 
}
\label{fig:pappus-graph-intermediate}
\end{figure}

\clearpage
\newpage

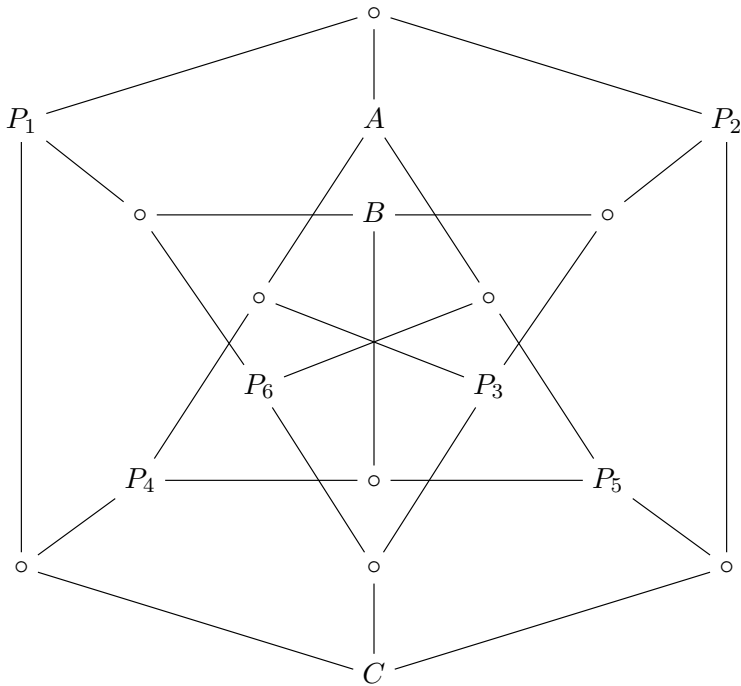
\begin{figure}[ht]
\vspace{-.1in}
\begin{equation*}
\begin{tikzpicture}[baseline= (a).base]
\node[scale=1] (a) at (0,0){
\begin{tikzcd}[arrows={-stealth}, cramped]
&&& \circ \edge{d} \edge{llld} \edge{rrrd}\\[.1in]
P_1 \edge{ddddd} \edge{dr} &&& A \edge{ddl} \edge{ddr} &&& P_2\edge{ddddd} \edge{dl}  \\[.0in]
& \circ \edge{rr} \edge{rdd} && B \edge{ddd} \edge{rr} && \circ \edge{ddl}  \\[.0in]
&& \circ \edge{ddl} \edge{drr} && \circ \edge{dll} \edge{ddr} \\[.0in]
&& P_6 \edge{ddr} && P_3 \edge{ddl} \\[.0in]
& P_4 \edge{dl} \edge{rr} && \circ \edge{rr} && P_5 \edge{dr} \\[.0in]
\circ \edge{rrrd} &&& \circ \edge{d} &&& \circ \edge{llld} \\[.1in]
&&& C
\end{tikzcd}
};
\end{tikzpicture}
\end{equation*}
\vspace{-.15in}
\caption{The Levi graph of the Pappus configuration. 
Each unlabeled vertex~$v$~corresponds to the line passing through the 3 points that label the vertices adjacent~to~$v$. 
}
\label{fig:pappus-graph}
\end{figure}

\vspace{-.2in}

Surprisingly, there exists an alternative proof of the Pappus theorem, 
also based on our master theorem
but utilizing a different tiling of the torus. 
This~tiling, shown in Figure~\ref{fig:pappus2-tiling}, involves 4~points and 5~lines 
(\emph{vs}.~6~points and 3~lines in Figure~\ref{fig:pappus-torus}).

\begin{proof}[Second proof of Theorem~\ref{th:pappus}]
$\!$As before, set $A\!=\!(P_3P_4)\cap(P_5P_6)$, ${C\!=\!(P_1P_4)\cap(P_2P_5)}$, and 
$b=AC$. Also define (cf.\ Figure~\ref{fig:pappus2}):
\begin{equation*}
\begin{array}{llll}
q=P_1P_6, \quad r=P_1P_2, \quad s=P_3P_6, \quad t=P_2P_3, 
\\[4pt]
D= P_1P_4\cap P_5P_6, \quad E= P_2P_5\cap P_3P_4. 
\end{array}
\end{equation*}
Reformulating the 9 collinearity conditions of the Pappus configuration  
%appear in Theorem~\ref{th:pappus} 
in terms of the points $P_4, P_5, D, E$ and the lines
$b, q, r, s, t$, we obtain the following 9 conditions: 
\begin{itemize}[leftmargin=.2in]
\item 
$P_4$, $D$, and $b\cap s$ are collinear, 
\item 
$P_4$, $D$, and $q\cap r$ are collinear, 
\item 
$P_4$, $E$, and $b\cap r$ are collinear; 
\item 
$P_4$, $E$, and $s\cap t$ are collinear; 
\item
$P_5$, $D$, and $b\cap r$ are collinear; 
\item
$P_5$, $D$, and $s\cap q$ are collinear; 
\item
$P_5$, $E$, and $b\cap s$ are collinear,
\item 
$P_5$, $E$, and $r\cap t$ are collinear;
\item 
$P_4$, $P_5$, and $q\cap t$ are collinear. % {this needs discussion}
\end{itemize}
These 9 conditions correspond to the 9 tiles in the tiling of the torus exhibited in Figure~\ref{fig:pappus2-tiling}. 
The claim follows by the master theorem. 
\end{proof}

\begin{figure}[ht]
\vspace{-.1in}
\includegraphics[angle=0, scale=0.75, trim=1.5cm 12cm 2cm 1cm, clip]{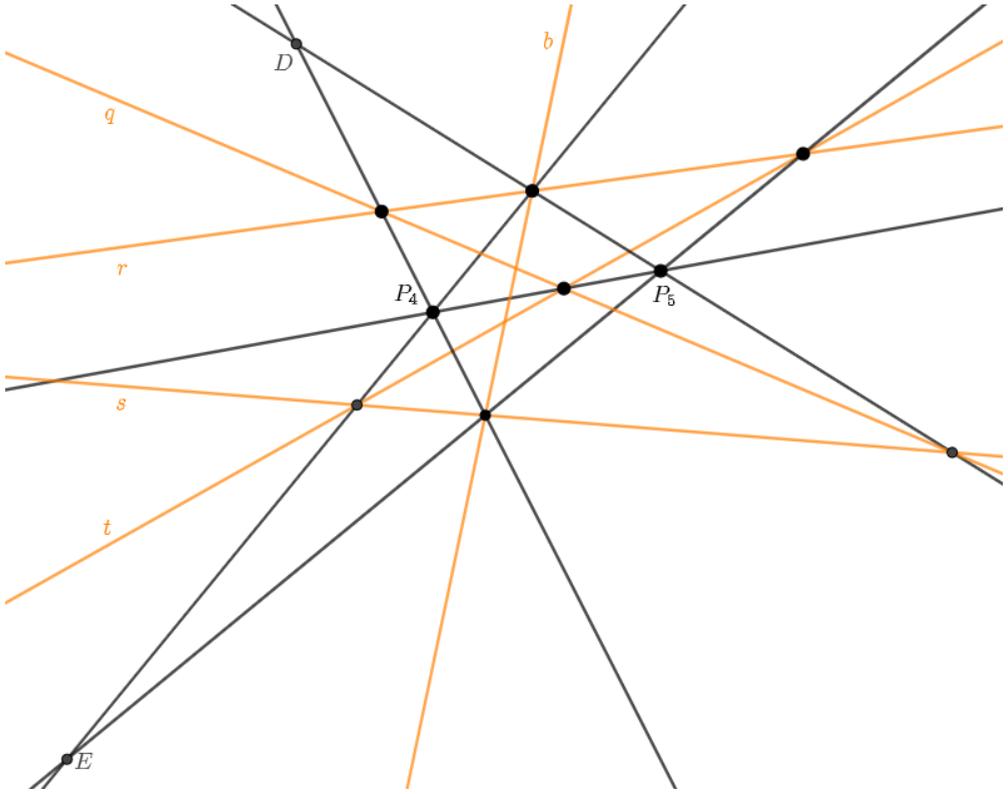}
\vspace{-.1in}
\caption{The Pappus theorem, second version. 
}
\label{fig:pappus2}
\end{figure}

\vspace{-.2in}

\begin{figure}[ht]
\vspace{-.2in}
\begin{center}
\begin{equation*}
\begin{tikzpicture}[baseline= (a).base]
\node[scale=.9] (a) at (0,0){
\begin{tikzcd}[arrows={-stealth}, cramped]
& & & P_5 \edge{rrd} \edge{lld} \edge{dddr} \edge{dddl} \\
& r \edge{ld} \edge{dd} &&&& s \edge{rd} \edge{dd} \\[-15pt]
P_4 \edge{d} &&&&&&  P_4 \edge{d} \\
b \edge{d} \edge{r} & E \edge{r} \edge{dd} & t \edge{rddd} && q \edge{r} \edge{lddd} & D \edge{dd} \edge{r} & b \edge{d} \\
P_5 \edge{rd} &&&&&&  P_5 \edge{ld} \\[-15pt]
& s \edge{rrd} &&&& r \edge{lld} \\
& & & P_4 
%\\
%s \edge{r}   & P_4 \edge{rr} \edge{ld} && r \edge{lld} \edge{rr} && P_5 \edge{r}\edge{ld} \edge{llld}  & s \edge{ld}   \\
%b \edge{r} & E \edge{ld} \edge{r} & t\edge{ld}  && q \edge{r} \edge{llld} & D \edge{r}\edge{lld}  & b \edge{ld} \\
%s  \edge{r} & P_4\edge{rr} && r\edge{rr} && P_5\edge{r} & s
\end{tikzcd}
};
\end{tikzpicture}
\end{equation*}
\vspace{-.2in}
\end{center}
\caption{The tiling of the torus for the second proof of the Pappus theorem.\\
Opposite sides of this hexagonal fundamental domain should be glued to each other.}
\vspace{-.2in}
\label{fig:pappus2-tiling}
\end{figure}
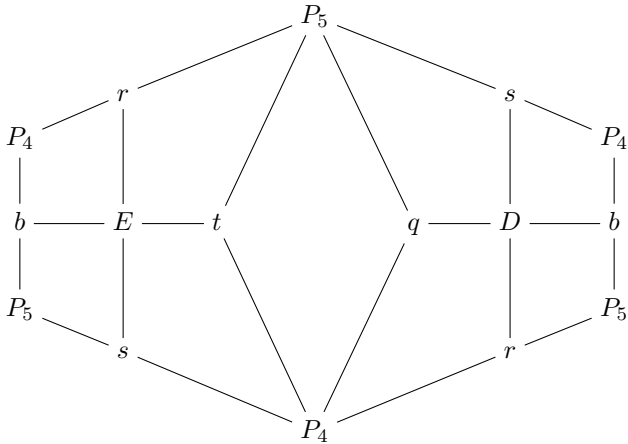

%\clearpage

To get the Levi graph of the Pappus configuration from the tiling in Figure~\ref{fig:pappus2-tiling}, 
identify duplicate occurences of $(P_4E)$, $(P_5E)$, $(P_4D)$, $(P_5D)$, $b\cap s$, $b\cap r$, then remove bivalent vertices $b, D, E$
and the edges incident to them.

\clearpage

\newpage

\section*{Complete quadrangles/quadrilaterals}

Theorems~\ref{th:complete-quad} and~\ref{th:harmonic-points-theorem} below 
are apparently due to G.~Desargues, with precursors in the works of Euclid and Pappus of Alexandria,
see \cite[Chapter~VIII, p.~288]{van-der-waerden-awakening}. 
For other presentations and discussions, see, e.g.,  \cite[Example~5]{richter-gebert-mechanical}, 
\cite[Section~8.3]{richter-gebert-book}, and/or \cite[\S2.4]{heyting}. 
These results play an important role in R.~Moufang's foundational work on 
axiomatization of projective geometry, cf.\ \cite[p.~68]{moufang}. 

\begin{theorem}%[Complete quadrilateral theorem]
\label{th:complete-quad}
Let $A_1, A_2, A_3,A_4$ be generic points on the real/complex projective plane. 
Draw six lines $\ell_{ij}\!=\!(A_iA_j)$, ${1\le i<j\le 4}$, through various pairs of these~points.
Pick an arbitrary line and denote by $P_{ij}$ the intersection of $\ell_{ij}$ with the chosen line. 
Now let us try to find another quadruple of points that would yield the same six points~$P_{ij}$. 
To this end, take a generic point~$B_1$ and draw the lines $m_{12}=(B_1P_{12})$, $m_{13}=(B_1P_{13})$, $m_{14}=(B_1P_{14})$.
Pick a generic~point $B_2\in m_{12}$ and draw the lines $m_{23}=(B_2 P_{23})$ and $m_{24}=(B_2 P_{24})$. 
Set $B_3=m_{13}\cap m_{23}$ and $B_4=m_{14}\cap m_{24}$. Then the line $m_{34}=(B_3B_4)$ passes through~$P_{34}$. 
See Figure~\ref{fig:complete-quadrilateral}. 
\end{theorem}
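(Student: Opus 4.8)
The plan is to reformulate the complete quadrangle theorem (Theorem~\ref{th:complete-quad}) as an instance of the master theorem, following the same recipe used for Desargues and Pappus. The geometric content is that two quadrangles $A_1A_2A_3A_4$ and $B_1B_2B_3B_4$, projected onto a fixed line, yield the same six intersection points $P_{ij}$. The coherence condition $(*)$ --- that $A$, $B$, and $\ell\cap m$ be collinear --- is exactly the statement that a point $P_{ij}$ lies on both $\ell_{ij}=(A_iA_j)$ and $m_{ij}=(B_iB_j)$. So each incidence ``$P_{ij}\in\ell_{ij}$'' and ``$P_{ij}\in m_{ij}$'' becomes a tile, once I organize the data so that each tile reads ``two points and a line-meet are collinear.''

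First I would set up the dictionary. The six points $P_{ij}$ all lie on one chosen line $n$; this common line will play a structural role, likely as one of the recurring vertices in the tiling (analogous to how $a,b,c$ recur in the Pappus tiling). The twelve lines $\ell_{ij}$ and $m_{ij}$, together with the eight points $A_i,B_i$ and the line $n$, form the configuration. I would then list the collinearity conditions: for each of the six index pairs $ij$, the triple $A_i,A_j,P_{ij}$ is collinear (by construction of $\ell_{ij}$ and $P_{ij}$), and the triple $B_i,B_j,P_{ij}$ is collinear for the pairs $12,13,14,23,24$ (these are the \emph{given} constructions of $B_2,B_3,B_4$ and the lines through $B_1$), with the pair $34$ --- i.e.\ $B_3,B_4,P_{34}$ collinear --- being the conclusion to be proved. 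That gives eleven ``given'' conditions and one ``to be proved,'' for a total of twelve tiles.

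Next I would express each such collinearity as a coherence condition of a tile $\begin{tikzcd}[arrows={-stealth}, sep=small, cramped] A\edge{r}\edge{d}&\ell\edge{d}\\ m\edge{r}&B\end{tikzcd}$ by writing $P_{ij}$ as the meet $\ell\cap m$ of two of the configuration's lines passing through it. Since all $P_{ij}$ lie on the common line $n$, the natural choice is to take $\ell_{ij}$ and $n$ (or $m_{ij}$ and $n$) as the two lines whose intersection is $P_{ij}$. Thus condition ``$A_i,A_j,P_{ij}$ collinear'' becomes the coherence of a tile whose point-vertices are $A_i,A_j$ and whose line-vertices are $\ell_{ij}$ and $n$ --- but this would make $\ell_{ij}$ itself a vertex, which is not what I want. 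The cleaner encoding is to read each coherence statement directly as ``$A_i$, $A_j$, and $\ell\cap m$ collinear'' where $\ell,m$ are two other configuration lines meeting at $P_{ij}$; here the line $n$ and one of the $m$-lines serve as $\ell,m$. The combinatorial task is to assemble these twelve tiles consistently into a quadrilateral tiling of a closed oriented surface whose $1$-skeleton is properly $2$-colored (points black, lines white), with each edge recording a genuine incidence, and with exactly one tile being the conclusion.

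\textbf{The main obstacle} will be the combinatorial bookkeeping of step three: finding the actual tiling --- determining the surface (sphere, torus, or higher genus), identifying which line-vertices recur and get glued, and verifying that the twelve coherence conditions fit together into a globally consistent bicolored quadrilateral tiling in which every edge $P$--$h$ is a true incidence and each point precedes a line exactly once clockwise and once counterclockwise across the tiles sharing that edge. As the authors emphasize in the introduction, no algorithm produces the tiling from the theorem, so this assembly is the genuinely creative (``art,'' not ``science'') part; the shared line $n$ on which all $P_{ij}$ lie is the key structural feature I would exploit to force the gluings. Once the tiling is exhibited and the $2$-coloring and incidence conditions checked, the conclusion ``$B_3,B_4,P_{34}$ collinear'' follows immediately from Theorem~\ref{th:master}, since it is the unique remaining tile.
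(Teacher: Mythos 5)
Your high-level strategy is the right one---the paper's entire proof is ``apply the master theorem to a suitable tiling of the sphere''---but your proposal stops short of the only step that carries mathematical content: exhibiting the tiling. You correctly identify this as ``the main obstacle'' and then leave it unresolved, so as written this is a plan, not a proof. Moreover, the combinatorial blueprint you sketch (twelve tiles, one for each of the incidences $P_{ij}\in\ell_{ij}$ and $P_{ij}\in m_{ij}$, with the chosen line $n$ recurring as a vertex) runs into concrete trouble. If $n$ is a vertex, a tile encoding ``$P_{ij}\in m_{ij}$'' would naturally have vertices $B_i,\ell_{ij},B_j,n$; then the vertex pair $\{B_i,n\}$ is used by exactly the three tiles indexed by the pairs containing~$i$, and since every edge of a tiling of a closed surface lies on exactly two tiles, an odd number of such sides cannot be glued up in pairs. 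You also flirt with a tile having $A_i$ adjacent to $\ell_{ij}$, which is inadmissible because $A_i\in\ell_{ij}$ violates the non-incidence hypothesis of Theorem~\ref{th:master}; you notice the problem but do not replace it with a workable encoding.

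The paper's tiling (Figure~\ref{fig:complete-quadrilateral-tiling}) is leaner and organized quite differently: it is a nine-tile tiling of the sphere whose only vertices are the five points $A_4, B_4, P_{14}, P_{24}, P_{34}$ and the six lines $\ell_{12},\ell_{13},\ell_{23},m_{12},m_{13},m_{23}$; the line~$n$, the points $A_1,A_2,A_3,B_1,B_2,B_3,P_{12},P_{13},P_{23}$, and the lines $\ell_{i4},m_{i4}$ never appear as vertices. Six of the tiles encode incidences such as $P_{14}\in\ell_{14}$ in the form ``$A_4$, $P_{14}$, and $\ell_{12}\cap\ell_{13}\,(=A_1)$ are collinear,'' and the remaining three encode the collinearity of the triples $\{P_{13},P_{14},P_{34}\}$, $\{P_{12},P_{14},P_{24}\}$, $\{P_{23},P_{24},P_{34}\}$---a strictly weaker hypothesis than all six points $P_{ij}$ lying on one line, which is exactly why the same tiling proves the stronger Theorem~\ref{th:complete-quad-generalization}. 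To repair your proposal you would need to abandon the one-tile-per-incidence bookkeeping and the use of $n$ as a vertex, and find an encoding of this kind.
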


%\medskip

\begin{figure}[ht]
\vspace{-.1in}
\begin{center}
\includegraphics[scale=0.4, trim=0.1cm 0cm 0cm 0.5cm, clip]{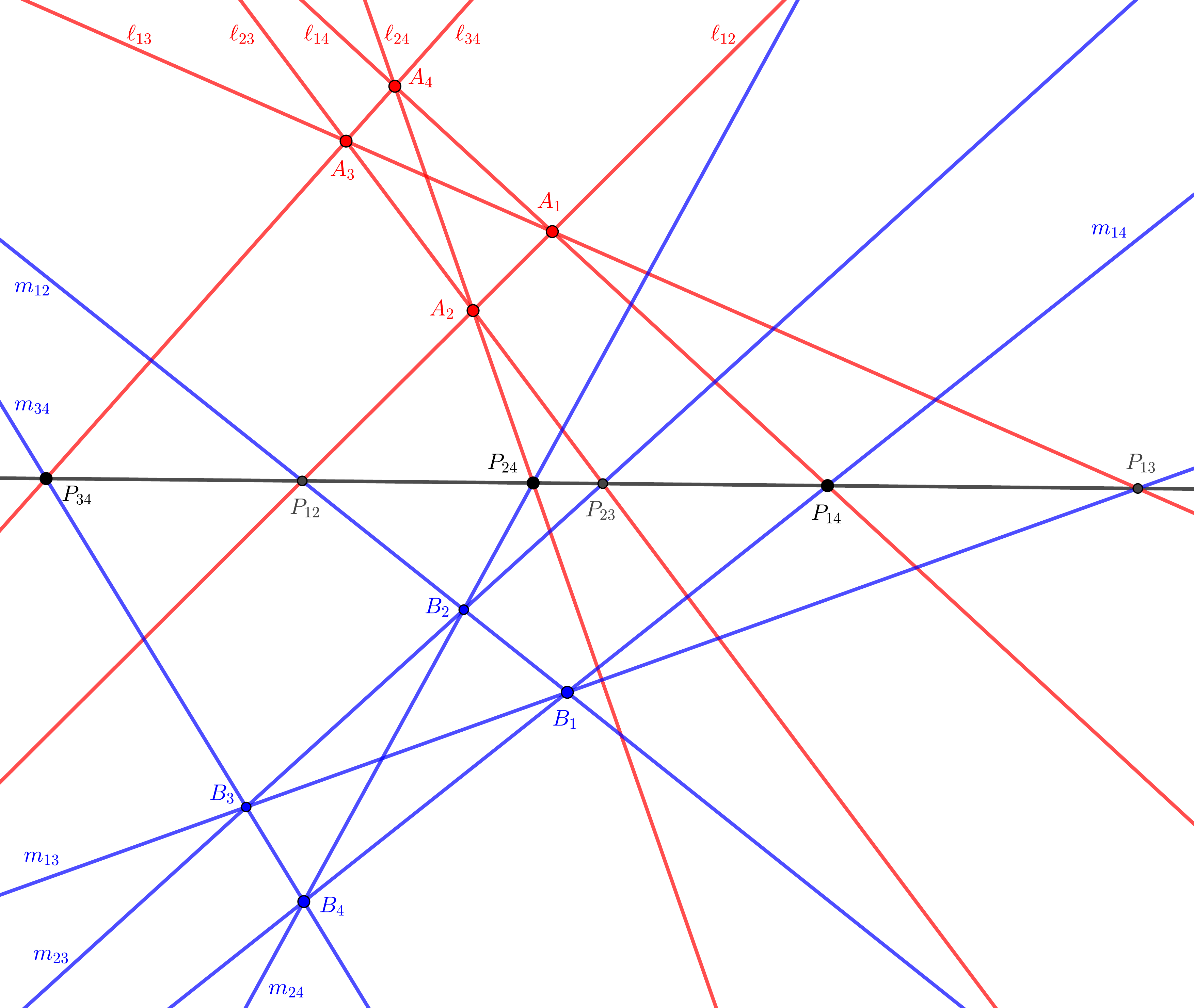}
\end{center}
\vspace{-5pt}
\caption{Theorem~\ref{th:complete-quad}. 
}
\label{fig:complete-quadrilateral}
\end{figure}

\vspace{-.2in}

\begin{proof}
Apply the master theorem to the tiling in Figure~\ref{fig:complete-quadrilateral-tiling}. 
\end{proof}

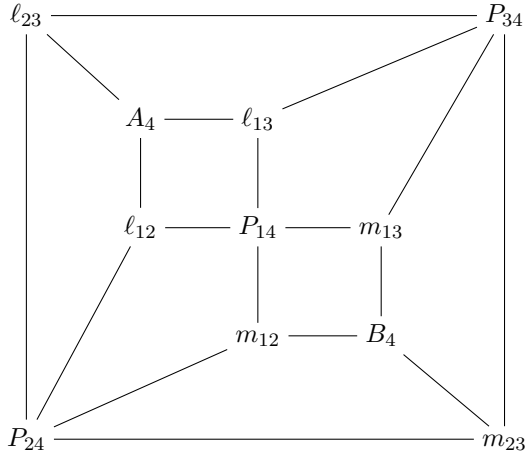
\begin{figure}[ht]
\vspace{-5pt}
\begin{equation*}
\begin{tikzpicture}[baseline= (a).base]
\node[scale=.9] (a) at (0,0){
\begin{tikzcd}[arrows={-stealth}, cramped]
\ell_{23} \edge{rrrr} \edge{rd} \edge{dddd} &&&& P_{34} \edge{lld}\edge{ldd} \edge{dddd} \\[8pt]
& A_4 \edge{r} \edge{d} & \ell_{13} \edge{d}& & \\[10pt]
& \ell_{12} \edge{r} \edge{ldd} & P_{14} \edge{d} \edge{r} & m_{13} \edge{d}& \\[10pt]
& & m_{12} \edge{r} \edge{lld}& B_4 \edge{rd} & \\[8pt]
P_{24}  \edge{rrrr} &&&& m_{23}
\end{tikzcd}
};
\end{tikzpicture}
\end{equation*}
\vspace{-.1in}
\caption{The tiling of the sphere used in the proofs of Theorems~\ref{th:complete-quad}--\ref{th:complete-quad-generalization}. 
}
\vspace{-.2in}
\label{fig:complete-quadrilateral-tiling}
\end{figure}

To obtain the Levi graph associated with the configuration in Figure~\ref{fig:complete-quadrilateral},
begin by inserting a quadripod~\eqref{eq:quadripod} into each tile to obtain the graph 
shown in Figure~\ref{fig:complete-quadrilateral-pre-Levi}. 
There, each of the three unlabeled vertices represents the line 
that passes through the six points~$P_{ij}$.
To get the Levi graph for Theorem~\ref{th:complete-quad}, these three vertices must be identified (``glued'') into a single vertex. 

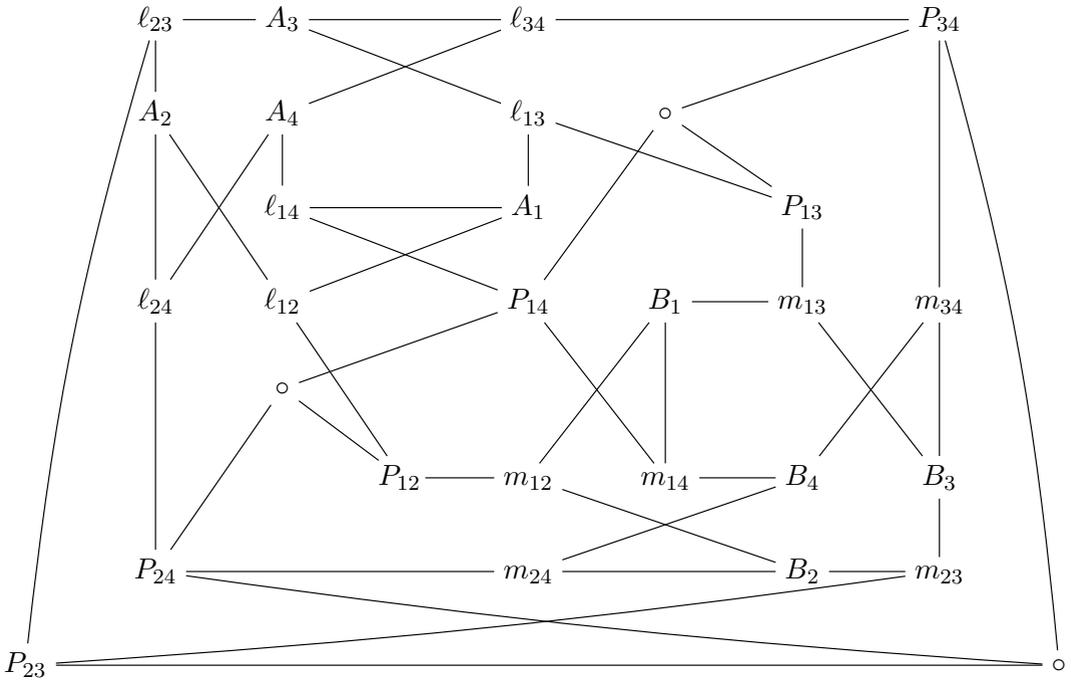
\begin{figure}[ht]
%\vspace{-.1in}
\begin{equation*}
\begin{tikzpicture}[baseline= (a).base]
\node[scale=1] (a) at (0,0){
\!\!\begin{tikzcd}[arrows={-stealth}, cramped]
& \ell_{23} \edge{r} \edge{d} \edge{lddddddd, bend right=5} & A_3\edge{rr} \edge{rrd}   && \ell_{34} \edge{dll} \edge{rrr} &&& P_{34}\edge{dll} \edge{rddddddd, bend left=5} \edge{ddd}   \\
& A_2 \edge{dd} \edge{rdd} & A_4 \edge{d} \edge{ldd} && \ell_{13} \edge{d} \edge{rrd} & \circ \edge{ldd} \edge{rd}  \\
&& \ell_{14} \edge{rr} \edge{rrd} && A_1 \edge{lld} && P_{13} \edge{d} \\
& \ell_{24} \edge{ddd} & \ell_{12}  \edge{rdd} && P_{14} \edge{lld} \edge{rdd} & B_1 \edge{r} \edge{ldd} \edge{dd} & m_{13} \edge{rdd} & m_{34} \edge{ldd} \edge{dd} \\
&& \circ \edge{ldd} \edge{rd} \\
&&& \!\!P_{12}\! \edge{r} & m_{12} \edge{rrd} & m_{14}\edge{r} & B_4 \edge{lld} & B_3 \edge{d} \\
& P_{24} \edge{rrr} \edge{drrrrrrr, bend right=2} &&& m_{24} \edge{rr} && B_2 \edge{r} & m_{23} \edge{dlllllll, bend left=2} \\
P_{23} \edge{rrrrrrrr} &&&&&&&& \circ
\end{tikzcd}
};
\end{tikzpicture}
\end{equation*}
\vspace{-.15in}
\caption{The Levi graph obtained from the tiling in Figure~\ref{fig:complete-quadrilateral-tiling}. 
}
\label{fig:complete-quadrilateral-pre-Levi}
\end{figure}

\vspace{-.1in}

As the above proof suggests, Theorem~\ref{th:complete-quad} can be generalized. 
There is no need to require that the six points $P_{ij}$ lie on the same line; 
it is enough to ask that the three triples 
$\{P_{13},P_{14},P_{34}\}$, $\{P_{12},P_{14},P_{24}\}$, and $\{P_{23}, P_{24},P_{34}\}$ are collinear.
The same proof still works,
with the same tiling as before (see Figure~\ref{fig:complete-quadrilateral-tiling}).
The key difference is that now, the graph in Figure~\ref{fig:complete-quadrilateral-pre-Levi} 
is the true Levi graph of the configuration, since the three unlabeled vertices no longer need to be glued together. 

We thus obtain the following generalization of Theorem~\ref{th:complete-quad}: 

\begin{theorem}
\label{th:complete-quad-generalization}
Let $A_1, A_2, A_3,A_4$ be four generic points on the real/complex projective plane. 
Draw six lines $\ell_{ij}\!=\!(A_iA_j)$, ${1\le i<j\le 4}$, through  pairs of these points.
Pick three generic points $P_{14}\in\ell_{14}$, $P_{24}\in\ell_{24}$, $P_{34}\in\ell_{34}$, 
then set
\begin{equation}
\label{eq:P12-P13-P23}
P_{12}=\ell_{12}\cap (P_{14}P_{24}), \quad 
P_{13}=\ell_{13}\cap (P_{14}P_{34}), \quad
P_{23}=\ell_{23}\cap (P_{24}P_{34}). 
\end{equation}
Pick a generic point~$B_1$ and 
define $m_{12}=(B_1P_{12})$, $m_{13}=(B_1P_{13})$, ${m_{14}=(B_1P_{14})}$.
Pick a generic~point $B_2\in m_{12}$ and draw the lines $m_{23}=(B_2 P_{23})$ and $m_{24}=(B_2 P_{24})$. 
Set $B_3=m_{13}\cap m_{23}$ and $B_4=m_{14}\cap m_{24}$. Then the line $m_{34}=(B_3B_4)$ contains~$P_{34}$. 
See Figure~\ref{fig:complete-quadrilateral-generalized}. 
\end{theorem}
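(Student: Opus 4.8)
The plan is to apply the master theorem (Theorem~\ref{th:master}) to exactly the bicolored quadrilateral tiling of the sphere already used for Theorem~\ref{th:complete-quad}, namely the one in Figure~\ref{fig:complete-quadrilateral-tiling}, coloring the five point-vertices $P_{34},A_4,P_{14},B_4,P_{24}$ black and the six line-vertices $\ell_{23},\ell_{13},\ell_{12},m_{13},m_{12},m_{23}$ white. (The required proper $2$-coloring exists automatically because the surface is a sphere.) This tiling has nine quadrilateral tiles; I would argue that eight of them are coherent by hypothesis or by construction, so that the master theorem forces the ninth---the tile encoding the desired conclusion---to be coherent as well.

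First I would read off, tile by tile, which collinearity each coherence condition encodes, using Remark~\ref{rem:coherence-plane}: a tile with points $A,B$ and lines $\ell,m$ is coherent precisely when $(AB)$ passes through $\ell\cap m$. Three tiles, whose two line-vertices both lie among the $\ell_{ij}$, reproduce the defining incidences $P_{14}\in\ell_{14}$, $P_{24}\in\ell_{24}$, $P_{34}\in\ell_{34}$; for instance the tile on $A_4,\ell_{13},P_{14},\ell_{12}$ asks that $(A_4P_{14})$ pass through $\ell_{13}\cap\ell_{12}=A_1$, which is exactly $P_{14}\in\ell_{14}$. Two further tiles, built from the $m_{ij}$, record that $B_1,P_{14},B_4$ are collinear on $m_{14}$ and that $B_2,P_{24},B_4$ are collinear on $m_{24}$; both hold by the construction $B_4=m_{14}\cap m_{24}$.

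The three remaining given tiles are where the generalization enters: each has one line-vertex $\ell_{ij}$ and one $m_{ij}$, so that $\ell\cap m$ is $P_{13}=\ell_{13}\cap m_{13}$, $P_{12}=\ell_{12}\cap m_{12}$, or $P_{23}=\ell_{23}\cap m_{23}$, and coherence demands the three collinearities $\{P_{13},P_{14},P_{34}\}$, $\{P_{12},P_{14},P_{24}\}$, $\{P_{23},P_{24},P_{34}\}$. These now hold automatically from the definitions in \eqref{eq:P12-P13-P23}, and this is precisely what replaces the original hypothesis that all six $P_{ij}$ lie on one common line. Finally, the ninth tile, on $m_{13},B_4,m_{23},P_{34}$, has $m_{13}\cap m_{23}=B_3$, so its coherence is the assertion that $B_3,B_4,P_{34}$ are collinear, i.e.\ that $m_{34}=(B_3B_4)$ passes through $P_{34}$---the conclusion.

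I expect the cross-ratio bookkeeping to be automatic once the tiling is fixed, so the only point needing genuine care is the hypothesis set of Theorem~\ref{th:master}: for every edge $P\!-\!h$ of the tiling one must confirm the non-incidence $P\notin h$, and one must check that the meets $\ell\cap m$ named above really are the claimed points (for example that $m_{13}$ and $m_{23}$ meet at $B_3$, and that no two of the lines accidentally coincide). All of these conditions cut out a Zariski-dense locus, so they follow from the genericity assumptions in the statement; verifying that the combinatorics of Figure~\ref{fig:complete-quadrilateral-tiling} matches the new definitions \eqref{eq:P12-P13-P23} is the only step that demands attention.
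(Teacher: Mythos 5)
Your proposal is correct and coincides with the paper's own argument: the paper proves Theorem~\ref{th:complete-quad-generalization} by applying the master theorem to the very tiling of Figure~\ref{fig:complete-quadrilateral-tiling}, observing (as you do) that the three tiles mixing an $\ell_{ij}$ with an $m_{ij}$ only require the collinearity of the triples $\{P_{13},P_{14},P_{34}\}$, $\{P_{12},P_{14},P_{24}\}$, $\{P_{23},P_{24},P_{34}\}$ guaranteed by \eqref{eq:P12-P13-P23}, rather than a single line through all six $P_{ij}$. Your tile-by-tile identification of the eight coherent tiles and of the ninth tile encoding $P_{34}\in(B_3B_4)$ matches the intended reading of that figure.
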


\begin{figure}[ht]
\vspace{-5pt}
\begin{center}
\includegraphics[scale=0.45, trim=0.1cm 0.3cm 0cm 0.5cm, clip]{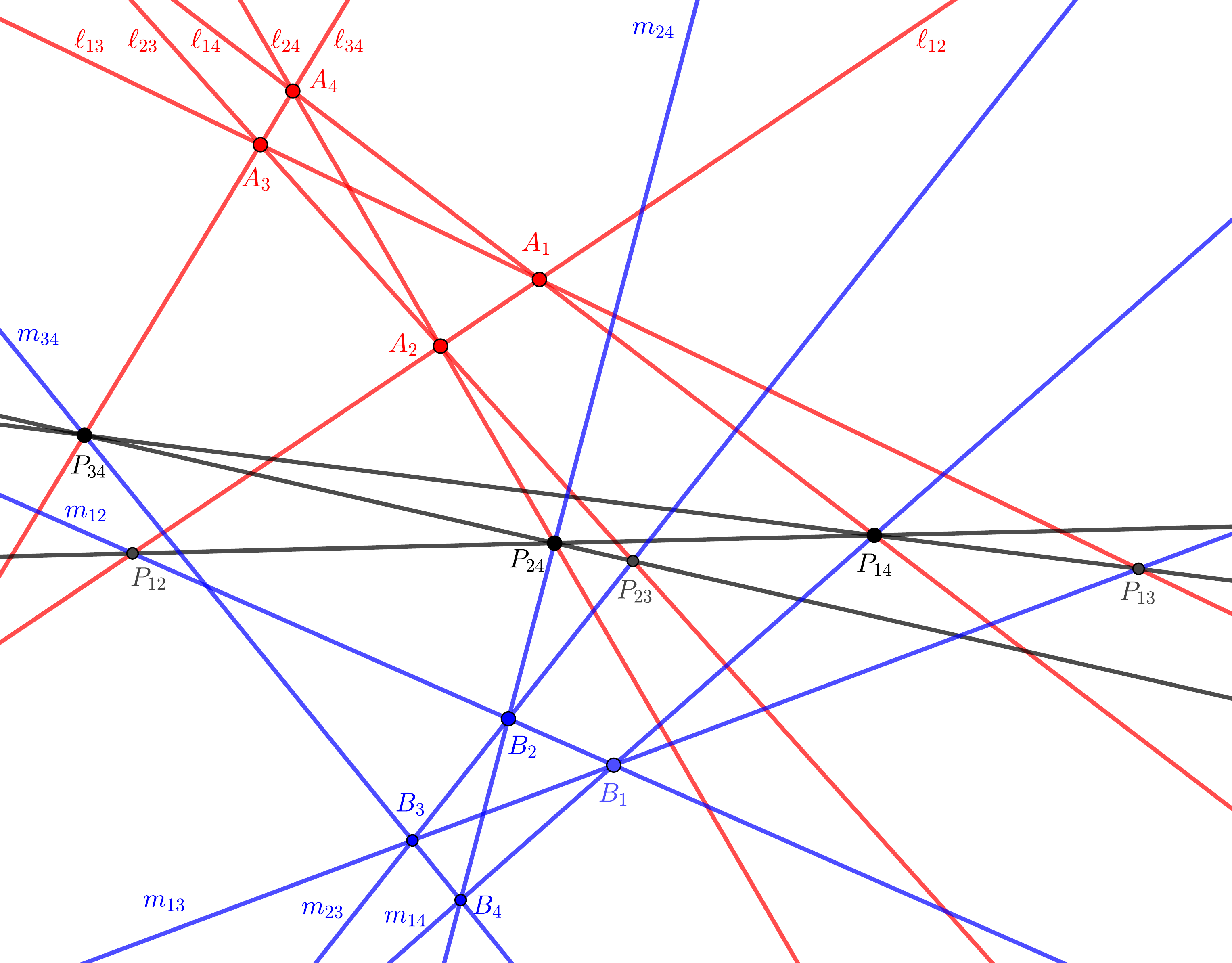}
\end{center}
\vspace{-5pt}
\caption{The configuration in Theorem~\ref{th:complete-quad-generalization}. 
}
\label{fig:complete-quadrilateral-generalized}
\end{figure}

\vspace{-15pt}

\begin{remark}
\label{rem:quads-desargues}
The assumptions in Theorem~\ref{th:complete-quad-generalization} imply (right after~\eqref{eq:P12-P13-P23})
that the points $P_{12},  P_{13}, P_{23}$ are collinear, cf.\ the front page. Our proof does not rely on this observation. 
\end{remark}

Going in the opposite direction, we can further specialize Theorem~\ref{th:complete-quad}
to obtain the following simplification (the ``harmonic points theorem'' \cite[Example~4]{richter-gebert-mechanical}):

\begin{theorem}
\label{th:harmonic-points-theorem}
Let $A_1, A_2, A_3,A_4$ be four generic points on the real/complex projective plane. 
Draw six lines $\ell_{ij}\!=\!(A_iA_j)$, ${1\le i<j\le 4}$. % through various pairs of these points.
Let $P_{12,34}=\ell_{12}\cap\ell_{34}$~and $P_{14, 23}\!=\!\ell_{14}\cap\ell_{23}$. 
Draw the line ${\ell_\circ\!=\!(P_{12,34}P_{14, 23})}$. 
Set ${P_{13}\!=\!\ell_{13}\cap\ell_\circ}$ and ${P_{24}\!=\!\ell_{24}\cap\ell_\circ}$.
%Now let us try to construct another quadruple of points that would yield the same four points $P_{12,34}$, $P_{14,23}$, $P_{13}$, $P_{24}$. 
%To~this end, 
Take a generic point~$B_1$ and set ${m_{12}=(B_1P_{12,34})}$, ${m_{13}=(B_1P_{13})}$, ${m_{14}=(B_1P_{14,23})}$.
Then pick a generic~point $B_2\in m_{12}$ and set $m_{23}=(B_2 P_{14,23})$, 
$m_{24}=(B_2 P_{24})$, 
$B_3=m_{13}\cap m_{23}$, $B_4=m_{14}\cap m_{24}$. Then the line $m_{34}=(B_3B_4)$ passes through~$P_{12,34}$. 
See Figure~\ref{fig:harmonic-points-theorem}. 
\end{theorem}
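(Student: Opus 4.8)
\textbf{Proof strategy for Theorem~\ref{th:harmonic-points-theorem}.}
The plan is to deduce this theorem as a direct specialization of Theorem~\ref{th:complete-quad}, since the harmonic points theorem is obtained precisely by making a special choice of the ``arbitrary line'' in the complete quadrangle construction. First I would recognize that Theorem~\ref{th:complete-quad} allows us to pick \emph{any} line and intersect it with the six lines $\ell_{ij}$ to produce the points $P_{ij}$; here we specialize that line to be $\ell_\circ=(P_{12,34}P_{14,23})$, the line joining the two diagonal points $P_{12,34}=\ell_{12}\cap\ell_{34}$ and $P_{14,23}=\ell_{14}\cap\ell_{23}$.

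The key observation is that with this choice of $\ell_\circ$, four of the six intersection points $P_{ij}=\ell_{ij}\cap\ell_\circ$ collapse onto the two diagonal points. Specifically, since $P_{12,34}\in\ell_{12}$ and $P_{12,34}\in\ell_{34}$, both $P_{12}$ and $P_{34}$ coincide with $P_{12,34}$; likewise both $P_{14}$ and $P_{23}$ coincide with $P_{14,23}$. Only the two remaining points $P_{13}=\ell_{13}\cap\ell_\circ$ and $P_{24}=\ell_{24}\cap\ell_\circ$ are genuinely new, matching the definitions in the statement. I would then simply substitute these identifications ($P_{12}=P_{34}=P_{12,34}$ and $P_{14}=P_{23}=P_{14,23}$) into the construction of Theorem~\ref{th:complete-quad}, so that the lines $m_{12}=(B_1P_{12,34})$, $m_{14}=(B_1P_{14,23})$, $m_{23}=(B_2P_{14,23})$ become exactly the lines named in the present statement, and the conclusion that $m_{34}=(B_3B_4)$ passes through $P_{34}=P_{12,34}$ is precisely what Theorem~\ref{th:complete-quad} delivers.

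Alternatively, and more in the spirit of the paper, one can apply the master theorem directly to the same tiling of the sphere used for Theorem~\ref{th:complete-quad} (Figure~\ref{fig:complete-quadrilateral-tiling}), simply relabeling the vertices to reflect the coincidences above. Each of the six coherence conditions encoded by the tiles translates into one of the collinearity statements of the harmonic points configuration, with five conditions given and the sixth (that $m_{34}$ passes through $P_{12,34}$) being the conclusion. The claim then follows immediately from Theorem~\ref{th:master}.

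\textbf{Main obstacle.} The only real subtlety will be the genericity bookkeeping. In Theorem~\ref{th:complete-quad} the line is ``arbitrary,'' but here $\ell_\circ$ is a \emph{specific} line determined by the $A_i$, so I must verify that this specialized configuration still lies in the Zariski-dense locus where the master theorem's non-incidence hypotheses hold (namely, that no relevant point lies on the associated line, so that all the mixed cross-ratios are well defined). This amounts to checking that for generic $A_1,\dots,A_4,B_1,B_2$, the diagonal points $P_{12,34},P_{14,23}$ are distinct, the line $\ell_\circ$ is not one of the $\ell_{ij}$, and the resulting points avoid the forbidden incidences. Since the conditions defining the bad locus are a finite union of proper subvarieties, genericity of the $A_i$ and $B_j$ suffices; I do not expect this verification to require any calculation beyond noting that each degeneracy is a nontrivial algebraic condition.
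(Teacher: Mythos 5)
Your proposal is correct and matches the paper's own argument: the paper derives Theorem~\ref{th:harmonic-points-theorem} as an immediate specialization of Theorem~\ref{th:complete-quad} by imposing the (redundant) identifications $P_{12}=P_{34}$ and $P_{14}=P_{23}$, which is precisely your choice of $\ell_\circ$ as the line through the two diagonal points, with the same underlying tiling (Figure~\ref{fig:complete-quadrilateral-tiling}) and only the Levi-graph identifications changing. Your genericity remarks are consistent with the paper's stated policy of not optimizing genericity hypotheses.
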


\begin{figure}[ht]
\vspace{-5pt}
\begin{center}
\includegraphics[scale=0.47, trim=0.1cm 0.3cm 0cm 0.5cm, clip]{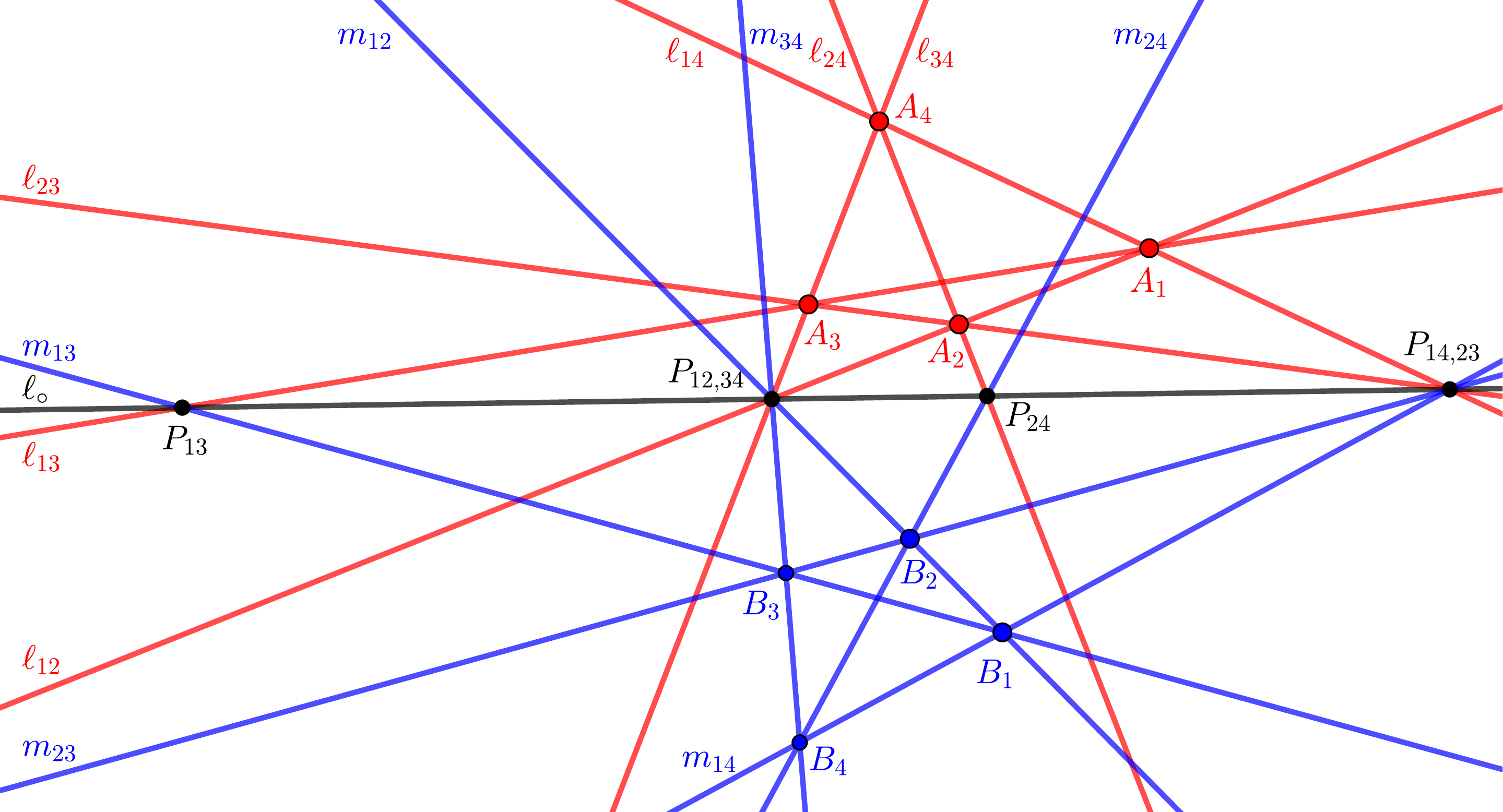}
\end{center}
\vspace{-8pt}
\caption{The configuration in Theorem~\ref{th:harmonic-points-theorem}. 
}
\label{fig:harmonic-points-theorem}
\end{figure}

\vspace{-15pt}

Theorem~\ref{th:harmonic-points-theorem} 
is immediate from Theorem~\ref{th:complete-quad}: just make the additional (redundant) 
assumptions $P_{12}=P_{34}$ and $P_{14}=P_{23}$.
The proof remains the same, except for the extra identifications in the Levi graph.

\begin{remark}
\label{rem:remove-redundancies}
As explained above, the tiling in Figure~\ref{fig:complete-quadrilateral-tiling} 
yields Theorem~\ref{th:complete-quad-generalization}, 
from which Theorems~\ref{th:complete-quad} and~\ref{th:harmonic-points-theorem}
can be obtained
by introducing additional---in point of fact, unnecessary---(co)incidence assumptions.
These examples suggest that if we want our master theorem to potentially encompass all incidence theorems
of plane projective geometry,
then we need to allow, in addition to the choices embedded into the master theorem
(i.e., the choices of a closed oriented surface and an arbitrary quadrilateral tiling of it), 
the ability to impose additional redundant constraints that may potentially simplify the statement
of an incidence theorem. 
%\end{remark}

%\begin{remark}
From an algorithmic standpoint, this means that in order to exhibit an instance of the master theorem
from which a given incidence theorem can be obtained, we may need to first identify and remove 
some redundant assumptions that do not affect the validity of the theorem. 
This could be a challenging algorithmic task in itself---but one that may have to be carried out 
in order to obtain (a generalization~of) a given incidence theorem 
from a suitable tiled surface. 
\end{remark}

%see () and (Heffter, vol.~II, \S 255, p.~95), respectively. 

\newpage

Adding the assumption $\ell_{12}=m_{12}$ to Theorem~\ref{th:complete-quad-generalization}
and simplifying the ensuing statement,
we obtain the following result. 

\begin{theorem}
\label{th:planar-bundle-generalized}
Let $A_1, A_2, A_3,A_4$ be four generic points on the plane. 
Draw six~lines $\ell_{ij}\!=\!(A_iA_j)$, ${1\le i<j\le 4}$. % through various pairs of these points.
Pick points $Q\!\in\!\ell_{34}$, $P_{13}\!\in\!\ell_{13}$, $P_{23}\!\in\!\ell_{23}$ and ${B_1,B_2\!\in\!\ell_{12}}$.
Construct 
$P_{14}\!=\!(QP_{13})\cap\ell_{14}$,
$P_{24}\!=\!(QP_{23})\cap\ell_{24}$
and 
$B_3\!=\!(B_1P_{13})\cap(B_2P_{23})$,
${B_4\!=\!(B_1P_{14})\cap(B_2P_{24})}$.
Then the points $B_3, B_4, Q$ are collinear. 
\end{theorem}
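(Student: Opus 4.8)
The plan is to derive Theorem~\ref{th:planar-bundle-generalized} as a specialization of Theorem~\ref{th:complete-quad-generalization}, following the recipe indicated in the paragraph preceding the statement: impose the additional constraint $\ell_{12}=m_{12}$ and then read off what the resulting configuration says. Concretely, I first relabel the data of Theorem~\ref{th:planar-bundle-generalized} to match that of Theorem~\ref{th:complete-quad-generalization}. The four generic points $A_1,A_2,A_3,A_4$ and the six lines $\ell_{ij}$ are the same in both statements. I take $P_{34}=Q\in\ell_{34}$, keep $P_{13}\in\ell_{13}$ and $P_{23}\in\ell_{23}$ as given, and then the constructions $P_{14}=(QP_{13})\cap\ell_{14}$ and $P_{24}=(QP_{23})\cap\ell_{24}$ match exactly the relations $P_{14}=\ell_{14}\cap(P_{14}P_{34})$-type collinearities from~\eqref{eq:P12-P13-P23}, once one checks that $P_{13},P_{14},P_{34}=Q$ are collinear (by construction of $P_{14}$) and likewise $P_{23},P_{24},P_{34}=Q$ are collinear.

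The main simplification comes from setting $m_{12}=\ell_{12}$. In Theorem~\ref{th:complete-quad-generalization}, the line $m_{12}$ is defined as $(B_1P_{12})$ with $B_1$ generic and $B_2\in m_{12}$; imposing $m_{12}=\ell_{12}$ forces both $B_1$ and $B_2$ to lie on $\ell_{12}$, which is precisely the hypothesis $B_1,B_2\in\ell_{12}$ of Theorem~\ref{th:planar-bundle-generalized}. Under this identification the auxiliary lines become $m_{13}=(B_1P_{13})$, $m_{14}=(B_1P_{14})$, $m_{23}=(B_2P_{23})$, $m_{24}=(B_2P_{24})$, so that $B_3=m_{13}\cap m_{23}=(B_1P_{13})\cap(B_2P_{23})$ and $B_4=m_{14}\cap m_{24}=(B_1P_{14})\cap(B_2P_{24})$ agree verbatim with the constructions of $B_3$ and $B_4$ in the target theorem. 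The conclusion of Theorem~\ref{th:complete-quad-generalization}, namely that $m_{34}=(B_3B_4)$ passes through $P_{34}$, becomes exactly the assertion that $B_3,B_4,Q$ are collinear, since $P_{34}=Q$.

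Thus the whole argument reduces to verifying that imposing $\ell_{12}=m_{12}$ is a legitimate (if redundant) specialization: it does not over-constrain the configuration, and the remaining generic choices ($B_1,B_2$ on $\ell_{12}$, and $Q,P_{13},P_{23}$) stay in a Zariski-dense locus where Theorem~\ref{th:complete-quad-generalization} applies. For completeness one should note that the genericity hypotheses of the source theorem are inherited: a generic choice of $B_1,B_2\in\ell_{12}$ and of $Q,P_{13},P_{23}$ in the target theorem corresponds, under the identifications above, to a point of the parameter space of Theorem~\ref{th:complete-quad-generalization} lying in the relevant dense subset. Since Theorem~\ref{th:complete-quad-generalization} was itself proved by applying the master theorem to the tiling in Figure~\ref{fig:complete-quadrilateral-tiling}, one could equally well argue directly from that tiling, merging the two white vertices $\ell_{12}$ and $m_{12}$ into one; the coherence bookkeeping is unchanged and the conclusion is the same.

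I expect the only real point requiring care to be the genericity check in the third paragraph: one must confirm that the extra equation $\ell_{12}=m_{12}$ cuts out a subvariety on which the construction is still well defined (no forced degeneracies such as $B_1=B_2$ or $P_{14}$ undefined) and on which the master-theorem conclusion remains valid. This is routine in the spirit of the paper's standing convention of working over a Zariski-dense locus, so the argument is short once the dictionary between the two labelings is laid out explicitly.
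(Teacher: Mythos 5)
Your proposal is correct and matches the paper's approach: the paper introduces the theorem with exactly the phrase ``adding the assumption $\ell_{12}=m_{12}$ to Theorem~\ref{th:complete-quad-generalization},'' and its one-line proof applies the master theorem to the tiling of Figure~\ref{fig:planar-bundle-generalized-tiling}, obtained by identifying the vertices $\ell_{12}$ and $m_{12}$ in Figure~\ref{fig:complete-quadrilateral-tiling} --- precisely the alternative you mention at the end. Note that the paper's formal proof takes that direct tiling route rather than the specialization route, which neatly sidesteps the genericity-of-specialization check you flag, since the master theorem applies to the merged tiling on its own terms.
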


\begin{figure}[ht]
\vspace{-5pt}
\begin{center}
\includegraphics[scale=0.48, trim=0.4cm 1.3cm 0.1cm 0.5cm, clip]{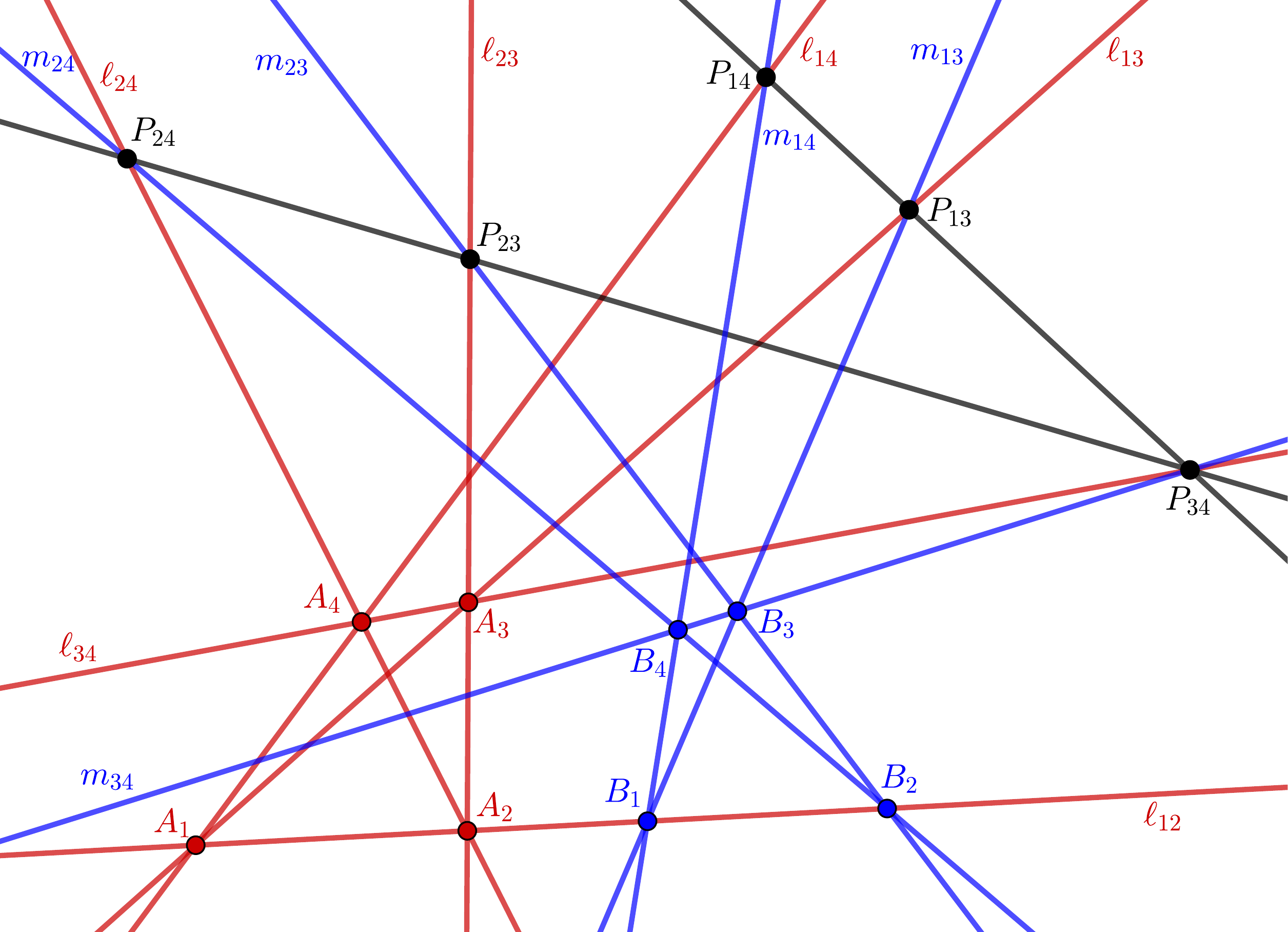}
\end{center}
\vspace{-10pt}
\caption{The configuration in Theorem~\ref{th:planar-bundle-generalized}.
}
\label{fig:planar-bundle-generalized}
\end{figure}

\vspace{-15pt}

\begin{proof}
Apply the master theorem to the tiling in Figure~\ref{fig:planar-bundle-generalized-tiling}. 
(This tiling was obtained from the tiling in in Figure~\ref{fig:complete-quadrilateral-tiling}
by identifying the vertices $\ell_{12}$ and~$m_{12}$.) 
\end{proof}

\begin{figure}[ht]
\vspace{-5pt}
\begin{equation*}
\begin{tikzpicture}[baseline= (a).base]
\node[scale=1] (a) at (0,0){
\begin{tikzcd}[arrows={-stealth}, cramped, sep=8]
P_{34} \edge{rd} &&&& P_{34} \edge{ld} \\
& \ell_{13} \edge{r} \edge{d} & P_{14}\edge{r}  \edge{d} & m_{13} \edge{d} \\[5pt]
& A_4 \edge{r} \edge{d} & \ell_{12} \edge{r} \edge{d} & B_4 \edge{d} \\[5pt]
& \ell_{23} \edge{r} & P_{24} \edge{r} & m_{23} \\
P_{34} \edge{ru} &&&& P_{34} \edge{lu} 
\end{tikzcd}
};
\end{tikzpicture}
\end{equation*}
\vspace{-.2in}
\caption{The tiling of the sphere used in the proof of Theorem~\ref{th:planar-bundle-generalized}. 
}
\label{fig:planar-bundle-generalized-tiling}
\end{figure}
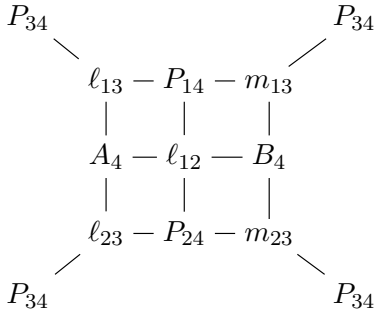

\vspace{-20pt}

\begin{remark}
%\label{rem:}
Theorem~\ref{th:planar-bundle-generalized} can be restated as follows.  
Consider two quadrilaterals that are in perspective. 
(In the above notation, take $A_3P_{13}B_3P_{23}$ and $A_4P_{14}B_4P_{24}$.)  
Construct the points where their corresponding  sides meet. 
(These would be $A_1$, $B_1$, $B_2$, $A_2$.)  
If three of these four points are collinear, then all four points are collinear. 
\end{remark}

\begin{remark}
%\label{rem:}
Degenerating the above construction to the setting where both quad\-ruples 
$\{A_3, A_4,B_3,B_4\}$ and $\{P_{13}, P_{14}, P_{23}, P_{24}\}$ are collinear,
we obtain the \emph{planar bundle theorem} \cite[Theorem~2.5]{BBB}. 
\end{remark}

%\newpage

%\section{Further applications in plane projective geometry}

%\medskip

We conclude this section by a couple of additional applications of our master theorem in plane projective geometry. 

\section*{The permutation theorem}

\begin{theorem}
\label{th:perm-thm}
Take a generic quadruple $p, q, r, s$ of concurrent lines on the real or complex projective plane. 
Let $P_1,Q_1,R_1,S_1$ and $P_2,Q_2,R_2,S_2$ denote the points of intersection
of this quadruple with two generic lines $\ell_1$ and~$\ell_2$, see Figure~\ref{fig:permutation-theorem}. 
If three of the four lines $(P_1Q_2)$, $(R_1S_2)$, $(R_2S_1)$, $(P_2Q_1)$ are concurrent,
then all four of them are concurrent. 
\end{theorem}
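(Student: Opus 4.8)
The plan is to deduce the permutation theorem from the master theorem (Theorem~\ref{th:master}) in the plane, just as was done for Desargues and Pappus. Write $O$ for the common point of $p,q,r,s$, and abbreviate the four lines in question as $L_1=(P_1Q_2)$, $L_2=(P_2Q_1)$, $L_3=(R_1S_2)$, $L_4=(R_2S_1)$. The construction supplies the collinearities $\{O,P_1,P_2\}$, $\{O,Q_1,Q_2\}$, $\{O,R_1,R_2\}$, $\{O,S_1,S_2\}$ (the four pencil lines) together with $\{P_1,Q_1,R_1,S_1\}\subset\ell_1$ and $\{P_2,Q_2,R_2,S_2\}\subset\ell_2$. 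Setting $T=L_1\cap L_2$, the lines $L_1,L_2$ pass through $T$ by definition, so the hypothesis that three of the four lines are concurrent becomes the single incidence $T\in L_3$ (i.e.\ $\{R_1,S_2,T\}$ collinear), and the desired conclusion is $T\in L_4$ (i.e.\ $\{R_2,S_1,T\}$ collinear).

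Next I would translate each collinearity into a coherent tile via Remark~\ref{rem:coherence-plane}: a condition ``$X,Y,\ell\cap m$ collinear'' is exactly the coherence of the tile whose point-vertices are $X,Y$ and whose line-vertices are $\ell,m$. Every third point here is visibly a meet of two configuration lines ($O$ as a meet of two of $p,q,r,s$; each $P_i$ as $p\cap\ell_i$, and so on; and $T=L_1\cap L_2$), so each of the listed collinearities is expressible in this form. The heart of the argument is then to assemble these tiles into one bicolored quadrilateral tiling of a closed oriented surface, with the ten points $O,T,P_1,\dots,S_2$ as the black vertices and the ten lines $p,q,r,s,\ell_1,\ell_2,L_1,L_2,L_3,L_4$ as the white vertices, so that every point-line edge borders exactly two tiles and the induced orientations agree. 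Granting such a tiling, Theorem~\ref{th:master} finishes the job: all tiles except the one encoding $\{R_2,S_1,T\}$ are coherent by hypothesis, whence that last tile is coherent, which is precisely $T\in L_4$.

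It remains to dispose of the four choices of which three lines are assumed concurrent. The configuration admits a Klein four-group of symmetries, generated by the transversal swap $\ell_1\leftrightarrow\ell_2$ (which acts on the four lines as $(L_1L_2)(L_3L_4)$) and the relabeling $p\leftrightarrow r$, $q\leftrightarrow s$ (which acts as $(L_1L_3)(L_2L_4)$); together these act transitively on $\{L_1,L_2,L_3,L_4\}$. Hence proving the implication for the single distinguished line $L_4$ yields all four cases.

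The step I expect to be the genuine obstacle is the construction of the explicit tiling in the second paragraph, together with the verification that the chosen collinearities glue up into a closed oriented surface carrying a proper two-coloring. As the authors stress, once a legitimate tiling is in hand the underlying telescoping product of mixed cross-ratios collapses to $1$ automatically; the difficulty is entirely in finding the surface and its gluing pattern---most plausibly a torus, given the $2\times4$ grid formed by the points on $\ell_1$ and $\ell_2$---which is the ``art'' rather than ``science'' part of the method.
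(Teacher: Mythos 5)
Your overall strategy is the right one, and your symmetry reduction of the four cases is correct, but the proposal has a genuine gap: the tiling, which is the entire mathematical content of the proof, is never produced. You say so yourself in the last paragraph, and "granting such a tiling" is precisely what cannot be granted. Worse, the vertex set you propose (all ten points $O,T,P_i,Q_i,R_i,S_i$ as black vertices and all ten lines $p,q,r,s,\ell_1,\ell_2,L_1,\dots,L_4$ as white vertices) is unlikely to admit a valid bicolored quadrilateral tiling at all. A coherent tile requires that no point labeling a vertex be incident to the line labeling an adjacent vertex, yet your configuration is saturated with such incidences ($P_1\in p$, $P_1\in\ell_1$, $R_1\in L_3$, $T\in L_1$, etc.); moreover, on a closed surface every vertex of the tiling must have degree at least $2$ and every edge must border exactly two tiles, and nothing in your list of collinearities suggests these combinatorial constraints can be met with twenty vertices and only a handful of conditions. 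Finding a workable vertex set is not a routine verification to be deferred; it is the step where the theorem is actually proved.

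The paper's proof succeeds precisely by discarding most of your proposed vertices. Setting $t=(R_1S_2)$ and $u=(R_2S_1)$, it re-expresses everything in terms of only the four points $P_1,P_2,Q_1,Q_2$ and the four lines $r,s,t,u$: the construction forces $r\cap t=R_1\in\ell_1=(P_1Q_1)$, $s\cap u=S_1\in\ell_1$, $r\cap u=R_2\in\ell_2=(P_2Q_2)$, $s\cap t=S_2\in\ell_2$, and the concurrence of $(P_1P_2)$ (resp.\ $(Q_1Q_2)$) with $r,s$ at $O$; the two remaining conditions, that $(P_1Q_2)$ and $(P_2Q_1)$ pass through $t\cap u$, encode the hypothesis and the conclusion. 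These eight conditions fit an eight-tile torus tiling (Figure~\ref{fig:torus-perm}), and the master theorem closes the argument. Notice that the points $O$, $T$, $R_i$, $S_i$ and the lines $p,q,\ell_1,\ell_2$ appear only implicitly, as intersections or spans of the eight chosen labels; this elimination is exactly the "art" your proposal identifies but does not carry out.
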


\begin{figure}[ht]
\vspace{-.1in}
\begin{center}
\includegraphics[scale=0.48, trim=0cm 0.5cm 1cm 0cm, clip]{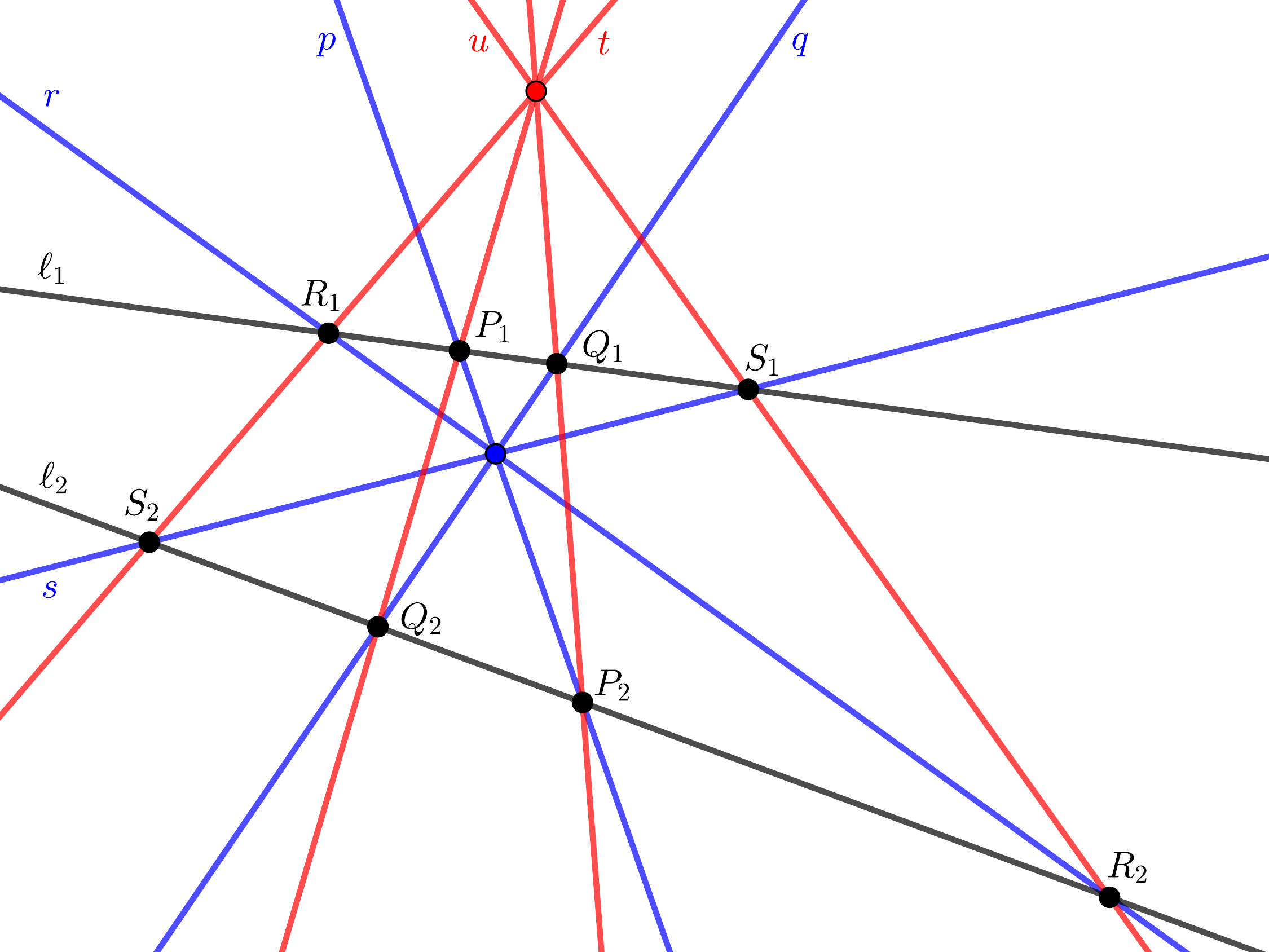}
\end{center}
\vspace{-.1in}
\caption{The permutation theorem. 
}
\vspace{-.25in}
\label{fig:permutation-theorem}
\end{figure}

%\enlargethispage{1cm}

\begin{proof}
Denote
$t=(R_1S_2)$ and $u=(R_2S_1)$. 
Let us express the conditions appearing in the theorem in terms of the points $P_1, P_2, Q_1, Q_2$
and the lines $r, s, t, u$:
\begin{itemize}[leftmargin=.2in]
\item 
$P_1$, $Q_1$, and $r\cap t$ are collinear; 
\item
$P_1$, $Q_1$, and $s\cap u$ are collinear; 
\item 
$P_2$, $Q_2$, and $r\cap u$ are collinear; 
\item
$P_2$, $Q_2$, and $s\cap t$ are collinear; 
\item 
$(P_1P_2), r, s$ are concurrent; 
\item
$(Q_1Q_2), r, s$ are concurrent; 
\item 
$(P_1Q_2), t, u$ are concurrent; 
\item
$(P_2Q_1), t, u$ are concurrent. 
\end{itemize}
These 8 conditions correspond to 8 tiles in the tiling of the torus shown in Figure~\ref{fig:torus-perm}. 
The claim follows by the master theorem. 
\end{proof}

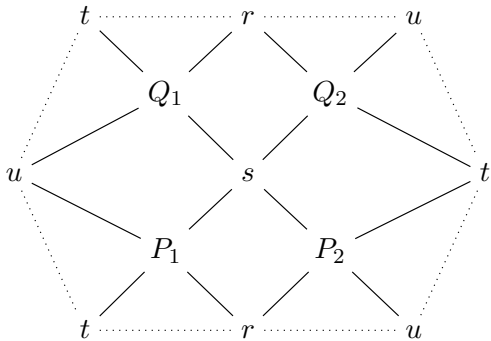
\begin{figure}[ht]
%\vspace{-.1in}
\begin{equation*}
%\begin{tikzpicture}[baseline= (a).base]
%\node[scale=1] (a) at (0,0){
%\begin{tikzcd}[arrows={-stealth}, cramped, sep=15]
%&& Q_2 \arrow[ld, no head] \arrow[rd, no head] \edge{rr, dotted} && P_1 \arrow[ld, no head]  \arrow[rd, no head]\\
%& s \arrow[ld, no head] \arrow[rd, no head]  && t \arrow[ld, no head] \arrow[rd, no head]  && r\arrow[ld, no head]\arrow[rd, no head]\\
%P_1 \arrow[rd, no head] & & P_2 \arrow[ld, no head] \arrow[rd, no head] & & Q_1 \arrow[ld, no head] \arrow[rd, no head]&& Q_2\arrow[ld, no head]\\
%& r \arrow[rd, no head]  && u \arrow[ld, no head] \arrow[rd, no head] && s\arrow[ld, no head]\\
%& & Q_2  \edge{rr, dotted} & & P_1
%\end{tikzcd}
%};
%\end{tikzpicture}
\begin{tikzpicture}[baseline= (a).base]
\node[scale=1] (a) at (0,0){
\begin{tikzcd}[arrows={-stealth}, cramped, sep=15]
& t \edge{rr, dotted} \edge{ldd, dotted} \edge{rd} && r \edge{rr, dotted} \edge{rd} \edge{ld} && u \edge{rdd, dotted} \edge{ld} \\
&& Q_1 \edge{rd} \edge{lld} && Q_2 \edge{rrd} \edge{ld} \\
u \edge{rdd, dotted} \edge{rrd} &&& s \edge{rd} \edge{ld} &&& t \edge{ldd, dotted} \edge{lld} \\
&& P_1 \edge{rd} \edge{ld} && P_2 \edge{rd} \edge{ld} \\
& t \edge{rr, dotted} && r \edge{rr, dotted} && u
\end{tikzcd}
};
\end{tikzpicture}
\end{equation*}
\vspace{-.1in}
\caption{The tiling of the torus used in the proof of the permutation theorem. 
Opposite sides of the hexagonal fundamental domain should be glued to each other.
%There is no edge between the vertices $Q_2$ and~$P_1$. 
There are no edges along the sides of the fundamental domain. 
}
\label{fig:torus-perm}
\end{figure}

\section*{Saam's sequence of perspectivities}

The following theorem generalizes a result of A.~Saam \cite{saam-1988},
reproduced in \cite[Examples~7,~20]{richter-gebert-mechanical}. 

\begin{theorem}
\label{th:saam-perspectivities}
Let $A, B, P_1,\dots,P_n, Q_1,\dots,Q_n, R_1,\dots,R_n, S_1,\dots,S_n$ be points on the real/complex projective plane.
Consider the following conditions, where the indices are viewed modulo~$n$:
\begin{itemize}[leftmargin=.2in]
\item 
the lines $(Q_iS_i)$, $(Q_{i-1}S_{i-1})$, $(AP_i)$ are concurrent; 
\item
the lines $(Q_iS_{i-1})$, $(Q_{i+1}S_i)$, $(BR_i)$ are concurrent; 
\item 
the points $P_i$, $Q_i$, $R_i$ are collinear; 
\item
the points $R_i$, $S_i$, $P_{i+1}$ are collinear. 
\end{itemize}
If $4n-1$ of these $4n$ conditions hold, then so does the remaining one. 
\end{theorem}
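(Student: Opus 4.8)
The plan is to realize Theorem~\ref{th:saam-perspectivities} as a direct instance of the master theorem (Theorem~\ref{th:master}) by exhibiting an explicit bicolored quadrilateral tiling of a closed oriented surface whose $4n$ tiles encode exactly the $4n$ stated conditions. Following the pattern established in the proofs of Desargues, Pappus, and the permutation theorem, I would first translate each of the four families of conditions into the coherence of a tile \eqref{eq:AlBm}. The two ``concurrency'' conditions already have the right shape: three concurrent lines $(Q_iS_i),(Q_{i-1}S_{i-1}),(AP_i)$ means, after naming auxiliary lines, that two points lie on a line through an intersection point $\ell\cap m$, which is precisely coherence in the plane (Remark~\ref{rem:coherence-plane}); similarly for the $(BR_i)$ condition. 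The two ``collinearity'' conditions ($P_i,Q_i,R_i$ collinear and $R_i,S_i,P_{i+1}$ collinear) are dually reformulated so that a point lying on a line through two others becomes the statement that the line $(XY)$ passes through $\ell\cap m$. The key bookkeeping step is to choose, for each $i$, the right set of auxiliary lines (built from $(Q_iS_i)$, $(Q_iS_{i-1})$, $(AP_i)$, $(BR_i)$, and the point-lines $(P_iQ_i)$ etc.) so that every pairing $\langle\mathbf P,\mathbf h\rangle$ that occurs cancels exactly once in a numerator and once in a denominator across the whole product of mixed cross-ratios.

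Next I would build the tiling. Because the conditions are indexed cyclically modulo $n$, the natural ambient surface is a torus: I would lay out $4n$ tiles in a band that wraps around in the $i$-direction, with four tiles per ``period'' corresponding to the four condition-types, and glue the ends of the band to close up the cycle in $i$. Concretely, I expect a fundamental domain that is a horizontal strip of $4$ columns and $n$ rows (or vice versa), with the left and right sides identified to produce the modular wraparound and the top and bottom identified (or self-identified) to close the surface; the resulting $1$-skeleton must be checked to be bipartite, with black vertices carrying the points $A,B,P_i,Q_i,R_i,S_i$ and white vertices carrying the auxiliary lines. I would verify the two-coloring is consistent around every quadrilateral face and around the gluing seams, so that Theorem~\ref{th:master} applies verbatim. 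I would also confirm the non-incidence hypothesis (no point on an adjacent line) holds generically, which is automatic once the configuration is chosen generically.

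Having produced such a tiling with the property that coherence of its $4n$ tiles is equivalent to the $4n$ conditions of the theorem, the conclusion is immediate: by Theorem~\ref{th:master}, if $4n-1$ of the tiles are coherent then the last one is coherent as well, which is exactly the assertion that if $4n-1$ of the conditions hold then so does the remaining one. The proof then reduces to a single sentence of the form ``Apply the master theorem to the tiling in Figure~\ref{fig:saam-tiling}.''

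The main obstacle is entirely combinatorial-topological rather than geometric: finding a quadrilateral tiling of the torus whose tiles are in exact correspondence with these four cyclic families and whose global pairing-cancellation works out. The delicate points are (i) ensuring each of the four condition-types is genuinely a coherence statement after the correct choice of auxiliary lines, since the concurrency conditions and the collinearity conditions enter the master theorem's product through numerators versus denominators in opposite roles, and (ii) verifying that the cyclic gluing in $i$ produces a properly bicolored tiling with each interior edge shared by exactly two tiles of the required adjacent-color type. I expect that, as with the Pappus and permutation arguments, some auxiliary lines (analogous to the bivalent vertices removed in passing to the Levi graph) may appear in the tiling without corresponding to genuine incidences, and the careful part is arranging the $4$ tiles of each period so their boundary edges match up consistently when the band closes into a torus.
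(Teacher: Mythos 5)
Your overall strategy --- encode each of the $4n$ conditions as the coherence of a quadrilateral tile and invoke Theorem~\ref{th:master} --- is exactly the paper's approach. However, your topological guess is wrong, and the construction as you describe it would fail at the gluing step: the surface is a \emph{sphere}, not a torus. Take the black vertices to be $A$, $B$, $P_1,\dots,P_n$, $R_1,\dots,R_n$ and the white vertices to be the $2n$ lines $(Q_iS_i)$ and $(Q_iS_{i-1})$. Then each of the four condition types is uniformly a coherence statement: ``$(Q_iS_i)$, $(Q_{i-1}S_{i-1})$, $(AP_i)$ concurrent'' becomes ``$A$, $P_i$, and $(Q_iS_i)\cap(Q_{i-1}S_{i-1})$ are collinear,'' and ``$P_i,Q_i,R_i$ collinear'' becomes ``$P_i$, $R_i$, and $(Q_iS_i)\cap(Q_iS_{i-1})=Q_i$ are collinear'' --- so there is no numerator-versus-denominator asymmetry between the concurrency and collinearity conditions, and no extra auxiliary lines such as $(AP_i)$ or $(P_iQ_i)$ are needed as vertices. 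With these labels one has $V=(2n+2)+2n$, $F=4n$, and $E=\tfrac{4\cdot 4n}{2}=8n$, so $V-E+F=2$: these tiles can only close up into a sphere. The correct picture is that the $n$ type-one tiles form a disk around $A$ (which has degree $n$), the $n$ type-two tiles form a disk around $B$, and the $2n$ collinearity tiles form an equatorial annulus joining the two polar disks; for $n=4$ this is combinatorially the square antiprism, which is exactly what the paper draws (cf.\ Figures~\ref{fig:saam-perspectivities} and~\ref{fig:polytopes->configs}). A $4\times n$ band glued into a torus would force $\chi=0$ and is incompatible with the vertex degrees dictated by the conditions ($A$ and $B$ have degree $n$, each $P_i$ and $R_i$ degree $3$), so that part of your plan needs to be replaced rather than merely filled in.
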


Figure~\ref{fig:saam-perspectivities-general} illustrates Theorem~\ref{th:saam-perspectivities} for $n=4$. 

%\vspace{10pt}

\begin{figure}[ht]
\begin{center}
\includegraphics[scale=0.45, trim=2.5cm 0.8cm 2cm 0.3cm, clip]{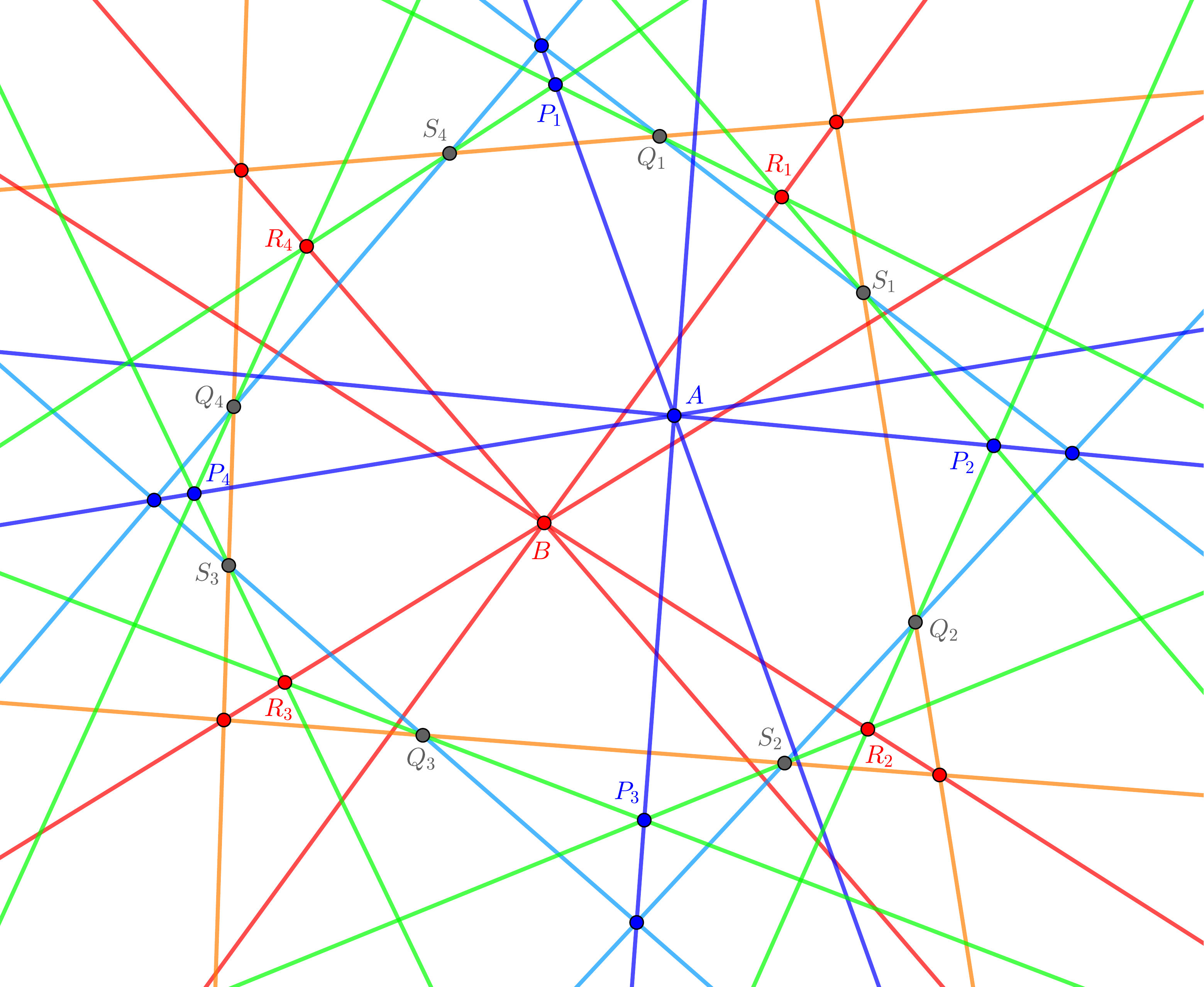}
\vspace{-5pt}
\end{center}
\caption{Generalization of Saam's sequence of perspectivities.}
\label{fig:saam-perspectivities-general}
\end{figure}

%\vspace{-.1in}

\begin{proof}[Proof of Theorem~\ref{th:saam-perspectivities}]
(For simplicity, we restrict the treatment to the case $n=4$.)
Apply the master theorem to the tiling of the sphere shown in Figure~\ref{fig:saam-perspectivities}. 
\end{proof}

\begin{figure}[ht]
\begin{center}
\vspace{-.15in}
\begin{equation*}
\begin{tikzpicture}[baseline= (a).base]
\node[scale=0.9] (a) at (0,0){
\begin{tikzcd}[arrows={-stealth}, cramped, sep=small]
A \edge{rd} &&&&&& A \edge{ld} \\[.1in]
& (Q_1S_1) \edge{rr}\edge{dd} \edge{rd}&& P_1 \edge{d}\edge{rr} && (Q_4S_4)\edge{dd}  \\[.1in]
&& R_1 \edge{r}\edge{d} & (Q_1S_4) \edge{r}\edge{d}& R_4 \edge{d}\edge{ru} \\[.1in]
& P_2 \edge{r}\edge{dd} & (Q_2S_1) \edge{r}\edge{d}& B \edge{r}\edge{d}& (Q_4S_3)\edge{r}\edge{d} & P_4\edge{dd}  \\[.1in]
&& R_2 \edge{r} & (Q_3S_2) \edge{d}\edge{r}& R_3 \edge{rd} \\[.1in]
& (Q_2S_2) \edge{rr}\edge{ru} && P_3 \edge{rr} && (Q_3S_3) \\[.1in]
A \edge{ru}&&&&&& A \edge{lu}
\end{tikzcd}
};
\end{tikzpicture}
\end{equation*}
\vspace{-.2in}
\end{center}

\enlargethispage{1cm}

\caption{The proof of Theorem~\ref{th:saam-perspectivities} for $n=4$. 
%The tiling of the sphere corresponding to (the generalized version of) Saam's sequence of perspectivities.
%
%for reference, to relate the above notation to the one in \cite[Example~7]{richter-gebert-mechanical}:
% 1=P_1, 2=Q_1, 3=R_1, 4=S_1, 5=P_2, 6=Q_2, 7=R_2, 8=S_2, 9=P_3, A=Q_3, B=R_3, C=S_3, D=P_4, E=Q_4, F=R_4, F=S_4, H=A=B
}
\label{fig:saam-perspectivities}
\end{figure}
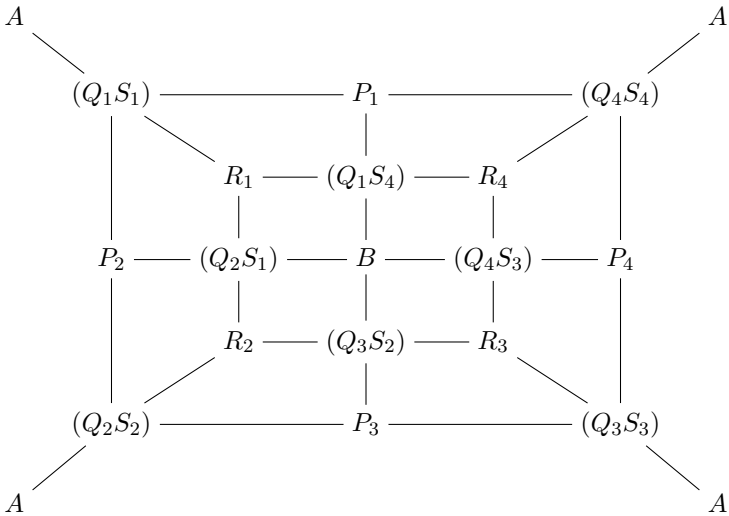

\begin{remark}
%\label{rem:}
In the original formulation of Saam's theorem, the points $A$ and~$B$ are assumed to coincide with each other. 
To use the notation from \cite[Figure~7]{richter-gebert-mechanical}, 
there is no need to require that all eight radial lines pass through the same point~$H$.
It suffices to ask that the lines through 1, 5, 9, $D$ pass through common point~$H$,
whereas the lines through 3, 7, $B$, and~$F$ pass through common point~$H'$. 
\end{remark}

\begin{remark}
\label{rem:nehring}
The special case $n=3$ of Saam's sequence-of-perspectivities theorem (Theorem~\ref{th:saam-perspectivities} with $A=B$),
known as Nehring's theorem \cite{nehring}, can be seen to be a special case of the Pappus theorem.
Using the notation from
\cite[Example~6.27]{chou-gao-zhang}, this can be explained as follows: 
remove the point~$O$ and the lines $OA, OB, OC$, then unmark the points $A, B, C$
to get the Pappus configuration. 
\end{remark}

\clearpage

\newpage

\section*{Twin stars of David}

\begin{theorem}
\label{th:dual-stars-of-david}
Let $A, B, C, A', B', C'$ be six points on the real or complex plane. \linebreak[3]
Let $p, q, r, p', q', r'$ be six lines none of which passes through any of those six points. \linebreak[3]
If~eleven of the twelve triples of lines 
\begin{align*}
&\{p,q,(A'C)\}, \{q,r,(BC')\}, \{p,r,(AB')\}, \{p',q',(AC')\}, \{q',r',(B'C)\}, \{p',r',(A'B)\} \\
&\{p,q',(AC)\}, \{q,r',(BC)\}, \{p',r,(AB)\}, \{p',q,(A'C')\}, \{q',r,(B'C')\}, \{p,r',(A'B')\} 
\end{align*}
are concurrent, then the twelfth triple is concurrent as well. See Figure~\ref{fig:dual-stars-of-david}. 
\end{theorem}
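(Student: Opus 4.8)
The plan is to realize each of the twelve concurrency conditions as the coherence of a single quadrilateral tile, and then to exhibit a bicolored quadrilateral tiling of a closed oriented surface whose twelve tiles encode exactly these conditions; the conclusion will then be immediate from the master theorem (Theorem~\ref{th:master}).

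First I would set up the dictionary between conditions and tiles. By Remark~\ref{rem:coherence-plane} together with Proposition~\ref{pr:coherence-algebraic}, a tile whose black (point) vertices are $X,Y$ and whose white (line) vertices are $\ell,m$ is coherent precisely when the line $(XY)$ passes through $\ell\cap m$, that is, when the three lines $(XY),\ell,m$ are concurrent. Hence the condition that a triple $\{x,y,(ZW)\}$ be concurrent is exactly the coherence of the tile with point vertices $Z,W$ and line vertices $x,y$. Reading off the twelve triples in this way produces twelve tiles; for instance $\{p,q,(A'C)\}$ becomes the tile with points $A',C$ and lines $p,q$, while $\{p',r,(AB)\}$ becomes the tile with points $A,B$ and lines $p',r$.

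Next I would assemble these tiles into a tiling, taking the six points $A,B,C,A',B',C'$ as the black vertices and the six lines $p,q,r,p',q',r'$ as the white vertices; the $1$-skeleton is then automatically bipartite, as the master theorem demands. A direct bookkeeping shows that the four point--line pairs appearing at the corners of the tiles exhaust, over all twelve tiles, precisely $24$ distinct point--line edges, each shared by exactly two tiles, so that every vertex has degree $4$. Thus $V=12$, $E=24$, $F=12$, and the Euler characteristic $V-E+F=0$ identifies the underlying closed surface as a torus. The hypothesis that none of the six lines passes through any of the six points guarantees that for every edge $X\!-\!h$ of this tiling the point $X$ does not lie on $h$, which is exactly the non-incidence hypothesis of Theorem~\ref{th:master}. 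Applying the master theorem, coherence of any eleven tiles forces coherence of the twelfth, which is the assertion of the theorem.

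The hard part will be the middle step: producing an honest tiling, not merely a list of tiles whose edge multiplicities happen to match. Concretely I must (i) choose the cyclic (clockwise) order of the two points and two lines on each tile so that the $24$ edge identifications glue consistently; (ii) verify that the link of each of the twelve vertices is a single $4$-cycle, so that the glued space is a genuine surface rather than a pinched one; and (iii) check that the resulting surface is \emph{orientable}, so that the telescoping product of mixed cross-ratios in the proof of Theorem~\ref{th:master} really collapses to~$1$, an orientation reversal being exactly what would spoil the numerator/denominator cancellation. Once a valid fundamental domain is drawn---most naturally by exploiting the evident $\mathbb{Z}/2$ symmetry that interchanges primed and unprimed objects---the verifications in (i)--(iii) are routine, and the theorem follows.
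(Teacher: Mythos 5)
Your strategy is exactly the paper's: each of the twelve concurrencies is the coherence of a quadrilateral tile with two point-vertices and two line-vertices, and the whole statement is the master theorem applied to a twelve-tile bicolored quadrilateral tiling of a torus. Your combinatorial bookkeeping is also correct (twelve vertices each of degree four, twenty-four edges each shared by two tiles, twelve faces, Euler characteristic zero). But the proposal stops exactly where the proof begins: you never exhibit the tiling, and --- in your own words --- ``a list of tiles whose edge multiplicities happen to match'' is not yet a proof, since one must still check that the link of each vertex is a single $4$-cycle and that the gluing is orientable. Deferring that as ``routine'' leaves a genuine gap, because the explicit fundamental domain is the entire content of the paper's argument.

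The gap is easily filled, and the filling is precisely the paper's Figure~\ref{fig:dual-stars-of-david-tiling}. Take three cyclic rows of four vertices on the torus, each row shifted one step relative to the one above and the fourth row identified with the first: row $1$ reads $A,p',A',p$ (cyclically), row $2$ reads $B,r',B',r$, row $3$ reads $C,q',C',q$. The vertical edges are $p'\!-\!B$, $A'\!-\!r'$, $p\!-\!B'$, $A\!-\!r$ between rows $1$ and $2$; then $r'\!-\!C$, $B'\!-\!q'$, $r\!-\!C'$, $B\!-\!q$ between rows $2$ and $3$; then $q'\!-\!A$, $C'\!-\!p'$, $q\!-\!A'$, $C\!-\!p$ between rows $3$ and $1$. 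Each band between consecutive rows consists of four quadrilateral tiles, and a direct check shows the three bands encode exactly the triples $\{p',r',(A'B)\}$, $\{p,r',(A'B')\}$, $\{p,r,(AB')\}$, $\{p',r,(AB)\}$; $\{q',r',(B'C)\}$, $\{q',r,(B'C')\}$, $\{q,r,(BC')\}$, $\{q,r',(BC)\}$; $\{p',q',(AC')\}$, $\{p',q,(A'C')\}$, $\{p,q,(A'C)\}$, $\{p,q',(AC)\}$ --- the twelve triples of the theorem. With this fundamental domain written down, your verifications (i)--(iii) really are immediate, and the master theorem finishes the argument.
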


\begin{figure}[ht]
\begin{center}
\includegraphics[scale=0.5, trim=0cm 0cm 0cm 0cm, clip]{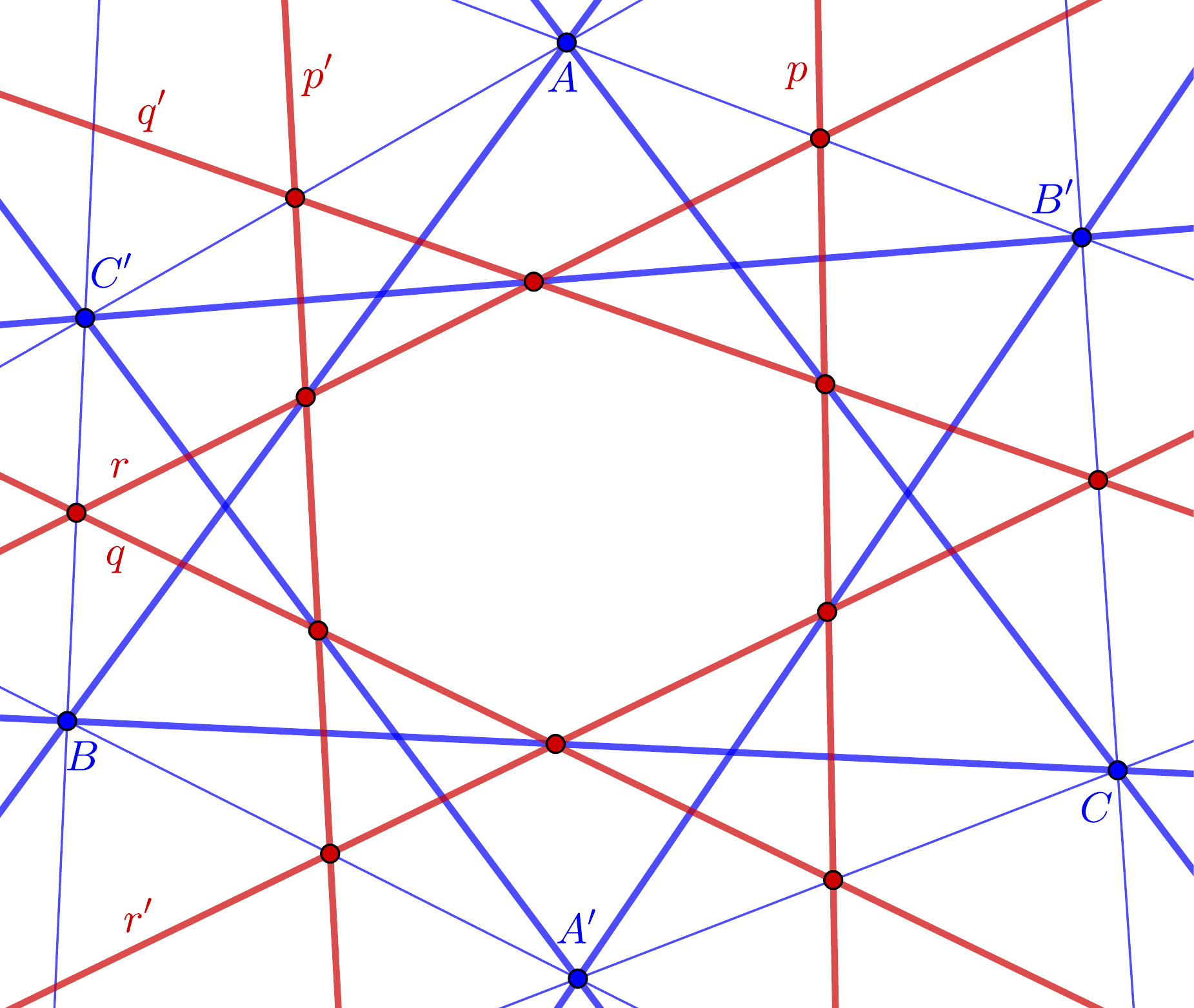}
\vspace{-5pt}
\end{center}
\caption{Twin stars of David.}
\vspace{-10pt}
\label{fig:dual-stars-of-david}
\end{figure}

\begin{proof}
Apply the master theorem to the tiling of the torus shown in Figure~\ref{fig:dual-stars-of-david-tiling}. 
\end{proof}

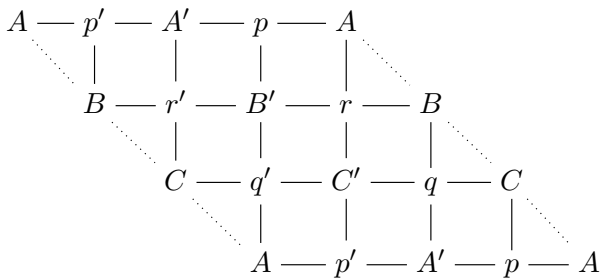
\begin{figure}[ht]
\begin{center}
\vspace{-.15in}
\begin{equation*}
\begin{tikzpicture}[baseline= (a).base]
\node[scale=1] (a) at (0,0){
\begin{tikzcd}[arrows={-stealth}, cramped, sep=14]
A \edge{r} \edge{rd, dotted} & p' \edge{r} \edge{d} & A' \edge{r} \edge{d} & p \edge{r} \edge{d} & A \edge{d} \edge{rd, dotted} \\
& B \edge{r} \edge{rd, dotted} & r' \edge{r} \edge{d} & B' \edge{r} \edge{d} & r \edge{r} \edge{d} & B \edge{d} \edge{rd, dotted} \\
&& C \edge{r} \edge{rd, dotted} & q' \edge{r} \edge{d} & C' \edge{r} \edge{d} & q \edge{r} \edge{d} & C \edge{d} \edge{rd, dotted} \\
&&& A \edge{r} & p' \edge{r} & A' \edge{r} & p \edge{r} & A
\end{tikzcd}
};
\end{tikzpicture}
\end{equation*}
\vspace{-.1in}
\end{center}
\caption{The proof of Theorem~\ref{th:dual-stars-of-david}.
The opposite sides of the fundamental domain should be glued to each other. 
}
\label{fig:dual-stars-of-david-tiling}
\end{figure}

\newpage

\section{Coherent polygons}
\label{sec:coherent-polygons}

\begin{definition}
\label{def:coherent-polygon}
Let $n\ge2$.
Let $A_1,\dots,A_n$ (resp., $\ell_1,\dots,\ell_n$) be an $n$-tuple of~points (resp., lines) on 
the real/complex projective plane. 
Assume that each point~$A_i$ does not lie on either of the lines~$\ell_i$ and~$\ell_{i-1}$
(with the indexing understood modulo~$n$). 
Consider a topological disk $\mathbf{P}$ with $2n$ marked points on the boundary, 
labeled by $A_1, \ell_1, \dots, A_n,\ell_n$, in this order. 
(See Figure~\ref{fig:coherent-polygon}.)
We call the $2n$-gon $\mathbf{P}$ \emph{coherent}~if 
the associated ``generalized mixed cross ratio'' $(A_1,\dots,A_n;\ell_1,\dots,\ell_n)$
is equal to~1: 
\begin{equation}
\label{eq:coherent-polygon}
(A_1,\dots,A_n;\ell_1,\dots,\ell_n)
\stackrel{\rm def}{=}
\dfrac{
\langle \mathbf{A_1}, \mathbf{\boldsymbol \ell_1} \rangle
\langle \mathbf{A_2}, \mathbf{\boldsymbol \ell_2} \rangle
\cdots
\langle \mathbf{A_{\boldsymbol n}}, \mathbf{\boldsymbol \ell_{\boldsymbol n}} \rangle
}{
\langle \mathbf{A_2}, \mathbf{\boldsymbol \ell_1} \rangle
\langle \mathbf{A_3}, \mathbf{\boldsymbol \ell_2} \rangle
\cdots
\langle \mathbf{A_1}, \mathbf{\boldsymbol \ell_{\boldsymbol n}} \rangle
}=1,
\end{equation}
where $\mathbf{A_{\boldsymbol i}}$ (resp.,~$\mathbf{\boldsymbol \ell_{\boldsymbol i}}$) is a vector (resp., covector)
defining~$A_i$ (resp.,~$\ell_i$). 

We note that $(A_1,\dots,A_n;\ell_1,\dots,\ell_n)$ does not depend on the choice of vectors 
$\mathbf{A_{\boldsymbol i}}$ and covectors $\mathbf{\boldsymbol \ell_{\boldsymbol i}}$
representing the points  $A_i$ and the hyperplanes $\ell_i$, respectively. 

In the case of a quadrilateral ($n=2$), we recover the notion of coherence introduced in 
%Definition~\ref{def:coherent-polygon} recovers 
Definition~\ref{def:coherent-tile},
in light of Proposition~\ref{pr:coherence-algebraic}. 
\end{definition}

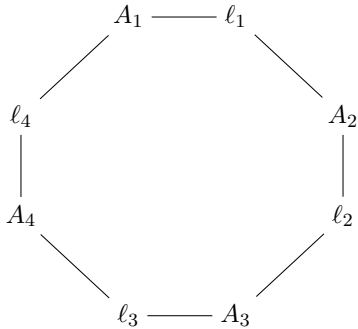
\begin{figure}[ht]
\begin{center}
\vspace{-.1in}
\begin{equation*}
\begin{tikzpicture}[baseline= (a).base]
\node[scale=.88] (a) at (0,0){
\begin{tikzcd}[arrows={-stealth}, cramped]
& A_1 \edge{r} & \ell_1 \edge{rd} & \\[.1in]
\ell_4 \edge{d} \edge{ru} & & & A_2 \\[.1in]
A_4 \edge{rd} & & & \ell_2 \edge{u}\\[.1in]
& \ell_3 \edge{r} & A_3 \edge{ru}
\end{tikzcd}
};
\end{tikzpicture}
\end{equation*}
\vspace{-.15in}
\end{center}
\caption{A coherent $2n$-gon, for $n=4$.}
\label{fig:coherent-polygon}
\end{figure}

\vspace{-.15in}

\begin{proposition}
\label{pr:coherence=tilability}
Let $\mathbf{P}\!=\!A_1 \ell_1 \cdots A_n\ell_n$ be a $2n$-gon with vertices labeled by 
$n$~points $A_1,\dots,A_n$ and $n$ lines $\ell_1,\dots,\ell_n$ on the real/complex projective plane. 
Assume that each point~$A_i$ does not lie on either of the lines~$\ell_i$ and~$\ell_{i-1}$
(with the indexing~$\bmod n$). 
Suppose that no three of the points~$A_i$ are collinear.
Then the following are equivalent:
\begin{itemize}[leftmargin=.2in]
\item 
the polygon $\mathbf{P}$ is coherent; 
\item
the polygon $\mathbf{P}$ can be tiled by coherent quadrilateral tiles. 
\end{itemize}
In fact, a coherent polygon $\mathbf{P}$ as above can always be tiled by $2n-3$ coherent tiles
all of whose vertices lying in the interior of~$\mathbf{P}$ are labeled by lines (and not by points). 
\end{proposition}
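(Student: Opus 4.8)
The plan is to derive both implications from the multiplicativity of the generalized mixed cross-ratio, treating the hard direction by induction on~$n$ through an explicit ``ear-cutting'' construction.

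For the implication \emph{tileable $\Rightarrow$ coherent}, I would reuse the telescoping argument from the proof of Theorem~\ref{th:master}. Representing each point and line by a vector and a covector as in Definition~\ref{def:coherent-polygon}, the product of the tile cross-ratios $(A,B;\ell,m)$ over all tiles of any quadrilateral tiling of~$\mathbf{P}$ equals the generalized cross-ratio $(A_1,\dots,A_n;\ell_1,\dots,\ell_n)$: each interior edge $P\!-\!h$ contributes a pairing $\langle P,h\rangle$ once to a numerator and once to a denominator (of the two tiles sharing it), so all interior edges cancel, leaving precisely the boundary pairings that make up the polygon cross-ratio. If every tile is coherent, then each factor is~$1$ by Proposition~\ref{pr:coherence-algebraic}, whence the product is~$1$ and $\mathbf{P}$ is coherent.

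For \emph{coherent $\Rightarrow$ tileable}---which also yields the refined statement with $2n-3$ tiles all of whose interior vertices are lines---I would induct on~$n$, the base case $n=2$ being a single tile, coherent by Proposition~\ref{pr:coherence-algebraic}. For $n\ge3$, I would cut off an ``ear'' at a consecutive triple $A_{i-1},A_i,A_{i+1}$: set $P_1=\ell_{i-1}\cap(A_{i-1}A_i)$ and $P_2=\ell_i\cap(A_iA_{i+1})$, and introduce the interior line $m=(P_1P_2)$. The two tiles $A_{i-1}\,\ell_{i-1}\,A_i\,m$ and $A_i\,\ell_i\,A_{i+1}\,m$ are coherent by construction, since $m$ meets $\ell_{i-1}$ at a point of $(A_{i-1}A_i)$ and meets $\ell_i$ at a point of $(A_iA_{i+1})$. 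Replacing the boundary segment $\ell_{i-1}\,A_i\,\ell_i$ by the single line $\ell'=m$ produces a $2(n-1)$-gon $\mathbf{P}'=A_1\ell_1\cdots A_{i-1}\,m\,A_{i+1}\cdots A_n\ell_n$. The key identity is the factorization
\[
(A_{i-1},A_i;\ell_{i-1},m)\,(A_i,A_{i+1};\ell_i,m)
=\frac{\langle A_{i-1},\ell_{i-1}\rangle\,\langle A_i,\ell_i\rangle\,\langle A_{i+1},m\rangle}
{\langle A_i,\ell_{i-1}\rangle\,\langle A_{i+1},\ell_i\rangle\,\langle A_{i-1},m\rangle},
\]
with the pairing $\langle A_i,m\rangle$ cancelling; the right-hand side is exactly the ratio of the generalized cross-ratio of~$\mathbf{P}$ to that of~$\mathbf{P}'$. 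Since the two tile factors equal~$1$, this shows $\mathbf{P}$ is coherent iff $\mathbf{P}'$ is. Applying the induction hypothesis to the coherent polygon~$\mathbf{P}'$ and adjoining the two ear tiles yields a tiling of~$\mathbf{P}$ by $2(n-1)-3+2=2n-3$ coherent tiles whose only interior vertices are the $n-2$ lines~$m$ introduced along the way.

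The main obstacle is non-degeneracy: I must check that $m=(P_1P_2)$ is a genuine line, that the ear tiles are coherent in the full sense of Definition~\ref{def:coherent-tile} (no point incident to a line of its own tile), and that $\mathbf{P}'$ again satisfies the hypotheses so the induction applies. I expect all of these to follow from the standing assumptions. Indeed $P_1=P_2$ would force the common point to equal $(A_{i-1}A_i)\cap(A_iA_{i+1})=A_i$ (using that $A_{i-1},A_i,A_{i+1}$ are not collinear), whence $A_i\in\ell_{i-1}$, contradicting non-incidence; the same reasoning shows $A_{i-1},A_i,A_{i+1}\notin m$, so the ear tiles are coherent and neither $A_{i-1}$ nor $A_{i+1}$ lies on the new boundary line $\ell'=m$. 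Finally, the points of~$\mathbf{P}'$ form a subset of $\{A_1,\dots,A_n\}$, so the no-three-collinear hypothesis persists and the induction closes.
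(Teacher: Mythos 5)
Your proposal is correct and follows essentially the same route as the paper: the forward implication by telescoping the tile cross-ratios, and the reverse by induction, cutting off an ear at a point $A_i$ via the new interior line through $\ell_{i-1}\cap(A_{i-1}A_i)$ and $\ell_i\cap(A_iA_{i+1})$ (the paper's $B$, $C$, $\ell_{n-1}'$ with $i=n$), with the same non-degeneracy checks resting on the no-three-collinear hypothesis. The only cosmetic differences are that you cut the ear at a general index and display the multiplicativity identity explicitly where the paper states it in words.
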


\begin{proof}
Suppose that $\mathbf{P}$ is tiled by coherent quadrilateral tiles.
Multiplying equations~\eqref{eq:cross-ratio=1}
for all these tiles, we obtain equation~\eqref{eq:coherent-polygon}. 
That is, $\mathbf{P}$ is coherent. 

We prove the reverse implication by induction on~$n$. The base case $n=2$ is trivial. 
Suppose that our $2n$-gon~$\mathbf{P}$ is coherent. 
Define
\begin{equation*}
B= (A_nA_1)\cap \ell_n, \quad C=(A_{n-1}A_n)\cap \ell_{n-1}, \quad \ell_{n-1}'=(BC). 
\end{equation*}
Recall that $A_{n-1}, A_n, A_1$ are not collinear. 
Also, neither $A_1$ nor $A_n$ lie~on~$\ell_n$, hence the collinear points $B, A_n, A_1$ are distinct. 
Similarly, the collinear points $C, A_{n-1}, A_n$ are distinct.
Hence the line $\ell_{n-1}'=(BC)$ contains none of the points $A_{n-1}, A_n, A_1$. 

It is now straightforward to check that the two tiles shown below are coherent: 
\begin{equation}
\label{eq:two-tiles-2n-gon}
\begin{tikzpicture}[baseline= (a).base]
\node[scale=1] (a) at (0,0){
\begin{tikzcd}[arrows={-stealth}, sep=small, cramped]
A_1  \edge{r}  \edge{d}& \ell _{n-1}'\edge{d} \edge{r} & A_{n-1} \edge{d} \\[3pt]
\ell_n \edge{r} & A_n\edge{r} & \ell_{n-1} 
\end{tikzcd}
};
\end{tikzpicture} 
\end{equation}
Indeed, $(A_1A_n)\cap \ell_{n-1}'\cap \ell_n=B$
and $(A_{n-1}A_n)\cap \ell_{n-1}'\cap \ell_{n-1}=C$. 

We now embed the tiles from~\eqref{eq:two-tiles-2n-gon} into the given $2n$-gon~$\mathbf{P}$,
as shown in \hbox{Figure~\ref{fig:coherent-polygon-induction-step}}. 
Since $\mathbf{P}$ is coherent, as are these two quadrilateral tiles,
it follows that the remaining $(2n-2)$-gon $\mathbf{P'}\!=\!A_1 \ell_1 \cdots A_{n-1}\ell_{n-1}'$
is coherent as well.
Moreover the noncollinearity condition for the points $A_1,\dots,A_{n-1}$ still holds,
so we can apply the induction assumption and obtain the desired tiling by $2+2(n-1)-3=2n-3$ tiles.
\end{proof}

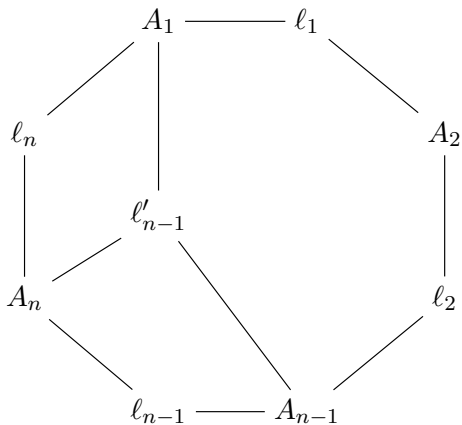
\begin{figure}[ht]
\begin{center}
\vspace{-.2in}
\begin{equation*}
\begin{tikzpicture}[baseline= (a).base]
\node[scale=1] (a) at (0,0){
\begin{tikzcd}[arrows={-stealth}, cramped]
& A_1 \edge{r} \edge{dd}& \ell_1 \edge{rd} & \\[.1in]
\ell_n \edge{dd} \edge{ru} & & & A_2 \\[-6pt]
& \ell_{n-1}' \\[-6pt]
A_n \edge{rd} \edge{ru} & & & \ell_2 \edge{uu}\\[.1in]
& \ell_{n-1} \edge{r} & A_{n-1} \edge{ru}\edge{luu}
\end{tikzcd}
};
\end{tikzpicture}
\end{equation*}
\vspace{-.2in}
\end{center}
\caption{Proof of Proposition~\ref{pr:coherence=tilability}.}
\label{fig:coherent-polygon-induction-step}
\end{figure}

\vspace{-.2in}

\begin{corollary}
\label{cor:tilability-genus}
Let $\Sigma$ be an oriented bordered surface whose boundary~$\partial\Sigma$ is homeo\-morphic to a circle.
Select $2n$ points on~$\partial\Sigma$ and label them with $n$ points and $n$ lines 
$A_1, \ell_1, \dots A_n,\ell_n$, in this order. 
Suppose that this labeling can be completed to a tiling of $\Sigma$ by coherent quadrilateral tiles. 
Then the polygon $\mathbf{P}\!=\!A_1 \ell_1 \cdots A_n\ell_n$ can be tiled by coherent tiles. 
\end{corollary}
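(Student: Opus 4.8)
The plan is to deduce, from the coherent tiling of $\Sigma$, that the boundary polygon $\mathbf{P}$ is coherent in the sense of Definition~\ref{def:coherent-polygon}, and then to invoke Proposition~\ref{pr:coherence=tilability} to produce the required tiling of $\mathbf{P}$ by coherent quadrilateral tiles.

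To establish coherence of $\mathbf{P}$, I would first represent every point (resp.\ line) labeling a vertex of the given tiling of $\Sigma$ by a vector in $\VV$ (resp.\ a covector in $\VV^*$), and then form the product of the mixed cross-ratios $(A,B;\ell,m)$ over all tiles of the tiling of $\Sigma$. By Proposition~\ref{pr:coherence-algebraic}, the coherence of each individual tile is equivalent to its factor being equal to $1$; hence the entire product equals $1$. Next I would evaluate this same product by the edge bookkeeping used in the proof of Theorem~\ref{th:master}: the pairing $\langle P,h\rangle$ attached to an edge $P-h$ (between a point $P$ and a line $h$) enters the numerator of the tile in which $P$ clockwise-precedes $h$, and the denominator of the tile in which $h$ clockwise-precedes $P$. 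Every interior edge is shared by exactly two tiles which, owing to the consistent orientation inherited from $\Sigma$, traverse it in opposite senses; its pairing therefore occurs once in a numerator and once in a denominator, and cancels. Only the $2n$ boundary edges survive, and reading them in the prescribed cyclic order $A_1,\ell_1,\dots,A_n,\ell_n$ they assemble into exactly the generalized mixed cross-ratio $(A_1,\dots,A_n;\ell_1,\dots,\ell_n)$ of \eqref{eq:coherent-polygon}. Consequently this cross-ratio equals $1$, i.e.\ $\mathbf{P}$ is coherent. The non-incidence requirement of Definition~\ref{def:coherent-polygon}, namely $A_i\notin\ell_i,\ell_{i-1}$, comes for free, since the boundary edges $A_i-\ell_i$ and $A_i-\ell_{i-1}$ lie on coherent tiles.

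With $\mathbf{P}$ shown to be coherent, Proposition~\ref{pr:coherence=tilability} immediately yields a tiling of $\mathbf{P}$ by $2n-3$ coherent tiles, completing the argument. Note that this reasoning is insensitive to the genus of $\Sigma$: only the local facts that interior edges border two tiles, that boundary edges border one, and that $\partial\Sigma$ is a single circle are used.

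The step I expect to be the main obstacle is the orientation bookkeeping in the middle paragraph: one must verify that the boundary orientation of $\partial\Sigma$ induced by $\Sigma$ aligns the clockwise traversal of each boundary tile with the given order $A_1,\ell_1,\dots$, so that the surviving factors reconstitute precisely the generalized cross-ratio \eqref{eq:coherent-polygon} (rather than a reordering or its reciprocal, though the latter would be harmless since the value is $1$). Calibrating against the base case of a disk tiled by a single quadrilateral ($n=2$) pins down the conventions. A secondary point to watch is that Proposition~\ref{pr:coherence=tilability} presupposes that no three of the points $A_i$ are collinear; this genericity condition must be assumed (or checked) before that proposition can be applied.
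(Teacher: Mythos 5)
Your argument is correct and is essentially the paper's own proof: multiply the coherence conditions (mixed cross-ratios) over all tiles, observe that interior-edge pairings cancel so that the surviving boundary factors give $(A_1,\dots,A_n;\ell_1,\dots,\ell_n)=1$, and then apply Proposition~\ref{pr:coherence=tilability}. Your closing remarks about orientation calibration and the non-collinearity hypothesis needed for Proposition~\ref{pr:coherence=tilability} are sensible points of care that the paper leaves implicit, but they do not change the substance of the argument.
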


To rephrase, adding handles to the surface does not expand the class of tileable polygons. 

\begin{proof}
Multiplying the coherence conditions for all tiles in the tiling, we conclude that 
the polygon $\mathbf{P}$ is coherent. 
By Proposition~\ref{pr:coherence=tilability}, this implies that $\mathbf{P}$ can be tiled  by coherent quadrilateral tiles. 
\end{proof}

While our arguments appearing below in this section utilize the notion of a coherent polygon,
they do not really rely on Proposition~\ref{pr:coherence=tilability}.
A~combinatorially-minded reader can replace ``coherent'' by ``tileable'' throughout without any loss of content. 

\newpage

\section*{Coherent hexagons}

The case $n\!=\!3$ of Proposition~\ref{pr:coherence=tilability} yields the following version of Desargues' theorem: 

\begin{corollary}
\label{cor:tileable-hexagons}
Let $A_1B_1C_1$ and $A_2B_2C_2$ be two generic triangles on the plane,~with sides 
${a_1\!=\!(B_1C_1)}, {b_1\!=\!(A_1C_1)}, {c_1\!=\!(A_1B_1)}$ and ${a_2\!=\!(B_2C_2)}, {b_2\!=\!(A_2C_2)}, {c_2\!=\!(A_2B_2)}$, respectively. 
The following are equivalent:
\begin{itemize}[leftmargin=.2in]
\item
the points $a_1\cap a_2$, $b_1\cap b_2$, $c_1\cap c_2$ are collinear; 
\item 
the lines $(A_1A_2)$, $(B_1B_2)$, $(C_1C_2)$ are concurrent; 
\item
the hexagon
shown in Figure~\ref{fig:6-cycle-tiling}(i) is coherent; 
\item
this hexagon can be tiled as shown in Figure~\ref{fig:6-cycle-tiling}(ii); 
\item
this hexagon can be tiled as shown in Figure~\ref{fig:6-cycle-tiling}(iii).
\end{itemize}
\end{corollary}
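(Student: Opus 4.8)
The plan is to obtain all five equivalences from the case $n=3$ of Proposition~\ref{pr:coherence=tilability}, reading the two geometric conditions off two different tilings of a single hexagon. Write $\mathbf{P}$ for the hexagon of Figure~\ref{fig:6-cycle-tiling}(i); genericity of the two triangles guarantees that its three point-labels are pairwise non-collinear and that no point-label lies on an adjacent line-label, so the noncollinearity hypothesis of Proposition~\ref{pr:coherence=tilability} is met. That proposition immediately gives the equivalence of the three ``coherent/tileable'' bullets: $\mathbf{P}$ is coherent if and only if it admits a tiling by coherent quadrilaterals, and the arrangements in (ii) and (iii) are two such tilings. The one direction that needs care---that coherence of $\mathbf{P}$ forces each of these two \emph{specific} tilings to be entirely coherent---is handled below.

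The mechanism is the following. In each of the tilings (ii) and (iii) the interior vertices are auxiliary points and lines manufactured from the triangle data, and my first task is to verify, via Remark~\ref{rem:coherence-plane}, that every tile other than one distinguished tile $T_0$ is coherent \emph{by construction}---directly from the definitions of those auxiliary vertices. Granting this, the multiplicativity of the mixed cross-ratio over a disk tiling (the same product identity used in the proofs of Theorem~\ref{th:master} and Proposition~\ref{pr:coherence=tilability}) gives
\[
\bigl(\text{generalized cross-ratio of }\mathbf{P}\bigr)=\prod_{\text{tiles}}(A,B;\ell,m),
\]
and since all factors but one are equal to $1$, the hexagon $\mathbf{P}$ is coherent if and only if $T_0$ is coherent. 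As ``the hexagon can be tiled as shown'' means precisely that every tile in the displayed tiling is coherent, this simultaneously yields $(3)\Leftrightarrow(4)$ and $(3)\Leftrightarrow(5)$.

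It then remains to match the distinguished tile of each tiling with the stated geometric condition, using the single coherence condition of a tile $A,\ell,B,m$ read in its two complementary ways (cf.\ Remark~\ref{rem:coherence-plane}): the tile is coherent exactly when the point $\ell\cap m$ lies on the line $(AB)$, equivalently exactly when the three lines $\ell$, $m$, $(AB)$ are concurrent. In tiling~(ii) the interior tiles produce the intersection points $a_1\cap a_2$ and $b_1\cap b_2$, and one checks that $T_0$ carries point-labels $a_1\cap a_2$, $b_1\cap b_2$ and line-labels $c_1,c_2$; reading coherence the first way, it asserts that the line through $a_1\cap a_2$ and $b_1\cap b_2$ passes through $c_1\cap c_2$, i.e.\ that $a_1\cap a_2$, $b_1\cap b_2$, $c_1\cap c_2$ are collinear---this is bullet~(1). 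In tiling~(iii) the interior tiles instead produce the lines $(A_1A_2)$ and $(B_1B_2)$, and $T_0$ carries line-labels $(A_1A_2)$, $(B_1B_2)$ and point-labels $C_1,C_2$; reading coherence the second way, it asserts that $(A_1A_2)$, $(B_1B_2)$, $(C_1C_2)$ are concurrent---this is bullet~(2). Combining, $(1)\Leftrightarrow(3)\Leftrightarrow(4)$ and $(2)\Leftrightarrow(3)\Leftrightarrow(5)$, so all five statements are equivalent; in particular $(1)\Leftrightarrow(2)$ recovers Desargues' theorem together with its converse.

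The main obstacle is the bookkeeping in the two tilings: arranging the interior auxiliary vertices so that (a) the common boundary really is $\mathbf{P}$, (b) every non-distinguished tile is coherent straight from the definition of its auxiliary vertices, and (c) the single remaining tile $T_0$ reads off the intended collinearity (resp.\ concurrence). Once the two figures are laid out correctly this is routine, each auxiliary tile encoding a tautological incidence such as ``the point defined as $a_1\cap a_2$ lies on $a_1$ and on $a_2$''; all of the genuine content has been concentrated into the single tile $T_0$ and into the product identity.
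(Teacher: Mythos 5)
Your high-level route matches the paper's: Proposition~\ref{pr:coherence=tilability} handles the coherence/tileability bullets, and the two geometric bullets are extracted from the coherence conditions of the tiles in Figure~\ref{fig:6-cycle-tiling}(ii)--(iii). But your concrete reading of those figures is wrong, and the error sits precisely in the ``bookkeeping'' you defer. Each of the tilings (ii) and (iii) consists of exactly three quadrilaterals sharing a \emph{single} interior vertex (any tiling of a hexagon by three quadrilaterals has one interior vertex, by an Euler-characteristic count): in (ii) it is a line $p$ joined to $A_1,B_1,C_1$, in (iii) a point $Q$ joined to $a_2,b_2,c_2$. There is no tile with point-labels $a_1\cap a_2$, $b_1\cap b_2$ and line-labels $c_1,c_2$, nor one with line-labels $(A_1A_2)$, $(B_1B_2)$ and point-labels $C_1,C_2$; such a tile would not even have its vertices on the boundary hexagon. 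The actual tiles of (ii) are $A_1\,b_2\,C_1\,p$, $C_1\,a_2\,B_1\,p$, $B_1\,c_2\,A_1\,p$, whose coherence conditions say, respectively, that $p$ passes through $b_1\cap b_2$, $a_1\cap a_2$, $c_1\cap c_2$. Hence the existence of the tiling is \emph{directly} equivalent to the collinearity of these three points --- no distinguished tile and no product identity are needed, because all three tiles carry content of the same kind and their conjunction is bullet~(1). Dually, the three tiles of (iii) say $Q\in(A_1A_2)$, $Q\in(B_1B_2)$, $Q\in(C_1C_2)$, giving bullet~(2). This is exactly how the paper argues.

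Your distinguished-tile mechanism is salvageable: choose $p$ to be the line through $a_1\cap a_2$ and $b_1\cap b_2$, so that two of the three tiles of (ii) are coherent by construction and the product identity transfers everything to the third tile $B_1\,c_2\,A_1\,p$, whose coherence says $p\ni c_1\cap c_2$; similarly with $Q=(A_1A_2)\cap(B_1B_2)$ in (iii). Once the tiles are identified correctly this yields the same equivalences, but it is a detour rather than a simplification, and as written your proof asserts checks that cannot be carried out on the figures as drawn. Please redo the tile identification before relying on it.
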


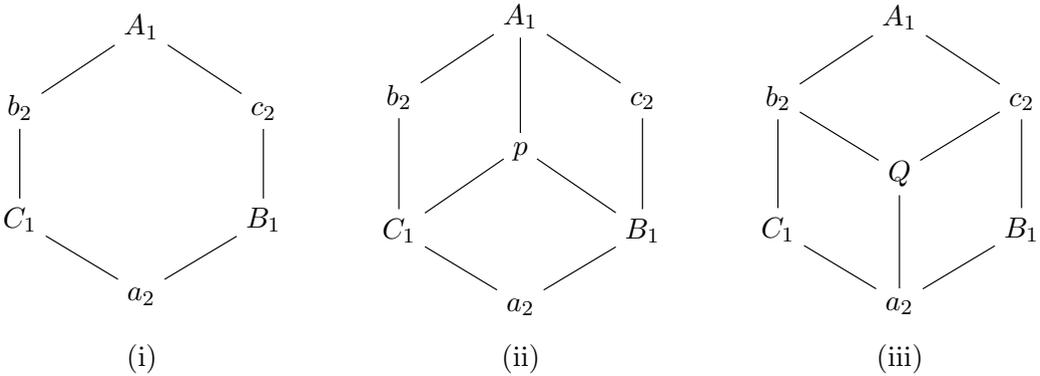
\begin{figure}[ht]
%\vspace{-.1in}
\begin{equation*}
\begin{array}{ccccc}
\begin{tikzcd}[arrows={-stealth}, cramped]
&  A_1 \edge{ld} \edge{rd}   & \\[-.07in]
b_2 \edge{d}  &     & c_2 \edge{d}  \\[7pt]
C_1 \edge{rd} & & B_1\edge{ld} \\[-.07in]
& a_2
\end{tikzcd}
&\hspace{.25in}
&
\begin{tikzcd}[arrows={-stealth}, cramped]
&  A_1 \edge{ld} \edge{rd} \edge{dd}  & \\[-.07in]
b_2 \edge{dd}  & & c_2 \edge{dd}  \\[-.19in]
& p \edge{rd} \edge{ld}     \\[-.06in]
C_1 \edge{rd}&& B_1\edge{ld} \\[-.07in]
& a_2
\end{tikzcd}
&\hspace{.25in}&
\begin{tikzcd}[arrows={-stealth}, cramped]
&  A_1 \edge{ld} \edge{rd}  & \\[-.07in]
b_2 \edge{dd} \edge{rd} && c_2 \edge{dd} \edge{ld} \\[-.1in]
& Q \edge{dd} \\[-.2in]
C_1 \edge{rd} &  & B_1\edge{ld} \\[-.07in]
& a_2
\end{tikzcd}
\\[.9in]
\text{(i)} && \text{(ii)}&& \text{(iii)}
\end{array}
\end{equation*}
\vspace{-.1in}
\caption{A hexagon and its two tilings. 
}
\label{fig:6-cycle-tiling}
\end{figure}

\vspace{-.1in}

\begin{proof}
The last three statements are equivalent to each other by Proposition~\ref{pr:coherence=tilability}.
The existence of the tiling in Figure~\ref{fig:6-cycle-tiling}(ii)
means that we can draw a line~$p$ through the points $a_1\cap a_2$, $b_1\cap b_2$, $c_1\cap c_2$;
likewise, the existence of the tiling in Figure~\ref{fig:6-cycle-tiling}(iii) 
means that we can find a point~$Q$ lying on the lines $(A_1A_2)$, $(B_1B_2)$, $(C_1C_2)$. 
\end{proof}

\begin{remark}
\label{rem:tilability-generic}	
In Corollary~\ref{cor:tileable-hexagons}, when we say that $A_1B_1C_1$ and $A_2B_2C_2$ are \emph{triangles},
we implicitly require that 
\begin{itemize}[leftmargin=.2in]
\item 
the points $A_1,B_1,C_1$ are not collinear and
\item
the lines $a_2,b_2,c_2$ is not concurrent.
\end{itemize}
To illustrate the role of this requirement, let's assume, on the contrary, that $A_1,B_1,C_1$ are distinct \emph{collinear} points.
Let $Q$ be a fourth generic point. 
Pick generic points $A_2\in (A_1Q)$, ${B_2\in (B_1Q)}$, $C_2\!\in\! (C_1Q)$.
Set $a_2\!=\!(B_2C_2)$, $b_2\!=\!(A_2C_2)$, $c_2\!=\!(A_2B_2)$. 
Then the tiling in \hbox{Figure~\ref{fig:6-cycle-tiling}(iii)} is coherent by construction. 
However, there is no coherent tiling as in Figure~\ref{fig:6-cycle-tiling}(ii),
since the line~$p$ would have to pass through the points $(A_1B_1)\cap c_2$, $(A_1C_1)\cap b_2$, $(B_1C_1)\cap a_2$,
implying that $p$ is the line through the three collinear points $A_1,B_1,C_1$. 
But this would violate the condition that in a coherent tile, no two adjacent labels (a point and a line)
are incident to each other. 
\end{remark}

\newpage

\section*{Coherent octagons}

In this section, we discuss two kinds of coherent octagons
(see Propositions~]\ref{pr:octagon-special} and~\ref{pr:octagon}), which come from special choices
of vertex labels. 

\begin{proposition}
\label{pr:octagon-special}
Let $P_1, P_2, P_3, P_4$ be four generic points on the plane.
Let $a$ and~$b$ be lines passing through \hbox{$(P_1P_2)\cap (P_3P_4)$} and \hbox{$(P_1P_4)\cap (P_2P_3)$}, respectively. 
%Let $b$ be a line passing through . 
%
Then the octagon whose vertices are labeled by
$P_1, a, P_2, b, P_3, a, P_4, b$ (see Figure~\ref{fig:8-gon-tiling-special}) is coherent. 
%Then~the alternating product of pairings along the boundary of the octagon 
%shown in Figure~\ref{fig:8-gon-tiling-special} is equal to~$1$. 
\end{proposition}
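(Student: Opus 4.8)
The plan is to verify directly that the generalized mixed cross-ratio attached to this octagon equals~$1$, which is exactly coherence by Definition~\ref{def:coherent-polygon}. Writing $A_i=P_i$ and recording that the eight labels are $P_1,a,P_2,b,P_3,a,P_4,b$ (so that $\ell_1=\ell_3=a$ and $\ell_2=\ell_4=b$), the expression in~\eqref{eq:coherent-polygon} becomes
\begin{equation*}
(P_1,\dots,P_4;a,b,a,b)=\frac{\langle P_1,a\rangle\,\langle P_2,b\rangle\,\langle P_3,a\rangle\,\langle P_4,b\rangle}{\langle P_2,a\rangle\,\langle P_3,b\rangle\,\langle P_4,a\rangle\,\langle P_1,b\rangle},
\end{equation*}
where I abusively write $P_i$ for a fixed vector representative and $a,b$ for fixed covector representatives. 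First I would regroup this as a product of two factors, one assembled only from pairings against~$a$ and one only from pairings against~$b$:
\begin{equation*}
\frac{\langle P_1,a\rangle\,\langle P_3,a\rangle}{\langle P_2,a\rangle\,\langle P_4,a\rangle}\cdot\frac{\langle P_2,b\rangle\,\langle P_4,b\rangle}{\langle P_1,b\rangle\,\langle P_3,b\rangle}.
\end{equation*}
The goal then splits into showing that each of these two factors equals~$1$.

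The key step is a judicious normalization of representatives. Since the generalized mixed cross-ratio is independent of the chosen representatives, and since genericity of $P_1,P_2,P_3,P_4$ (no three collinear) guarantees that the unique linear relation among their vector representatives has all four coefficients nonzero, I would rescale the representatives so that
\begin{equation*}
P_1+P_2+P_3+P_4=0.
\end{equation*}
With this normalization the two ``diagonal points'' of the complete quadrangle acquire transparent representatives: the point $X=(P_1P_2)\cap(P_3P_4)$ is represented by $P_1+P_2=-(P_3+P_4)$, and the point $Y=(P_1P_4)\cap(P_2P_3)$ is represented by $P_1+P_4=-(P_2+P_3)$.

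Finally I would feed in the hypotheses that $a$ passes through~$X$ and $b$ passes through~$Y$. Applying $\langle X,a\rangle=0$ to the two representatives of~$X$ yields $\langle P_1,a\rangle=-\langle P_2,a\rangle$ and $\langle P_3,a\rangle=-\langle P_4,a\rangle$, so the first factor collapses to~$1$; symmetrically $\langle Y,b\rangle=0$ gives $\langle P_2,b\rangle=-\langle P_3,b\rangle$ and $\langle P_1,b\rangle=-\langle P_4,b\rangle$, so the second factor is~$1$ as well. Hence the cross-ratio is~$1$ and the octagon is coherent. The only point needing care is the genericity bookkeeping in the normalization step: one must confirm that the relation coefficients are genuinely nonzero (so that the rescaling is legitimate) and that the non-incidence requirements built into Definition~\ref{def:coherent-polygon} hold, both of which follow from the stated genericity of the~$P_i$ and of the lines $a,b$. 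As an alternative staying closer to the paper's theme, one could instead invoke Proposition~\ref{pr:coherence=tilability} and exhibit an explicit tiling of the octagon by coherent quadrilateral tiles; but the normalization argument above is shorter and makes the role of the two diagonal points $X$ and $Y$ manifest.
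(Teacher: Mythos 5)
Your proof is correct, but it takes a different route from the paper's. The paper proves Proposition~\ref{pr:octagon-special} by staying inside the tiling formalism: it introduces the auxiliary line $e=(MN)$ through the two diagonal points $M=(P_1P_2)\cap(P_3P_4)$ and $N=(P_1P_4)\cap(P_2P_3)$, places $e$ at the center of the octagon, and checks that each of the four resulting quadrilateral tiles is coherent (e.g.\ the tile $P_3$--$a$--$P_4$--$e$ is coherent because $a\cap e=M$ lies on $(P_3P_4)$); coherence of the octagon then follows by multiplying the four cross-ratio identities. You instead verify \eqref{eq:coherent-polygon} directly, splitting the cross-ratio into an $a$-factor and a $b$-factor and normalizing representatives so that $P_1+P_2+P_3+P_4=0$, which makes $P_1+P_2$ and $P_1+P_4$ represent $M$ and $N$ and collapses each factor to~$1$. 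Both arguments are sound, and your genericity bookkeeping (all four coefficients of the linear relation nonzero because no three points are collinear; the non-incidence conditions holding for generic $a$, $b$) is in order. What the paper's version buys is an explicit tiling of this octagon, which is reused as a building block elsewhere (the second proof of Pappus, Figure~\ref{fig:10-gon-tiling-no-holes}, and the dual Proposition~\ref{pr:octagon-special-dual}), in keeping with the paper's stated goal of demonstrating purely tiling-based arguments. What your version buys is brevity and transparency: the normalization makes visible that the two diagonal points are exactly what force each factor to equal~$1$ --- indeed your relation $P_1+P_2+P_3+P_4=0$ encodes the same geometric data as the paper's auxiliary line $e=(MN)$.
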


\iffalse
\begin{figure}[ht]
\begin{center}
\vspace{-.1in}
\begin{equation*}
\begin{tikzpicture}[baseline= (a).base]
\node[scale=.88] (a) at (0,0){
\begin{tikzcd}[arrows={-stealth}, cramped, sep=8]
& a \edge{r} & P_4 \edge{rd} & \\[.1in]
P_3 \edge{d} \edge{ru} & & & b \\[.1in]
b \edge{rd} & & & P_1 \edge{u}\\[.1in]
& P_2 \edge{r} & a \edge{ru}
\end{tikzcd}
};
\end{tikzpicture}
\end{equation*}
\vspace{-.15in}
\end{center}
\caption{The octagonal tile.}
\label{fig:8-gon-special}
\end{figure}
\fi

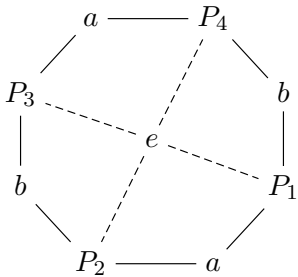
\begin{figure}[ht]
\begin{center}
\vspace{-.2in}
\begin{equation*}
\begin{tikzpicture}[baseline= (a).base]
\node[scale=1] (a) at (0,0){
\begin{tikzcd}[arrows={-stealth}, cramped, sep=8]
& a \edge{rr} && P_4 \edge{rd} & \\[5pt]
P_3 \edge{dd} \edge{ru} & && & b \\[-4pt]
&& e \edge{llu, dashed}\edge{ruu, dashed} \edge{rrd, dashed} \edge{ldd, dashed}\\[-4pt]
b \edge{rd} & && & P_1 \edge{uu}\\[5pt]
& P_2 \edge{rr} && a \edge{ru}
\end{tikzcd}
};
\end{tikzpicture}
\end{equation*}
\vspace{-.15in}
\end{center}
\caption{The octagon from Proposition~\ref{pr:octagon-special} and its tiling. 
Note that each of the lines $a$ and $b$ appears twice along the boundary. }
\label{fig:8-gon-tiling-special}
\end{figure}

\vspace{-.1in}

\begin{proof}
%\label{rem:}
The proof is based on the tiling shown in Figure~\ref{fig:8-gon-tiling-special}, where $e=(MN)$ is the line
through the points $M=(P_1P_2)\cap (P_3P_4)$
and $N=(P_1P_4)\cap (P_2P_3)$. 
It is strightforward to check that the conditions for all four tiles are satisfied. 
\end{proof}

\iffalse
\begin{remark}
%\label{rem:}
Conversely, given a tiling as in Figure~\ref{fig:8-gon-tiling-special}, with $a\neq e\neq b$,
the conditions $M=(P_1P_2)\cap (P_3P_4)\in a$ and $N=(P_1P_4)\cap (P_2P_3)\in b$ from Proposition~\ref{pr:octagon-special}
must be satisfied. 
For example, $a$ must pass through $e\cap (P_1P_2)$ as well as through $e\cap (P_3P_4)$,
implying that $a$ passes through~$M$. 
\end{remark}
\fi

\begin{remark}
%\label{rem:}
Our second proof of the Pappus theorem can be viewed as an application of Proposition~\ref{pr:octagon-special}: 
the octagons $P_4 \,r P_5\,t P_4\,s P_5\, b$ and $P_5 \,s P_4\,b P_5\,r P_4\, q$
in Figure~\ref{fig:pappus2-tiling} satisfy, up to projective duality, the conditions in Proposition~\ref{pr:octagon-special}.
\end{remark}

%In the next example, the labels of a coherent octagon are the four vertices and four sides of a generic quadrilateral. 

\begin{proposition}
\label{pr:octagon}
Let $P_1, P_2, P_3, P_4$ be four generic points on the plane.
%\begin{equation*}
%\ell_1=(P_1P_2), \quad \ell_2=(P_2P_3), \quad \ell_3=(P_3P_4), \quad \ell_4=(P_4P_1). 
%\end{equation*}
Then the 
octa\-gon %with vertices
$(P_1P_2) P_3 (P_4P_1) P_2 (P_3P_4) P_1 (P_2P_3) P_4$
(see~Figure~\ref{fig:8-gon}) is coherent. 
\end{proposition}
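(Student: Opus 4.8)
The plan is to verify coherence straight from the algebraic definition~\eqref{eq:coherent-polygon}, computing the generalized mixed cross-ratio explicitly in terms of $3\times3$ determinants. Reading the boundary, the four points $P_1,P_2,P_3,P_4$ each occur once, while the four line-vertices are exactly the sides $(P_1P_2),(P_2P_3),(P_3P_4),(P_4P_1)$ of the quadrilateral $P_1P_2P_3P_4$ (neither diagonal appears). Indexing the octagon as in Definition~\ref{def:coherent-polygon} and starting the cyclic order $A_1,\ell_1,A_2,\ell_2,\dots$ at $P_3$ gives $A_1,A_2,A_3,A_4=P_3,P_2,P_1,P_4$ and $\ell_1,\ell_2,\ell_3,\ell_4=(P_4P_1),(P_3P_4),(P_2P_3),(P_1P_2)$.

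I would then fix a vector $\mathbf{p}_i\in\VV$ for each point $P_i$ and represent each side $(P_jP_k)$ by the covector $\mathbf{p}_j\times\mathbf{p}_k$, so that $\langle\mathbf{p}_i,(P_jP_k)\rangle=\det(\mathbf{p}_i,\mathbf{p}_j,\mathbf{p}_k)$; write $[ijk]$ for this bracket. By Definition~\ref{def:coherent-polygon} the value of the cross-ratio is independent of all these choices, since each point and each line occurs once in the numerator and once in the denominator of~\eqref{eq:coherent-polygon}. The numerator, formed from the pairings $\langle A_i,\ell_i\rangle$, is $[341]\,[234]\,[123]\,[412]$, and the denominator, formed from $\langle A_{i+1},\ell_i\rangle$, is $[241]\,[134]\,[423]\,[312]$. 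Sorting each bracket into increasing index order introduces no sign, because every reordering here is a cyclic shift of the three indices and hence an even permutation; the numerator becomes $[123]\,[124]\,[134]\,[234]$ and the denominator becomes the same product. Hence the cross-ratio equals $1$, i.e.\ the octagon is coherent. Genericity of $P_1,\dots,P_4$ (no three collinear) makes all four brackets nonzero, which is precisely the non-incidence of each point with its two neighboring lines, so the octagon is admissible and every pairing is defined.

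The only real point requiring care---and the step I would double-check most carefully---is this sign bookkeeping: one must confirm that all eight brackets are reordered by even permutations, so that the numerator and denominator agree exactly rather than up to a sign. This holds because each of the needed reorderings is a $3$-cycle. (One could instead imitate the tiling argument of Proposition~\ref{pr:octagon-special}, introducing the two diagonals $(P_1P_3)$, $(P_2P_4)$ and their common structure, but the determinant computation is shorter and exhibits the cancellation directly.)
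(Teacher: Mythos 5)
Your computation is correct: with the indexing $A_1,\dots,A_4=P_3,P_2,P_1,P_4$ and $\ell_1,\dots,\ell_4=(P_4P_1),(P_3P_4),(P_2P_3),(P_1P_2)$, the numerator $[341][234][123][412]$ and denominator $[241][134][423][312]$ of~\eqref{eq:coherent-polygon} each sort (via $3$-cycles, hence without signs) to $[123][124][134][234]$, and the scalar ambiguities in the choice of vectors and covectors cancel because each label occurs once upstairs and once downstairs. However, this is not the route the paper takes. The paper's proof exhibits an explicit tiling of the octagon by four coherent quadrilateral tiles arranged around the auxiliary point $H=(P_1P_3)\cap(P_2P_4)$ (Figure~\ref{fig:8-gon}), verifying each tile's coherence directly -- e.g.\ the lines $(P_1P_2)$, $(P_2P_3)$, and $(P_4H)=(P_2P_4)$ meet at $P_2$. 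The trade-off is worth noting. Your bracket cancellation is shorter and makes the mechanism of the identity transparent; the paper itself uses exactly this style of argument as ``Proof~1'' of the analogous decagon statement (Proposition~\ref{prop:10-gon}), but then remarks that since the goal is to demonstrate the universal applicability of tiling-based arguments, purely algebraic proofs are to be supplemented by tilings. Moreover, the explicit tiling of Figure~\ref{fig:8-gon} is reused as a building block later (e.g.\ for the octagon around the vertex~$U$ in Figure~\ref{fig:10-gon-tiling-no-holes} and for the octagonal ``petals'' in the proof of Theorem~\ref{th:saam-5}), so the constructive tiling carries information your proof does not supply directly -- although by Proposition~\ref{pr:coherence=tilability} coherence does imply tileability under the stated noncollinearity hypotheses, so nothing is lost logically.
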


%\vspace{-.1in}

\begin{proof} %[Proof~1]
The claim follows from the tiling shown in Figure~\ref{fig:8-gon}, with 
$H=(P_1P_3)\cap(P_2P_4)$. 
It is easy to check that all four tiles are coherent. 
For example, the lines $(P_1P_2)$, $(P_2P_3)$, and $(P_4H)=(P_2P_4)$ intersect at~$P_2$. 
\end{proof}

\iffalse
\begin{proof}[Proof~2]
Identity \eqref{eq:coherent-polygon} holds, since all eight factors cancel each other out:
%
%We have
\begin{equation}
\label{eq:cyclic-cancellation}
%\frac{\langle\ell_1,P_3\rangle\langle\ell_2,P_4\rangle\langle\ell_3,P_1\rangle\langle\ell_4,P_2\rangle}{\langle\ell_1,P_4\rangle\langle\ell_2,P_1\rangle\langle\ell_3,P_2\rangle\langle\ell_4,P_3\rangle} 
%=
\frac{[\mathbf{P_1P_2P_3}][\mathbf{P_2P_3P_4}][\mathbf{P_3P_4P_1}][\mathbf{P_4P_1P_2]}}{[\mathbf{P_1P_2P_4}][\mathbf{P_2P_3P_1}][\mathbf{P_3P_4P_2}][\mathbf{P_4P_1P_3]}}=1.
\end{equation}
(Here $\mathbf{P}_{\boldsymbol i}\in\VV$, for $i=1,2,3,4$,  is a vector corresponding to the point~$P_i$, 
while $[\mathbf{P}_{\boldsymbol i}\mathbf{P}_{\boldsymbol j}\mathbf{P}_{\boldsymbol k}]$ denotes a volume form on~$\VV$ evaluated 
at the vectors $\mathbf{P}_{\boldsymbol i}, \mathbf{P}_{\boldsymbol k}, \mathbf{P}_{\boldsymbol k}$.) 
\end{proof}
\fi

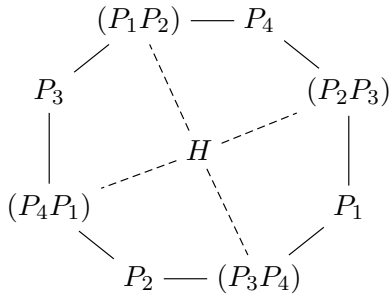
\begin{figure}[ht]
\begin{center}
\vspace{-.2in}
\begin{equation*}
\begin{tikzpicture}[baseline= (a).base]
\node[scale=1] (a) at (0,0){
\begin{tikzcd}[arrows={-stealth}, cramped, sep=-5]
& (P_1P_2) \edge{rr} && P_4 \edge{rd} & \\[.2in]
P_3 \edge{dd} \edge{ru} & && & (P_2P_3) \\[11pt]
&& H \edge{luu, dashed}\edge{rru, dashed} \edge{rdd, dashed} \edge{lld, dashed}\\[11pt]
(P_4P_1) \edge{rd} & && & P_1 \edge{uu}\\[.2in]
& P_2 \edge{rr} && (P_3P_4) \edge{ru}
\end{tikzcd}
};
\end{tikzpicture}
\end{equation*}
\vspace{-.25in}
\end{center}
\caption{A coherent octagon obtained from a plane quadrilateral $P_1P_2P_3P_4$. 
}
\label{fig:8-gon}
\end{figure}

\newpage

\iffalse
\begin{problem}{\rm
We discussed three kinds of coherent octagons:
\begin{itemize}[leftmargin=.2in]
\item 
the octagon in Proposition~\ref{pr:octagon}, tileable by 4 quadrilateral tiles; 
\item 
the octagon in Proposition~\ref{pr:octagon-special}, tileable by 4 quadrilateral tiles; 
\item
a generic coherent octagon tiled by 5 quadrilateral tiles, cf.\ Proposition~\ref{pr:coherence=tilability}. 
\end{itemize}
What is the relationship between these types of octagonal tiles?
}%\rm
\end{problem}
\fi

\newpage

\section*{Coherent decagons}

Proposition~\ref{pr:octagon} straightforwardly generalizes to larger polygons with the number of sides not divisible by~3. 
The next interesting example is the decagon:

\begin{proposition}
\label{prop:10-gon}
The decagon shown in Figure~\ref{fig:10-gon} is coherent.
\end{proposition}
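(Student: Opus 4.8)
The plan is to mimic the proof of Proposition~\ref{pr:octagon} by exhibiting an explicit tiling of the decagon into coherent quadrilateral tiles; once such a tiling is produced, Proposition~\ref{pr:coherence=tilability} (or simply multiplying the identities~\eqref{eq:cross-ratio=1} for the individual tiles) immediately yields coherence of the whole decagon. So the real content is combinatorial-geometric: find the right interior vertices and the right way to cut the $10$-gon into $2n-3 = 7$ quadrilaterals whose coherence conditions are all satisfied by construction.

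First I would read off from Figure~\ref{fig:10-gon} the labeling of the boundary, which by the stated generalization of Proposition~\ref{pr:octagon} should be the cyclic pattern in which five points $P_1,\dots,P_5$ alternate with the five ``skew diagonals'' $(P_iP_{i+2})$ (indices mod~$5$), arranged so that each line-label $(P_iP_{i+2})$ sits between the two point-labels it is incident to, exactly as $(P_1P_2), (P_4P_1), (P_3P_4), \dots$ did in the octagon case. Next I would introduce interior vertices labeled by the auxiliary intersection points (the analogues of $H=(P_1P_3)\cap(P_2P_4)$ in Figure~\ref{fig:8-gon}); for the decagon one expects to need the relevant diagonal-intersection points of the underlying pentagon/pentagram, and these interior vertices should be labeled by points so that, per the last sentence of Proposition~\ref{pr:coherence=tilability}, all purely interior vertices carry lines or points consistently with a valid $2$-coloring. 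I would then describe the seven tiles explicitly and verify each one: for a tile with vertices $A,\ell,B,m$ the coherence check reduces, via Remark~\ref{rem:coherence-plane}, to confirming that $(AB)$ passes through $\ell\cap m$, and in every tile this will hold because the two lines $\ell,m$ are chosen to be two of the skew diagonals through a common pentagon vertex, so they meet precisely at that vertex, which lies on the line joining the two point-labels $A,B$.

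The cleanest route, and the one I would actually write, is the cross-ratio telescoping argument indicated in the commented-out alternative proof of Proposition~\ref{pr:octagon}: write the generalized mixed cross-ratio~\eqref{eq:coherent-polygon} for the decagon purely in terms of the bracket (volume) forms $[\mathbf{P_i}\mathbf{P_j}\mathbf{P_k}]$, using that a line $(P_iP_j)$ paired against a point $P_k$ evaluates to such a bracket. Each boundary point-label $P_i$ is flanked by the two diagonals incident to it, so the bracket $[\mathbf{P_i}\cdots]$ appearing in the numerator from one edge is cancelled by an identical bracket in the denominator from the adjacent edge; running around the decagon, all ten brackets cancel in pairs and the product equals~$1$. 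I expect the main obstacle to be purely bookkeeping: pinning down the exact cyclic order of the ten labels from Figure~\ref{fig:10-gon} (including the precise indexing convention, mod~$5$, that makes the flanking-diagonals cancellation work) and checking that the sign/orientation conventions in the brackets are consistent so that the cancellation is exact rather than off by a sign. Once the labeling is fixed correctly, the cancellation is forced and the proof is a one-line telescoping identity, so I would present the tiling picture for geometric intuition and give the bracket-cancellation computation as the formal verification.
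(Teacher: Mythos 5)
Your formal argument---the telescoping cancellation of the ten brackets in the generalized mixed cross-ratio \eqref{eq:coherent-polygon}---is exactly the paper's Proof~1 of Proposition~\ref{prop:10-gon}, and it is correct: each line $(P_iP_j)$ and each point $P_k$ occurs once in the numerator and once in the denominator, the brackets match up in pairs up to permutations of their entries, and the two odd permutations that arise contribute canceling signs, so the product is~$1$. One caveat on the tiling sketch you offer as ``geometric intuition'': it does not work as described. The two lines flanking a boundary point of this decagon (e.g.\ $(P_4P_5)$ and $(P_2P_3)$ flanking $P_1$) do \emph{not} pass through a common vertex of the pentagon, so the coherence of the tiles is not automatic; indeed a single interior vertex joined to the five boundary points yields precisely the Goodman--Pollack configuration (Figure~\ref{fig:10-gon-tiling}), whose collinearity is the nontrivial content rather than a freebie, and invoking Proposition~\ref{pr:coherence=tilability} here would be circular since tileability and coherence are equivalent. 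The paper's tiling-based proofs (Proofs~2 and~3) accordingly require several auxiliary points and tilings of a torus or a genus-two surface, not a $7$-tile subdivision of the disk.
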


\begin{figure}[ht]
\begin{center}
\vspace{-.15in}
\begin{equation*}
%\begin{tikzpicture}[baseline= (a).base]
%\node[scale=.85] (a) at (0,0){
%\begin{tikzcd}[arrows={-stealth}, cramped]
%P_1 \edge{r} \edge{d}  & P_2P_3 \edge{r} & P_4 \edge{r} & P_1P_5 \edge{r} & P_2 \edge{d}  \\
%P_4P_5 \edge{r} & P_3 \edge{r} & P_1P_2 \edge{r} & P_5 \edge{r} & P_3P_4 
%\end{tikzcd}
\begin{tikzcd}[arrows={-stealth}, cramped, sep=small]
P_3 \edge{d} \edge{rr} && (P_4P_5) \edge{rr} && P_1 \edge{d} \\[5pt]
(P_1P_2) \edge{d} &&&& (P_2P_3) \edge{d}\\[5pt]
P_5 \edge{r} & (P_3P_4) \edge{r} & P_2 \edge{r} & (P_1P_5) \edge{r} & P_4
\end{tikzcd}
%};
%\end{tikzpicture}
\end{equation*}
\vspace{-.1in}
\end{center}
\caption{A coherent decagon.}
\label{fig:10-gon}
\end{figure}
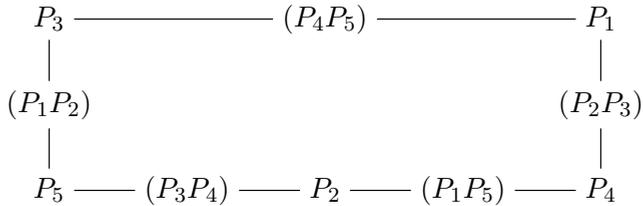

\vspace{-.3in}

\begin{proof}[Proof~1]
The generalized mixed cross-ratio (cf.\ \eqref{eq:coherent-polygon}) is equal to~$1$,
as each of the five terms in the numerator cancels out a term in the denominator. 
%for the same reasons as those in the second proof of Proposition~\ref{pr:octagon}, see \eqref{eq:cyclic-cancellation}. 
\end{proof}

The above proof, while very simple, relies on algebraic calculation.
Since our key goal is to show the universal applicability of tiling-based arguments, we provide two alternative proofs below. 
A~fourth proof (which is simpler but relies on a modification of the tiling method)
will be given in Section~\ref{sec:anticoherent}. 

\begin{proof} [Proof~2]
Denote 
\begin{equation*}
\begin{array}{llll}
Q = (P_1P_5)\cap (P_2P_3), \quad & S =(P_1P_2) \cap (P_3P_4),  %\quad & K=(P_1P_3)\cap (P_4P_5), 
\\[5pt]
\!M =(P_1P_2) \cap (P_4Q),              & \!N = (P_2P_3) \cap (P_5S), \quad & T = (P_4Q)\cap (P_5S), \\[5pt]
U = (P_1N)\cap(P_3M), 
\end{array}
\end{equation*}
see Figure~\ref{fig:10-gon-tiling-picture}. 
The claim can now be obtained from the tiling %of the torus 
shown in Figure~\ref{fig:10-gon-tiling-no-holes}. 
%This tiling uses the decagonal tile in Figure~\ref{fig:10-gon} together with 16 quadrilateral tiles. 
\end{proof}

%\vspace{-5pt}

\enlargethispage{1cm}

\begin{figure}[ht]
\begin{center}
\includegraphics[scale=0.5, trim=8.5cm 12cm 10cm 12.5cm, clip]{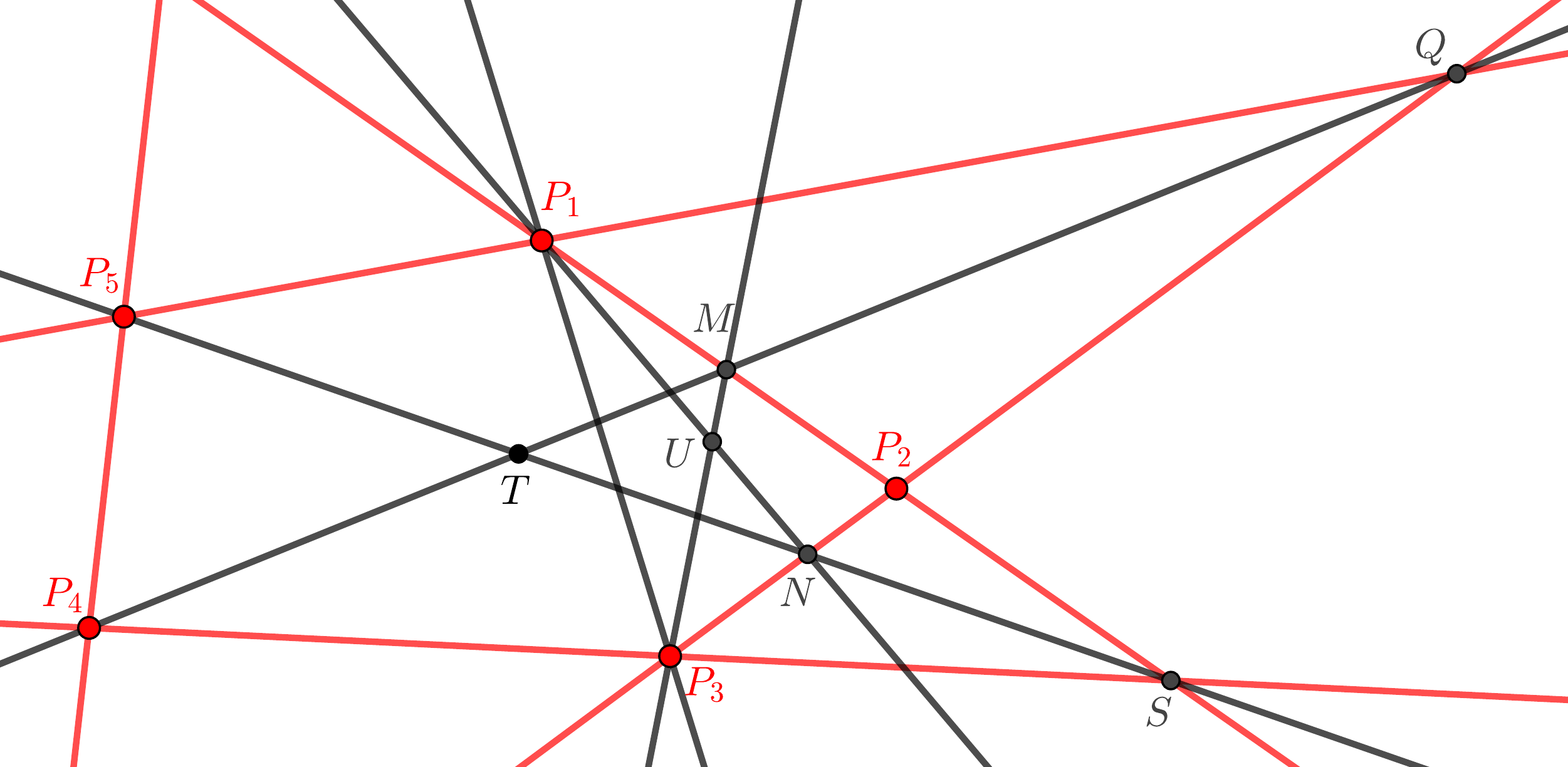}
\end{center}
\vspace{-.1in}
\caption{Notation used in the proof of Proposition~\ref{prop:10-gon}. 
}
\label{fig:10-gon-tiling-picture}
\end{figure}

\clearpage

\begin{figure}[ht]
\begin{center}
\vspace{-.2in}
%\begin{equation*}
%\begin{tikzpicture}[baseline= (a).base]
%\node[scale=0.9] (a) at (0,0){
%\begin{tikzcd}[arrows={-stealth}, cramped]
%&   & P_4P_5 \edge{ld} \edge{rd} \edge{rrdddddd, bend left =55}\edge{lldddddd, bend right =55} &  \\[0pt]
%& S \edge{dd}\edge{rd} & \text{hole} & Q \edge{dd}\edge{ld} \\[0pt]
%&   & MN \edge{d} &  \\[0pt]
%& NP_4 \edge{lddd} \edge{dd}\edge{r} & T \edge{d}\edge{r} & MP_5 \edge{dd}\edge{rddd} \\[0pt]
%&   & P_4P_5 \edge{rd}\edge{ld} &    \\[0pt]
%& Q \edge{rd}    & \text{hole}  & S  \edge{ld} \\[0pt]
% P_3 \edge{rr}  \edge{dd}&& MN \edge{d, line width=2pt} && P_1 \edge{ll}  \edge{dd}\\[0pt]
%&& \bullet \edge{dll} \edge{d} \edge{drr} && \\[0pt]
% P_1P_2 \edge{r}   \edge{d}& N \edge{d}  \edge{r}&P_1P_3 \edge{d} & M \edge{l} \edge{r} \edge{d}& \edge{d} P_2P_3\\[0pt]
%P_5 \edge{r} & P_3P_4  \edge{r}& P_2  & P_1P_5  \edge{l} \edge{r} & P_4 
%\end{tikzcd}
%};
%\end{tikzpicture}
%\end{equation*}
\begin{equation*}
\!
\begin{tikzpicture}[baseline= (a).base]
\node[scale=0.7] (a) at (0,0){
\begin{tikzcd}[arrows={-stealth}, cramped]
&&&&& P_4P_5 \edge{dll}\edge{drr}\edge{dddlll, bend right =15, line width=1.7pt}\edge{dddrrr, bend left =15, line width=1.7pt}
\\[0pt]
&&& S \edge{dlll} \edge{ddll} &&&& Q \edge{drrr} \edge{ddrr}
\\[15pt]
MN \edge{d} &&& P_5 \edge{d, line width=1.7pt}\edge{r, line width=1.7pt} & P_3P_4  \edge{d}\edge{r, line width=1.7pt} & P_2  \edge{d}\edge{r, line width=1.7pt} & P_1P_5  \edge{d}\edge{r, line width=1.7pt} & P_4  \edge{d, line width=1.7pt} &&& MN \edge{d} 
\\[0pt]
T \edge{r}\edge{d} & NP_4 \edge{r}\edge{rdd} & P_3 \edge{r, line width=1.7pt} \edge{rrrddd}& P_1P_2 \edge{r} \edge{drr, dashed}& N \edge{r} & P_1P_3 \edge{r} \edge{d, dashed}& M \edge{r} & P_2P_3 \edge{r, line width=1.7pt} \edge{dll, dashed}& P_1 \edge{r}\edge{lllddd} & MP_5 \edge{r} \edge{ldd}& T  \edge{d}
\\[0pt]
P_4P_5 \edge{rrd} &&&&& U \edge{dd, dashed}&&&&& P_4P_5 \edge{lld}
\\[0pt]
&& Q \edge{rrrd}&&&&&& S \edge{llld}
\\[15pt]
&&&&& MN
\end{tikzcd}
};
\end{tikzpicture}
\end{equation*}
\vspace{-.15in}
\end{center}
\caption{Tileability of the decagon from Figure~\ref{fig:10-gon} (shown in~\textbf{bold}). 
%The two quadrilateral holes with perimeters $Q$\,--\,$MN$\,--\,$S$\,--\,$P_4P_5$\,--\,$Q$
%are glued to each other, creating a handle. 
Opposite~sides of the shown hexagonal fundamental domain should be identified, yielding a torus. 
The tiling of the octagon 
$P_1P_2$\,--\,$P_3$\,--\,$MN$\,--\,$P_1$\,--\,$P_2P_3$\,--\,$M$\,--\,$P_1P_3$\,--\,$N$\,--\,$P_1P_2$
around the vertex~$U$
follows the pattern of Figure~\ref{fig:8-gon}. 
Here we use that $(P_1P_2)\!=\!(P_1M)$ and ${(P_2P_3)\!=\!(P_3N)}$. 
Furthermore, replacing the quadrilateral $P_1$\,--\,$MN$\,--\,$P_3$\,--\,$P_4P_5$\,--\,$P_1$ by a single tile yields
the tiling in Figure~\ref{fig:pappus2-tiling}, so the tileability of this quadrilateral can be interpreted as an application of 
the Pappus theorem. 
}
\vspace{-.25in}
\label{fig:10-gon-tiling-no-holes}
\end{figure}
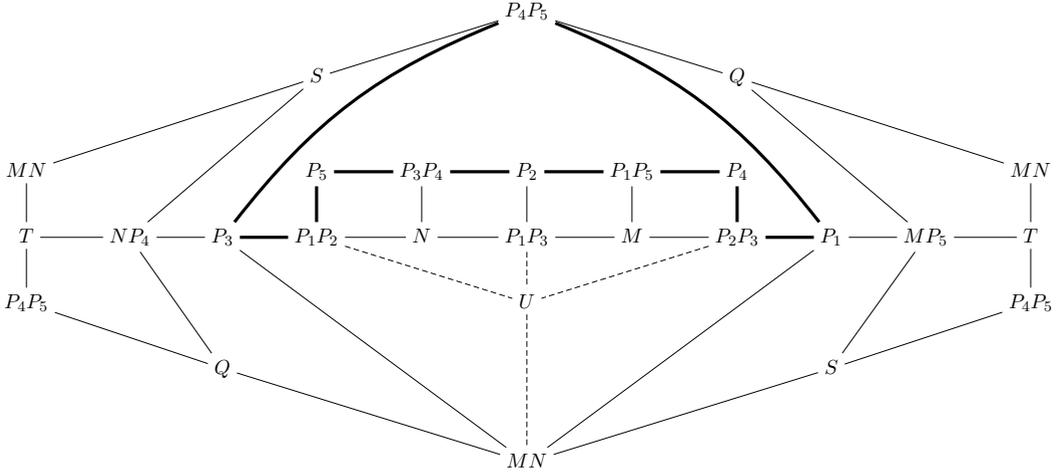

\begin{proof}[Proof~3]
This proof utilizes the tiling of a genus~2 surface
shown in Figure~\ref{fig:10-gon-tiling-holes}. 
\end{proof}

%\enlargethispage{1in}

%\iffalse
\begin{figure}[ht]
\begin{center}
\vspace{-.1in}
\begin{equation*}
\begin{tikzpicture}[baseline= (a).base]
\node[scale=0.9] (a) at (0,0){
\begin{tikzcd}[arrows={-stealth}, cramped]
P_3 \edge{rd} \edge{rrr, line width=1.5pt} \edge{dddd, line width=1.5pt} &&& P_4 P_5 \edge{ld} \edge{rd} \edge{rrr, line width=1.5pt} \edge{ddddd}  &&& P_1  \edge{dddd, line width=1.5pt} \edge{ld}  \\[0pt]
& P_2P_4 \edge{dd}  \edge{r}   & S \edge{ddd}  && Q \edge{ddd} \edge{r} & P_2P_5 \edge{dd} \\[-15pt]
&& \hspace{-.75in} \text{hole\#1} &&& \hspace{-.75in} \text{hole\#1} \\[-15pt]
& Q \edge{dl} \edge{dr} &&&& S \edge{dr}\\[-15pt]
P_1P_2  \edge{dr} \edge{dddd, line width=1.5pt} & & P_2P_5 \edge{dr}& & P_2P_4 \edge{dr}\edge{dl} \edge{ur} && P_2P_3 \edge{dddd, line width=1.5pt}  \\[-15pt]
& N  \edge{dr} \edge{dddd} \edge{ur} && T \edge{dr} \edge{dddd} && M \edge{ur} \edge{dl} \edge{dddd}  \\[-15pt]
&& P_1P_5 \edge{dd} \edge{ur} \edge{d} && P_3P_4 \edge{dd}  \\[-15pt]
&& \hspace{-.75in} \text{hole\#2} &&& \hspace{-.75in} \text{hole\#2} \\[-15pt]
P_5 \edge{dr, line width=1.5pt}&& M \edge{dr}&& N \edge{dr}&& P_4 \\[-15pt]
& P_3P_4 \edge{ur} \edge{rrd, line width=1.5pt} && QS \edge{d} \edge{ur} && P_1P_5 \edge{ur, line width=1.5pt} \\
&&& P_2 \edge{urr, line width=1.5pt} 
\end{tikzcd}
};
\end{tikzpicture}
\end{equation*}
\vspace{-.1in}
\end{center}
\caption{Another tiling of the decagon from Figure~\ref{fig:10-gon}. 
The two holes in each pair (\#1 and \#2) should be glued to each other.
%{at the bottom, we see the 8-cycle $P_2-P_3P_4-M-P_1P_5-T-P_3P_4-N-P_1P_5-P_2$ from Proposition~\ref{pr:octagon-special}}
}
\vspace{-.1in}
\label{fig:10-gon-tiling-holes}
\end{figure}
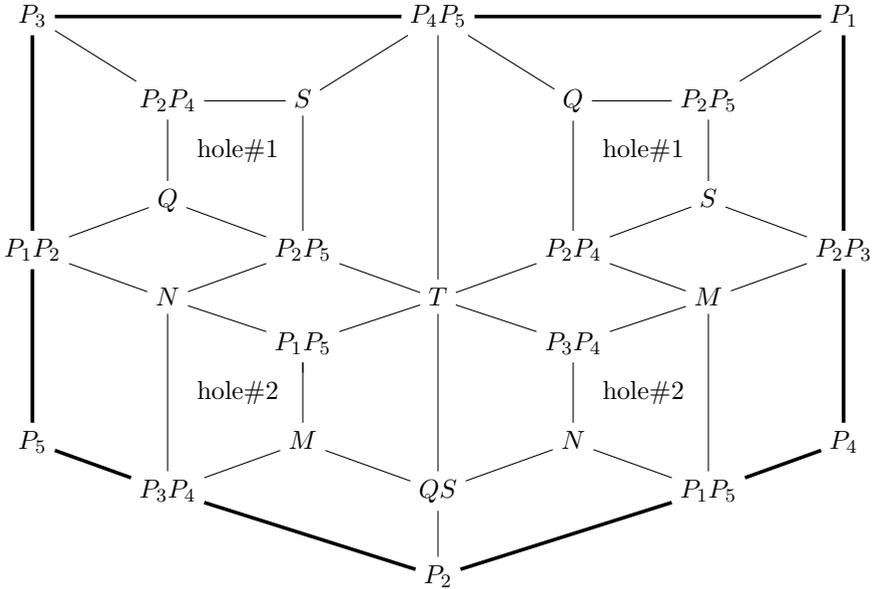
%\fi

%
\clearpage

\newpage 

\section*{The Goodman-Pollack theorem}

Coherent polygons of the kind shown in Figure~\ref{fig:10-gon} 
(cf.\ Proposition~\ref{pr:octagon}) appear in the proof of the ``$n$-star theorem'' of J.~Goodman and R.~Pollack, 
see \cite[Figure~3]{goodman-pollack} and \cite[Example~14]{richter-gebert-mechanical}. 
Here is the statement for $n=5$. 

\begin{theorem}
\label{th:goodman-pollack}
Let $P_1,\dots,P_5$ be five generic points on the plane. 
%Let $\ell$ be a line.
%If four of the five triples 
%\begin{equation*}
%(P_3P_4, P_2P_5,\ell), \ \  
%(P_4P_5, P_3P_1,\ell), \ \  
%(P_5P_1, P_4P_2,\ell), \ \  
%(P_1P_2, P_5P_3,\ell), \ \  
%(P_2P_3, P_1P_4,\ell), 
%\end{equation*}
%are concurrent, then so is the fifth. 
Denote
\begin{align}
\notag
Q_1&=(P_3P_4)\cap(P_2P_5), \\
\notag
Q_2&=(P_4P_5)\cap(P_3P_1), \\
\label{eq:Q1...Q5}
Q_3&=(P_5P_1)\cap(P_4P_2), \\
\notag
Q_4&=(P_1P_2)\cap(P_5P_3), \\
\notag
Q_5&=(P_2P_3)\cap(P_1P_4). 
\end{align}
If four of the points $Q_1,\dots,Q_5$ lie on a line, then the fifth point lies on the same line. 
\end{theorem}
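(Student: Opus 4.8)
The plan is to reformulate the five collinearity conditions as coherence of five quadrilateral tiles sharing a common vertex, and then invoke the master theorem.

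First I would rewrite the construction \eqref{eq:Q1...Q5} uniformly: reading indices modulo~$5$, one has $Q_i=(P_{i+2}P_{i+3})\cap(P_{i+1}P_{i+4})$, so each $Q_i$ lies on the ``short'' line $(P_{i+2}P_{i+3})$ and on the ``long'' line $D_i:=(P_{i+1}P_{i+4})$. Let $\ell$ be the line containing the four given points. Following Definition~\ref{def:coherent-tile} and Remark~\ref{rem:coherence-plane}, the condition $Q_i\in\ell$ is exactly the coherence of the tile $T_i$ with black vertices $P_{i+2},P_{i+3}$ and white vertices $\ell,D_i$: coherence of $T_i$ asserts that the point $\ell\cap D_i$ lies on $(P_{i+2}P_{i+3})$, which forces $\ell\cap D_i=Q_i$, i.e.\ $Q_i\in\ell$. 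For generic $P_j$ none of the four edges of $T_i$ records an incidence, so $T_i$ is a legitimate tile.

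Next I would glue $T_1,\dots,T_5$ into a wheel around the single shared white vertex~$\ell$. Since $T_i$ and $T_{i+1}$ both contain the edge $\ell$\,--\,$P_{i+3}$, these five edges become interior, and the outer boundary of the wheel is the decagon with cyclic vertex labels $P_3,D_1,P_4,D_2,P_5,D_3,P_1,D_4,P_2,D_5$. This decagon is coherent: multiplying the mixed cross-ratios \eqref{eq:mixed-cross-ratio} of the five tiles, every factor $\langle P_\bullet,\ell\rangle$ cancels (each $P_j$ occurs once in a numerator and once in a denominator as one runs around~$\ell$), leaving $\prod_i \langle P_{i+3},D_i\rangle/\langle P_{i+2},D_i\rangle$, which equals~$1$ because the corresponding $3\times 3$ minors cancel in pairs; this is exactly an instance of the coherent decagon of Proposition~\ref{prop:10-gon}. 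By Proposition~\ref{pr:coherence=tilability} a coherent decagon can be tiled by coherent quadrilateral tiles, so capping the wheel with such a tiling yields a bicolored quadrilateral tiling of the sphere (which is automatically $2$-colored and oriented) whose capping tiles are coherent independently of the position of~$\ell$.

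Finally I would apply the master theorem (Theorem~\ref{th:master}) to this tiling. The capping tiles are always coherent, while $T_i$ is coherent if and only if $Q_i\in\ell$. Hence, if four of the points $Q_i$ lie on~$\ell$, then all tiles but one are coherent, and the master theorem forces the last tile to be coherent as well, i.e.\ the fifth point lies on~$\ell$; since the exceptional tile may be any of the five, this is precisely the assertion of the theorem. I expect the main obstacle to be the verification that the outer decagon is coherent, equivalently that the product of the five tile cross-ratios telescopes to~$1$; organizing the bookkeeping of the $3\times3$ minors---or instead recognizing the decagon as a relabelling of the one in Figure~\ref{fig:10-gon}---is the crux, with the genericity/non-incidence checks along each edge being a routine but necessary secondary point.
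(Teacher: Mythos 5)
Your proof is correct and follows essentially the same route as the paper's: both encode the five collinearity conditions as five quadrilateral tiles forming a wheel around the common line and reduce the claim to the coherent decagon of Proposition~\ref{prop:10-gon} (the paper via Figure~\ref{fig:10-gon-tiling}, with the self-contained capping relegated to a remark). The only difference is cosmetic --- for each condition $Q_i\in\ell$ you take the black vertices to be $P_{i+2},P_{i+3}$ with white vertex $(P_{i+1}P_{i+4})$, whereas the paper uses $P_{i+1},P_{i+4}$ with white vertex $(P_{i+2}P_{i+3})$, so your outer decagon is the paper's up to the pentagram relabeling of the points.
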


\begin{proof}
The conditions in Theorem~\ref{th:goodman-pollack} are encoded by the 5~tiles shown in Figure~\ref{fig:10-gon-tiling}.  
The claim then follows by Proposition~\ref{prop:10-gon}. 
\end{proof}

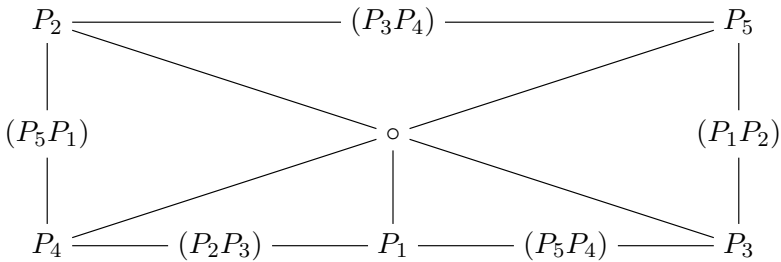
\begin{figure}[ht]
\begin{center}
\begin{equation*}
\begin{tikzpicture}[baseline= (a).base]
\node[scale=1] (a) at (0,0){
\begin{tikzcd}[arrows={-stealth}, cramped]
P_2 \edge{d} \edge{rr} && (P_3P_4) \edge{rr} && P_5 \edge{d} \\[5pt]
(P_5P_1) \edge{d} & &\circ \edge{llu} \edge{rru}  \edge{d} \edge{lld} \edge{rrd} &  & (P_1P_2) \edge{d}\\[5pt]
P_4 \edge{r} & (P_2P_3) \edge{r} & P_1 \edge{r} & (P_5P_4) \edge{r} & P_3
\end{tikzcd}
};
\end{tikzpicture}
\end{equation*}
\end{center}
\caption{The proof of the Goodman-Pollack theorem.
The line at the center passes through the points $Q_1,\dots,Q_5$.}
\label{fig:10-gon-tiling}
\end{figure}

\begin{remark}
%\label{rem:}
To get a self-contained tiling-based proof of Theorem~\ref{th:goodman-pollack}, paste the tiling shown in Figure~\ref{fig:10-gon-tiling} 
into the decagonal region appearing in Figure~\ref{fig:10-gon-tiling-no-holes}. 
Alternatively, glue together the boundaries of the tilings shown in Figures~\ref{fig:10-gon-tiling-holes} and~\ref{fig:10-gon-tiling}. 
\end{remark}

Proposition~\ref{pr:coherence=tilability} suggests the following generalization of Theorem~\ref{th:goodman-pollack}.

\begin{theorem}
\label{th:5-points-theorem}
Let $P_1, \dots, P_5$ be five generic points on the plane. 
Define $Q_1,\dots,Q_5$ as in~\eqref{eq:Q1...Q5}. 
Then the points $(P_1P_2)\cap(Q_3Q_5)$, 
$(P_5P_1)\cap(Q_4Q_2)$, and~$Q_1$ are collinear. 
See Figure~\ref{fig:5-points-theorem}. 
\end{theorem}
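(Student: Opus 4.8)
The plan is to realize Theorem~\ref{th:5-points-theorem} as an instance of the master theorem by building a suitable bicolored quadrilateral tiling, in direct analogy with the proof of Theorem~\ref{th:goodman-pollack} but arranged so that the conclusion involves the two auxiliary intersection points rather than just the five points $Q_i$. The key observation is that Theorem~\ref{th:5-points-theorem} is essentially the ``polygonal'' statement that the decagon of Figure~\ref{fig:10-gon} is coherent, read through the glasses of Proposition~\ref{pr:coherence=tilability}: coherence of a polygon is equivalent to its tileability by coherent quadrilateral tiles, and any choice of interior decomposition gives an equivalent statement. So the first thing I would do is identify the two points $R_1=(P_1P_2)\cap(Q_3Q_5)$ and $R_2=(P_5P_1)\cap(Q_4Q_2)$ as the intersection points produced when the decagon is subdivided, and check that the claimed collinearity of $R_1$, $R_2$, $Q_1$ is exactly the coherence condition for the residual tile left over after the subdivision.

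Concretely, I would start from Proposition~\ref{prop:10-gon}, which asserts that the decagon in Figure~\ref{fig:10-gon} is coherent, and then apply Proposition~\ref{pr:coherence=tilability} to tile it by coherent quadrilaterals in a way that isolates the three-point collinearity in the conclusion. The five conditions defining $Q_1,\dots,Q_5$ via \eqref{eq:Q1...Q5} correspond (as in Figure~\ref{fig:10-gon-tiling}) to five tiles; the generalization replaces the single central line through all five $Q_i$ by a configuration in which $Q_3,Q_5$ determine one line and $Q_4,Q_2$ another, and the master theorem then forces the remaining coherence condition. I would set up the tiling so that four of the five coherence conditions are the defining incidences (the points $Q_i$ lying where \eqref{eq:Q1...Q5} says they do, together with the two auxiliary lines $(Q_3Q_5)$ and $(Q_4Q_2)$ passing through $R_1$ and $R_2$ respectively), and the fifth coherence condition is precisely the assertion that $R_1$, $R_2$, $Q_1$ are collinear. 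Then Theorem~\ref{th:master} yields the claim immediately.

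The main obstacle, as in the other applications in this section, is purely combinatorial-geometric bookkeeping: constructing an explicit bicolored quadrilateral tiling of the appropriate surface whose $2n$ boundary-vertex labels match the decagon of Figure~\ref{fig:10-gon} and whose tiles encode exactly the desired incidences, with the single ``to be proved'' tile carrying the conclusion. I expect this to require introducing a few auxiliary points and lines (the analogues of $Q$, $S$, $M$, $N$, $T$, $U$ appearing in Figure~\ref{fig:10-gon-tiling-no-holes}) and verifying that every resulting tile is coherent, i.e.\ that in each tile the two points, two lines, and the relevant intersection point satisfy the collinearity/concurrency demanded by Remark~\ref{rem:coherence-plane}. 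Once the tiling is exhibited, the verification of coherence for the known tiles is routine, and the genericity of $P_1,\dots,P_5$ guarantees the non-incidence hypotheses needed to apply the master theorem. The only subtlety is checking that the two auxiliary lines $(Q_3Q_5)$ and $(Q_4Q_2)$ are genuinely distinct from the tile edges, so that no degenerate (vacuous) coherence condition slips in; this follows from genericity.

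\begin{proof}
We realize the statement as an instance of Proposition~\ref{prop:10-gon} via Proposition~\ref{pr:coherence=tilability}. The decagon of Figure~\ref{fig:10-gon} is coherent, hence tileable by coherent quadrilateral tiles. Subdividing it so that the two auxiliary lines $(Q_3Q_5)$ and $(Q_4Q_2)$ appear as interior edges, the intersection points $R_1=(P_1P_2)\cap(Q_3Q_5)$ and $R_2=(P_5P_1)\cap(Q_4Q_2)$ become interior vertices, and all but one of the resulting tiles encode the defining incidences \eqref{eq:Q1...Q5}. The remaining tile is coherent precisely when the points $R_1$, $R_2$, and $Q_1$ are collinear. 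By the master theorem (Theorem~\ref{th:master}), coherence of all the other tiles forces coherence of this last one, which is the desired conclusion.
\end{proof}
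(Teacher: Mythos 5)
Your proposal is correct and takes essentially the same route as the paper: the paper's proof likewise combines Proposition~\ref{prop:10-gon} with an explicit tiling of the decagon by seven quadrilaterals whose three interior (white) vertices are the lines $(Q_3Q_5)$, $(Q_2Q_4)$, and the line $\circ$ through $(P_1P_2)\cap(Q_3Q_5)$ and $(P_5P_1)\cap(Q_2Q_4)$, with the coherence of the last tile $P_2$\,--\,$\circ$\,--\,$P_5$\,--\,$(P_3P_4)$ being exactly the asserted collinearity (since $Q_1=(P_3P_4)\cap(P_2P_5)$). One small correction to your bookkeeping: in the master-theorem formalism the auxiliary lines are interior \emph{vertices} rather than edges, and the two intersection points $(P_1P_2)\cap(Q_3Q_5)$ and $(P_5P_1)\cap(Q_2Q_4)$ never appear as vertices of the tiling---they arise only as the points $\ell\cap m$ inside the coherence conditions of individual tiles.
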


Before proving Theorem~\ref{th:5-points-theorem}, let us explain why it can be viewed as a generalizion
of Theorem~\ref{th:goodman-pollack}.
If in Theorem~\ref{th:5-points-theorem}, we make the additional (redundant) assumption
that the points $Q_2,Q_3,Q_4,Q_5$ lie on a line~$\ell$,
then $(Q_3Q_5)=(Q_4Q_2)=\ell$ and therefore
Theorem~\ref{th:5-points-theorem} implies that $Q_1\in\ell$. 

\begin{proof}[Proof of Theorem~\ref{th:5-points-theorem}]
Combine Proposition~\ref{prop:10-gon} with the tiling shown in Figure~\ref{fig:5-points-theorem-tiling}. 
\end{proof}

%\clearpage

\begin{figure}[ht]
\begin{center}
\includegraphics[scale=0.55, trim=0.6cm 0cm 0.5cm 0cm, clip]{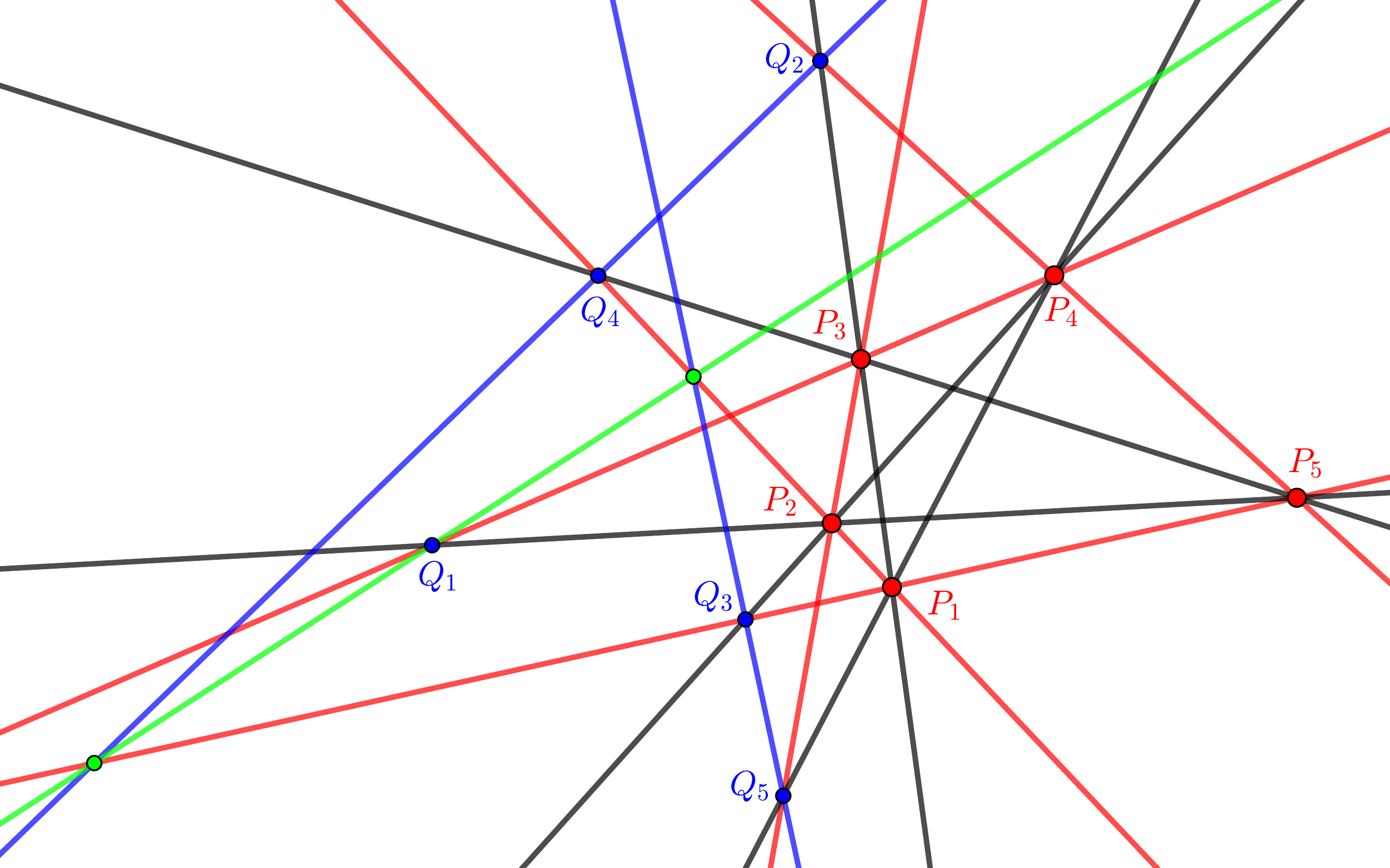}
\end{center}
%\vspace{-.1in}
\caption{The configuration in Theorem~\ref{th:5-points-theorem}. 
}
\label{fig:5-points-theorem}
\end{figure}

\begin{figure}[ht]
\begin{center}
\begin{equation*}
\begin{tikzpicture}[baseline= (a).base]
\node[scale=1] (a) at (0,0){
\begin{tikzcd}[arrows={-stealth}, cramped]
P_2 \edge{d} \edge{rr} && (P_3P_4) \edge{rr} && P_5 \edge{d} \\[5pt]
(P_5P_1) \edge{d} & (Q_3Q_5) \edge{lu} \edge{ld} \edge{rd} &\circ \edge{llu} \edge{rru}  \edge{d} & (Q_2Q_4) \edge{ld} \edge{rd} \edge{ru} & (P_1P_2) \edge{d}\\[5pt]
P_4 \edge{r} & (P_2P_3) \edge{r} & P_1 \edge{r} & (P_5P_4) \edge{r} & P_3
\end{tikzcd}
};
\end{tikzpicture}
\end{equation*}

\end{center}
\caption{The proof of Theorem~\ref{th:5-points-theorem}.}
\label{fig:5-points-theorem-tiling}
\end{figure}
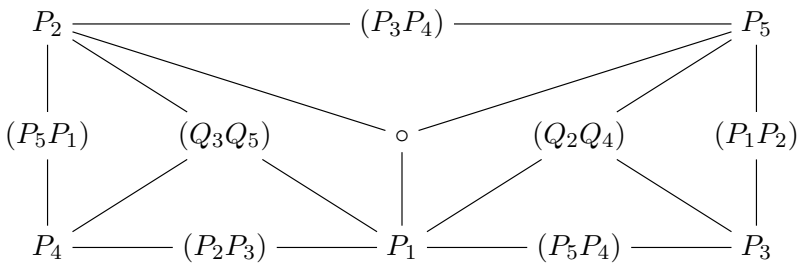

\begin{remark}
\label{rem:12-3-45-6-}
Propositions \ref{pr:octagon}--\ref{prop:10-gon} generalize
to larger $2n$-gons (here ${n\not\equiv 0\bmod 3}$) 
with boundary labels $P_1$\,-\,$(P_2P_3)$\,-\,$P_4$\,-\,$(P_6P_7)$\,-\,$\cdots$.
We provide hints for a tiling-based proof of this statement at the end of Section~\ref{sec:anticoherent}. 
\end{remark}

\begin{remark}
%\label{rem:}
The proof of the general Goodman-Pollack theorem, for $n\ge 5$ points on the plane, breaks into two cases. 
If $n$ is not divisible by~3, the proof is analogous to the one given above for $n=5$, cf.\ Remark~\ref{rem:12-3-45-6-}. 
If $n$ is divisible by~3, then the statement becomes vacuous, since each concurrence condition is repeated more than once. 
\end{remark}

\newpage

\section*{Saam's theorem}

The following result is the $n=5$ instance of Saam's theorem \cite{saam-1987, saam-1988}, 
reproduced in \cite[Example~6]{richter-gebert-mechanical}. 

\begin{theorem}
\label{th:saam-5}
Let $B, P_1, P_2,P_3,P_4,P_5$ be six generic points on the plane.
Pick a generic point $Q_1\in(BP_1)$. Define (reading by columns): 
\begin{align*}
                                                        & Q_1'=(Q_5P_3)\cap(BP_1), \\
Q_2'=(Q_1P_4)\cap (BP_2), \quad & Q_2=(Q_1'P_4)\cap (BP_2), \\
Q_3=(Q_2'P_5)\cap (BP_3), \quad & Q_3'=(Q_2'P_5)\cap (BP_3), \\
Q_4'=(Q_3P_1)\cap (BP_4), \quad & Q_4'=(Q_3'P_1)\cap (BP_4), \\
Q_5=(Q_4'P_2)\cap (BP_5), \quad & Q_5'=(Q_4'P_2)\cap (BP_5). 
\end{align*}
Then the points $Q_5', P_3, Q_1$ are collinear. 
See Figure~\ref{fig:saam-5}. 
\end{theorem}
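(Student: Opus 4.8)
The plan is to proceed exactly as in the proofs of the Desargues, Pappus, and sequence-of-perspectivities theorems: reinterpret the $n=5$ closure construction as a bicolored quadrilateral tiling of a closed oriented surface and invoke the master theorem (Theorem~\ref{th:master}).

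First I would strip the construction down to its combinatorial skeleton. Relabel the ten intersection points cyclically as $R_0=Q_1,\,R_1=Q_2',\,R_2=Q_3,\,R_3=Q_4',\,R_4=Q_5,\,R_5=Q_1',\,R_6=Q_2,\,R_7=Q_3',\,R_8=Q_4,\,R_9=Q_5'$, so that $R_k$ lies on the spoke $(BP_s)$ with $s\equiv k+1\pmod 5$, and the defining relation for $R_{k+1}$ reads: the three points $R_k$, $R_{k+1}$, $P_j$ are collinear, where $j\equiv k+4\pmod 5$ (indices taken in $\{1,\dots,5\}$). The nine construction steps produce nine such collinearities, which define $R_1,\dots,R_9$; the assertion to be proved, that $Q_5'$, $P_3$, $Q_1$ are collinear, is precisely the tenth collinearity $R_9,R_0,P_3$, closing the cycle. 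Thus there are ten collinearity conditions arranged in a single cyclic chain, and the master theorem will finish the proof the moment these are realized as the coherence conditions of ten tiles in a closed tiling.

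Next I would turn each collinearity ``$R_k,R_{k+1},P_j$ are collinear'' into a coherent tile. Taking $R_k$ and $R_{k+1}$ as the two (black) point-vertices and two distinct lines through $P_j$ as the two (white) line-vertices, Remark~\ref{rem:coherence-plane} shows that coherence of the tile is equivalent to $(R_kR_{k+1})$ passing through $P_j$, i.e.\ to the desired collinearity; the natural candidates for the two lines through $P_j$ are the spoke $(BP_j)$ and the second chord of the construction incident to $P_j$. The decisive --- and hardest --- step is then to glue these tiles into an honest bicolored quadrilateral tiling of a closed oriented surface, i.e.\ to arrange the shared edges so that in the product of the mixed cross-ratios $(R_k,R_{k+1};\,\cdot\,,\cdot\,)$ every pairing $\langle R,h\rangle$ occurs once in a numerator and once in a denominator (this is exactly the telescoping used in the proof of Theorem~\ref{th:master}). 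The obstruction I anticipate is the behaviour of the five spokes through the common point $B$: each spoke carries the two marked points of a single pencil, and naively its contributions fail to cancel, so the ten ``chord'' tiles alone do not close up into a surface without boundary (a direct count of unmatched edges already shows this). I expect to repair this by inserting auxiliary tiles that encode the automatically true collinearity of $B$ with the two marked points on each spoke, and/or by passing to a surface of positive genus, in the spirit of the sphere tiling used for Saam's sequence of perspectivities (Figure~\ref{fig:saam-perspectivities}) and of the torus and genus-two tilings of the decagon in Proposition~\ref{prop:10-gon}.

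Finally, with the closed tiling in hand, I would check the remaining hypotheses of Theorem~\ref{th:master} --- that the two-coloring is proper and that no point-vertex lies on an adjacent line-vertex, both of which hold for generic $B,P_1,\dots,P_5,Q_1$ --- and conclude: nine of the ten tiles are coherent by construction, hence the tenth is coherent as well, which is exactly the collinearity of $Q_5'$, $P_3$, $Q_1$. As in Remark~\ref{rem:remove-redundancies}, it may be cleanest to establish a mild generalization first (for instance dropping the coincidence $A=B$, as was done for the sequence-of-perspectivities theorem) and then specialize. I expect essentially the entire difficulty to be concentrated in the construction of the surface and its tiling; once that combinatorial model is found, the geometric content is an immediate application of the master theorem.
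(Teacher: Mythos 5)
Your strategic outline is the right one --- it is in fact the approach the paper takes --- and your combinatorial bookkeeping is correct: the construction produces ten collinearities $R_k,R_{k+1},P_j$ arranged in a single cyclic chain, the conclusion being the tenth, and each such collinearity is the coherence of a tile whose white vertices are two lines through~$P_j$. You also correctly diagnose the obstruction: the pairings along the five spokes $(BP_i)$ do not telescope, so the ten chord-tiles alone do not close up into a surface. But the proposal stops exactly where the proof begins. The entire content of the paper's argument is the explicit closed tiling (Figures~\ref{fig:saam-tiling} and~\ref{fig:saam-tiling-20-cycle}), and ``I expect to repair this by inserting auxiliary tiles and/or passing to higher genus'' is a statement of intent, not a construction. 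In particular, your proposed repair --- tiles recording the collinearity of $B$ with the two marked points on each spoke --- is not what closes the surface, and there is no reason to believe it suffices.

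What actually closes the tiling is more substantial and is not anticipated in your plan. The paper arranges the interior tiles so that the leftover boundary is a 20-gon that is coherent for trivial cancellation reasons; tiling that 20-gon then requires (i) five octagonal ``petals'' that are coherent by Proposition~\ref{pr:octagon}, (ii) gluing the ten boundary sides involving $B$ in five orientation-respecting pairs, and (iii) recognizing that the residual 10-gon $Q_1\!-\!(Q_2Q_4)\!-\!Q_5\!-\!(Q_1Q_3)\!-\!\cdots$ is precisely the coherent decagon of Proposition~\ref{prop:10-gon} built on the five points $Q_1,\dots,Q_5$ (in the cyclic order $Q_1Q_4Q_2Q_5Q_3$) with the auxiliary lines $(Q_iQ_j)$, a nontrivial input with its own tiling-based proof. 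None of these ingredients appears in your proposal, and without them the argument does not go through. So: right approach, correct identification of the difficulty, but the step that constitutes the proof is missing.
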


\begin{figure}[ht]
\begin{center}
\vspace{-.1in}
\includegraphics[scale=0.5, trim=0cm 0cm 0cm 0cm, clip]{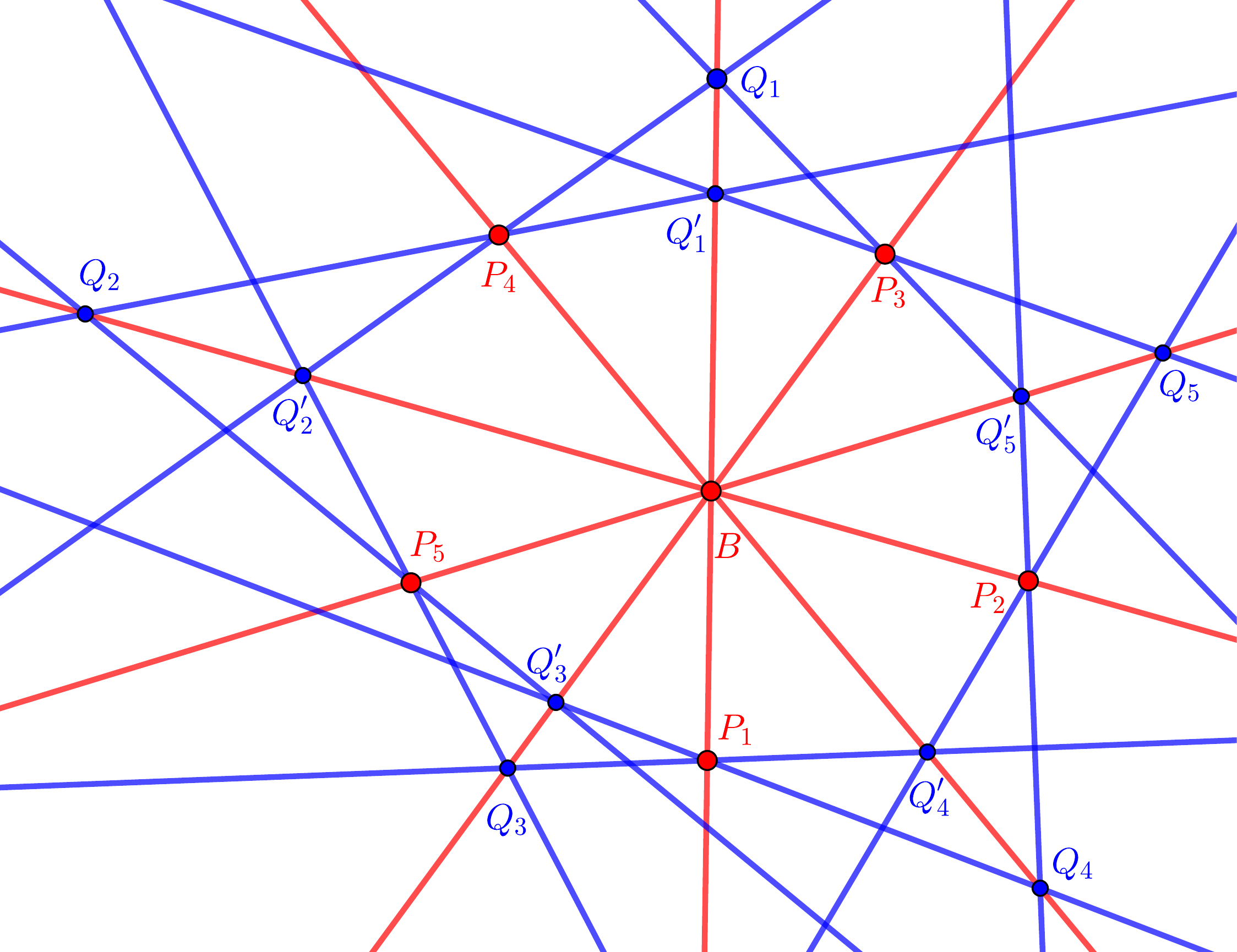}
\end{center}
\vspace{-.1in}
\caption{The configuration in Theorem~\ref{th:saam-5}. 
}
\label{fig:saam-5}
\end{figure}

\vspace{-.25in}

\begin{proof}
The claim follows from the tiling shown in Figure~\ref{fig:saam-tiling},
once we observe that for the 20-gon on the perimeter of this tiling, 
the generalized mixed ratio is equal to~1 for trivial algebraic reasons. 
(All factors cancel each other out in pairs.)  

To get an entirely tiling-based proof, we need to tile the aforementioned 20-gon.
One possible tiling is shown in Figure~\ref{fig:saam-tiling-20-cycle}. 
There, each octagonal ``petal'' is an 8-cycle from Proposition~\ref{pr:octagon},
so it can be tiled as in Figure~\ref{fig:8-gon}. 
In the interior 20-gon, the 10 sides involving the vertices labeled~$B$ can be glued in 5~pairs,
respecting both the labeling and the orientation. 
The remaining 10~sides form the (properly oriented) 10-gon $Q_1-(Q_2Q_4)-Q_5-(Q_1Q_3)-Q_4-(Q_2Q_5)-Q_3-(Q_1Q_4)-Q_2-(Q_3Q_5)-Q_1$. 
This is a 10-gon from Figure~\ref{fig:10-gon}, with respect to the cyclic ordering $Q_1Q_4Q_2Q_5Q_3$;
so it can be tiled as shown in Figure~\ref{fig:10-gon-tiling-no-holes} or
Figure~\ref{fig:10-gon-tiling-holes}. 
\end{proof}

%\begin{center}
%\includegraphics[scale=1.2, trim=7cm 17.5cm 7cm 7cm, clip]{rg-page161.pdf}
%\end{center}

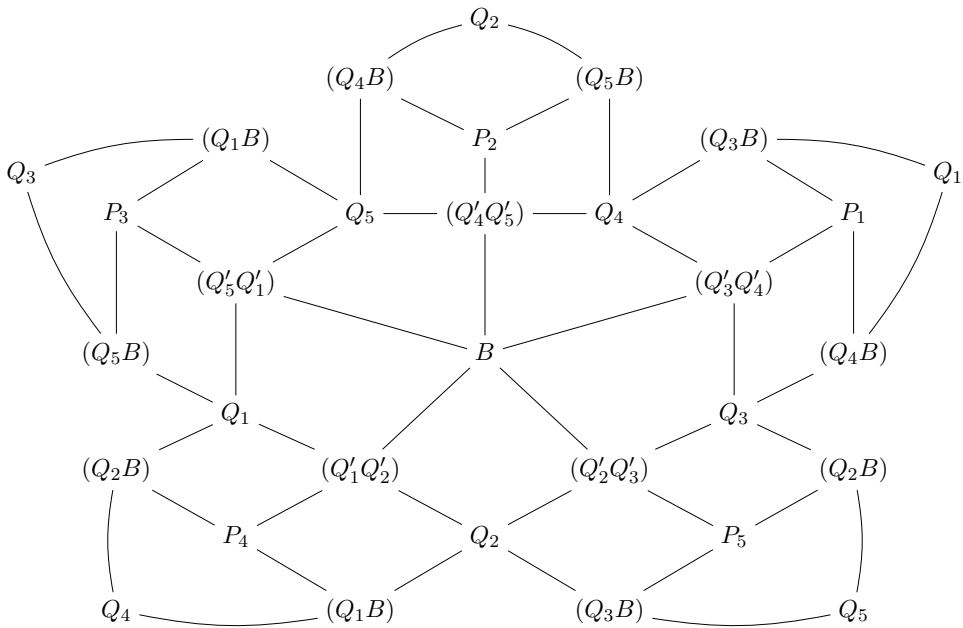
\begin{figure}[ht]
\vspace{-.1in}
\begin{equation*}
\begin{tikzpicture}[baseline= (a).base]
\node[scale=.85] (a) at (0,0){
\begin{tikzcd}[arrows={-stealth}, cramped, sep=small]
&&&& Q_2 \edge{dl, bend right=10} \edge{dr, bend left=10} \\
& & & (Q_4B) \edge{ddd} \edge{dr} && (Q_5B)\edge{ddd}   \\
& & (Q_1B) \edge{rdd} \edge{ldd} &  & P_2 \edge{dd} \edge{ru} &   & (Q_3B)\edge{ldd} \edge{rdd} \edge{drr, bend left=10} \\[-12pt]
Q_3 \edge{dddr, bend right=10} \edge{rru, bend left=10} &&&&&&&& Q_1\edge{dddl, bend left=10} \\[-9pt]
& P_3 \edge{dd} \edge{rd} & & Q_5\edge{r} \edge{ld} & (Q_4'Q_5')\edge{dd} \edge{r} & Q_4 \edge{rd} & & P_1 \edge{dd} \edge{ld} &  \\[3pt]
& & (Q_5'Q_1') \edge{dd} \edge{rrd} & & & & (Q_3'Q_4') \edge{dd} \edge{lld} \\[4pt]
& (Q_5B) \edge{rd} &&& B \edge{rdd} \edge{ldd} &&& (Q_4B)\edge{dl}  \\
& & Q_1 \edge{ld} \edge{rd} &&&& Q_3\edge{ld} \edge{rd}  \\[-2pt]
& (Q_2B) \edge{dd, bend right=10} \edge{rd} && (Q_1'Q_2')\edge{ld} \edge{rd}  && (Q_2'Q_3')\edge{ld} \edge{rd}  && (Q_2B)\edge{dd, bend left=10} \edge{ld}  \\[3pt]
&& P_4\edge{rd}  && Q_2\edge{ld} \edge{rd}  && P_5 \edge{ld} \\[6pt]
& Q_4 \edge{rr, bend right=10}&& (Q_1B) && (Q_3B) \edge{rr, bend right=10}&& Q_5
\end{tikzcd}
};
\end{tikzpicture}
\end{equation*}
\vspace{-.1in}
\caption{The tiling used in the proof of Theorem~\ref{th:saam-5}. 
%Here $C=17\cap 26$, $D=28\cap 37$, $E=39\cap 48$, $F=4A\cap 59$, $G=56\cap 1A$.
%For the 20-cycle along the boundary, the alternating product is equal to~1. 
}
\label{fig:saam-tiling}
\end{figure}
%
%\newpage
%
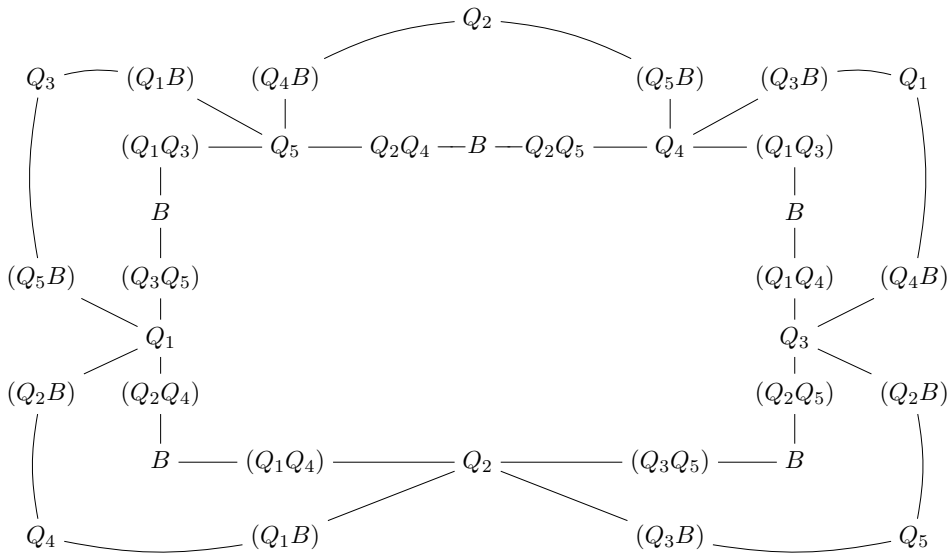
\begin{figure}[ht]
\vspace{-.2in}
\begin{equation*}
\hspace{-10pt}
\begin{tikzpicture}[baseline= (a).base]
\node[scale=.85] (a) at (0,0){
\begin{tikzcd}[arrows={-stealth}, cramped, sep=small]
&&& Q_2 \edge{dl, bend right=10} \edge{dr, bend left=10} \\
Q_3 \edge{ddd, bend right=10} \edge{r, bend left=10}& (Q_1B) \edge{rd} &(Q_4B) \edge{d}  && (Q_5B)\edge{d} & (Q_3B) \edge{ld} \edge{r, bend left=10} & Q_1 \edge{ddd, bend left=10}\\[3pt]
&  (Q_1Q_3) \edge{d}& Q_5\edge{r} \edge{l} & Q_2Q_4-\!\!\!-B-\!\!\!-Q_2Q_5 \edge{r} & Q_4 \edge{r} & (Q_1Q_3) \edge{d} \\[3pt]
& B \edge{d}  & & & & B \edge{d} \\[4pt]
(Q_5B) \edge{rd} &(Q_3Q_5) \edge{d}&&  && (Q_1Q_4) \edge{d}& (Q_4B)\edge{dl}  \\
& Q_1\edge{d} \edge{ld}  &&&& Q_3 \edge{d}\edge{rd}  \\[-2pt]
(Q_2B) \edge{dd, bend right=10}  & (Q_2Q_4) \edge{d}& & & & (Q_2Q_5) \edge{d}& (Q_2B)\edge{dd, bend left=10}   \\[3pt]
& B\edge{r}  & (Q_1Q_4) \edge{r} & Q_2\edge{ld} \edge{rd}\edge{r}   &(Q_3Q_5) \edge{r} & B  \\[6pt]
Q_4 \edge{rr, bend right=10}&& (Q_1B) && (Q_3B) \edge{rr, bend right=10}&& Q_5
\end{tikzcd}
};
\end{tikzpicture}
\end{equation*}
\vspace{-.15in}
\caption{Tiling the 20-gon appearing along the perimeter of Figure~\ref{fig:saam-tiling}. 
}
\label{fig:saam-tiling-20-cycle}
\end{figure}

\clearpage

\newpage

\section{Applications in three-dimensional geometry
%Linear incidence geometry
}
\label{sec:3D-incidence-geometry}

\section*{The bundle theorem}

The \emph{bundle theorem} (see, e.g., \cite{BBB, kahn} and references therein)
is the following result. 

\begin{theorem}
\label{th:bundle}
Let $\ell_1,\ell_2,\ell_3,\ell_4$ be four lines in the real/complex projective 3-space, no three of them coplanar. 
If~five of the pairs $(\ell_i,\ell_j)$ ($i\neq j$) are coplanar, then the sixth pair is also coplanar. 
\end{theorem}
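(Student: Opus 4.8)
The plan is to turn each of the six coplanarity conditions into the coherence of a single quadrilateral tile in $\PP=\PP^3$, and then to glue the six tiles into a bicolored quadrilateral tiling of a closed oriented surface, so that Theorem~\ref{th:master} applies directly. The basic dictionary is this: two distinct lines in $\PP^3$ are coplanar if and only if they meet. By Definition~\ref{def:coherent-tile}, a tile $(A,\ell,B,m)$ in $\PP^3$ (with $A,B$ points and $\ell,m$ planes) is coherent exactly when the line $(AB)$ meets the line $\ell\cap m$. Hence, to encode ``$\ell_i$ and $\ell_j$ are coplanar,'' I would pick two points $A,B$ on $\ell_i$ (so that $(AB)=\ell_i$) and two planes $\ell,m$ through $\ell_j$ (so that $\ell\cap m=\ell_j$); the resulting tile is coherent if and only if $\ell_i$ meets $\ell_j$, i.e.\ the pair $(\ell_i,\ell_j)$ is coplanar. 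Each of the six conditions $\{i,j\}$ thus becomes the coherence of one tile.

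Next I would assemble these six tiles into a closed oriented surface. Since there are six conditions and $S_4$ permutes the four lines, the natural candidate is the tiling of the sphere that is combinatorially the boundary of a cube: eight vertices (four black points, four white planes) and six quadrilateral faces, with the six faces in bijection with the six pairs $\{i,j\}$ and opposite faces corresponding to the three complementary pairings $\{12,34\}$, $\{13,24\}$, $\{14,23\}$. This is exactly the shape used for the Desargues theorem (Figure~\ref{fig:desargues}), where eight point/line labels and six tiles encode six collinearities. As in the Desargues and Pappus proofs, the eight labels would be points lying on, and planes passing through, the four lines $\ell_i$, possibly together with auxiliary objects, chosen so that each face realizes exactly one coplanarity under the dictionary above. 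Once such a labeling is in place, the conclusion is immediate: if five of the six tiles are coherent, Theorem~\ref{th:master} forces the sixth to be coherent, which says precisely that the sixth pair of lines is coplanar.

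The hard part will be the gluing, and this is the genuinely creative step (the ``art'' of the introduction). The difficulty is that realizing ``$\ell_i$ meets $\ell_j$'' literally requires two points on $\ell_i$ and two planes through $\ell_j$, and if these are taken to be intrinsic to the two lines, then different tiles share only isolated vertices and not whole edges, so the six squares fail to form a closed surface and the product-of-cross-ratios argument underlying Theorem~\ref{th:master} does not apply. Forcing the six tiles to close up — so that every edge is shared by exactly two tiles with matching orientation — requires a careful, non-obvious choice of the vertex labels (and, I expect, of some auxiliary points and planes beyond the data of the four lines), together with a verification that every edge $A-h$ joins a non-incident point and plane, under suitable genericity (no three of the $\ell_i$ coplanar, and generic auxiliary choices). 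As a consistency check on the combinatorics, note the classical fact that three pairwise-meeting, pairwise-noncoplanar lines are concurrent; applied to two of the coplanar triples this forces all four lines through a common point, so the sixth pair meets automatically. This concurrence is exactly the cube-consistency phenomenon of Bobenko--Suris discussed later in the paper, which is reassuring since the cube is precisely the surface the master theorem wants here.
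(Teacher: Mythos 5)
Your overall strategy (six tiles, one per pair, assembled on the combinatorial cube, then invoke Theorem~\ref{th:master}) matches the paper's, and your identification of the sphere/cube as the right surface is correct. But there is a genuine gap: the proof stands or falls on exhibiting the actual vertex labeling, and you stop exactly there, conceding that "the hard part will be the gluing." Worse, your proposed dictionary --- two points on $\ell_i$ giving $(AB)=\ell_i$ and two planes through $\ell_j$ giving $\ell\cap m=\ell_j$ --- is the wrong starting point, and as you yourself observe it cannot close up into a surface, because the data of each tile would then be intrinsic to a single pair of lines and adjacent tiles would share nothing. The resolution is not to encode the incidence $\ell_i\cap\ell_j\neq\varnothing$ literally, but to re-express it: $\ell_i$ and $\ell_j$ are coplanar iff any four points $P_i,Q_i\in\ell_i$, $P_j,Q_j\in\ell_j$ are coplanar, iff the ``cross'' line $(Q_iQ_j)$ meets the line $(P_iP_j)$. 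This is the reformulation that shares data across tiles.

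Concretely, the paper chooses two generic points $P_i,Q_i$ on each line $\ell_i$, labels the four black vertices of the cube by $Q_1,Q_2,Q_3,Q_4$ and the four white vertices by the planes $(P_iP_jP_k)$, with $Q_i$ opposite to $(P_jP_kP_\ell)$. The face whose black vertices are $Q_i,Q_j$ has as its white vertices the two planes $(P_iP_jP_k)$ and $(P_iP_jP_\ell)$, which intersect in the line $(P_iP_j)$; coherence of that tile says $(Q_iQ_j)$ meets $(P_iP_j)$, i.e.\ $P_i,P_j,Q_i,Q_j$ are coplanar, i.e.\ $\ell_i=(P_iQ_i)$ and $\ell_j=(P_jQ_j)$ are coplanar. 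Every edge $Q_i-(P_iP_jP_k)$ is shared by exactly two faces, the non-incidence $Q_i\notin(P_iP_jP_k)$ follows from the hypothesis that no three of the lines are coplanar, and the master theorem then finishes the argument. Without this (or an equivalent) labeling your argument is a plan rather than a proof; the auxiliary-objects step you defer is precisely the content.
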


\begin{proof}
Choose two generic points $P_i$ and $Q_i$ on each of the four lines~$\ell_i$. 
In the tiling of the sphere shown in Figure~\ref{fig:bundle}, the six tiles 
correspond to the six coplanarities for the pairs $(\ell_i,\ell_j)$.
The claim follows.
\end{proof}

\begin{figure}[ht]
\vspace{-.1in}
\begin{equation*}
%\begin{tikzpicture}[baseline= (a).base]
%\node[scale=.88] (a) at (0,0){
\begin{tikzcd}[arrows={-stealth}, cramped, sep=6]
Q_3 \edge{rrr} \edge{rd} \edge{ddd}&&& (P_1P_3P_4)  \edge{ddd}\edge{ld}\\[.4in]
& (P_1P_2P_3)  \edge{r}  \edge{d}& Q_1 \edge{d}& \\[.4in]
& Q_2 \edge{r} \edge{ld} & (P_1P_2P_4) \edge{rd}& \\[.4in]
(P_2P_3P_4) \edge{rrr} & & & Q_4
\end{tikzcd}
%};
%\end{tikzpicture}
\end{equation*}
\vspace{-5pt}
\caption{The tiling of the sphere used in the proof of the bundle theorem. 
Each tile~(a~face of the cube) corresponds to a coplanarity condition.
For example, the coherence of the outer face means that the points $P_3,P_4,Q_3,Q_4$ (equivalently, the lines $\ell_3$ and~$\ell_4$) are coplanar. 
}
\label{fig:bundle}
\end{figure}
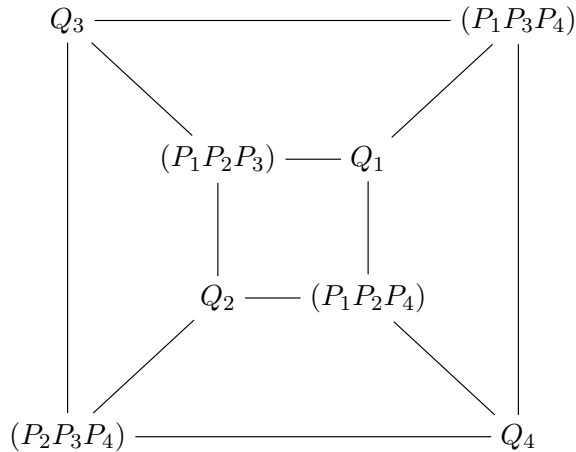

\vspace{-.2in}

\begin{remark}
%\label{rem:}
If three lines do not lie in a plane but are pairwise coplanar, then these three lines intersect at a common point. 
So the four lines $\ell_1,\ell_2,\ell_3,\ell_4$
in Theorem~\ref{th:bundle} do, in fact, intersect at a single point, thus forming a ``bundle'' (hence the name). 
\end{remark}

\begin{remark}
%\label{rem:}
The above proof of Theorem~\ref{th:bundle} implicitly relies on the non-coplanarity condition
on the triples of lines~$\ell_i$. 
Indeed, the coherence of the tiles shown in Figure~\ref{fig:bundle} requires each point~$Q_i$ 
to lie outside the planes of the form $(P_iP_jP_k)$,
which is ensured by the fact that the lines $\ell_i, \ell_j,\ell_k$ are non-coplanar. 
\end{remark}

\begin{remark}
%\label{rem:}
Ignoring the labels, the tiling in Figure~\ref{fig:bundle} coincides with the tiling in Figure~\ref{fig:desargues}.
Thus, the bundle theorem can be viewed as the 3-dimensional counterpart of Desargues' theorem. 

A 3-dimensional counterpart of the Pappus theorem will be presented later in this section, see Theorem~\ref{th:3d-pappus}. 
\end{remark}

\newpage

\section*{The cube theorem}

The \emph{cube theorem} below is due to A.~F.~M\"obius~\cite{moebius}, cf.\ \cite[vol.~1, Exercise I.II.10]{baker} or 
{\cite[Example~11]{richter-gebert-mechanical}}. 

\begin{theorem}%[The cube theorem]
\label{th:cube}
Let $P_1, \dots, P_8$ be points in the 3-space such that none of the quadruples $\{P_i,P_j,P_k,P_\ell\}$
($1\le i<j<k\le4<\ell$) are coplanar. 
If seven of the eight quadruples 
\begin{equation}
\label{eq:8-quadruples}
\begin{array}{l}
\{P_1, P_2, P_7, P_8\}, \{P_1, P_2, P_5, P_6\}, \{P_1, P_4, P_5, P_8\}, \{P_1, P_4, P_6, P_7\}, \\[.05in]
\{P_2, P_3, P_5, P_8\}, \{P_2, P_3, P_6, P_7\}, \{P_3, P_4, P_7, P_8\}, \{P_3, P_4, P_5, P_6\}
\end{array}
\end{equation}
are coplanar, then the remaining quadruple is also coplanar. 
See Figure~\ref{fig:16-points-cube}. 
\end{theorem}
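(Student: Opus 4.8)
The plan is to realize the eight coplanarity conditions as the coherence conditions of the eight tiles of a single bicolored quadrilateral tiling, and then invoke the master theorem (Theorem~\ref{th:master}). The key observation is the elementary fact that four (generic) points of $\PP$ with $\dim\PP=3$ are coplanar if and only if the two lines joining them in pairs meet: for a quadruple $\{P_a,P_b,P_c,P_d\}$, coplanarity is equivalent to $(P_aP_b)\cap(P_cP_d)\neq\varnothing$. In the language of Definition~\ref{def:coherent-tile}, if I form a tile whose black (point) vertices are $P_c,P_d$ and whose white (plane) vertices are two distinct planes $\ell,m$ with $\ell\cap m=(P_aP_b)$, then, granting the requisite non-incidences, the tile is coherent precisely when the line $(P_cP_d)$ meets the codimension~$2$ subspace $\ell\cap m=(P_aP_b)$, i.e.\ precisely when $\{P_a,P_b,P_c,P_d\}$ is coplanar.

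To choose the planes systematically, regard $P_1,P_2,P_3,P_4$ as the vertices of a tetrahedron $T$ and let $g_j$ denote its face opposite $P_j$ (so $g_1=(P_2P_3P_4)$, and so on). Each of the eight quadruples in~\eqref{eq:8-quadruples} has the form $\{P_a,P_b,P_c,P_d\}$ with $\{a,b\}\subset\{1,2,3,4\}$ an edge of $T$ and $\{c,d\}\subset\{5,6,7,8\}$. I encode such a quadruple by the tile with black vertices $P_c,P_d$ and white vertices the two faces of $T$ through the edge $(P_aP_b)$, namely the two faces $g_j$ with $j\notin\{a,b\}$; their intersection is exactly the line $(P_aP_b)$, so coherence of the tile is equivalent to coplanarity of the quadruple. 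Running through all eight quadruples produces eight tiles whose vertex set consists of the four black points $P_5,P_6,P_7,P_8$ and the four white planes $g_1,g_2,g_3,g_4$. A direct bookkeeping check shows that each of the sixteen black--white pairs $\{P_c,g_j\}$ occurs as an edge of exactly two of the eight tiles; hence the tiles glue into a closed surface whose $1$-skeleton is the complete bipartite graph $K_{4,4}$, with $V=8$, $E=16$, $F=8$ and Euler characteristic $0$. This is a torus tiling, and it is automatically bicolored black versus white since $K_{4,4}$ is bipartite.

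It remains to verify the hypothesis of Theorem~\ref{th:master} that no point lies on a plane to which it is joined by an edge. The edges of the tiling are exactly the pairs $\{P_c,g_j\}$ with $c\in\{5,6,7,8\}$ and $j\in\{1,2,3,4\}$, and $P_c\in g_j$ would mean that a ``top'' vertex $P_c$ lies on a face $(P_xP_yP_z)$ of $T$, i.e.\ that some quadruple $\{P_x,P_y,P_z,P_c\}$ with $x<y<z\le 4<c$ is coplanar. These are precisely the quadruples excluded by the genericity assumption of the theorem, so every required non-incidence holds. Orienting the torus consistently, each edge is traversed in opposite directions by its two adjacent tiles, so (by the argument in the proof of Theorem~\ref{th:master}) the product of the mixed cross-ratios over all eight tiles equals~$1$. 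Consequently, if seven of the eight tiles are coherent, so is the eighth, which is exactly the assertion that if seven of the quadruples in~\eqref{eq:8-quadruples} are coplanar then so is the eighth.

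The genuinely creative step---the main obstacle---is discovering this tiling: one must see that taking the top points as the black vertices and the faces of the \emph{bottom} tetrahedron $T=P_1P_2P_3P_4$ as the white vertices is what makes the coplanarity encoding work while simultaneously matching the non-incidence requirements to exactly the stated genericity hypothesis. The dual choice, with the bottom points black and the top tetrahedron's faces white, would instead demand the non-coplanarity of the quadruples consisting of one bottom and three top points, which is not what is assumed. Once the tiling is in hand, the remaining verifications---the line-meets-line encoding, the doubling of each $K_{4,4}$ edge, and the orientation bookkeeping---are routine.
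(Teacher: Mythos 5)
Your proposal is correct and is essentially the paper's own proof: the tiling you construct (black vertices $P_5,\dots,P_8$, white vertices the four faces of the tetrahedron $P_1P_2P_3P_4$, with $1$-skeleton $K_{4,4}$ on the torus) is exactly the tiling exhibited in Figure~\ref{fig:cube-thm-tiling}, to which the paper applies the master theorem. Your writeup additionally spells out the edge-doubling/Euler-characteristic bookkeeping and the matching of the non-incidence requirements to the stated genericity hypothesis, which the paper leaves to the figure and its caption.
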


\iffalse
Note that the eight quadruples in Theorem~\ref{th:cube} can be naturally associated with 
\begin{itemize}[leftmargin=.3in]
\item[(i)]
the six faces of the cube in Figure~\ref{fig:16-points-cube}, together with 
\item[(ii)]
the two quadruples of pairwise non-adjacent vertices of the cube. 
\end{itemize}
The cube theorem asserts that if all six faces of the cube are coplanar 
and~one of the two quadruples in~(ii) is coplanar, then the other quadruple is coplanar as~well. 
\fi

%The terminology ``the cube theorem'' is explained in Figure~\ref{fig:16-points-cube}. 

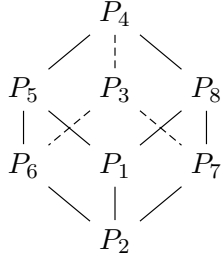
\begin{figure}[ht]
\vspace{-10pt}
\begin{equation*}
\begin{tikzcd}[arrows={-stealth}, cramped, sep=8]
& &  P_4 \arrow[lld, no head] \arrow[rrd, no head] \arrow[d, no head, dashed]  & & \\[5pt]
P_5 \arrow[d, no head] \arrow[rrd, no head] && P_3 \arrow[rrd, no head, dashed] \arrow[lld, no head, dashed]   
     && P_8 \arrow[d, no head] \arrow[lld, no head] \\[5pt]
P_6 \arrow[rrd, no head] && P_1 \arrow[d, no head] && P_7\arrow[lld, no head] \\[5pt]
&& P_2
\end{tikzcd}
\end{equation*}
\vspace{-7pt}
\caption{The cube theorem. Given eight points $P_i$ associated with the vertices of the cube as shown in the picture, 
the theorem asserts that if the four points associated with each face of the cube are coplanar, 
as is the quadruple $\{P_1, P_4, P_6, P_7\}$, then the quadruple $\{P_2, P_3, P_5, P_8\}$ is coplanar as well. 
}
\label{fig:16-points-cube}
\end{figure}

\vspace{-15pt}

%\pagebreak[3]

\begin{proof}%[Proof of Theorem~\ref{th:cube}]
Apply the master theorem to the tiling of the torus shown in Figure~\ref{fig:cube-thm-tiling}. 
\end{proof}

\enlargethispage{.5cm}

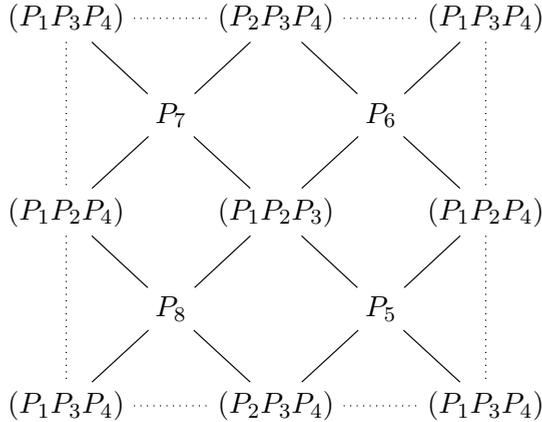
\begin{figure}[ht]
\vspace{-7pt}
\begin{equation*}
\begin{tikzcd}[arrows={-stealth}, cramped, sep=5]
(P_1P_3P_4) \arrow[rd, no head] \edge{rr, dotted} \edge{dd, dotted} && (P_2P_3P_4) \arrow[ld, no head] \arrow[rd, no head]  \edge{rr, dotted} && (P_1P_3P_4) \arrow[ld, no head]  \edge{dd, dotted} \\[15pt]
& P_7 \arrow[ld, no head] \arrow[rd, no head]  && P_6 \arrow[ld, no head] \arrow[rd, no head] \\[15pt]
(P_1P_2P_4) \edge{dd, dotted} \arrow[rd, no head] & & (P_1P_2P_3) \arrow[ld, no head] \arrow[rd, no head] & & (P_1P_2P_4) \edge{dd, dotted} \arrow[ld, no head] \\[15pt]
& P_8 \arrow[ld, no head] \arrow[rd, no head]  && P_5 \arrow[ld, no head] \arrow[rd, no head] \\[15pt]
(P_1P_3P_4)  \edge{rr, dotted} & & (P_2P_3P_4)  \edge{rr, dotted} & & (P_1P_3P_4)
\end{tikzcd}
\end{equation*}
%\vspace{-6pt}
\caption{The tiling of the torus that yields the cube theorem. 
Opposite sides of the rectangular fundamental domain should be glued to each other.
There are no edges between the vertices connected by dotted lines.
The eight coplanarity conditions~\eqref{eq:8-quadruples} are encoded by the eight tiles. 
For example, the tile at the center-top with the vertices $P_7$, $(P_2P_3P_4)$, $P_6$ and $(P_1P_2P_3)$
encodes the coplanarity of the lines $(P_6P_7)$ and $(P_2P_3P_4)\cap (P_1P_2P_3) = (P_2P_3)$,
i.e., the coplanarity of the quadruple $\{P_2,P_3,P_6,P_7\}$. 
}
\label{fig:cube-thm-tiling}
\end{figure}

\clearpage

\newpage

\section*{The octahedron theorem}

In the forthcoming treatment of 4D consistency (see Section~\ref{sec:consistency}), 
we will utilize a version of Theorem~\ref{th:cube}
known as the \emph{octahedron theorem}
(see, e.g., \cite[Example~11]{nixon-schulze-whiteley} and references therein).

\begin{theorem}%[The octahedron theorem]
\label{th:octahedron}
Let $f_{12}, f_{13}, f_{14}, f_{23}, f_{24}, f_{34}$ be six points in 3-space. 
Assume that the planes $(f_{12}f_{13}f_{14})$, 
$(f_{12}f_{23}f_{24})$, 
$(f_{13}f_{23}f_{34})$,  
$(f_{14}f_{24}f_{34})$ have a common point and the six given points are otherwise generic. 
Then the planes 
$(f_{12}f_{13}f_{23})$ 
$(f_{12}f_{14}f_{24})$, 
$(f_{13}f_{14}f_{34})$,
$(f_{23}f_{24}f_{34})$ 
have a common point. 
\end{theorem}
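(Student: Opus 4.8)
The plan is to realize the octahedron theorem (Theorem~\ref{th:octahedron}) as an instance of the master theorem, by exhibiting a bicolored quadrilateral tiling of a closed oriented surface whose tiles encode precisely the five hypothesized coplanarity/concurrence conditions plus the one conclusion. Since we work in 3-space, the black vertices carry points of~$\PP$ and the white vertices carry planes (hyperplanes in $\PP^3$). Each tile
\begin{equation*}
\begin{tikzpicture}[baseline= (a).base]
\node[scale=1] (a) at (0,0){
\begin{tikzcd}[arrows={-stealth}, sep=small, cramped]
A  \edge{r}  \edge{d}& \ell \edge{d} \\[3pt]
m \edge{r} & B
\end{tikzcd}
};
\end{tikzpicture}
\end{equation*}
encodes the coherence condition that the line $(AB)$ meets the line $\ell\cap m$, i.e.\ that $A,B$ and the codimension-2 subspace $\ell\cap m$ are coplanar in the relevant sense.

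**First I would** reformulate the four hypotheses and the conclusion in the form demanded by Definition~\ref{def:coherent-tile}. The crucial observation is that the common-point hypothesis on the four planes is equivalent, after choosing auxiliary points on the edges of the octahedron, to a collection of coplanarity conditions, each of which pairs two of the $f_{ij}$ with an intersection line of two of the eight relevant planes. Concretely, I expect to introduce the eight planes $(f_{12}f_{13}f_{14})$, $(f_{12}f_{23}f_{24})$, $(f_{13}f_{23}f_{34})$, $(f_{14}f_{24}f_{34})$ (the ``hypothesis'' planes) and $(f_{12}f_{13}f_{23})$, $(f_{12}f_{14}f_{24})$, $(f_{13}f_{14}f_{34})$, $(f_{23}f_{24}f_{34})$ (the ``conclusion'' planes) as the white vertices, and the six points $f_{ij}$ as the black vertices. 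Each tile then records that a particular pair $f_{ij},f_{ik}$ together with the intersection of two planes sharing that pair lie in a common plane—which is automatic for the eight facial planes of the octahedron—while the two global conditions (common point of the four hypothesis planes, common point of the four conclusion planes) become the coherence of two distinguished tiles.

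**The main structural step** is to build the tiling itself: I would take the boundary complex of the octahedron (eight triangular faces, six vertices, twelve edges) and convert it into a quadrilateral tiling by placing a point-vertex at each of the six $f_{ij}$ and a plane-vertex at the center of each of the eight triangular faces, joining each face-center to its three incident $f_{ij}$. This medial/barycentric subdivision turns each triangle into three quadrilaterals, giving $24$ tiles on the sphere $S^2$; the antipodal pairing of faces of the octahedron lets me fold this onto a smaller surface if desired. Because the octahedron graph is bipartite-after-subdivision (plane-vertices and point-vertices alternate along every edge), the two-coloring demanded by Theorem~\ref{th:master} exists automatically, and since we are on a sphere (or a quotient thereof), the required $2$-coloring is guaranteed by the remark following the master theorem. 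I would check that $23$ of the $24$ tiles are coherent exactly when the four hypothesis planes are concurrent and the six points are generic, so that coherence of the last tile—equivalent to concurrence of the four conclusion planes—follows from Theorem~\ref{th:master}.

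**The hard part will be** the bookkeeping that matches each quadrilateral tile to a genuine coplanarity statement and confirming that the concurrence hypothesis on four planes is faithfully captured by the coherence of the tiles surrounding a single point-vertex (rather than introducing spurious or redundant conditions). In particular, I must verify the second bullet of Definition~\ref{def:coherent-tile} at each tile: that the line $(f_{ij}f_{kl})$ actually meets the codimension-2 subspace obtained as the intersection of the two incident planes, and that no point $f_{ij}$ accidentally lies on an incident plane (the non-incidence condition), which is where the genericity assumption enters. I expect the cleanest route is to exploit the $S_4$-symmetry of the octahedron labeling—the indices $\{1,2,3,4\}$ act on the $f_{ij}$—so that all $24$ coherence verifications reduce to a single orbit computation, and the global concurrence of four planes translates, via the cross-ratio product in the proof of Theorem~\ref{th:master}, into the telescoping identity that forces the final tile to be coherent.
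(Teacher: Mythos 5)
There is a genuine gap: the tiling you propose to build from the boundary complex of the octahedron does not satisfy the hypotheses of the master theorem, and it cannot encode the concurrence conditions at all. If you place a plane-vertex in each triangular face and a point-vertex at each $f_{ij}$ and form quadrilaterals in the manner of Definition~\ref{def:graphs-on-surfaces}, you get one tile per \emph{edge} of the octahedron (twelve tiles, not twenty-four; the ``three quadrilaterals per triangle'' count would require additional edge-midpoint vertices, which then receive no consistent color in the required bipartition, since the six points and eight face-centers would all land in the same color class). Worse, the tile attached to the edge $\{f_{ij},f_{ik}\}$ has as its two white vertices the planes $(f_{ij}f_{ik}f_{jk})$ and $(f_{ij}f_{ik}f_{i\ell})$, \emph{both of which contain both black vertices} $f_{ij}$ and $f_{ik}$. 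This violates the non-incidence requirement of Definition~\ref{def:coherent-tile} (and of Theorem~\ref{th:master}), so none of these tiles is admissible. Finally, a single quadrilateral tile involves only two points and two planes, so the statement ``four planes have a common point'' is not the coherence condition of any one tile with your vertex set; the ``two distinguished tiles'' you invoke do not exist in this tiling.

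The missing idea is to introduce the two common points as auxiliary vertices. The paper proves Theorem~\ref{th:octahedron} by reduction to the cube theorem (Theorem~\ref{th:cube}): set $P_1$ equal to the common point of the four hypothesis planes, $P_3$ equal to the common point of three of the conclusion planes, and $P_2=f_{12}$, $P_4=f_{34}$, $P_5=f_{13}$, $P_6=f_{14}$, $P_7=f_{24}$, $P_8=f_{23}$. With this dictionary, seven of the eight coplanarity conditions in~\eqref{eq:8-quadruples} hold by hypothesis and construction, and the cube theorem forces the eighth, namely $P_3\in(f_{23}f_{24}f_{34})$, which is the desired conclusion. The tiling that ultimately powers the argument is the one in Figure~\ref{fig:cube-thm-tiling} --- a tiling of the \emph{torus} by eight tiles whose white vertices are the four planes spanned by triples of $\{P_1,f_{12},P_3,f_{34}\}$ and whose black vertices are $f_{13},f_{14},f_{23},f_{24}$ --- not a tiling read off from the faces of the octahedron itself. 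Your instinct that the concurrence hypotheses must be ``broken into tile-sized coplanarities via auxiliary points'' is the right one, but carrying it out requires adjoining $P_1$ and $P_3$ as new points and abandoning the octahedral face structure in favor of the cube-theorem tiling.
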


Theorem~\ref{th:octahedron} can be visually presented as follows, cf.\ Figure~\ref{fig:octahedron}. 
Given an (irregular) octahedron in the real \hbox{3-space}, color its eight faces in two colors, red and blue,
so that adjacent faces are colored in different colors.
Then the four red faces intersect at a point if and only if the four blue faces intersect at a point.

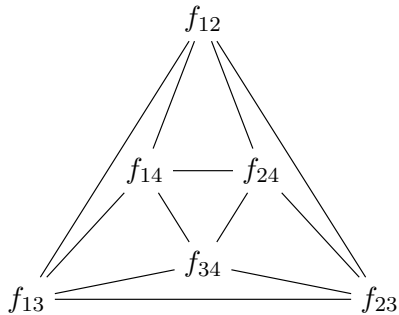
\begin{figure}[ht]
\begin{equation*}
%\begin{tikzcd}[arrows={-stealth}, cramped, sep=small]
%& &  f_{12} \arrow[lld, no head] \arrow[llddd, no head] \arrow[rrd, no head] \arrow[rrddd, no head]  & & \\[4pt]
%f_{14} \arrow[dd, no head] \arrow[rrrr, no head, dashed] \arrow[rrddd, no head, dashed] &&&&  f_{24} \arrow[dd, no head] \arrow[llddd, no head, dashed]  \\[4pt]
% &&&& \\[4pt]
%f_{13} \arrow[rrd, no head] \arrow[rrrr, no head]  &&&& f_{23}\arrow[lld, no head] \\[4pt]
%&& f_{34}
%\end{tikzcd}
\begin{tikzcd}[arrows={-stealth}, cramped, sep=1]
&\ &\  &\  &  f_{12} \edge{llllddd} \edge{ld} \edge{rd} \edge{rrrrddd} \\[40pt]
&& & f_{14} \edge{rr} \edge{rd} \edge{llldd}&&  f_{24} \edge{ld} \edge{rrrdd}\\[18pt]
&& && f_{34} \edge{lllld} \edge{rrrrd}\\[-4pt]
f_{13} \edge{rrrrrrrr}  &&&&&&\ &\ & f_{23}  
\end{tikzcd}
\end{equation*}
%\vspace{-.1in}
\caption{The octahedron theorem (Theorem~\ref{th:octahedron}). 
%The faces $f_{13}f_{14}f_{12}, f_{13}f_{23}f_{34}, f_{12}f_{24}f_{23}, f_{14}f_{24}f_{34}$ have a common point if and only if 
%the faces $f_{14}f_{12}f_{24}, f_{24}f_{23}f_{34}, f_{13}f_{14}f_{34}, f_{13}f_{12}f_{23}$ have a common point.
}
\vspace{-.1in}
\label{fig:octahedron}
\end{figure}

\begin{proof}
The octahedron theorem is obtained from the cube theorem (Theorem~\ref{th:cube})
under the following change of notation:
\begin{align*}
P_1 &= (f_{12}f_{13}f_{14})\cap (f_{12}f_{23}f_{24})\cap (f_{13}f_{23}f_{34})\cap (f_{14}f_{24}f_{34}), \\
P_2 &= f_{12}, \\
P_3 &= (f_{12}f_{13}f_{23})\cap (f_{12}f_{14}f_{24})\cap (f_{13}f_{14}f_{34})\cap (f_{23}f_{24}f_{34}), \\
P_4 &= f_{34}, \\
P_5 &= f_{13}, \\
P_6 &= f_{14}, \\
P_7 &= f_{24}, \\
P_8 &= f_{23}. 
\end{align*}
To be a bit more precise, set $P_3=(f_{12}f_{13}f_{23})\cap (f_{12}f_{14}f_{24})\cap (f_{13}f_{14}f_{34})$.
Then all quadruples in~\eqref{eq:8-quadruples} except for $\{P_3, P_4, P_7, P_8\}$
are coplanar, hence the latter quadruple is coplanar as well. 
\end{proof}

\begin{remark}
%\label{rem:}
Cox's first theorem \cite[Exercise~2.28]{bobenko-suris-book}
is essentially a reformulation of the octahedron theorem.
\end{remark}

\newpage

\section*{The sixteen points theorem}

The sixteen points theorem is the following classical result in 3-dimensional projective geometry, see, e.g., 
\cite[p.~46]{baker}, 
\cite[$\S$4.2]{heyting} or~\cite[Example~11]{richter-gebert-mechanical}.

\begin{theorem}[The sixteen points theorem]
\label{th:16-points}
Let $a_1, a_2, a_3, a_4, b_1, b_2, b_3, b_4$ be a generic collection of eight lines in 3-space
such that fifteen of the pairs $(a_i, b_j)$ are coplanar. Then the remaining pair is also~coplanar.
\end{theorem}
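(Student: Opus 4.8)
The plan is to exhibit the sixteen coplanarity conditions $(a_i,b_j)$ as the coherence conditions of sixteen tiles in a bicolored quadrilateral tiling of a closed oriented surface, and then invoke the master theorem (Theorem~\ref{th:master}). The first step is to fix two points on each line: write $a_i=(P_iQ_i)$ and $b_j=(R_jS_j)$, with the four points on each relevant pair chosen generically. Then coplanarity of $a_i$ and $b_j$ is equivalent to coplanarity of the four points $P_i,Q_i,R_j,S_j$, i.e.\ to the assertion that the transversal line $(Q_iS_j)$ meets the line $(P_iR_j)$. This is precisely the shape of a coherence condition in $3$-space: a tile with black (point) corners $Q_i,S_j$ and white (plane) corners two planes whose intersection is the line $(P_iR_j)$. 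The template for this encoding is the proof of the bundle theorem (Theorem~\ref{th:bundle}), where coplanarity of $\ell_i,\ell_j$ was rewritten as ``$(Q_iQ_j)$ meets $(P_iP_j)=(P_iP_jP_k)\cap(P_iP_jP_l)$''; the only difference here is that the two families of lines play asymmetric roles.

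Next I would assemble these sixteen tiles into a closed oriented surface. The black vertices would be the eight points $Q_i$ (one per $a$-line) and $S_j$ (one per $b$-line), while the white vertices would be planes through suitable triples of the auxiliary points $P_i,R_j$, chosen so that for each pair $(i,j)$ the two white corners of the corresponding tile cut out exactly the line $(P_iR_j)$. Since the pairs $(a_i,b_j)$ form a complete bipartite pattern $K_{4,4}$, the natural ambient surface is a torus, and I would lay the sixteen tiles out on a sheared/staircase fundamental domain with opposite sides identified, as in the tilings used for the Pappus theorem and for the twin stars of David (Theorem~\ref{th:dual-stars-of-david}). Once such a tiling is in place, one checks tile by tile that coherence of the $(i,j)$ tile holds if and only if $a_i$ and $b_j$ are coplanar, and that the non-incidence hypotheses of Theorem~\ref{th:master} (no designated point lies on an adjacent plane) follow from genericity; the conclusion ``fifteen coherent $\Rightarrow$ sixteenth coherent'' is then immediate.

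The hard part — the genuinely creative step — is producing a tiling that actually closes up into a surface while keeping every tile coherent. In the bundle theorem the white planes were triple-planes $(P_iP_jP_k)$, which exist automatically and each carry three of the relevant lines $(P_iP_j)$, matching the degree-$3$ vertices of the cube. For the richer $K_{4,4}$ pattern the surface carries more tiles, so its white vertices must have higher degree, and a single plane carrying four relevant lines $(P_iR_j)$ would require four of the auxiliary points to be coplanar — which is \emph{not} automatic for generic input, and cannot be forced symmetrically because the points $P_i$ and $R_j$ come from two disjoint groups. The resolution I would pursue is either to spend the freedom in placing $P_i,R_j$ along their lines to arrange exactly the needed coplanar quadruples (while preserving genericity elsewhere), or, preferably, to lower the vertex degrees by inserting \emph{auxiliary} tiles whose coherence is automatic by construction or follows from a smaller incidence theorem (the bundle theorem or the Pappus theorem), exactly as auxiliary tiles appear in the proofs for the decagon (Proposition~\ref{prop:10-gon}) and for Saam's theorem (Theorem~\ref{th:saam-5}). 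Verifying that the resulting completed tiling is a genuine closed oriented surface with the sixteen designated condition-tiles, and that all auxiliary tiles are coherent, is the remaining—routine but essential—bookkeeping, after which Theorem~\ref{th:master} finishes the proof.
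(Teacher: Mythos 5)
Your overall framework---encode each coplanarity as the coherence of a tile and invoke Theorem~\ref{th:master}---is the right one, and your reduction of ``$a_i$ and $b_j$ coplanar'' to ``$(Q_iS_j)$ meets $(P_iR_j)$'' is correct. But the proposal stops exactly where the proof has to begin: you never produce the tiling, and you yourself identify the obstruction. With $16$ tiles on a torus, Euler's formula forces $16$ vertices, hence $8$ white vertices of average degree~$4$, i.e.\ planes each containing four of the lines $(P_iR_j)$; as you note, this imposes non-generic coplanarities among your $16$ auxiliary points. Neither of your two escape routes (tuning the positions of $P_i,R_j$, or inserting auxiliary tiles) is carried out, so as it stands this is a plan with a genuine gap, not a proof.

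The idea you are missing is the one that dissolves the obstruction. Since $K_{4,4}$ minus any edge still contains a perfect matching, you may relabel so that the four pairs $(a_i,b_i)$ are all among the fifteen known-coplanar ones; then $a_i$ and $b_i$ meet, and you can take $P_i=a_i\cap b_i$ as a \emph{common} point of the two lines, writing $a_i=(P_iQ_i)$ and $b_i=(P_iR_i)$. This collapses your two ``disjoint groups'' of auxiliary points into a single set $P_1,\dots,P_4$, makes the four diagonal conditions automatic, and leaves only $12$ off-diagonal conditions to encode. The white vertices can then be taken to be the four planes $(P_iP_jP_k)$---which exist for free and pairwise intersect in the lines $(P_iP_j)$, exactly as in your ``template,'' the bundle theorem (Theorem~\ref{th:bundle})---and the $12$ tiles, with black vertices $Q_i,R_j$ and white vertices these four triple-planes, close up into a tiling of the torus (Figure~\ref{fig:16-points-torus}). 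For instance the tile with corners $Q_3$, $(P_1P_2P_3)$, $R_2$, $(P_2P_3P_4)$ is coherent iff $(Q_3R_2)$ meets $(P_1P_2P_3)\cap(P_2P_3P_4)=(P_2P_3)$, i.e.\ iff $a_3$ and $b_2$ are coplanar. No auxiliary tiles and no forced coplanarities are needed. (The paper also gives a second proof reducing the statement to the cube theorem, Theorem~\ref{th:cube}, which your proposal does not consider.)
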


\begin{proof}[Proof~1]
Let the eight given lines be 
\begin{equation*}
\begin{array}{llll}
a_1=(P_1Q_1), \qquad & a_2=(P_2Q_2), \qquad & a_3=(P_3Q_3), \qquad & a_4=(P_4Q_4), \\[.05in]
b_1=(P_1R_1), & b_2=(P_2R_2), & b_3=(P_3R_3), & b_4=(P_4R_4). 
\end{array}
\end{equation*}
Then the four conditions $a_i\cap b_i\neq\varnothing$ are automatic, while the remaining 12 conditions
$a_i\cap b_j\neq\varnothing$ ($i\neq j$) are encoded by the 12 tiles of the tiling of the torus shown in Figure~\ref{fig:16-points-torus}.
The claim follows. 
\end{proof}

\begin{figure}[ht]
\vspace{-.1in}
\begin{equation*}
\label{eq:16-points-torus}
\begin{tikzpicture}[baseline= (a).base]
\node[scale=1] (a) at (0,0){
\begin{tikzcd}[arrows={-stealth}, cramped, sep=small]
&& R_1 \edge{rr} \edge{ld} && (P_1P_2P_3) \edge{rr} \edge{ld} \edge{rd}&& Q_1 \edge{rd} \\[40pt]
& (P_1P_3P_4) \edge{rr} \edge{ld} \edge{rd} && Q_3 \edge{rd} && R_2 \edge{rr} \edge{ld} && (P_1P_2P_4) \edge{ld} \edge{rd}\\[40pt]
Q_1 \edge{rd} && R_4 \edge{rr} \edge{ld} && (P_2P_3P_4) \edge{rr} \edge{rd} \edge{ld}&& Q_4\edge{rd} & & R_1\edge{ld} \\[40pt]
& (P_1P_2P_4) \edge{rr} \edge{rd} && Q_2 \edge{rd} && R_3 \edge{rr} \edge{ld} && (P_1P_3P_4) \edge{ld} \\[40pt]
&& R_1 \edge{rr} && (P_1P_2P_3) \edge{rr} && Q_1 
\end{tikzcd}
};
\end{tikzpicture}
\end{equation*}
\vspace{-.1in}
\caption{The tiling of the torus used in the proof of the sixteen points theorem.
Opposite sides of the hexagonal fundamental domain should be glued to each~other.
The~coherence of each tile can be interpreted as coplanarity of two lines $a_i$ and~$b_j$ with $i\neq j$.
For~example, the tile at the center-top with the vertices $Q_3$, $(P_1P_2P_3)$, $R_2$, and $(P_2P_3P_4)$
encodes the coplanarity of the points $P_2, P_3, Q_3, R_2$ or, equivalently, 
the coplanarity of the lines $a_3=(P_3Q_3)$ and $b_2=(P_2R_2)$.
}
\label{fig:16-points-torus}
\end{figure}
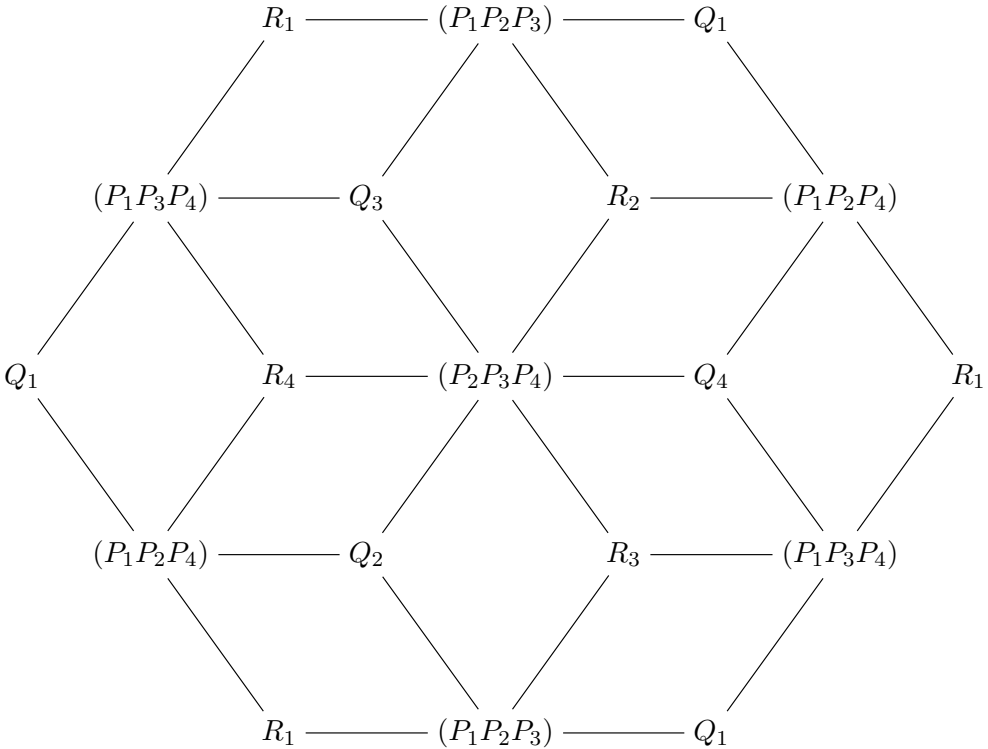

\vspace{-.1in}

\begin{proof}[Proof~2]
The sixteen points theorem can be viewed as the cube theorem in disguise, as we shall now explain. 
Assume that the pairs $(a_i, b_j)$, for $i,j\in\{1,2\}$ are coplanar,
as are the pairs $(a_i, b_j)$, for $i,j\in\{3,4\}$.
Denote
\begin{equation*}
\begin{array}{l}
P_1 = a_1\cap b_2, \\[.05in]
P_2 = a_1\cap b_1, 
\end{array}
\quad
\begin{array}{l}
P_3 = a_2\cap b_1, \\[.05in]
P_4 = a_2\cap b_2, 
\end{array}
\quad
\begin{array}{l}
P_5 = a_3\cap b_4, \\[.05in]
P_6 = a_4\cap b_4, 
\end{array}
\quad
\begin{array}{l}
P_7 = a_4\cap b_3, \\[.05in]
P_8 = a_3\cap b_3,
\end{array}
\end{equation*}
%Now we can take the eight points $P_1, P_2, P_3, P_4, P_5, P_6, P_7, P_8$ as the inputs of the sixteen points theorem, 
%instead of the 8 lines $a_1, a_2, a_3, a_4, b_1, b_2, b_3, b_4$. 
so that 
\begin{equation}
\label{eq:ab-via-P}
\begin{array}{llll}
a_1 = (P_1 P_2), \ \ &
a_2 = (P_3 P_4), \ \ &
a_3 = (P_5 P_8), \ \ &
a_4 = (P_6 P_7), 
\\[.05in]
b_1 = (P_2 P_3), &
b_2 = (P_1 P_4), &
b_3 = (P_7 P_8), &
b_4 = (P_5 P_6). 
\end{array}
\end{equation}
Let us now express the sixteen coplanarity conditions in Theorem~\ref{th:16-points} in terms of
the eight points $P_1, P_2, P_3, P_4, P_5, P_6, P_7, P_8$.
Eight of these sixteen coplanarity conditions have already been assumed
(and are automatic from~\eqref{eq:ab-via-P}). 
The remaining eight conditions concern the coplanarity of the pairs of lines
\begin{align*}
&(a_1,b_3), (a_1, b_4), (a_2,b_3), (a_2, b_4), \\
&(a_3, b_1), (a_4, b_1), (a_3,b_2), (a_4,b_2), 
\end{align*}
or equivalently the coplanarity of the quadruples of points
\begin{equation*}
\begin{array}{l}
\{P_1, P_2, P_7, P_8\}, \{P_1, P_2, P_5, P_6\}, \{P_3, P_4, P_7, P_8\}, \{P_3, P_4, P_5, P_6\}, \\[.05in]
\{P_2, P_3, P_5, P_8\}, \{P_2, P_3, P_6, P_7\}, \{P_1, P_4, P_5, P_8\}, \{P_1, P_4, P_6, P_7\}. 
\end{array}
\end{equation*}
Note that these are precisely the quadruples appearing in~\eqref{eq:8-quadruples} (in different order). 
Thus, the statement that the coplanarity of seven of these eight quadruples implies the coplanarity of the remaining one
is precisely Theorem~\ref{th:cube}. 
\end{proof}

\begin{remark}
\label{rem:16-lines-Schubert}
The sixteen points theorem has the following interpretation rooted in classical Schubert Calculus.
Consider three generic lines $a_1$, $a_2$, $a_3$ in $\CC\PP^3$.
The lines~$b$ piercing each of $a_1$, $a_2$, $a_3$ form a one-parameter family, 
a ruling~$\mathcal{B}$  of a hyperboloid~$\mathbf{H}$. 
The original lines $a_1$, $a_2$, $a_3$ belong to the other ruling of~$\mathbf{H}$, which we denote by~$\mathcal{A}$;
see, e.g., \cite[Section~1]{leykin-sottile}. 
Now consider a fourth line~$a_4$. 
If $a_4$ is also generic, then the one-parameter family~$\mathcal{B}$ contains precisely two lines $b_1$ and $b_2$ 
that pierce all four lines $a_1$, $a_2$, $a_3$, $a_4$.
Namely, locate the two intersection points of $a_4$ with the hyperboloid~$\mathbf{H}$ and take the lines in~$\mathcal{B}$ 
that pass through either of these two points.
If, on the other hand, $a_4$ is not generic, so that there exist at least three lines $b_1$, $b_2$, $b_3\in\mathcal{B}$ that pierce $a_1$, $a_2$, $a_3$, $a_4$
(as in the sixteen points theorem),
then $a_4$ intersects $\mathbf{H}$ in three points and therefore $a_4$ must lie on~$\mathbf{H}$. 
More concretely, $a_4$ must belong to the same ruling~$\mathcal{A}$ of~$\mathbf{H}$ that contains $a_1$, $a_2$, $a_3$. 
In that case, \emph{any} line~$b_4$ that pierces $a_1$, $a_2$, $a_3$ (that is, any line $b_4\in\mathcal{B}$) 
also pierces~$a_4$, and we recover the assertion of the  sixteen points theorem. 

Put differently, we know from Schubert Calculus that given four generic lines $a_1$, $a_2$, $a_3$, $a_4$,
there exist exactly two lines $b_1$ and $b_2$ that pierce each of $a_1$, $a_2$, $a_3$, $a_4$.
(This classical fact is a special case of Lemma~\ref{lem:flats} below.) 
The~sixteen points theorem says that if four lines $a_1$, $a_2$, $a_3$, $a_4$ 
can be pierced by three lines $b_1$, $b_2$, $b_3$ (thus $a_1$, $a_2$, $a_3$, $a_4$ are not generic), 
then there must exist infinitely many lines piercing $a_1$, $a_2$, $a_3$, $a_4$, 
so that any line $b_4$ that pierces $a_1$, $a_2$, $a_3$ %(such lines $b_4$ would typically form a one-parameter family)
will pierce $a_4$ as well. 
\end{remark}

\newpage

\section*{The Glynn configuration}

The following ``sibling'' of the M\"obius configuration from Theorem~\ref{th:cube}
was discovered by D.~G.~Glynn~\cite[Section~4.1]{glynn-2010}: 

\begin{theorem}%[The cube theorem]
\label{th:glynn-config}
Let $P_1, \dots, P_8$ be points in the 3-space such that none of the quadruples $\{P_i,P_j,P_k,P_\ell\}$
($1\le i<j<k\le4<\ell$) is coplanar. 
If seven of the eight quadruples 
\begin{equation}
\label{eq:8-quadruples-glynn}
\begin{array}{l}
\{P_1, P_2, P_5, P_7\}, \{P_1, P_3, P_5, P_8\}, \{P_1, P_3, P_6, P_7\}, \{P_1, P_4, P_6, P_8\}, \\[.05in]
\{P_2, P_3, P_6, P_8\}, \{P_2, P_4, P_5, P_6\}, \{P_2, P_4, P_7, P_8\}, \{P_3, P_4, P_5, P_7\}
\end{array}
\end{equation}
are coplanar, then the remaining quadruple is also coplanar. 
See Figure~\ref{fig:glynn}. 
\end{theorem}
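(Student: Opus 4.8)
The strategy is to follow the exact template established for the cube theorem (Theorem~\ref{th:cube}): exhibit a bicolored quadrilateral tiling of a closed oriented surface whose eight tiles encode precisely the eight coplanarity conditions in~\eqref{eq:8-quadruples-glynn}, and then invoke the master theorem (Theorem~\ref{th:master}) with $\dim\PP=3$. Each tile must have its two point-vertices (black) labeled by two of the $P_i$, and its two hyperplane-vertices (white) labeled by planes, so that the coherence condition of the tile---that the line joining the two points meets the codimension-$2$ intersection of the two planes---reduces to the desired coplanarity of four of the $P_i$. As in Figure~\ref{fig:cube-thm-tiling}, the planes labeling the white vertices will be of the form $(P_iP_jP_k)$ with $i<j<k\le 4$, and the mechanism is that intersecting two such planes produces a line $(P_iP_j)$, so that a tile with point-vertices $P_a,P_b$ and plane-vertices $(P_iP_jP_k),(P_iP_jP_{k'})$ encodes the coplanarity of $\{P_a,P_b,P_i,P_j\}$.

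\textbf{Key steps.}
First I would record, from the list~\eqref{eq:8-quadruples-glynn}, which pairs $\{P_a,P_b\}$ of ``heavy'' points (indices $5,6,7,8$) occur, and pair each such quadruple $\{P_a,P_b,P_i,P_j\}$ (with $i,j\le 4$) with the two planes $(P_iP_jP_k)$ and $(P_iP_jP_{k'})$ whose intersection is the line $(P_iP_j)$; the genericity hypothesis (no $\{P_i,P_j,P_k,P_\ell\}$ coplanar for $i<j<k\le4<\ell$) guarantees these planes are distinct and that no heavy point lies on the adjacent planes, so the edge-incidence hypothesis of Theorem~\ref{th:master} is met. Second, I would assemble these eight tiles into a quadrilateral tiling: each white vertex $(P_iP_jP_k)$ is shared by several tiles, and the combinatorics of which quadruples share a common triple-plane dictates the gluing pattern. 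Third, I would verify that the resulting complex is a closed oriented surface with a proper two-coloring (black for points, white for planes), most likely a torus as in Figure~\ref{fig:cube-thm-tiling}, realized by gluing opposite sides of a suitable fundamental domain. Finally, the conclusion is immediate from the master theorem: coherence of seven tiles forces coherence of the eighth, which is exactly the coplanarity of the remaining quadruple.

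\textbf{Main obstacle.}
The hard part will be finding the correct incidence pattern among the eight tiles---that is, determining exactly how the white triple-plane vertices are to be identified so that the eight tiles close up into a consistently oriented surface with the requisite bipartite structure. The Glynn quadruples in~\eqref{eq:8-quadruples-glynn} differ combinatorially from the M\"obius quadruples~\eqref{eq:8-quadruples}, so the tiling cannot simply be copied from Figure~\ref{fig:cube-thm-tiling}; I expect the underlying surface may even differ (Glynn's configuration is genuinely a distinct ``sibling''). Concretely, I would compute the bipartite adjacency data---for each of the four planes $(P_iP_jP_k)$, which heavy points it is joined to across tiles---and then check whether these data admit an embedding on an orientable surface in which every face is one of the eight prescribed quadrilaterals. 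Confirming the Euler-characteristic bookkeeping (vertices minus edges plus faces) and the orientability/2-colorability of the resulting cell complex is the delicate combinatorial step; once a valid tiling is displayed, the geometric content follows formally from Theorem~\ref{th:master} exactly as in the proofs of Theorems~\ref{th:cube} and~\ref{th:16-points}.
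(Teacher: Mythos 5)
Your plan is precisely the paper's proof: the paper exhibits a bicolored quadrilateral tiling of the torus (Figure~\ref{fig:glynn-tiling}) whose white vertices are the four planes $(P_iP_jP_k)$ with $i<j<k\le 4$, whose black vertices are $P_5,\dots,P_8$, and whose eight tiles encode exactly the eight coplanarities in~\eqref{eq:8-quadruples-glynn}, and then invokes the master theorem. The combinatorial step you defer does close up as you predict: each plane and each heavy point occurs in exactly four of the eight tiles, so the complex has $8$ vertices, $16$ edges, and $8$ faces, hence Euler characteristic $0$, and the paper's figure realizes it as a properly $2$-colored tiling of the torus.
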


\begin{figure}[ht]
\begin{center}
\vspace{-.1in}
\includegraphics[scale=0.5, trim=0cm 0.4cm 0cm 0.2cm, clip]{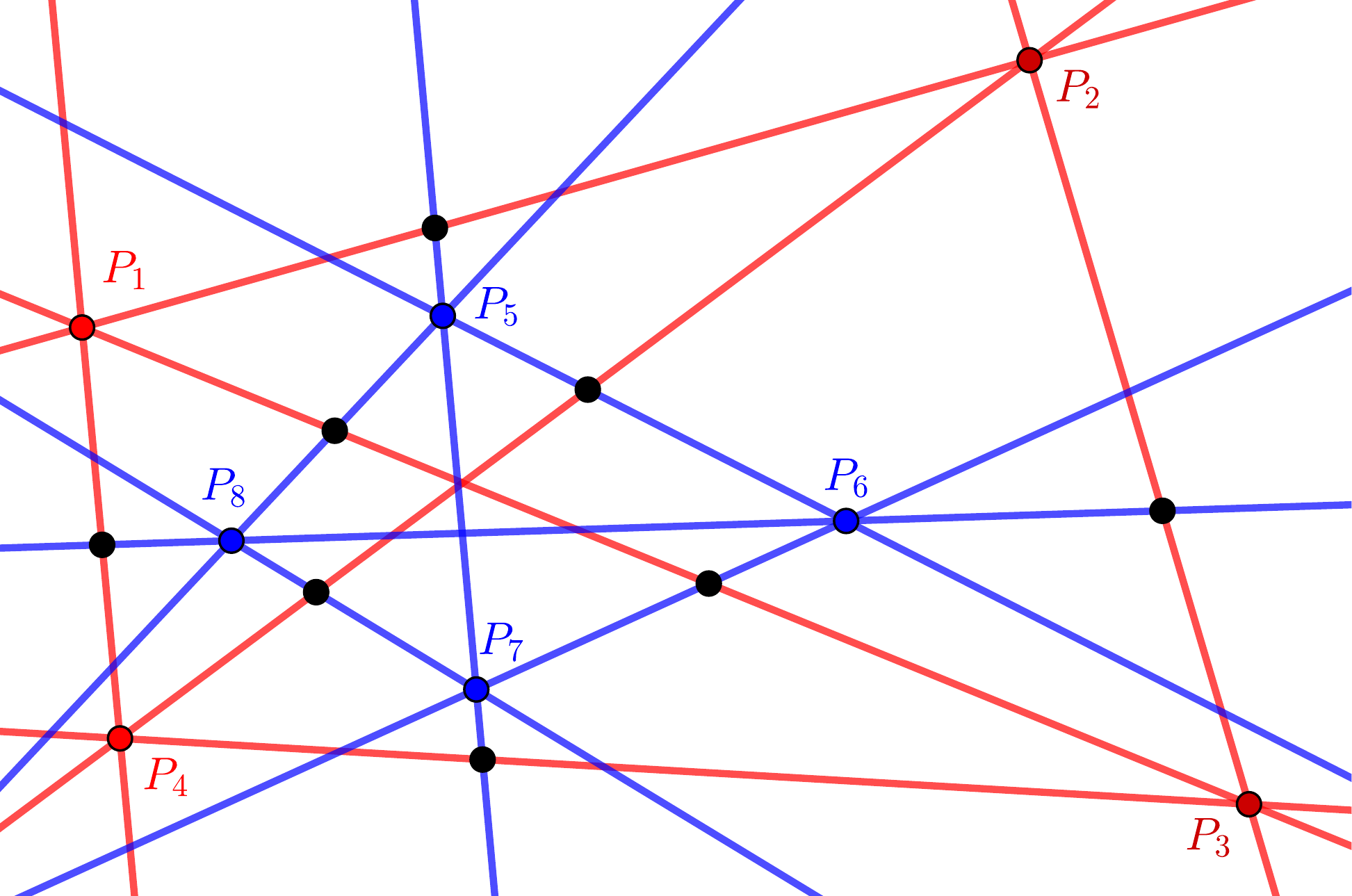}
\end{center}
\vspace{-.1in}
\caption{The Glynn configuration. 
}
\label{fig:glynn}
\end{figure}

\vspace{-.25in}

\begin{proof}%[Proof of Theorem~\ref{th:cube}]
Apply the master theorem to the tiling of the torus shown in Figure~\ref{fig:glynn-tiling}. 
\end{proof}

\enlargethispage{.5cm}

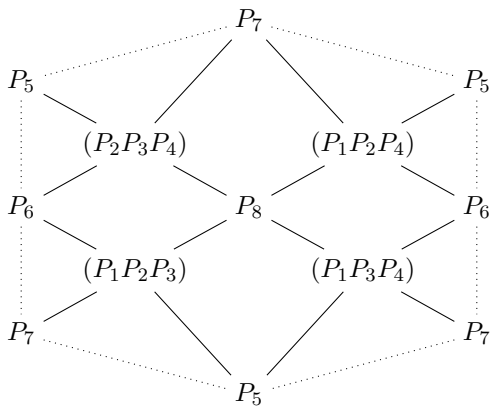
\begin{figure}[ht]
\vspace{-3pt}
\begin{equation*}
\begin{tikzpicture}[baseline= (a).base]
\node[scale=0.9] (a) at (0,0){
\begin{tikzcd}[arrows={-stealth}, cramped, sep=small]
&& P_7 \edge{rrd, dotted} \edge{lld, dotted} \edge{ddr} \edge{ddl} \\
P_5 \edge{dd, dotted} \edge{dr} && && P_5 \edge{dd, dotted} \edge{dl} \\ 
& (P_2P_3P_4) \edge{dl} \edge{dr} & & (P_1P_2P_4) \edge{dl} \edge{dr} \\
P_6 \edge{dd, dotted} \edge{dr} && P_8 \edge{dr} \edge{dl} && P_6 \edge{dd, dotted} \edge{dl} \\ 
& (P_1P_2P_3)  \edge{dl} \edge{ddr} & & (P_1P_3P_4) \edge{ddl}\edge{dr}  \\
P_7 \edge{rrd, dotted} && && P_7 \edge{lld, dotted} \\ 
&& P_5
\end{tikzcd}
};
\end{tikzpicture}
\end{equation*}
\vspace{-10pt}
\caption{The tiling yielding Theorem~\ref{th:glynn-config}. 
Opposite sides of the hexagonal fundamental domain should be glued to each other.
There are no edges between $P_5, P_6, P_7$.
}
\label{fig:glynn-tiling}
\end{figure}

\newpage

\section*{Two lines piercing five lines}

\begin{theorem}
\label{th:hexagon-piercing}
Let $\ell_1, \ell_2$ be a generic pair of lines and let $m_1,m_2,m_3,m_4,m_5$ be five lines 
each of which pierces both $\ell_1$ and~$\ell_2$. 
Pick a generic point $P_1$ on the~line~$m_3$. \linebreak[3]
Find $P_2\!\in\! m_4$ such that $(P_1P_2)$ pierces~$m_1$. 
Find $P_3\!\in\! m_5$ such that $(P_2P_3)$ pierces~$m_2$. \linebreak[3]
Find $P_4\!\in\! m_3$ such that $(P_3P_4)$ pierces~$m_1$. 
Find $P_5\!\in\! m_4$ such that $(P_4P_5)$ pierces~$m_2$. 
Find $P_6\!\in\! m_5$ such that $(P_5P_6)$ pierces~$m_1$. 
Then $(P_6P_1)$ pierces~$m_2$. 
See~Figure~\ref{fig:piercing2x5}. 
\iffalse
Let $A, B, C, D, E, F$ be generic points in the real/complex projective 3-space.
Suppose that each of the two disjoint lines $\ell_1$ and $\ell_2$
pierces the three lines $(AD)$, $(BE)$, and~$(CF)$. 
Then the following are equivalent:
\begin{itemize}[leftmargin=.2in]
\item 
there exists a line piercing $\ell_1, \ell_2, (AB), (CD), (EF)$; 
\item
there exists a line piercing $\ell_1, \ell_2, (BC), (DE), (AF)$.
\end{itemize}
\fi
\end{theorem}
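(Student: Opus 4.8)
The plan is to translate every instance of ``piercing'' into a coplanarity condition and then realize the whole configuration as a bicolored quadrilateral tiling of a closed oriented surface, so that the conclusion drops out of the master theorem (Theorem~\ref{th:master}) in the case $\dim\PP=3$. Since each $m_k$ meets both $\ell_1$ and $\ell_2$, I would let $\pi_k$ be the plane containing $\ell_1$ and $m_k$, and $\sigma_k$ the plane containing $\ell_2$ and $m_k$, so that $m_k=\pi_k\cap\sigma_k$. Because two lines in $3$-space pierce each other exactly when they are coplanar, the statement ``$(P_iP_j)$ pierces $m_k$'' is precisely the coherence of the tile with clockwise vertices $P_i,\pi_k,P_j,\sigma_k$, whose diagonal $\pi_k\cap\sigma_k$ equals $m_k$. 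Thus the five constructed hypotheses on $(P_1P_2),\dots,(P_5P_6)$ and the desired conclusion on $(P_6P_1)$ become the coherence of the six tiles
\begin{gather*}
(P_1,\pi_1,P_2,\sigma_1),\quad (P_2,\pi_2,P_3,\sigma_2),\quad (P_3,\pi_1,P_4,\sigma_1),\\
(P_4,\pi_2,P_5,\sigma_2),\quad (P_5,\pi_1,P_6,\sigma_1),\quad (P_6,\pi_2,P_1,\sigma_2),
\end{gather*}
which involve only the four planes $\pi_1,\sigma_1,\pi_2,\sigma_2$; the last tile is the one to be proved coherent.

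These six tiles meet only in vertices, so they do not yet bound a surface, and the heart of the argument is to adjoin auxiliary tiles, automatically coherent, that double every edge. Here I would use two facts: first, $\pi_1\cap\pi_2=\ell_1$ and $\sigma_1\cap\sigma_2=\ell_2$, since the two planes in each pair share the corresponding line; second, the pairing hypotheses $P_1,P_4\in m_3$, $P_2,P_5\in m_4$, $P_3,P_6\in m_5$ give $(P_1P_4)=m_3$, $(P_2P_5)=m_4$, $(P_3P_6)=m_5$, each of which meets both $\ell_1$ and $\ell_2$. Consequently the six tiles
\begin{gather*}
(P_1,\pi_1,P_4,\pi_2),\quad (P_2,\pi_1,P_5,\pi_2),\quad (P_3,\pi_1,P_6,\pi_2),\\
(P_1,\sigma_1,P_4,\sigma_2),\quad (P_2,\sigma_1,P_5,\sigma_2),\quad (P_3,\sigma_1,P_6,\sigma_2)
\end{gather*}
are coherent for free: the first three have diagonal $\ell_1$ and their point-pairs span $m_3,m_4,m_5$, which meet $\ell_1$, while the last three have diagonal $\ell_2$ and likewise meet $\ell_2$.

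Then I would check that these $12$ tiles constitute a genuine tiling. A direct tally shows that each of the $24$ edges $P_i\!-\!\pi_a$ and $P_i\!-\!\sigma_a$ lies in exactly two tiles (one ``hexagon'' tile and one auxiliary tile), and tracing the tiles incident to a vertex shows every vertex-link is a single cycle (of length $6$ at each plane and length $4$ at each point). With $V=10$, $E=24$, $F=12$ one gets $\chi=-2$, so once the tile orientations are seen to be consistent the surface is the orientable genus-$2$ surface; the two-coloring (points black, planes white) is built in, and genericity secures the non-incidences $P_i\notin\pi_a,\sigma_a$ demanded along each edge. As every tile except $(P_6,\pi_2,P_1,\sigma_2)$ is coherent, the master theorem forces that one to be coherent too, i.e.\ $(P_6P_1)$ pierces $m_2$.

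The main obstacle is exactly this middle step: spotting the six auxiliary tiles and recognizing that the relations $\pi_1\cap\pi_2=\ell_1$, $\sigma_1\cap\sigma_2=\ell_2$ make them automatically coherent while simultaneously closing up the surface. Once they are in hand, the remaining verifications (the edge-doubling, the single-cycle vertex links, orientability, and the generic non-incidences) are routine bookkeeping.
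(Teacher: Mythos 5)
Your proposal is correct and is essentially the paper's own proof: with $\pi_k=h_{1k}$ and $\sigma_k=h_{2k}$ your twelve tiles (six encoding the piercing conditions via $\pi_k\cap\sigma_k=m_k$, six automatically coherent via $\pi_1\cap\pi_2=\ell_1$, $\sigma_1\cap\sigma_2=\ell_2$ and $(P_1P_4)=m_3$, $(P_2P_5)=m_4$, $(P_3P_6)=m_5$) assemble into exactly the genus-$2$ tiling of Figure~\ref{fig:6points-piercing}. The identification of the auxiliary tiles and the Euler-characteristic bookkeeping match the paper's argument precisely.
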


\begin{figure}[ht]
\begin{center}
%\vspace{-.1in}
\includegraphics[scale=0.5, trim=0cm 0cm 0cm 0cm, clip]{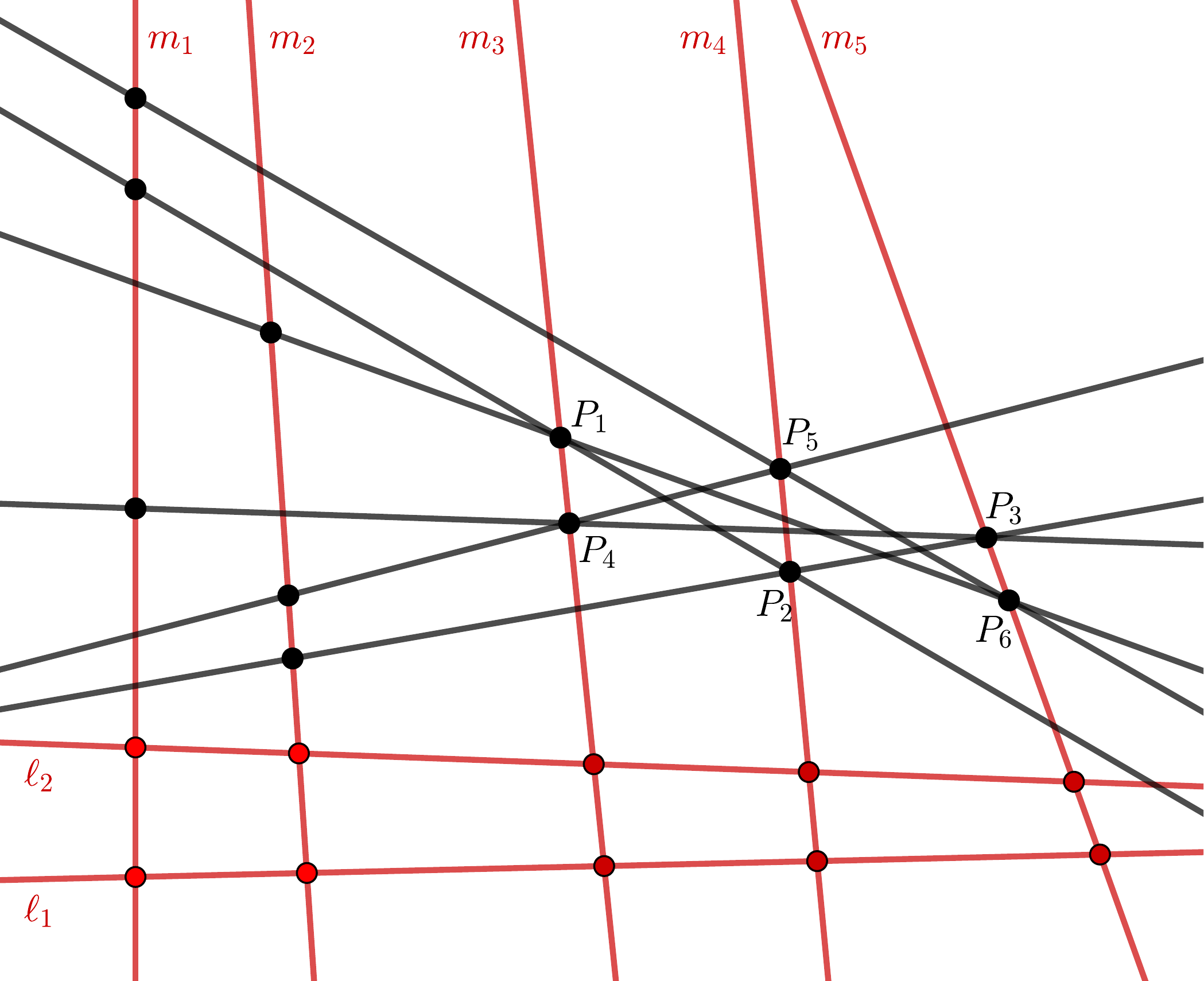}
\end{center}
\vspace{-.1in}
\caption{The 3-dimensional configuration in Theorem~\ref{th:hexagon-piercing}. 
}
\label{fig:piercing2x5}
\end{figure}

\begin{proof}
Let $h_{ij}$ be the plane containing the lines $\ell_i$ and $m_j$, for $i,j\in\{1,2\}$.
Then 
\begin{equation*}
\begin{array}{ll}
h_{11}\cap h_{12}=\ell_1, \quad
&h_{21}\cap h_{22}=\ell_2, \quad \\[5pt]
h_{11}\cap h_{21}=m_1, \quad
&h_{12}\cap h_{22}=m_2. 
\end{array}
\end{equation*}
Also, $(P_1P_4)=m_3$, $(P_2P_5)=m_4$, $(P_3P_6)=m_5$. 
It is then easy to check that the piercing conditions in the theorem are encoded by the tiles of the tiling
shown in Figure~\ref{fig:6points-piercing}.
The claim follows by the master theorem. 
\end{proof}

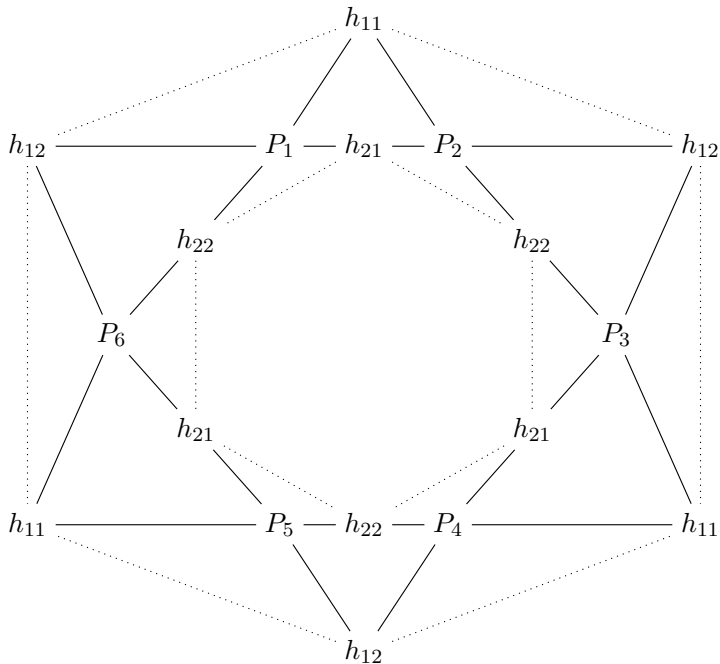
\begin{figure}[ht]
\vspace{-.1in}
\begin{equation*}
\begin{tikzpicture}[baseline= (a).base]
\node[scale=.95] (a) at (0,0){
\begin{tikzcd}[arrows={-stealth}, cramped, sep=small]
&&&& h_{11} \edge{dllll, dotted} \edge{drrrr, dotted} \edge{dl}\edge{dr}\\[25pt]
h_{12} \edge{rrr} \edge{ddr}\edge{dddd, dotted} &&& P_1 \edge{r} \edge{dl} & h_{21} \edge{r} \edge{dll, dotted}\edge{drr, dotted}& P_2 \edge{rrr} \edge{dr}&&& h_{12} \edge{dddd, dotted}\edge{ddl}\\[12pt]
&& h_{22} \edge{dl}\edge{dd, dotted}&&&& h_{22} \edge{dr}\edge{dd, dotted}\\[12pt]
& P_6 \edge{ddl} \edge{dr}&&&&&& P_3 \edge{ddr}\edge{dl}\\[12pt]
&& h_{21} \edge{dr}\edge{drr, dotted}&&&& h_{21} \edge{dll, dotted}\edge{dl}\\[12pt]
h_{11} \edge{rrr} &&& P_5 \edge{r}  \edge{dr}& h_{22}\edge{r}  & P_4 \edge{rrr} \edge{dl}&&& h_{11} \\[25pt]
&&&& h_{12}\edge{ullll, dotted} \edge{urrrr, dotted}
\end{tikzcd}
};
\end{tikzpicture}
\end{equation*}
\vspace{-.1in}
\caption{The tiling of the genus 2 surface used in the proof of Theorem~\ref{th:hexagon-piercing}. 
The opposite sides of each of the two dotted hexagons should be glued to each other.
}
\label{fig:6points-piercing}
\end{figure}

\begin{remark}
%\label{rem:}
As mentioned in Remark~\ref{rem:16-lines-Schubert}, 
four generic lines in 3-space can be pierced, in two different ways, by a fifth line.
When \emph{five} lines $m_1,\dots,m_5$ can be pierced in this way, they are in special position:
as Theorem~\ref{th:hexagon-piercing} shows, the existence of two piercing lines $\ell_1$ and~$\ell_2$
forces the hexagon $P_1P_2P_3P_4P_5P_6$ inscribed in the triple of lines $(m_3,m_4,m_5)$ to close up. 
Note that instead of $(m_3,m_4,m_5)$, we could have chosen any sub-triple of the 5-tuple $(m_1,\dots,m_5)$. 
\end{remark}

\section*{Three-dimensional counterpart of the Pappus theorem}

Consider the tiling in Figure~\ref{fig:pappus-torus}, which was used in the proof of the Pappus theorem.
As before, let $P_1, \dots, P_6$ be points---but this time, let $a,b,c$ be planes in 3-space. 
%Consider the three concurrent lines $m_1=b\cap c$, $m_2=a\cap c$, $m_3=a\cap b$. 
%of pairwise intersection of planes $a,b,c$. Under genericity assumptions this imposes the condition that the three lines intersect at one point. As a result, this tiling 
As a result, we obtain the following  ``three-dimensional Pappus theorem.''

\begin{theorem}
\label{th:3d-pappus}
Let $P_1,\dots,P_6$ be six generic points in 3-space.
Take a generic line~$m_1$ that pierces the lines $(P_1P_2)$, $(P_3P_4)$, $(P_5P_6)$.
Find a line $m_2$ that pierces $m_1$ together with $(P_2P_3)$, $(P_4P_5)$, $(P_6P_1)$. 
Let $O=m_1\cap m_2$. 
Then the line $\ell_1$ that passes through~$O$ and pierces $(P_1P_4)$ and~$(P_2P_5)$ will also pierce~$(P_3P_6)$. 
\end{theorem}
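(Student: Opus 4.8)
The plan is to reinterpret the torus tiling of Figure~\ref{fig:pappus-torus}---the very tiling that proves the planar Pappus theorem---in three-dimensional projective space, keeping $P_1,\dots,P_6$ as points but now letting $a,b,c$ be \emph{planes} rather than lines. By Definition~\ref{def:coherent-tile}, a tile whose two point-vertices are $P_i,P_j$ and whose two plane-vertices are $g,h$ is coherent precisely when the line $(P_iP_j)$ meets the line $g\cap h$, i.e.\ when $(P_iP_j)$ \emph{pierces} $g\cap h$. Since the master theorem (Theorem~\ref{th:master}) holds in arbitrary dimension, it suffices to exhibit three planes $a,b,c$ for which eight of the nine tiles encode piercing conditions guaranteed by the construction, leaving the ninth tile to encode the desired conclusion.

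I would first build the three planes from the data $m_1,m_2,\ell_1$. Because $m_2$ pierces $m_1$, the two lines meet at the point $O=m_1\cap m_2$, and $\ell_1$ passes through $O$ by construction; for generic input the three concurrent lines $m_1,m_2,\ell_1$ span all of space. Let $c$ be the plane containing $m_1$ and $m_2$, let $a$ be the plane containing $m_2$ and $\ell_1$, and let $b$ be the plane containing $m_1$ and $\ell_1$. These are three distinct planes through $O$, and since two distinct planes sharing a common line meet exactly in that line,
\[
b\cap c=m_1,\qquad a\cap c=m_2,\qquad a\cap b=\ell_1 .
\]
Tracking Figure~\ref{fig:pappus-torus}, the intersection line $b\cap c=m_1$ is attached to the three tiles carrying the pairs $(P_1,P_2)$, $(P_3,P_4)$, $(P_5,P_6)$; the line $a\cap c=m_2$ is attached to the tiles carrying $(P_2,P_3)$, $(P_4,P_5)$, $(P_6,P_1)$; and the line $a\cap b=\ell_1$ is attached to the tiles carrying $(P_1,P_4)$, $(P_2,P_5)$, $(P_3,P_6)$.

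Now I would read off coherence. The hypotheses state exactly that $m_1$ pierces $(P_1P_2),(P_3P_4),(P_5P_6)$, that $m_2$ pierces $(P_2P_3),(P_4P_5),(P_6P_1)$, and that $\ell_1$ pierces $(P_1P_4),(P_2P_5)$---eight piercing statements, one for each of eight tiles, so those eight tiles are coherent. The remaining tile carries the pair $(P_3,P_6)$ together with the planes $a,b$, so its coherence is exactly the assertion that $(P_3P_6)$ pierces $a\cap b=\ell_1$. By the master theorem this ninth tile is coherent as well, which is precisely the conclusion that $\ell_1$ pierces $(P_3P_6)$.

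The part that still requires care is genericity. I would check that $m_1$ and $m_2$ exist (a generic transversal to three generic lines, then a transversal to four lines, the latter guaranteed by the Schubert-calculus count recalled in Remark~\ref{rem:16-lines-Schubert}); that $m_1,m_2,\ell_1$ are not coplanar, so that $a,b,c$ are genuinely distinct planes meeting pairwise in those lines and all three meeting in the single point $O$; and that the non-incidence hypothesis of the master theorem holds, i.e.\ no $P_i$ lies on an adjacent plane among $a,b,c$. Each of these is an open dense condition satisfied for generic $P_1,\dots,P_6$, so I expect the main obstacle to be not any single hard step but the bookkeeping: verifying that the eight assumed piercing conditions line up tile-by-tile with Figure~\ref{fig:pappus-torus} and that the construction of the three planes is forced and mutually consistent.
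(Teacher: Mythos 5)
Your proposal is correct and is essentially the paper's own proof: the paper likewise takes $a,b,c$ to be the planes spanned by $(m_2,\ell_1)$, $(m_1,\ell_1)$, $(m_1,m_2)$ and applies the master theorem to the Pappus tiling of Figure~\ref{fig:pappus-torus}. Your tile-by-tile bookkeeping and genericity checks simply make explicit what the paper leaves to the reader.
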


\begin{proof}
Let $a$, $b$, and $c$ be the planes spanned by the pairs of lines $(m_2,\ell_1)$, $(m_1,\ell_1)$, and $(m_1,m_2)$, 
respectively. 
The claim can now be obtained by applying the master theorem to the tiling in Figure \ref{fig:pappus-torus}.
\end{proof}

\begin{remark}
%\label{rem:}
Theorem~\ref{th:3d-pappus} can be illustrated using Figure~\ref{fig:piercing2x5}. 
Just remove line $\ell_2$ and make the three lines $m_1,m_2,\ell_1$ intersect at a point~$O$. 
\end{remark}

\begin{remark}
%\label{rem:}
As noted in Remark~\ref{rem:16-lines-Schubert}, the lines that pierce a given triple of~generic lines $\{a_1,a_2,a_3\}$
form a ruling of a hyperboloid $\mathbf{H}=\mathbf{H}(a_1,a_2,a_3)$. 
With this notation, line~$m_1$ in Theorem~\ref{th:3d-pappus} lies on the hyperboloid $\mathbf{H}_1=\mathbf{H}((P_1P_2),(P_3P_4),(P_5P_6))$.
Similarly, line~$m_2$ lies on $\mathbf{H}_2=\mathbf{H}((P_2P_3),(P_4P_5),(P_6P_1))$.
Theorem~\ref{th:3d-pappus} says that any point 
${O\in \mathbf{H}_1\cap \mathbf{H}_2}$
lies on $\mathbf{H}_3=\mathbf{H}((P_1P_4),(P_2P_5),(P_3P_6))$.
Thus, the intersection
$\mathbf{H}_1
\cap \mathbf{H}_2
\cap \mathbf{H}_3$ 
is 1-dimensional rather than 0-dimensional, as one would expect. 
\end{remark}

\section*{Incidence theorems in changing dimensions}

\begin{remark}
%\label{rem:}
For a given a bicolored quadrilateral tiling~$\TT$,
our master theorem (Theorem~\ref{th:master}) yields an incidence theorem associated to~$\TT$, 
for an arbitrary choice of dimension $d=\dim\PP\ge 2$ of the ambient real/complex projective space~$\PP$. 
It~is~then natural to wonder: what happens to the resulting incidence theorem as $d$ varies? 
Thus far, we have seen two examples of this kind:
\begin{itemize}[leftmargin=.2in]
\item 
the tiling in Figures~\ref{fig:desargues} and~\ref{fig:bundle} (the surface of a cube) yields
the Desargues theorem (Theorem~\ref{th:desargues}) for $d=2$ 
and the bundle theorem (Theorem~\ref{th:bundle}) for $d=3$; 
\item
the tiling in Figure~\ref{fig:pappus-torus} yields the Pappus theorem (Theorem~\ref{th:pappus}) for $d=2$
and Theorem~\ref{th:3d-pappus} for $d=3$. 
\end{itemize}
Speaking in general, we have not observed a transparent direct relationship 
between the 2-dimensional and 3-dimensional statements coming from the same tiling, 
short of their common generalization in the form of Theorem~\ref{th:master}. 

In most cases, an incidence theorem becomes simpler (and sometimes trivial)
when the ambient dimension~$d$ decreases. 
One notable exception to this rule is Desargues' theorem, 
which is ostensibly more substantial than its 3-dimensional counterpart. 
\end{remark}

There are many examples where a tiling yields a trivial incidence theorem for $d=2$ but not for $d=3$.  
We next discuss one common mechanism behind such occurrences. 

\begin{lemma}
\label{lem:four-tiles-triv}
Let $Q_1, Q_2, Q_3, Q_4$ be four distinct points on the real/complex projective plane 
and let $\ell_1,\ell_2,\ell_3,\ell_4$ be four distinct lines. 
Suppose that the tiles
\begin{equation}
\label{eq:four-tiles-triv}
\begin{tikzpicture}[baseline= (a).base]
\node[scale=1] (a) at (0,0){
\begin{tikzcd}[arrows={-stealth}, sep=small, cramped]
Q_1  \edge{r}  \edge{d}& \ell_1 \edge{d} \\[3pt]
\ell_2 \edge{r} & Q_2
\end{tikzcd}
};
\end{tikzpicture}
\qquad
\begin{tikzpicture}[baseline= (a).base]
\node[scale=1] (a) at (0,0){
\begin{tikzcd}[arrows={-stealth}, sep=small, cramped]
Q_1  \edge{r}  \edge{d}& \ell_3 \edge{d} \\[3pt]
\ell_4 \edge{r} & Q_2
\end{tikzcd}
};
\end{tikzpicture}
\qquad
\begin{tikzpicture}[baseline= (a).base]
\node[scale=1] (a) at (0,0){
\begin{tikzcd}[arrows={-stealth}, sep=small, cramped]
Q_3  \edge{r}  \edge{d}& \ell_1 \edge{d} \\[3pt]
\ell_2 \edge{r} & Q_4
\end{tikzcd}
};
\end{tikzpicture}
\qquad
\begin{tikzpicture}[baseline= (a).base]
\node[scale=1] (a) at (0,0){
\begin{tikzcd}[arrows={-stealth}, sep=small, cramped]
Q_3  \edge{r}  \edge{d}& \ell_3 \edge{d} \\[3pt]
\ell_4 \edge{r} & Q_4
\end{tikzcd}
};
\end{tikzpicture}
\end{equation}
are coherent. 
Then one of the following two statements holds:
\begin{itemize}[leftmargin=.3in]
\item[{\rm (i)}]
the six lines $\ell_1,\ell_2,\ell_3,\ell_4, (Q_1Q_2), (Q_3Q_4)$ are concurrent; 
\item[{\rm (ii)}]
the six points $Q_1,Q_2,Q_3,Q_4, \ell_1\cap\ell_2, \ell_3\cap\ell_4$ are collinear. 
\end{itemize}
\end{lemma}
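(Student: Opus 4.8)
The plan is to translate the coherence of each of the four tiles in~\eqref{eq:four-tiles-triv} into its geometric meaning via Remark~\ref{rem:coherence-plane} (equivalently, Proposition~\ref{pr:coherence-algebraic}), and then to run a very short case analysis on the relative position of two auxiliary points.

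First I would observe that, since the points $Q_1, Q_2, Q_3, Q_4$ are distinct and the lines $\ell_1, \ell_2, \ell_3, \ell_4$ are distinct, each tile in~\eqref{eq:four-tiles-triv} has its two point-labels distinct and its two line-labels distinct. Hence, by Remark~\ref{rem:coherence-plane}, the coherence of each tile is equivalent to a single collinearity statement. Writing $X = \ell_1\cap\ell_2$ and $Y = \ell_3\cap\ell_4$, the four tiles assert, respectively, that $(Q_1Q_2)$ passes through $X$; that $(Q_1Q_2)$ passes through $Y$; that $(Q_3Q_4)$ passes through $X$; and that $(Q_3Q_4)$ passes through $Y$.

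Next I would split into two cases according to whether $X = Y$. If $X \neq Y$, then the line $(Q_1Q_2)$ contains the two distinct points $X$ and $Y$, forcing $(Q_1Q_2) = (XY)$; likewise $(Q_3Q_4) = (XY)$. Thus all four points $Q_1, Q_2, Q_3, Q_4$, together with the points $X = \ell_1\cap\ell_2$ and $Y = \ell_3\cap\ell_4$, lie on the single line $(XY)$, which is conclusion~(ii). If instead $X = Y =: Z$, then $Z$ lies on $\ell_1, \ell_2$ (by definition of $X$) and on $\ell_3, \ell_4$ (by definition of $Y$), while the first and third tiles force $Z$ to lie on $(Q_1Q_2)$ and on $(Q_3Q_4)$ as well; hence all six lines $\ell_1, \ell_2, \ell_3, \ell_4, (Q_1Q_2), (Q_3Q_4)$ pass through $Z$, which is conclusion~(i).

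The only point requiring care---and the nearest thing to an obstacle here---is checking that the dichotomy $X = Y$ versus $X \neq Y$ matches exactly the dichotomy between the two conclusions, and that no further genericity hypothesis is needed. This is where the distinctness assumptions earn their keep: distinctness of the four lines guarantees that $X$ and $Y$ are well-defined single points, and distinctness of the four points guarantees that $(Q_1Q_2)$ and $(Q_3Q_4)$ are genuine lines, so the reading of coherence as a collinearity condition is legitimate in every tile. Everything else is immediate, and the argument requires no cross-ratio computation beyond the equivalence already recorded in Proposition~\ref{pr:coherence-algebraic}.
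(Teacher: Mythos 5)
Your proof is correct and follows essentially the same route as the paper: both reduce the four coherence conditions to the statement that each of the points $\ell_1\cap\ell_2$ and $\ell_3\cap\ell_4$ lies on each of the lines $(Q_1Q_2)$ and $(Q_3Q_4)$, and then finish by a one-step case analysis. The only (immaterial) difference is that you split on whether $\ell_1\cap\ell_2=\ell_3\cap\ell_4$, whereas the paper splits on whether $(Q_1Q_2)=(Q_3Q_4)$; the two dichotomies lead to the same two conclusions by the same two-lines-meet-in-one-point argument.
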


\begin{proof}
The coherence of the tiles \eqref{eq:four-tiles-triv} means that each of the points $\ell_1\cap\ell_2$ and $\ell_3\cap\ell_4$
lies on each of the lines $(Q_1Q_2)$ and~$(Q_3Q_4)$. 
If the latter two lines are distinct, then their intersection must coincide with both $\ell_1\cap\ell_2$ and $\ell_3\cap\ell_4$,
yielding~(i). 
Otherwise, the points $Q_1,Q_2,Q_3,Q_4$ lie on the same line that moreover contains $\ell_1\cap\ell_2$ and $\ell_3\cap\ell_4$,
yielding~(ii). 
\end{proof}

Lemma~\ref{lem:four-tiles-triv} shows that whenever a tiling contains four tiles as in~\eqref{eq:four-tiles-triv},
the $d\!=\!2$ case of the corresponding incidence theorem involves a rather degenerate
point-and-line configuration that includes either six concurrent lines or six collinear points. 
As a result, the final statement may end up collapsing to a trivial one. 

\begin{example}
\label{eg:cube-in-dim-2}
Figure~\ref{fig:cube-thm-tiling} contains two instances of Lemma~\ref{lem:four-tiles-triv}.
Because of that, the 2-dimensional counterpart of the M\"obius theorem (Theorem~\ref{th:cube}) turns out to be trivial. 
For~another example of a similar kind, see Theorem~\ref{th:hexagon-piercing}/Figure~\ref{fig:6points-piercing}.
\end{example}

\begin{remark}
%\label{rem:}
In Section~\ref{sec:master-theorem} (see Examples~\ref{eg:tiling-tautology-1}--\ref{eg:tiling-3-tiles}),
we discussed several situations wherein an incidence theorem associated with a given tiling is trivial regardless of dimension. 
The phenomena discussed in Example~\ref{eg:cube-in-dim-2} are more subtle than those. 
\end{remark}

\newpage

\section{Combinatorial reformulations of the master theorem
}
\label{sec:combinatorial-reformulations}

Fix a finite-dimensional real/complex projective space~$\PP$. 
For any bicolored quadrilateral tiling of an oriented closed surface~$\Sigma$ (of arbitrary genus),
Theorem~\ref{th:master} yields an incidence theorem in~$\PP$.
%(For some tilings, this theorem is trivial.) 
%We begin this section by discussing 
In this section, we discuss a couple of alternative ways to describe such tilings. 
In each case, we obtain reformulations of the master theorem (or its individual instances) in a new combinatorial language. 

\section*{Graphs embedded into surfaces}

\begin{definition}
\label{def:graphs-on-surfaces}
Consider a tiling $\TT$ of an oriented closed surface~$\Sigma$ by quadrilateral tiles. 
(Each pair of tiles are either disjoint, or share a side, or share a vertex.)
Assume that the vertices of the tiling are colored black and white, 
so that each edge connects vertices of different color. 
Construct a graph $\GG=\GG(\TT)$ embedded in~$\Sigma$ as follows.
The vertices of~$\GG$ are the white vertices of the tiling~$\TT$.
Each tile in~$\TT$ gives rise to an edge in~~$\GG$
that connects the two white vertices of the tile. 

We note that the \emph{faces} of~$\GG$ (i.e., connected components of the complement of~$\GG$ inside~$\Sigma$) 
correspond to the black vertices of the tiling. 
\end{definition}

A couple of examples of the above construction are given in Figure~\ref{fig:bipartite-tilings-G}.

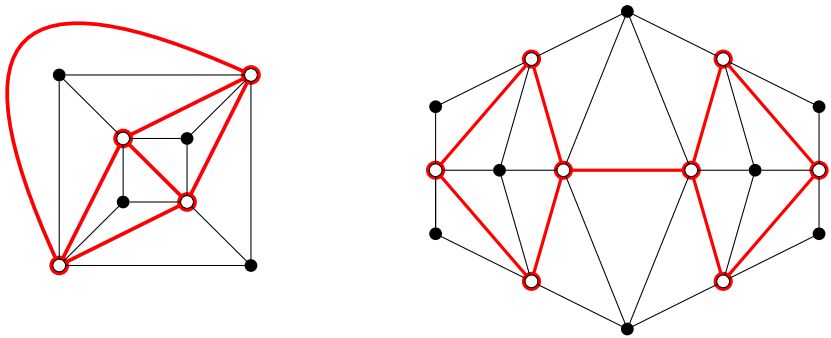
\begin{figure}[ht]
\begin{center}
\vspace{5pt}
\setlength{\unitlength}{1.2pt}
\begin{picture}(60,65)(0,-20)
\linethickness{1.4pt}

\put(1,2){\red{{\line(1,2){18}}}}
\put(2,1){\red{{\line(2,1){36}}}}
\put(41,22){\red{{\line(1,2){18}}}}
\put(22,41){\red{{\line(2,1){36}}}}
\put(21.5,38.5){\red{{\line(1,-1){17}}}}

\put(0,0){\red{\circle{5}}}
\put(20,40){\red{\circle{5}}}
\put(40,20){\red{\circle{5}}}
\put(60,60){\red{\circle{5}}}

\red{\qbezier(-1,2)(-50,110)(58,61)}

\thinlines

\put(2,0){{\line(1,0){56}}}
\put(2,60){{\line(1,0){56}}}
\put(22,20){{\line(1,0){16}}}
\put(22,40){{\line(1,0){16}}}

\put(0,2){{\line(0,1){56}}}
\put(60,2){{\line(0,1){56}}}
\put(20,22){{\line(0,1){16}}}
\put(40,22){{\line(0,1){16}}}

\put(1.4,1.4){{\line(1,1){18}}}
\put(58.6,58.6){{\line(-1,-1){18}}}
\put(18.6,41.4){{\line(-1,1){18}}}
\put(41.4,18.6){{\line(1,-1){18}}}

\put(0,0){\circle{4}}
\put(0,60){\circle*{4}}
\put(60,60){\circle{4}}
\put(60,0){\circle*{4}}
\put(20,20){\circle*{4}}
\put(20,40){\circle{4}}
\put(40,20){\circle{4}}
\put(40,40){\circle*{4}}
\end{picture}
\qquad\qquad\qquad
\begin{picture}(120,100)(0,0)
\linethickness{1.2pt}

\put(0,50){\red{\circle{5}}}
\put(40,50){\red{\circle{5}}}
\put(80,50){\red{\circle{5}}}
\put(120,50){\red{\circle{5}}}
\put(30,15){\red{\circle{5}}}
\put(30,85){\red{\circle{5}}}
\put(90,15){\red{\circle{5}}}
\put(90,85){\red{\circle{5}}}

\put(1.2,51.4){\red{{\line(6,7){27.6}}}}
\put(1.2,48.6){\red{{\line(6,-7){27.6}}}}
\put(118.8,51.4){\red{{\line(-6,7){27.6}}}}
\put(118.8,48.6){\red{{\line(-6,-7){27.6}}}}
\put(42,50){\red{{\line(1,0){36}}}}
\put(39.4,52.1){\red{{\line(-2,7){8.8}}}}
\put(39.4,47.9){\red{{\line(-2,-7){8.8}}}}
\put(80.6,52.1){\red{{\line(2,7){8.8}}}}
\put(80.6,47.9){\red{{\line(2,-7){8.8}}}}

\thinlines

\put(2,50){{\line(1,0){16}}}
\put(22,50){{\line(1,0){16}}}
\put(82,50){{\line(1,0){16}}}
\put(102,50){{\line(1,0){16}}}

\put(0,52){{\line(0,1){16}}}
\put(0,48){{\line(0,-1){16}}}
\put(120,52){{\line(0,1){16}}}
\put(120,48){{\line(0,-1){16}}}
\put(0,48){{\line(0,-1){16}}}
\put(29.4,82.9){{\line(-2,-7){9}}}
\put(29.4,17.1){{\line(-2,7){9}}}
\put(90.6,17.1){{\line(2,7){9}}}
\put(90.6,82.9){{\line(2,-7){9}}}

\put(28,16){{\line(-2,1){28}}}
\put(32,14){{\line(2,-1){28}}}
\put(28,84){{\line(-2,-1){28}}}
\put(32,86){{\line(2,1){28}}}
\put(88,86){{\line(-2,1){28}}}
\put(92,84){{\line(2,-1){28}}}
\put(88,14){{\line(-2,-1){28}}}
\put(92,16){{\line(2,1){28}}}

\put(40.8,52){{\line(2,5){19}}}
\put(40.8,48){{\line(2,-5){19}}}
\put(79.2,52){{\line(-2,5){19}}}
\put(79.2,48){{\line(-2,-5){19}}}

\put(60,0){\circle*{4}}
\put(30,15){\circle{4}}
\put(90,15){\circle{4}}
\put(0,30){\circle*{4}}
\put(120,30){\circle*{4}}
\put(0,50){\circle{4}}
\put(40,50){\circle{4}}
\put(80,50){\circle{4}}
\put(120,50){\circle{4}}
\put(20,50){\circle*{4}}
\put(100,50){\circle*{4}}
\put(0,70){\circle*{4}}
\put(120,70){\circle*{4}}
\put(30,85){\circle{4}}
\put(90,85){\circle{4}}
\put(60,100){\circle*{4}}

\end{picture}
\vspace{-5pt}
\end{center}
\caption{The correspondence $\TT\mapsto\GG=\GG(\TT)$ for the tilings from Figure~\ref{fig:bipartite-tilings},
cf.~also Figures~\ref{fig:desargues} and~\ref{fig:pappus2-tiling}. 
On the right, the opposite sides of the hexagonal fundamental domain should be glued to each other, 
producing a graph $\GG$ embedded in the torus. 
This graph has 5~vertices (corresponding to the hyperplanes $b, r, s, q, t$ in Figure~\ref{fig:pappus2-tiling}),
9~edges (shown in the figure above) and 4~faces
(2~pentagons and 2~quadrilaterals).
}
\label{fig:bipartite-tilings-G}
\end{figure}

\vspace{-.2in}

\begin{proposition}
\label{pr:graphs-on-surfaces}
The map $\TT\mapsto\GG$ 
described in Definition~\ref{def:graphs-on-surfaces} 
is a bijection~between 
\begin{itemize}[leftmargin=.2in]
\item 
bicolored quadrilateral tilings~$\TT$
of a Riemann surface~$\Sigma$ (viewed up to isotopy of the surface~$\Sigma$) and
\item
graphs~$\GG$ embedded in~$\Sigma$ (and viewed up to isotopy of~$\Sigma$) 
in which every vertex has degree $\ge 2$, every face has $\ge 2$ sides, 
and each face 
is homeomorphic to a disk. 
\end{itemize}
\end{proposition}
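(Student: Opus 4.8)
The plan is to exhibit an explicit inverse to the map $\TT\mapsto\GG$ of Definition~\ref{def:graphs-on-surfaces} and then check that the two maps are mutually inverse; this is the classical passage between a cellularly embedded graph (``map'') and its associated quadrangulation, so the work is mostly in matching the non-degeneracy conditions. First I would record the local structure on the tiling side: in a bicolored quadrilateral tiling the two colors alternate around the boundary of each tile (every edge joins vertices of opposite colors and the boundary is a $4$-cycle), so each tile carries exactly two white corners and two black corners, located at opposite positions. Hence the diagonal joining the two white corners is canonically determined, and $\GG=\GG(\TT)$ is a well-defined embedded graph whose vertices are the white vertices of $\TT$ and whose edges are these diagonals, one per tile. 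The faces of $\GG$ are the stars of the black vertices; since each such star is the union of the quadrilateral corners meeting at a black vertex, and that vertex has a disk neighborhood in $\Sigma$, every face of $\GG$ is a disk. The degree of a white vertex $w$ equals the number of tiles incident to $w$, and the number of sides of the face attached to a black vertex $v$ equals the number of tiles incident to $v$; the requirement that every point of $\Sigma$ have a disk neighborhood forces each vertex of the tiling to meet at least two tiles, which translates into $\deg_{\GG}\ge 2$ and into every face having $\ge 2$ sides.

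For the reverse direction I would describe the inverse map $\GG\mapsto\TT=\TT(\GG)$ directly. Given an embedded graph $\GG$ as in the statement, place a black vertex $f^{\bullet}$ in the interior of each face $f$, keep the vertices of $\GG$ as white vertices, and for each edge $e$ of $\GG$, with endpoints $w_1,w_2$ and bordering faces $f,f'$ (possibly $f=f'$), form the quadrilateral whose boundary passes through $w_1, f^{\bullet}, w_2, (f')^{\bullet}$ in this cyclic order, realized as the union of the triangle $w_1 f^{\bullet} w_2$ lying in $f$ and the triangle $w_1 (f')^{\bullet} w_2$ lying in $f'$, glued along $e$. Because each face $f$ is a disk whose boundary is traversed by its incident edges, the triangles based at $f^{\bullet}$ fan out to fill $f$ exactly; assembling over all faces shows that these quadrilaterals cover $\Sigma$ without overlap and meet only along shared sides or vertices. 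Coloring the $w_i$ white and the $f^{\bullet}$ black yields a bicolored quadrilateral tiling $\TT(\GG)$ of the kind considered in Theorem~\ref{th:master}.

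Finally I would verify that the two constructions are mutually inverse, which is bookkeeping once the local pictures are in place. Starting from $\TT$, the tiles of $\TT(\GG(\TT))$ are indexed by the edges of $\GG(\TT)$, i.e.\ by the tiles of $\TT$, and each recovered tile has the same two white corners (the endpoints of the diagonal) and the same two black corners (the two black vertices of the original tile), so $\TT(\GG(\TT))=\TT$ up to isotopy. Symmetrically, the diagonals of the tiles of $\TT(\GG)$ are exactly the edges of $\GG$ in the same isotopy class, whence $\GG(\TT(\GG))=\GG$. The main obstacle, and the point needing the most care, is the precise matching of the non-degeneracy conditions: one must check that the three hypotheses on $\GG$ (minimum degree $\ge 2$, faces with $\ge 2$ sides, faces homeomorphic to disks) correspond exactly to $\TT$ being a genuine quadrilateral tiling of a closed surface with no monogon or self-folded faces, so that neither construction escapes the prescribed class. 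All remaining verifications are local and routine.
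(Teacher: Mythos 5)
Your proposal is correct and follows essentially the same route as the paper: the paper's (very terse) proof likewise checks the stated properties of $\GG(\TT)$ directly and inverts the construction by placing a black vertex inside each face of $\GG$ and joining it to the white vertices on that face's boundary, which is exactly your $\TT(\GG)$. Your write-up simply supplies more of the local bookkeeping (tiles of $\TT(\GG)$ indexed by edges of $\GG$, matching of the degree/side-count conditions) that the paper dismisses as ``readily checked.''
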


\begin{proof}
The properties of $\GG$ listed above are readily checked for any graph $\GG(\TT)$ obtained from a triangulation~$\TT$.
Conversely, given such an embedded graph~$\GG$, we can recover the corresponding triangulation~$\TT$ 
by placing a black vertex inside each face of~$\GG$ and connecting each of these black vertices,
inside the corresponding face, to the (white) vertices of~$\GG$ lying on the boundary of that face. 
\end{proof}

\begin{remark}
%\label{rem:}
Interchanging the black and white colors in a triangulation~$\TT$ results in replacing the embedded graph $\GG(\TT)$
by its (Poincar\'e) dual graph. 
The corresponding incidence theorems are projectively dual to each other. 
\end{remark}

\begin{remark}
%\label{rem:}
Graphs embedded into surfaces so that every face is a topological disk are sometimes called 
(topological) \emph{maps} (on surfaces); see, e.g., \cite[Section~1.3.2]{lando-zvonkin}. 
\end{remark}

\begin{remark}
%\label{rem:}
The above correspondence becomes especially transparent when the surface~$\Sigma$ 
is a sphere while the graph~$\GG$ comes from the 1-skeleton of a convex 3-dimensional polytope~$\mathbf{P}$
(whose boundary can be identified with the sphere~$\Sigma$). 
Thus, every 3-dimensional  polytope~$\mathbf{P}$ gives rise to an incidence theorem.
Dual polytopes produce projectively dual theorems (which we view as equivalent). 
Several specific instances of this construction are listed in Figure~\ref{fig:polytopes->configs}. 
\end{remark}

\begin{figure}[ht]
\begin{center}
\vspace{10pt}
\begin{tabular}{|c|c|}
\hline
Desargues theorem, the bundle theorem & tetrahedron \\
\hline 
complete quadrilateral theorem & 
triangular prism 
%(or, dually, the triangular bipyramid)
\\
\hline
Theorem~\ref{th:planar-bundle-generalized} & square pyramid
\\
\hline
Saam's sequence of perspectivities & square antiprism
\\
\hline
\end{tabular}
\end{center}
\caption{Incidence theorems coming from 3-dimensional convex polytopes.
Each polytope can be replaced by its dual.}
\label{fig:polytopes->configs}
\end{figure}

\vspace{-.2in}

\begin{remark}
\label{rem:glynn}
Denote $d=\dim\PP$. 
In the special case when the number of vertices in the graph~$\GG$
(equivalently, the number of white vertices in the corresponding tiling~$\TT$)
is equal to $d+1$,
we recover the ingenious construction of D.~G.~Glynn~\cite{glynn}. \linebreak[3]
Let us explain. 

%Any collection of $d+1$ generic points in~$\PP$
%can be reinterpreted as a collection of $d+1$ generic hyperplanes, and vice versa. 
%(This is the correspondence between the set of vertices of a simplex and the set of its facets.) 
Let $\GG$ be a graph embedded into a closed oriented surface as in Proposition~\ref{pr:graphs-on-surfaces}, 
with vertex set~$V$ of cardinality $d+1$ and a set of faces~$F$ of arbitrary cardinality. 
Associate with each vertex $v\in V$ (resp., a face $f\in F$) a point $P_v$ (resp.,~$P_f$). 
For each $v\in V$, let $h_v\subset\PP$ be the hyperplane 
spanned by the set $\{P_{u}\mid u\in V, u\neq v\}$.
We thus obtain a labeling of the vertices~$v$ (resp., faces~$f$) of~$\GG$
by the hyperplanes~$h_v$ (resp., points~$P_f$). 
Equivalently, we get a labeling of the white (resp., black) vertices of the corresponding tiling~$\TT$
by hyperplanes (resp., points) in~$\PP$. 
The corresponding instance of our master theorem is a restatement of the main result of~\cite{glynn}.

A particularly nice feature of the special case $|V|=d+1$ treated in~\cite{glynn} is that the resulting  
incidence theorems are immediately stated in ``matroidal'' terms,
i.e., in terms of a finite collection of points in~$\PP$ and dependency relations among them. \linebreak[3]
It~is also shown in~\cite{glynn} that in the genus~0 case (i.e., when $\Sigma$ is a sphere),
the ensuing incidence theorems hold over arbitrary skew fields (such as the quaternions). 

On the other hand, the restriction $|V|=d+1$ substantially limits the range of potential applications.
For example, in the case of the projective plane ($d=2$), this restriction would limit us to
tilings with three white (or three black) vertices. 
It~appears that most incidence theorems discussed in this paper cannot be directly obtained
from such tilings. 
\end{remark}

\pagebreak[3]

\section*{Nodal curves on a surface}

We next discuss, somewhat informally, another ``cryptomorphic'' version 
of the combinatorial construction underlying our master theorem.

\begin{definition}
\label{def:tiling-to-curve}
Given a tiling~$\TT$ of a surface~$\Sigma$, construct a nodal curve $\mathbf{C}=\mathbf{C}(\TT)$ on~$\Sigma$ as follows. 
Choose a ``midpoint'' inside each edge of the tiling.
For each tile, connect both pairs of opposite midpoints to each other.
If you like, smoothen the resulting curve. (Everything is viewed up to isotopy.) 
Examples related to Desargues, Pappus, and permutation theorems are shown in Figures~\ref{fig:nodal-curves}--\ref{fig:nodal-curves-2}. 
\end{definition}

\begin{figure}[ht]
\begin{center}
\vspace{-5pt}
\qquad
\setlength{\unitlength}{1.1pt}
\begin{picture}(60,65)(0,-20)
\linethickness{1.4pt}

\put(0,30){\lightgreen{{\line(1,0){60}}}}
\put(0,-20){\lightgreen{{\line(1,0){60}}}}
\put(-10,-10){\lightgreen{{\line(0,1){30}}}}
\put(70,-10){\lightgreen{{\line(0,1){30}}}}

\lightgreen{
\qbezier(0,-20)(-10,-20)(-10,-10)
\qbezier(60,-20)(70,-20)(70,-10)
\qbezier(60,30)(70,30)(70,20)
\qbezier(0,30)(-10,30)(-10,20)
}

\put(30,0){\blue{{\line(0,1){60}}}}
\put(-20,0){\blue{{\line(0,1){60}}}}
\put(-10,-10){\blue{{\line(1,0){30}}}}
\put(-10,70){\blue{{\line(1,0){30}}}}

\blue{
\qbezier(-20,0)(-20,-10)(-10,-10)
\qbezier(-20,60)(-20,70)(-10,70)
\qbezier(30,60)(30,70)(20,70)
\qbezier(30,0)(30,-10)(20,-10)
}

\put(10,20){\red{{\line(0,1){20}}}}
\put(50,20){\red{{\line(0,1){20}}}}
\put(20,10){\red{{\line(1,0){20}}}}
\put(20,50){\red{{\line(1,0){20}}}}

\red{
\qbezier(10,20)(10,10)(20,10)
\qbezier(50,20)(50,10)(40,10)
\qbezier(10,40)(10,50)(20,50)
\qbezier(50,40)(50,50)(40,50)
}

\thinlines

\put(2,0){{\line(1,0){56}}}
\put(2,60){{\line(1,0){56}}}
\put(22,20){{\line(1,0){16}}}
\put(22,40){{\line(1,0){16}}}

\put(0,2){{\line(0,1){56}}}
\put(60,2){{\line(0,1){56}}}
\put(20,22){{\line(0,1){16}}}
\put(40,22){{\line(0,1){16}}}

\put(1.4,1.4){{\line(1,1){18}}}
\put(58.6,58.6){{\line(-1,-1){18}}}
\put(18.6,41.4){{\line(-1,1){18}}}
\put(41.4,18.6){{\line(1,-1){18}}}

\put(0,0){\circle{4}}
\put(0,60){\circle*{4}}
\put(60,60){\circle{4}}
\put(60,0){\circle*{4}}
\put(20,20){\circle*{4}}
\put(20,40){\circle{4}}
\put(40,20){\circle{4}}
\put(40,40){\circle*{4}}
\end{picture}
\qquad\qquad\qquad
\begin{picture}(120,100)(0,0)
\linethickness{1.4pt}

\put(0,60){\red{{\line(3.5,1){50}}}}
\put(0,40){\red{{\line(3.5,-1){50}}}}
\put(120,40){\red{{\line(-3.5,-1){50}}}}
\put(120,60){\red{{\line(-3.5,1){50}}}}
\put(50,25){\red{{\line(1,2.5){20}}}}
\put(50,75){\red{{\line(1,-2.5){20}}}}

\put(10,50){\green{{\line(1,5.5){5}}}}
\put(10,50){\green{{\line(1,-5.5){5}}}}

\put(90,50){\green{{\line(-3,8.5){15}}}}
\put(90,50){\green{{\line(-3,-8.5){15}}}}

\put(110,50){\blue{{\line(-1,5.5){5}}}}
\put(110,50){\blue{{\line(-1,-5.5){5}}}}

\put(30,50){\blue{{\line(3,8.5){15}}}}
\put(30,50){\blue{{\line(3,-8.5){15}}}}

\thinlines

\put(2,50){{\line(1,0){16}}}
\put(22,50){{\line(1,0){16}}}
\put(82,50){{\line(1,0){16}}}
\put(102,50){{\line(1,0){16}}}

\put(0,52){{\line(0,1){16}}}
\put(0,48){{\line(0,-1){16}}}
\put(120,52){{\line(0,1){16}}}
\put(120,48){{\line(0,-1){16}}}
\put(0,48){{\line(0,-1){16}}}
\put(29.4,82.9){{\line(-2,-7){9}}}
\put(29.4,17.1){{\line(-2,7){9}}}
\put(90.6,17.1){{\line(2,7){9}}}
\put(90.6,82.9){{\line(2,-7){9}}}

\put(28,16){{\line(-2,1){28}}}
\put(32,14){{\line(2,-1){28}}}
\put(28,84){{\line(-2,-1){28}}}
\put(32,86){{\line(2,1){28}}}
\put(88,86){{\line(-2,1){28}}}
\put(92,84){{\line(2,-1){28}}}
\put(88,14){{\line(-2,-1){28}}}
\put(92,16){{\line(2,1){28}}}

\put(40.8,52){{\line(2,5){19}}}
\put(40.8,48){{\line(2,-5){19}}}
\put(79.2,52){{\line(-2,5){19}}}
\put(79.2,48){{\line(-2,-5){19}}}

\put(60,0){\circle*{4}}
\put(30,15){\circle{4}}
\put(90,15){\circle{4}}
\put(0,30){\circle*{4}}
\put(120,30){\circle*{4}}
\put(0,50){\circle{4}}
\put(40,50){\circle{4}}
\put(80,50){\circle{4}}
\put(120,50){\circle{4}}
\put(20,50){\circle*{4}}
\put(100,50){\circle*{4}}
\put(0,70){\circle*{4}}
\put(120,70){\circle*{4}}
\put(30,85){\circle{4}}
\put(90,85){\circle{4}}
\put(60,100){\circle*{4}}

\end{picture}
%\vspace{-5pt}
\end{center}
\caption{Nodal curves corresponding to the tilings in Figure~\ref{fig:bipartite-tilings}
(or Figures~\ref{fig:desargues} and~\ref{fig:pappus2-tiling}). 
Each component of a nodal curve is shown in different color. 
}
\label{fig:nodal-curves}
\end{figure}
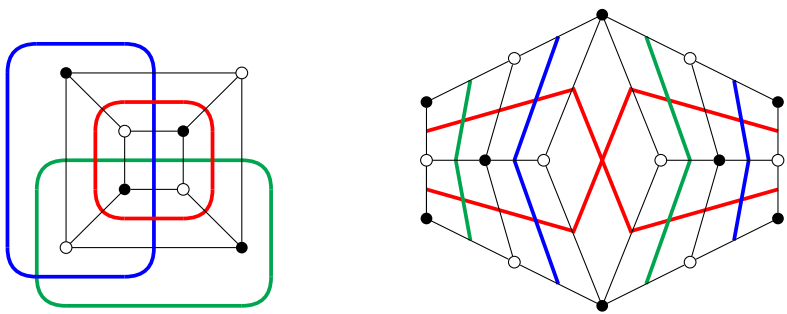

\vspace{-10pt}

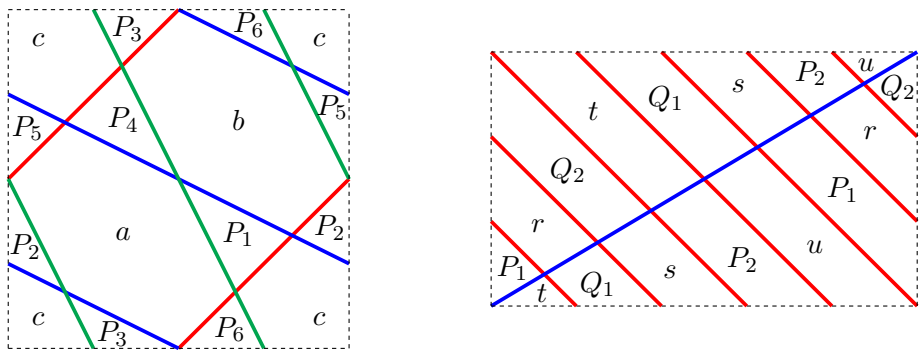
\begin{figure}[ht]
\begin{center}
\vspace{-5pt}
\qquad
\setlength{\unitlength}{1.6pt}
\begin{picture}(80,80)(0,0)
\linethickness{1.4pt}

\put(0,40){\red{{\line(1,1){40}}}}
\put(40,0){\red{{\line(1,1){40}}}}

\put(0,20){\blue{{\line(2,-1){40}}}}
\put(0,60){\blue{{\line(2,-1){80}}}}
\put(40,80){\blue{{\line(2,-1){40}}}}

\put(0,40){\lightgreen{{\line(1,-2){20}}}}
\put(20,80){\lightgreen{{\line(1,-2){40}}}}
\put(60,80){\lightgreen{{\line(1,-2){20}}}}

\thinlines

\multiput(0,0)(0,2){40}{\line(0,1){1}}
\multiput(80,0)(0,2){40}{\line(0,1){1}}
\multiput(0,0)(2,0){40}{\line(1,0){1}}
\multiput(0,80)(2,0){40}{\line(1,0){1}}

\put(27,27){\makebox(0,0){$a$}}
\put(54,54){\makebox(0,0){$b$}}
\put(27,54){\makebox(0,0){$P_4$}}
\put(54,27){\makebox(0,0){$P_1$}}

\put(24.5,3.5){\makebox(0,0){$P_3$}}
\put(28,75.5){\makebox(0,0){$P_3$}}
\put(52,4.5){\makebox(0,0){$P_6$}}
\put(56.5,76.5){\makebox(0,0){$P_6$}}

\put(4,24){\makebox(0,0){$P_2$}}
\put(4.5,52){\makebox(0,0){$P_5$}}

\put(76,57){\makebox(0,0){$P_5$}}
\put(75.5,28.5){\makebox(0,0){$P_2$}}

\put(7,7){\makebox(0,0){$c$}}
\put(7,73){\makebox(0,0){$c$}}
\put(73,7){\makebox(0,0){$c$}}
\put(73,73){\makebox(0,0){$c$}}

\end{picture}
\qquad\qquad\qquad
\begin{picture}(100,60)(10,-10)
\linethickness{1.4pt}

\put(0,20){\red{{\line(1,-1){20}}}}
\put(0,40){\red{{\line(1,-1){40}}}}
\put(0,60){\red{{\line(1,-1){60}}}}
\put(20,60){\red{{\line(1,-1){60}}}}
\put(40,60){\red{{\line(1,-1){60}}}}
\put(60,60){\red{{\line(1,-1){40}}}}
\put(80,60){\red{{\line(1,-1){20}}}}

\put(0,0){\blue{{\line(5,3){100}}}}

\thinlines

\multiput(0,0)(0,2){30}{\line(0,1){1}}
\multiput(100,0)(0,2){30}{\line(0,1){1}}
\multiput(0,0)(2,0){50}{\line(1,0){1}}
\multiput(0,60)(2,0){50}{\line(1,0){1}}

\put(5,9){\makebox(0,0){$P_1$}}
\put(11,19){\makebox(0,0){$r$}}
\put(18,32){\makebox(0,0){$Q_2$}}
\put(24,46){\makebox(0,0){$t$}}
\put(41,49){\makebox(0,0){$Q_1$}}
\put(58,52){\makebox(0,0){$s$}}
\put(75,55){\makebox(0,0){$P_2$}}
\put(88,57){\makebox(0,0){$u$}}

\put(12,3){\makebox(0,0){$t$}}
\put(25,5){\makebox(0,0){$Q_1$}}
\put(42,8){\makebox(0,0){$s$}}
\put(59,11){\makebox(0,0){$P_2$}}
\put(76,14){\makebox(0,0){$u$}}
\put(82.5,27.5){\makebox(0,0){$P_1$}}
\put(89,41){\makebox(0,0){$r$}}
\put(95.5,51.5){\makebox(0,0){$Q_2$}}

\end{picture}

%\vspace{-5pt}
\end{center}
\caption{Nodal curves on the torus corresponding to the tilings in Figures~\ref{fig:pappus-torus} and~\ref{fig:torus-perm}. 
Region labels match the vertex labels of the tiling. 
}
\vspace{-15pt}
\label{fig:nodal-curves-2}
\end{figure}

\begin{remark}
\label{rem:curve-to-tiling}
Let $\TT$ be a bicolored quadrilateral tiling of a surface~$\Sigma$.
Let ${\mathbf{C}=\mathbf{C}(\TT)}$ be the associated curve on~$\Sigma$. 
%Consider the curve~$\mathbf{C}\!=\!\mathbf{C}(\TT)$ constructed from a tiling~$\TT$ of a surface~$\Sigma$ as in Definition~\ref{def:tiling-to-curve}. 
A~\emph{region} of~$\mathbf{C}$ is a connected component of the complement $\Sigma-\mathbf{C}$. 
We then have: 
\begin{itemize}[leftmargin=.2in]
\item 
every region of $\mathbf{C}$ is homeomorphic to a disk; 
\item
every region of $\mathbf{C}$ is not a monogon (i.e., has at least two nodes on its boundary);
\item
each region %of $\mathbf{C}$ 
is colored black or white, 
so that adjacent regions have different colors. 
\end{itemize}
To~recover the tiling from a nodal curve~$\mathbf{C}$ satisfying these conditions, 
place a black (resp., white) vertex inside each black (resp., white) region 
and connect these vertices across curve segments separating neighboring regions. 
\end{remark}

\clearpage

\newpage

\section{Deducing incidence theorems from each other
%Deducing new incidence theorems from existing ones
} 
\label{sec:deducing-new-incidence-theorems}

The study of logical dependencies among various incidence theorems
has been an important research direction within incidence geometry since its inception in ancient Greece. 
Some of these dependencies can be naturally interpreted in terms of associated tilings. 
In this section, we discuss several settings in which this phenomenon occurs. 

\medskip

We begin by examining several combinatorial constructions that yield 
material equivalences between incidence theorems, i.e., 
instances where two incidence theorems are equivalent to each other, for some transparent reason. 

\section*{Projective duality}

%\begin{remark}[\emph{Dual theorems}]
Projective duality between points and hyperplanes
implies that each incidence theorem has a dual counterpart in which the points of the original 
theorem are replaced by hyperplanes, and vice versa.
This is readily seen at the level of tilings: simply swap the black and white colors of the vertices
to get a tiling for the dual theorem. 
%\end{remark}

To illustrate, dualizing Proposition~\ref{pr:octagon-special} yields the following statement. 

\begin{proposition}
\label{pr:octagon-special-dual}
Let $p_1, p_2, p_3, p_4$ be four generic lines on the plane.
Suppose that
\begin{itemize}[leftmargin=.25in]
\item[{\rm (a)}]
$A$ is a generic point on the line passing through $p_1\cap p_2$ and $p_3\cap p_4$; 
\item[{\rm (b)}]
$B$ is a generic point on the line passing through $p_1\cap p_4$ and $p_2\cap p_3$. 
\end{itemize}
Then 
\begin{itemize}[leftmargin=.25in]
\item[{\rm (c)}]
the octagon with vertex labels
$p_1, A, p_2, B, p_3, A, p_4, B$ (see Figure~\ref{fig:8-gon-tiling-special-dual}) is coherent. 
\end{itemize}
Conversely, {\rm (a)} and~{\rm (c)} imply~{\rm (b)}. 
\end{proposition}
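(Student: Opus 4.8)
The plan is to obtain Proposition~\ref{pr:octagon-special-dual} directly from Proposition~\ref{pr:octagon-special} by projective duality, rather than reproving it from scratch. Projective duality interchanges points and lines in the plane while reversing all incidences: a point $P$ on a line $\ell$ becomes a line $P^\vee$ through the point $\ell^\vee$. Crucially, coherence of a tile is a self-dual notion. Indeed, the mixed cross-ratio $(A,B;\ell,m)$ from \eqref{eq:mixed-cross-ratio} is built from pairings $\langle \mathbf{V},\hh\rangle$, and under the identification $\VV\leftrightarrow\VV^*$ supplied by duality these pairings are simply relabeled; by Proposition~\ref{pr:coherence-algebraic} a tile is coherent exactly when this cross-ratio equals~$1$, a condition manifestly preserved when we swap the roles of points and lines. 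At the level of a tiling, duality amounts to interchanging the black and white vertex colors, and Theorem~\ref{th:master} applies verbatim to the recolored tiling.

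First I would set up the dual dictionary explicitly for the data in Proposition~\ref{pr:octagon-special}. There, $P_1,P_2,P_3,P_4$ are generic points and $a,b$ are lines through the diagonal points $(P_1P_2)\cap(P_3P_4)$ and $(P_1P_4)\cap(P_2P_3)$ respectively. Dualizing, the four points $P_i$ become four generic lines $p_i=P_i^\vee$, while the lines $a,b$ become points $A=a^\vee$, $B=b^\vee$. I would then check that the dual of the incidence ``$a$ passes through $(P_1P_2)\cap(P_3P_4)$'' is precisely condition~(a): the line $(P_1P_2)$ dualizes to the point $p_1\cap p_2$, the line $(P_3P_4)$ to $p_3\cap p_4$, their common point dualizes to the line joining $p_1\cap p_2$ and $p_3\cap p_4$, and the incidence of $a$ with that point dualizes to the incidence of $A$ with that line. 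Condition~(b) arises identically from the statement that $b$ passes through $(P_1P_4)\cap(P_2P_3)$.

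Next I would translate the conclusion. The octagon in Proposition~\ref{pr:octagon-special} has boundary labels $P_1,a,P_2,b,P_3,a,P_4,b$; its coherence is expressed via the generalized mixed cross-ratio \eqref{eq:coherent-polygon}. Dualizing each vertex label gives the octagon $p_1,A,p_2,B,p_3,A,p_4,B$ of statement~(c), and since the generalized cross-ratio is again a product of pairings that is invariant under the duality relabeling, coherence of the original octagon is equivalent to coherence of the dual octagon. This yields exactly the implication ``(a) and (b) $\Rightarrow$ (c).'' For the converse ``(a) and (c) $\Rightarrow$ (b),'' I would invoke the fact that Proposition~\ref{pr:octagon-special} itself, being an instance of the master theorem applied to the tiling in Figure~\ref{fig:8-gon-tiling-special}, is reversible: the coherence of any one tile follows from the coherence of the remaining ones together with the octagon's coherence. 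Dualizing that reversibility gives that (a) and~(c) force~(b).

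I expect the main obstacle to be purely expository rather than mathematical: one must verify carefully that the self-duality of coherence is applied consistently around the octagon, so that the cyclic order of the dual labels matches the order asserted in statement~(c) and no orientation is inadvertently reversed. Once the dictionary $P_i\mapsto p_i$, $a\mapsto A$, $b\mapsto B$ is fixed and the invariance of \eqref{eq:coherent-polygon} under duality is noted, the proof is a single sentence; the care lies entirely in confirming that the hypotheses~(a),~(b) are genuinely the duals of the two line-through-diagonal-point conditions in Proposition~\ref{pr:octagon-special}.
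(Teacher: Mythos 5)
Your route is the paper's route: Proposition~\ref{pr:octagon-special-dual} is introduced there precisely as the projective dual of Proposition~\ref{pr:octagon-special}, with the recolored tiling of Figure~\ref{fig:8-gon-tiling-special-dual} playing the role of Figure~\ref{fig:8-gon-tiling-special}. Your dictionary $P_i\mapsto p_i$, $a\mapsto A$, $b\mapsto B$, together with the observation that (anti)coherence of tiles and polygons is a self-dual notion, is exactly what is needed, and it settles the implication from (a) and (b) to (c) completely.

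The converse is where there is a genuine gap. Proposition~\ref{pr:octagon-special} as stated carries no converse clause, so dualizing it does not hand you ``(a) and (c) imply (b)''; and the ``reversibility'' you invoke --- that the coherence of one tile follows from the coherence of all the others together with that of the octagon --- does not apply directly, because after using (a) and (c) there remain \emph{two} tiles of unknown status, namely the two tiles $p_2,B,p_3,E$ and $p_4,B,p_1,E$ adjacent to~$B$ (here $E=m\cap n$ is the intersection of the line $m$ through $p_1\cap p_2$ and $p_3\cap p_4$ with the line $n$ through $p_1\cap p_4$ and $p_2\cap p_3$). Multiplicativity of mixed cross-ratios only tells you that the product of these two tiles' cross-ratios equals~$1$; each factor separately equals~$1$ exactly when $B\in n$, but the product condition cuts out a conic in~$B$ --- the degenerate member $m\cup n$ of the pencil of conics through the four points $p_i\cap p_j$ with $i$ odd and $j$ even --- so a priori $B$ could lie on $m$ instead of~$n$. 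One way to repair this within the tiling formalism is to place the interior vertex not at $m\cap n$ but at the point $E'$ where $m$ meets the line joining $B$ to $p_2\cap p_3$. Then the two $A$-tiles are coherent because $A$, $E'$, $p_1\cap p_2$, $p_3\cap p_4$ all lie on~$m$, the tile $p_2,B,p_3,E'$ is coherent by the choice of~$E'$, and (c) forces the last tile, so $B$, $E'$, $p_1\cap p_4$ are collinear; since $B\notin m$ by genericity, the two lines through $B$ thus produced must coincide, which is exactly~(b). Either this modified tiling argument or the pencil-of-conics argument should be supplied; as written, your appeal to reversibility does not close the converse.
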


\begin{figure}[ht]
\begin{center}
\vspace{-.2in}
\begin{equation*}
\begin{tikzpicture}[baseline= (a).base]
\node[scale=1] (a) at (0,0){
\begin{tikzcd}[arrows={-stealth}, cramped, sep=8]
& A \edge{rr} && p_4 \edge{rd} & \\[5pt]
p_3 \edge{dd} \edge{ru} & && & B \\[-4pt]
&& E \edge{llu, dashed}\edge{ruu, dashed} \edge{rrd, dashed} \edge{ldd, dashed}\\[-4pt]
B \edge{rd} & && & p_1 \edge{uu}\\[5pt]
& p_2 \edge{rr} && A \edge{ru}
\end{tikzcd}
};
\end{tikzpicture}
\end{equation*}
\vspace{-.15in}
\end{center}
\caption{The octagon from Proposition~\ref{pr:octagon-special-dual} and its tiling.}
\label{fig:8-gon-tiling-special-dual}
\end{figure}
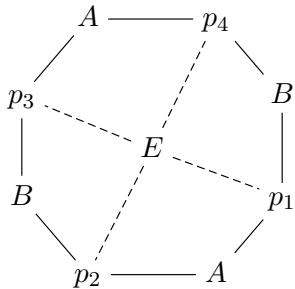

Dualizing the second proof of Theorem~\ref{th:pappus} (cf.\ Figure~\ref{fig:pappus2-tiling}),
we obtain the following version of the Pappus theorem. 

\begin{theorem}[Pappus]
\label{th:pappus-dual}
Let $Q, S, R, T$ be four points on the real/complex plane
and let $d, e, p_4, p_5$ be four lines. 
Consider the six tiles appearing  in Figure~\ref{fig:pappus2-tiling-dual} (including the octagonal tile in the middle). 
If  five of these six tiles are coherent, then the remaining tile is coherent as well. 
Cf.~Figure~\ref{fig:pappus-dual}. 
\end{theorem}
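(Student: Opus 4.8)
The plan is to recognize Theorem~\ref{th:pappus-dual} as the \emph{exact projective dual} of the second proof of the Pappus theorem (Theorem~\ref{th:pappus}, second version), and to obtain it by the same mechanism that produced every other result in this section, namely by directly invoking the master theorem (Theorem~\ref{th:master}) on a suitable bicolored quadrilateral tiling. Concretely, I would start from the tiling in Figure~\ref{fig:pappus2-tiling}, which has four points $P_4,P_5,D,E$ playing the role of black vertices and five lines $b,q,r,s,t$ playing the role of white vertices, and apply the black-white color swap described in the ``Projective duality'' subsection. Under this swap the four points become four lines $d,e,p_4,p_5$ and the five lines become four points $Q,S,R,T$ together with one more; relabeling to match the statement, the dualized tiling is precisely the configuration of six tiles (five quadrilaterals plus the central octagonal tile) shown in Figure~\ref{fig:pappus2-tiling-dual}.

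First I would make the dictionary explicit: record which vertex of Figure~\ref{fig:pappus2-tiling} maps to which labeled object $Q,S,R,T,d,e,p_4,p_5$ of the dual statement, and verify that each edge $A\!-\!h$ of the dualized tiling corresponds to a genuine non-incidence (a point not lying on a line), so that the hypothesis of Theorem~\ref{th:master} is met under the stated genericity. Second, I would observe that coherence of a quadrilateral tile is a self-dual notion: by Proposition~\ref{pr:coherence-algebraic} a tile is coherent iff its mixed cross-ratio equals~$1$, and the defining expression~\eqref{eq:mixed-cross-ratio} is manifestly symmetric under interchanging the roles of vectors and covectors (points and hyperplanes). Hence a tile is coherent precisely when its dual tile is coherent, and the six coherence conditions of the dual statement are literally the six coherence conditions of the original tiling, read through the duality dictionary.

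For the octagonal tile in the middle, I would note that Proposition~\ref{pr:octagon-special-dual}, just proved by dualizing Proposition~\ref{pr:octagon-special}, supplies exactly the coherence bookkeeping for the central octagon $p_1,A,p_2,B,p_3,A,p_4,B$; in the present notation this is the octagon around the central vertex, and its coherence is governed by the same generalized mixed cross-ratio~\eqref{eq:coherent-polygon}, which is again self-dual. Thus the ``octagonal tile'' counts as one of the six regions, and the product of the (generalized) mixed cross-ratios over all six regions of the tiled surface equals~$1$ by the same telescoping argument used in the proof of Theorem~\ref{th:master}: every pairing $\langle\mathbf{P},\mathbf{h}\rangle$ occurring along an interior edge appears once in a numerator and once in a denominator. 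Therefore, if five of the six tiles are coherent (their cross-ratios equal~$1$), the sixth must be coherent as well.

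The main obstacle I anticipate is not conceptual but bookkeeping: I must check that the color-swap of Figure~\ref{fig:pappus2-tiling} reproduces \emph{precisely} the six-region decomposition of Figure~\ref{fig:pappus2-tiling-dual}, including the correct cyclic orientation of the octagonal region and the correct identification of the glued sides of the hexagonal fundamental domain, so that the telescoping cancellation is valid on the closed torus. In particular I would verify that the central octagon's boundary labels alternate point/line as required by Definition~\ref{def:coherent-polygon}, and that no spurious incidences are introduced by the relabeling. Once the dictionary is pinned down, the conclusion is immediate from Theorem~\ref{th:master} together with Propositions~\ref{pr:coherence-algebraic} and~\ref{pr:octagon-special-dual}.
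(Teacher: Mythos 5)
Your proposal is correct and takes essentially the same route as the paper, which derives Theorem~\ref{th:pappus-dual} in exactly one line as ``dualizing the second proof of Theorem~\ref{th:pappus}'' applied to the tiling in Figure~\ref{fig:pappus2-tiling}, with the telescoping product of (generalized) mixed cross-ratios over the six regions doing the work. The only bookkeeping point worth pinning down is that the color swap alone yields a tiling with nine quadrilaterals and nine vertices, so to arrive at the six-region decomposition of Figure~\ref{fig:pappus2-tiling-dual} you must additionally fuse the four tiles surrounding the vertex dual to the line $b$ into the single central octagon (deleting that vertex); since your argument already treats the octagon as one region whose generalized mixed cross-ratio enters the telescoping product, and Proposition~\ref{pr:octagon-special-dual} identifies its coherence geometrically, this causes no difficulty.
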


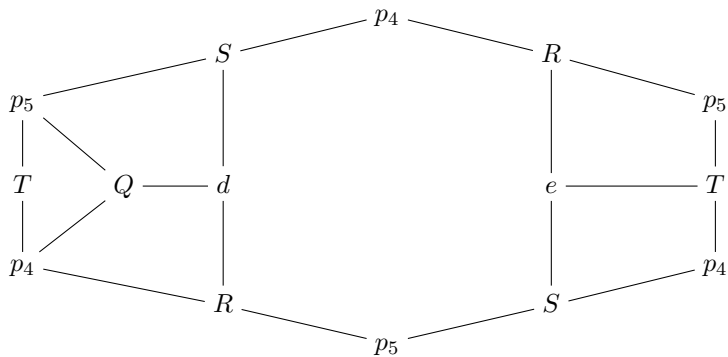
\begin{figure}[ht]
\vspace{-.1in}
\begin{center}
\begin{equation*}
\begin{tikzpicture}[baseline= (a).base]
\node[scale=.9] (a) at (0,0){
\begin{tikzcd}[arrows={-stealth}, cramped]
& & & & p_4 \edge{rrd} \edge{lld}  \\[-19pt]
& & S \edge{lld} \edge{dd} &&& & R \edge{rrd} \edge{dd}  \\[-12pt]
p_5 \edge{d} \edge{dr} &&&&&&&&  p_5 \edge{d} \\
T \edge{d}  & Q \edge{r} \edge{dl} & d \edge{dd} &&&& e \edge{rr} \edge{dd} & & T \edge{d} \\
p_4 \edge{rrd} &&&&&&&&  p_4 \edge{lld} \\[-19pt]
& & R \edge{rrd} &&&& S \edge{lld} \\[-15pt]
& & & & p_5 
\end{tikzcd}
};
\end{tikzpicture}
\end{equation*}
\vspace{-.15in}
\end{center}
\caption{The tiling of the torus for the dual version of the Pappus theorem. 
Opposite sides of the shown hexagonal fundamental domain should be glued to each other.}
%\vspace{-.2in}
\label{fig:pappus2-tiling-dual}
\end{figure}

\begin{figure}[ht]
\vspace{-.1in}
\begin{center}
\includegraphics[scale=0.5, trim=0.1cm 0.2cm 0.6cm 0cm, clip]{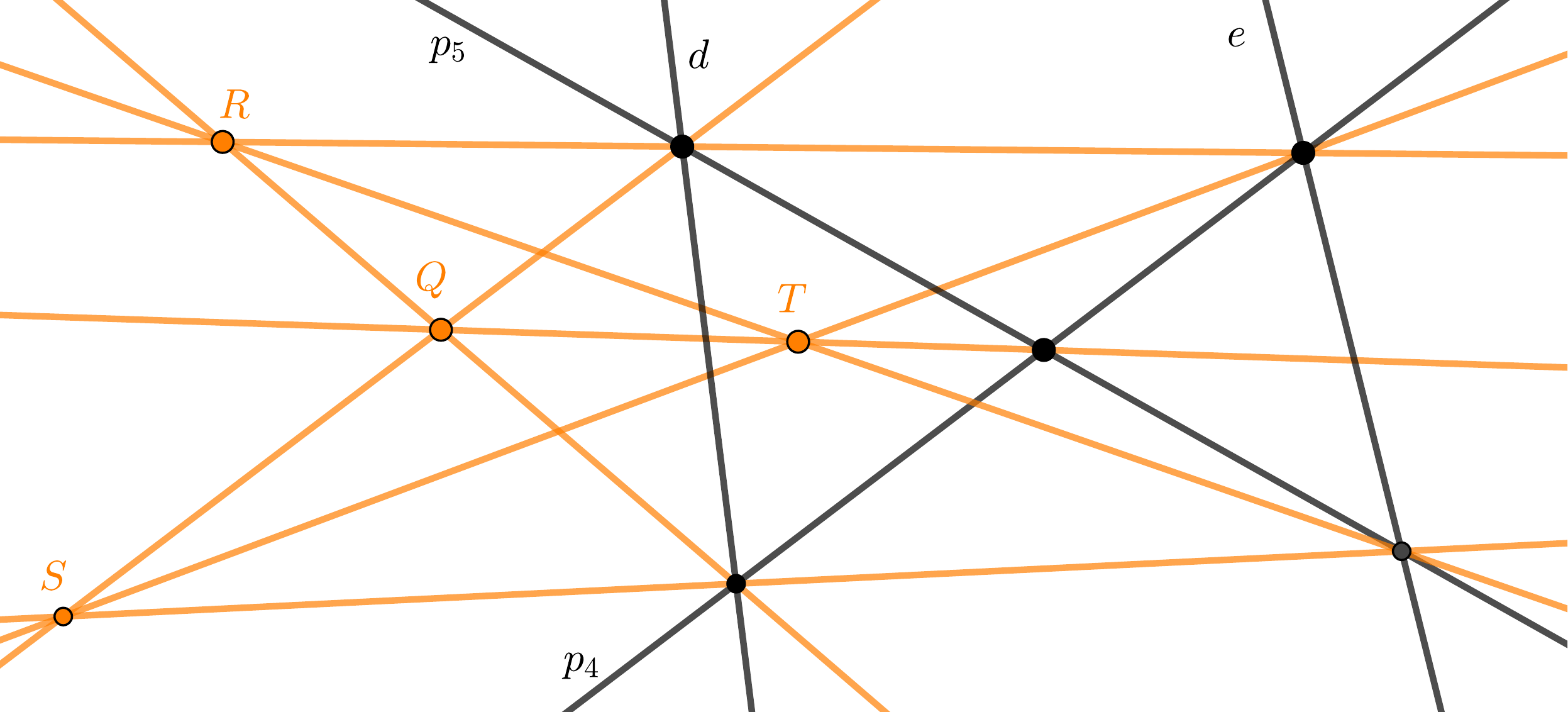}
\end{center}
\vspace{-.1in}
\caption{The dual version of the Pappus  theorem. 
}
\vspace{-.15in}
\label{fig:pappus-dual}
\end{figure}

\section*{Local moves on tilings}

We next discuss several types of \emph{local moves} (that is, local transformations of tilings 
of an oriented surface)  
that translate into equivalences between the corresponding incidence theorems.  

\begin{definition}[\emph{Fusion of adjacent tiles}]
\label{def:fusion-of-tiles}
Suppose that a tiling~$\TT$ of an oriented surface by quadrilateral tiles contains a vertex~$v$ of degree~2. 
Remove~$v$ together with the two edges incident to it to obtain a new tiling~$\TT'$
in which the two tiles surrounding~$v$ have been \emph{fused} into a single quadrilateral tile. 
It~is not hard to see that the incidence theorems associated to $\TT$ and~$\TT'$ 
are closely related; in fact, each of them immediately implies the other. 
See Figure~\ref{fig:fuse-tilings}. 

The removal of~$v$ from~$\TT$ can potentially produce a tiling~$\TT'$ 
that contains a ``self-folded'' quadrilateral tile 
in which two adjacent sides have been glued to each other,
cf.\ the bottom row of Figure~\ref{fig:fuse-tilings}. 
Although thus far in the paper, we did not allow such tiles, 
they may appear as a result of fusion moves. 
This is not going to create problems since self-folded tiles can be easily eliminated, 
see Definition~\ref{def:removal-self-folded} below. 
\end{definition}

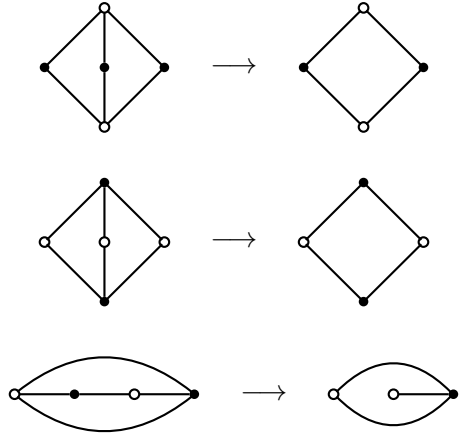
\begin{figure}[ht]
\begin{center}
\setlength{\unitlength}{0.45pt}
\begin{picture}(100,100)(0,0)
\thicklines
\put(1.8,51.8){{\line(1,1){46}}}
\put(1.8,48.2){{\line(1,-1){46}}}
\put(98.2,51.8){{\line(-1,1){46}}}
\put(98.2,48.2){{\line(-1,-1){46}}}

\put(50,4){{\line(0,1){42}}}
\put(50,54){{\line(0,1){42}}}

\put(0,50){\circle*{8}}
\put(50,0){\circle{8}}
\put(50,100){\circle{8}}
\put(100,50){\circle*{8}}

\put(50,50){\circle*{8}}
\end{picture}
\begin{picture}(100,100)(0,0)
\thicklines
\put(50,50){\makebox(0,0){$\longrightarrow$}}
\end{picture}
\begin{picture}(100,100)(0,0)
\thicklines
\put(1.8,51.8){{\line(1,1){46}}}
\put(1.8,48.2){{\line(1,-1){46}}}
\put(98.2,51.8){{\line(-1,1){46}}}
\put(98.2,48.2){{\line(-1,-1){46}}}

\put(0,50){\circle*{8}}
\put(50,0){\circle{8}}
\put(50,100){\circle{8}}
\put(100,50){\circle*{8}}
\end{picture}
\\[20pt]
\begin{picture}(100,100)(0,0)
\thicklines
\put(1.8,51.8){{\line(1,1){46}}}
\put(1.8,48.2){{\line(1,-1){46}}}
\put(98.2,51.8){{\line(-1,1){46}}}
\put(98.2,48.2){{\line(-1,-1){46}}}

\put(50,4){{\line(0,1){42}}}
\put(50,54){{\line(0,1){42}}}

\put(0,50){\circle{8}}
\put(50,0){\circle*{8}}
\put(50,100){\circle*{8}}
\put(100,50){\circle{8}}

\put(50,50){\circle{8}}
\end{picture}
\begin{picture}(100,100)(0,0)
\thicklines
\put(50,50){\makebox(0,0){$\longrightarrow$}}
\end{picture}
\begin{picture}(100,100)(0,0)
\thicklines
\put(1.8,51.8){{\line(1,1){46}}}
\put(1.8,48.2){{\line(1,-1){46}}}
\put(98.2,51.8){{\line(-1,1){46}}}
\put(98.2,48.2){{\line(-1,-1){46}}}

\put(0,50){\circle{8}}
\put(50,0){\circle*{8}}
\put(50,100){\circle*{8}}
\put(100,50){\circle{8}}
\end{picture}
\\[25pt]
\begin{picture}(150,40)(0,0)
\thicklines
\qbezier(2.2,22)(75,80)(147.8,22)
\qbezier(2.2,18)(75,-40)(147.8,18)

\put(4,20){{\line(1,0){42}}}
\put(54,20){{\line(1,0){42}}}
\put(104,20){{\line(1,0){42}}}

\put(0,20){\circle{8}}
\put(50,20){\circle*{8}}
\put(100,20){\circle{8}}
\put(150,20){\circle*{8}}

\end{picture}
\begin{picture}(100,40)(0,0)
\thicklines
\put(50,20){\makebox(0,0){$\longrightarrow$}}
\end{picture}
\begin{picture}(100,40)(0,0)
\thicklines
\qbezier(2.4,22)(50,70)(97.6,22)
\qbezier(2.4,18)(50,-30)(97.6,18)

\put(54,20){{\line(1,0){42}}}

\put(0,20){\circle{8}}
\put(50,20){\circle{8}}
\put(100,20){\circle*{8}}
\end{picture}
%\vspace{-5pt}
\end{center}
\caption{Fusion of tiles. 
}
\vspace{-15pt}
\label{fig:fuse-tilings}
\end{figure}

\begin{definition}[\emph{Getting rid of self-folded tiles}]
\label{def:removal-self-folded}
For a self-folded tile, the coherence condition is always vacuous. 
Consequently, such tiles can be removed (collapsed), 
as shown in Figure~\ref{fig:removal-of-self-folded-tiles}, 
without affecting the content of the associated incidence theorem. 
\end{definition}

\begin{figure}[ht]
\begin{center}
%\vspace{5pt}
\setlength{\unitlength}{0.45pt}
\begin{picture}(100,40)(0,0)
\thicklines
\qbezier(2.4,22)(50,70)(97.6,22)
\qbezier(2.4,18)(50,-30)(97.6,18)

\put(54,20){{\line(1,0){42}}}

\put(0,20){\circle{8}}
\put(50,20){\circle{8}}
\put(100,20){\circle*{8}}
\end{picture}
\begin{picture}(100,40)(0,0)
\thicklines
\put(50,20){\makebox(0,0){$\longrightarrow$}}
\end{picture}
\begin{picture}(100,40)(0,0)
\thicklines
\put(4,20){{\line(1,0){92}}}

\put(0,20){\circle{8}}
\put(100,20){\circle*{8}}

\end{picture}
\\[20pt]
\begin{picture}(100,40)(0,0)
\thicklines
\qbezier(2.4,22)(50,70)(97.6,22)
\qbezier(2.4,18)(50,-30)(97.6,18)

\put(54,20){{\line(1,0){42}}}

\put(0,20){\circle*{8}}
\put(50,20){\circle*{8}}
\put(100,20){\circle{8}}
\end{picture}
\begin{picture}(100,40)(0,0)
\thicklines
\put(50,20){\makebox(0,0){$\longrightarrow$}}
\end{picture}
\begin{picture}(100,40)(0,0)
\thicklines
\put(4,20){{\line(1,0){92}}}

\put(0,20){\circle*{8}}
\put(100,20){\circle{8}}

\end{picture}
\vspace{-5pt}
\end{center}
\caption{Removal of self-folded tiles. 
}
\vspace{-15pt}
\label{fig:removal-of-self-folded-tiles}
\end{figure}
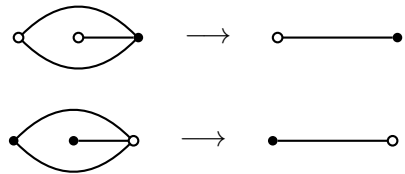

\begin{definition}[\emph{Desargues moves}]
\label{def:desargues-move}
The most interesting type of local move is a \emph{Desargues move} 
shown in Figure~\ref{fig:desargues-moves},
corresponds to applying the Desargues theorem, see Corollary~\ref{cor:tileable-hexagons}. 
If two tilings differ by a Desargues move, then the corresponding incidence theorems
can be obtained from each other via a single application of Desargues' theorem. 
\end{definition}

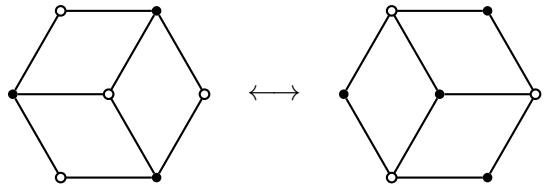
\begin{figure}[ht]
\begin{center}
%\vspace{5pt}
\setlength{\unitlength}{0.45pt}
\begin{picture}(160,140)(0,0)
\thicklines

\put(2,73.5){{\line(4,7){36}}}
\put(2,66.5){{\line(4,-7){36}}}
\put(44,0){{\line(1,0){72}}}
\put(44,140){{\line(1,0){72}}}
\put(158,73.5){{\line(-4,7){36}}}
\put(158,66.5){{\line(-4,-7){36}}}

\put(4,70){{\line(1,0){72}}}
\put(82,73.5){{\line(4,7){36}}}
\put(82,66.5){{\line(4,-7){36}}}

\put(0,70){\circle*{8}}
\put(120,0){\circle*{8}}
\put(120,140){\circle*{8}}

\put(160,70){\circle{8}}
\put(40,0){\circle{8}}
\put(40,140){\circle{8}}

\put(80,70){\circle{8}}
\end{picture}
\begin{picture}(100,140)(0,0)
\thicklines
\put(50,70){\makebox(0,0){$\longleftrightarrow$}}
\end{picture}
\begin{picture}(160,140)(0,0)
\thicklines

\put(2,73.5){{\line(4,7){36}}}
\put(2,66.5){{\line(4,-7){36}}}
\put(44,0){{\line(1,0){72}}}
\put(44,140){{\line(1,0){72}}}
\put(158,73.5){{\line(-4,7){36}}}
\put(158,66.5){{\line(-4,-7){36}}}

\put(84,70){{\line(1,0){72}}}
\put(78,73.5){{\line(-4,7){36}}}
\put(78,66.5){{\line(-4,-7){36}}}

\put(0,70){\circle*{8}}
\put(120,0){\circle*{8}}
\put(120,140){\circle*{8}}

\put(160,70){\circle{8}}
\put(40,0){\circle{8}}
\put(40,140){\circle{8}}

\put(80,70){\circle*{8}}
\end{picture}
\vspace{-5pt}
\end{center}
\caption{A Desargues move shown above can be performed left-to-right or right-to-left. 
The six vertices of the ambient hexagon do not have to be distinct. 
}
\vspace{-15pt}
\label{fig:desargues-moves}
\end{figure}

\begin{problem}
\label{problem:tiling-moves}

If two tilings can be obtained from each other by a sequence of local moves
described in Definitions~\ref{def:fusion-of-tiles}--\ref{def:desargues-move}
(see Figures~\ref{fig:fuse-tilings}--\ref{fig:desargues-moves}),
then the corresponding incidence theorems are equivalent modulo Desargues' theorem. 
Therefore it would be interesting to effectively describe the equivalence classes of tilings 
with respect to these local moves.
%Incidence theorems associated with tilings in the same equivalence class are equivalent modulo Desargues' theorem. 
\end{problem}

\clearpage

\newpage

\section*{Local moves on nodal curves}

\begin{remark}
%\label{rem:}
The construction described in Definition~\ref{def:tiling-to-curve} associates a nodal curve to a tiled surface, and vice versa
(cf.\ Remark~\ref{rem:curve-to-tiling}). 
Accordingly, the local moves discussed above can be translated into the language of curves on a surface.  
To be specific, local moves on tilings shown in Figures~\ref{fig:fuse-tilings}--\ref{fig:desargues-moves} correspond, 
under the construction from Definition~\ref{def:tiling-to-curve}, 
to local moves on nodal curves that are shown in Figure~\ref{fig:yb-move}. \linebreak[3]
In~particular, the \emph{braid move} in the bottom row of Figure~\ref{fig:yb-move} 
%(an analogue of a type~III Reide\-meister move)
corresponds to a Desargues move from Definition~\ref{def:desargues-move}/Figure~\ref{fig:desargues-moves}. 
Note that a~braid move may create a monogon even if the original curve did not have such regions, 
see Figure~\ref{fig:move-confluence}. 
\end{remark}

\iffalse
\begin{picture}(33,20)(0,0)
\thicklines
\qbezier(2,12)(16.5,5)(31,12)
\qbezier(10,0)(12,14)(23,24)
\qbezier(10,24)(21,14)(23,0)
\end{picture} 
\begin{picture}(25,24)(0,0)
\put(12.5,12){\makebox(0,0){$\longleftrightarrow$}}
\end{picture}
\begin{picture}(33,20)(0,0)
\thicklines
\qbezier(2,12)(16.5,19)(31,12)
\qbezier(10,0)(21,10)(23,24)
\qbezier(10,24)(12,10)(23,0)
\end{picture} 
\fi

\begin{figure}[ht] 
\begin{center} 
\vspace{-5pt}
\setlength{\unitlength}{1.2pt} 
\begin{picture}(30,30)(0,0)
\thicklines
\qbezier(30,25)(30,8)(7,8)
\qbezier(30,-5)(30,12)(7,12)
\qbezier(7,8)(1,8)(1,10)
\qbezier(7,12)(1,12)(1,10)
\end{picture} 
\setlength{\unitlength}{1.5pt} 
\begin{picture}(33,20)(0,0)
\put(16.5,10){\makebox(0,0){$\longrightarrow$}}
\end{picture}
\setlength{\unitlength}{1.2pt} 
\begin{picture}(30,30)(0,0)
\thicklines
\put(15,-5){{\line(0,1){30}}}
\end{picture} 
\\[20pt]
\setlength{\unitlength}{1.5pt} 
\begin{picture}(30,10)(0,0)
\thicklines
\qbezier(0,0)(15,20)(30,0)
\qbezier(0,10)(15,-10)(30,10)
\end{picture} 
\begin{picture}(33,10)(0,0)
\put(16.5,5){\makebox(0,0){$\longrightarrow$}}
\end{picture}
\begin{picture}(30,10)(0,0)
\thicklines
\put(0,0){{\line(3,1){30}}}
\put(0,10){{\line(3,-1){30}}}
\end{picture} 
\\[10pt]
%\begin{picture}(30,20)(0,0)
%\thicklines
%\put(15,0){{\line(0,1){20}}}
%\put(15,10){\circle{10}}
%\end{picture} 
%\begin{picture}(33,20)(0,0)
%\put(16.5,10){\makebox(0,0){$\longrightarrow$}}
%\end{picture}
%\begin{picture}(30,20)(0,0)
%\thicklines
%\put(15,0){{\line(0,1){20}}}
%\end{picture} 
%\\[5pt]
\setlength{\unitlength}{1.8pt} 
\begin{picture}(24,33)(0,0)
\thicklines
\qbezier(12,2)(5,16.5)(12,31)
\qbezier(0,10)(14,12)(24,23)
\qbezier(24,10)(14,21)(0,23)
\end{picture} 
\setlength{\unitlength}{1.35pt} 
\begin{picture}(33,33)(0,0)
\put(16.5,21){\makebox(0,0){$\longleftrightarrow$}}
\end{picture}
\setlength{\unitlength}{1.8pt} 
\begin{picture}(24,33)(0,0)
\thicklines
\qbezier(12,2)(19,16.5)(12,31)
\qbezier(0,10)(10,21)(24,23)
\qbezier(24,10)(10,12)(0,23)
\end{picture} 
\\
\vspace{-5pt}
\end{center} 
\caption{Local moves on nodal curves. 
The moves in the top and bottom rows are similar to Reidemeister moves of types~I and~III, 
respectively. The move in the middle (which corresponds to fusion moves on tilings, 
see Definition~\ref{def:fusion-of-tiles}/Figure~\ref{fig:fuse-tilings})
is however different from Reidemeister~II. 
%The move shown in the fourth row of Figure~\ref{fig:moves-tilings} translates into a move on nodal curves
%that can be obtained as a composition of the moves shown above. 
} 
\label{fig:yb-move} 
\end{figure}
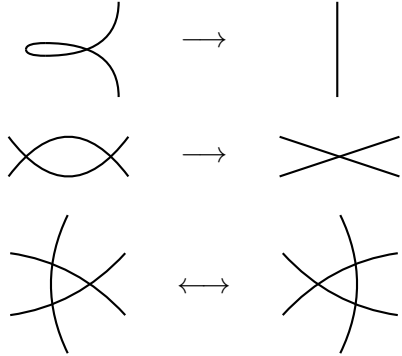 

\begin{figure}[ht] 
\vspace{-35pt}
\begin{equation*}
\begin{tikzpicture}[baseline= (a).base]
\node[scale=1] (a) at (0,0){
\begin{tikzcd}[arrows={-stealth}, sep=small, cramped]
\setlength{\unitlength}{1.2pt} 
\begin{picture}(40,30)(0,7.5)
\thicklines
\qbezier(30,25)(30,8)(7,8)
\qbezier(30,-5)(30,12)(7,12)
\qbezier(7,8)(1,8)(1,10)
\qbezier(7,12)(1,12)(1,10)
\put(10,-5){{\line(0,1){30}}}
\end{picture} 
  \arrow[r]   & 
  \setlength{\unitlength}{1.2pt} 
\begin{picture}(45,30)(-10,7.5)
\thicklines
\qbezier(30,25)(30,8)(7,8)
\qbezier(30,-5)(30,12)(7,12)
\qbezier(7,8)(1,8)(1,10)
\qbezier(7,12)(1,12)(1,10)
\put(25,-5){{\line(0,1){30}}}
\end{picture} 
  \arrow[r] & 
  \setlength{\unitlength}{1.2pt} 
\begin{picture}(35,30)(5,7.5)
\thicklines
\qbezier(30,25)(30,20)(20,15)
\qbezier(20,15)(10,10)(20,5)
\qbezier(30,-5)(30,0)(20,5)
\put(25,-5){{\line(0,1){30}}}
\end{picture} 
\\[3pt]
{\ } \arrow[d] & & {\ } \arrow[d] 
\\[5pt]
\setlength{\unitlength}{1.2pt} 
\begin{picture}(40,27)(0,2)
\thicklines
\qbezier(30,25)(30,5)(20,5)
\qbezier(10,25)(10,5)(20,5)
\qbezier(30,-5)(30,15)(20,15)
\qbezier(10,-5)(10,15)(20,15)
\end{picture} 
\arrow[rr]
&&
 \setlength{\unitlength}{1.2pt} 
\begin{picture}(35,27)(5,2)
\thicklines
\put(15,-5){{\line(1,2){15}}}
\put(30,-5){{\line(-1,2){15}}}
\end{picture} 
\end{tikzcd}
};
\end{tikzpicture}
\end{equation*}
\caption{The phenomenon of confluence of local moves on nodal curves. 
} 
\vspace{-15pt}
\label{fig:move-confluence} 
\end{figure}
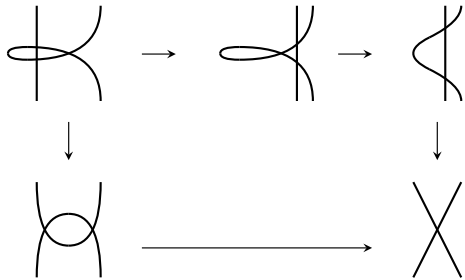 

\begin{problem}
\label{problem:nodal-moves}
Reformulating Problem~\ref{problem:tiling-moves}, it would be interesting to classify 
the equivalence classes of  nodal curves on closed oriented surfaces 
with respect to the local moves in Figure~\ref{fig:yb-move}. 
As in Remark~\ref{rem:curve-to-tiling}, we assume that 
each region (a connected component of the complement of a curve) is homeomorphic to a disk 
and that these regions are properly colored in two colors. 
%repeated application of local moves may be \emph{confluent} i.e., the 
It is natural to ask whether all such curves within a given move-equivalence class 
that have the smallest number of nodes are braid-equivalent to each other, 
cf.\ Figure~\ref{fig:move-confluence}. 
(Incidence theorems associated with curves that differ by a braid move 
are equivalent modulo Desargues' theorem.) 
\end{problem}

\clearpage

\newpage

\section*{Collapse and insertion of tiles}

We next discuss several combinatorial transformations of tiled surfaces 
that yield logical implications between incidence theorems, 
that is, instances where one such theorem directly follows from some other theorem(s).
The simplest, yet nontrivial, example of such combinatorial transformations
involves a single tile:

\begin{definition}
\label{def:tile-collapse-insertion}
Let $\TT$ be a bicolored quadrilateral tiling of an oriented surface~$\Sigma$.
Pick a tile~$\tau$ in~$\TT$ and construct a new tiling~$\TT'$ 
by removing the interior of~$\tau$ and gluing its four edges in pairs,
as shown below:
\begin{equation*}
\begin{tikzpicture}[baseline= (a).base]
\node[scale=1] (a) at (0,0){
\begin{tikzcd}[arrows={-stealth}, sep=small, cramped]
&& A \edge{rrd}  \edge{lld} \\
\ell \edge{rrd} &&\boxed{\tau}&& m \edge{lld}&  \longrightarrow & \ell \edge{rr} && A && m \edge{ll}& \\
&& A'
\end{tikzcd}
};
\end{tikzpicture}
\end{equation*}
We say that the resulting tiling~$\TT'$ is obtained from~$\TT$ by \emph{collapsing} 
the tile~$\tau$. 
Going in the opposite direction, we say that $\TT$ is obtained from~$\TT'$ by \emph{inserting} 
the tile~$\tau$ into the length-2 path $\ell -\!\!\!- A -\!\!\!- m$ in~$\TT'$. 

The incidence theorem associated with $\TT'$ is obtained from the one associated with~$\TT$
by making the additional assumption $A=A'$. 
\end{definition} 

\iffalse
\begin{figure}[ht]
\begin{center}
\setlength{\unitlength}{0.4pt}
\begin{picture}(160,120)(0,0)
\thicklines
\put(4,63){{\line(4,3){72}}}
\put(4,57){{\line(4,-3){72}}}
\put(84,3){{\line(4,3){72}}}
\put(84,117){{\line(4,-3){72}}}

\put(0,60){\circle*{10}}
\put(80,0){\circle{10}}
\put(80,120){\circle{10}}
\put(160,60){\circle*{10}}
\end{picture}
\begin{picture}(160,120)(0,0)
\thicklines
\put(80,60){\makebox(0,0){$\longrightarrow$}}
\end{picture}
\begin{picture}(160,120)(0,0)
\thicklines
\put(5,60){{\line(1,0){70}}}
\put(85,60){{\line(1,0){70}}}

\put(0,60){\circle*{10}}
\put(80,60){\circle{10}}
\put(160,60){\circle*{10}}
\end{picture}
\vspace{-5pt}
\end{center}
\caption{Collapsing a tile into two segments. 
}
\vspace{-15pt}
\label{fig:tile-collapse}
\end{figure}
\fi

%An interesting feature of the above construction is that it suggests non-obvious generalizations of existing incidence theorems. 

Tile insertion can be used to produce nontrivial generalizations of existing incidence theorems. 
Several examples illustrating this technique are given below. 

\begin{example}
\label{eg:generalized-perm-theorem}
Recall that the permutation theorem (Theorem~\ref{th:perm-thm})
can be obtained from the tiling in Figure~\ref{fig:torus-perm}. 
Insert two tiles into the latter tiling to obtain the tiling shown in Figure~\ref{fig:torus-perm-generalized}. 
The latter tiling yields a generalization of the permutation theorem stated below in Theorem~\ref{th:generalized-perm-theorem}. 
\end{example}

\begin{figure}[ht]
%\vspace{-.1in}
\begin{equation*}
\begin{tikzpicture}[baseline= (a).base]
\node[scale=1] (a) at (0,0){
\begin{tikzcd}[arrows={-stealth}, cramped, sep=10]
& t \edge{rr, dotted} \edge{ldd, dotted} \edge{rd} && r \edge{rrr, dotted} \edge{rd} \edge{rrd} \edge{ld} &&& u \edge{rdd, dotted} \edge{ld} \\
&& Q_1 \edge{rd} \edge{lld} && Q_2 \edge{ld} \edge{rrrd} & Q_2' \edge{rrd}  \\
u \edge{rdd, dotted} \edge{rrd} &&& s \edge{rd} \edge{rrd} \edge{ld} &&&& t \edge{ldd, dotted} \edge{lld} \\
&& P_1 \edge{rd} \edge{ld} && P_2 \edge{ld} \edge{rrd} & Q_2' \edge{rd} \\
& t \edge{rr, dotted} && r \edge{rrr, dotted} &&& u
\end{tikzcd}
};
\end{tikzpicture}
\end{equation*}
\vspace{-.1in}
\caption{The tiling of the torus obtained by inserting two tiles into the tiling in Figure~\ref{fig:torus-perm}. 
%Opposite sides of the hexagonal fundamental domain should be glued to each other.
%There is no edge between the vertices $Q_2$ and~$P_1$. 
}
\vspace{-.1in}
\label{fig:torus-perm-generalized}
\end{figure}
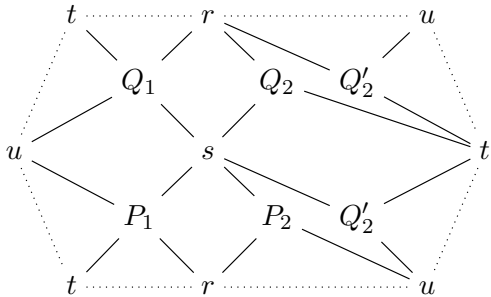

\begin{theorem}
\label{th:generalized-perm-theorem}
Let $P_1, Q_1, R_1, S_1$ be four distinct collinear points in the real/complex projective plane.
Let $R_2, S_2$ be two generic points in the plane. 
Define $r=(R_1R_2)$, $s=(S_1S_2)$, $t=(R_1S_2)$, $u=(R_2S_1)$, 
$A=r\cap s$, $B=t\cap u$. 
Pick a point $P_2\in (AP_1)$. 
Set 
$P_2'=(S_1P_2)\cap(BQ_1)$, 
$Q_2=(S_2P_2')\cap(AQ_1)$,
$Q_2'=(R_1Q_2)\cap(BP_1)$.
Then the points $Q_2', P_2, R_2$ are collinear. 
See Figure~\ref{fig:perm2}. 
\end{theorem}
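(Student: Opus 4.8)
The plan is to apply the master theorem (Theorem~\ref{th:master}) to the bicolored quadrilateral tiling of the torus displayed in Figure~\ref{fig:torus-perm-generalized}, exactly as announced in Example~\ref{eg:generalized-perm-theorem}. I~would label the four white (line) vertices by $r,s,t,u$ and the six black (point) vertices by $P_1,P_2,P_2',Q_1,Q_2,Q_2'$. The first concrete step is to record the six pairwise intersections of the four lines, which are forced by the definitions $r=(R_1R_2)$, $s=(S_1S_2)$, $t=(R_1S_2)$, $u=(R_2S_1)$: namely $r\cap s=A$, $t\cap u=B$, $r\cap t=R_1$, $r\cap u=R_2$, $s\cap t=S_2$, and $s\cap u=S_1$.

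By Remark~\ref{rem:coherence-plane}, each tile asserts that its two point-labels together with the intersection of its two line-labels are collinear. Using the list of intersections above, I would check that seven of the ten tiles recover exactly the seven defining incidences of the construction: the two tiles with line-pair $(r,s)$ give $A\in(P_1P_2)$ and $A\in(Q_1Q_2)$; the tile through $R_1=r\cap t$ gives $Q_2'\in(R_1Q_2)$ and the tile through $S_1=s\cap u$ gives $P_2'\in(S_1P_2)$; the two tiles through $B=t\cap u$ give $P_2'\in(BQ_1)$ and $Q_2'\in(BP_1)$; and the tile through $S_2=s\cap t$ gives $Q_2\in(S_2P_2')$. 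All seven are coherent by construction. The one remaining ``interesting'' tile, with line-pair $(r,u)$ and point-labels $P_2,Q_2'$, encodes $R_2\in(P_2Q_2')$, which is precisely the desired conclusion.

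The hypothesis that $P_1,Q_1,R_1,S_1$ are collinear enters through the two tiles not yet accounted for. Each has point-labels $P_1,Q_1$, while their line-pairs meet at $R_1=r\cap t$ and $S_1=s\cap u$ respectively; thus they encode $\{P_1,Q_1,R_1\}$ and $\{P_1,Q_1,S_1\}$ collinear, and both hold automatically because all four of $P_1,Q_1,R_1,S_1$ lie on a common line. Hence nine of the ten tiles are coherent, and Theorem~\ref{th:master} forces the tenth (the conclusion tile) to be coherent as well, proving that $Q_2',P_2,R_2$ are collinear.

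What remains are the genericity and non-incidence checks needed to legitimately invoke the master theorem: for every tile edge $X\!-\!h$ joining a point-label $X$ to a line-label $h$ one must verify $X\notin h$, and one must confirm that the constructed points $P_2',Q_2,Q_2'$ are well defined (the relevant pairs of lines being distinct). I~expect this bookkeeping to be the only real obstacle; it should follow from openness of the configuration space once $R_2,S_2$, the free point $P_2\in(AP_1)$, and the distinct collinear quadruple $P_1,Q_1,R_1,S_1$ are chosen generically, but it ought to be spelled out rather than taken for granted.
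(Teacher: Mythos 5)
Your proof is correct and is exactly the paper's intended argument: the paper establishes this theorem by applying the master theorem to the tiling of the torus in Figure~\ref{fig:torus-perm-generalized} (obtained by inserting two tiles into the permutation-theorem tiling of Figure~\ref{fig:torus-perm}), and your explicit enumeration of the ten tiles and their coherence conditions --- seven defining incidences, two tiles absorbed by the collinearity of $P_1,Q_1,R_1,S_1$, and the conclusion tile $R_2\in(P_2Q_2')$ --- is precisely that tiling. The genericity bookkeeping you flag at the end is left implicit throughout the paper, so your treatment is, if anything, slightly more careful than the original.
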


\begin{figure}[ht]
%\vspace{-.1in}
\begin{center}
\includegraphics[scale=0.5, trim=0.1cm 0.1cm 0.6cm 0.2cm, clip]{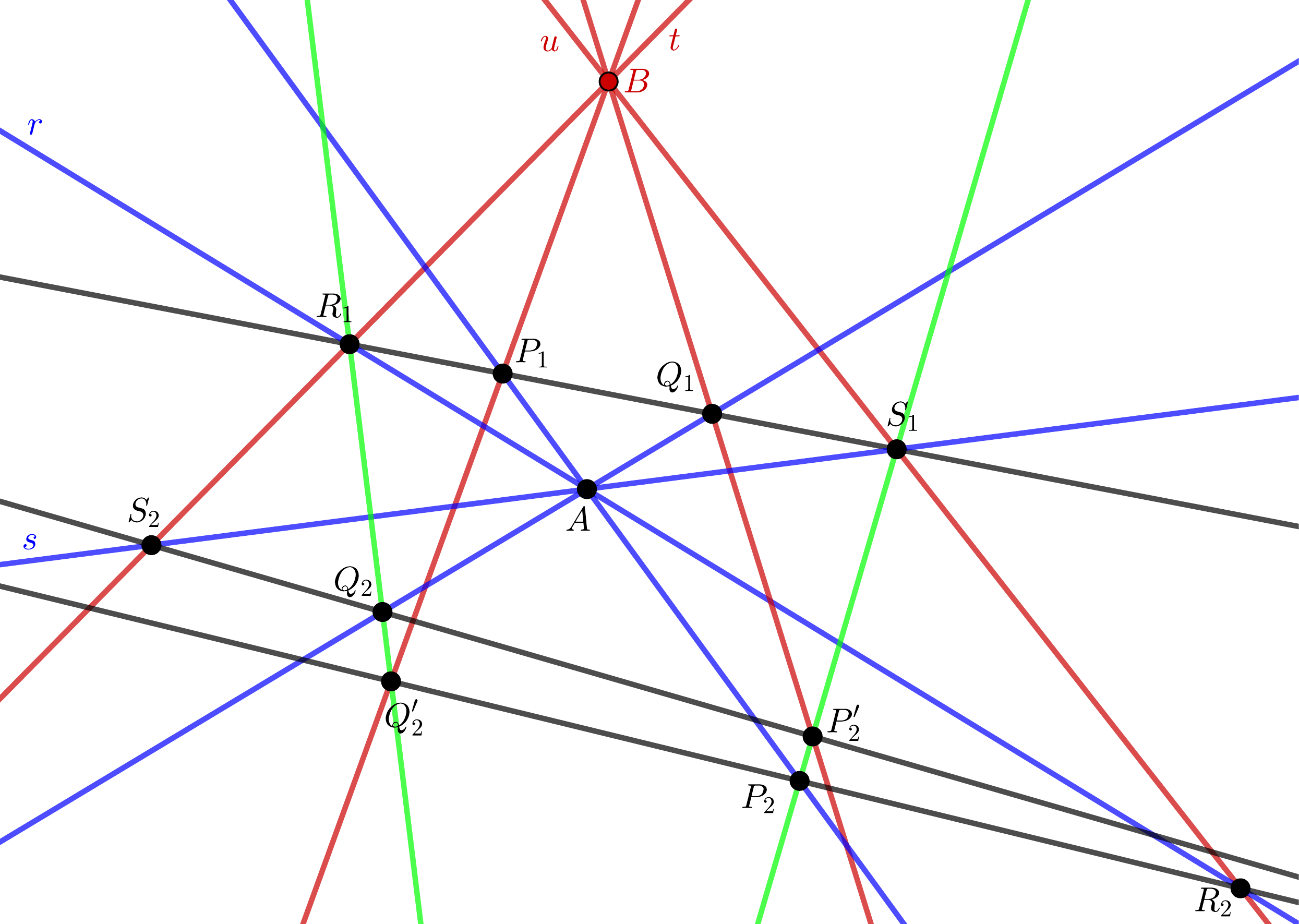}
\end{center}
\vspace{-.1in}
\caption{Generalization of the permutation theorem.}
\vspace{-.15in}
\label{fig:perm2}
\end{figure}

\begin{remark}
\label{rem:sequential-construction}
Theorem~\ref{th:generalized-perm-theorem} has a basic property 
that the original permutation theorem
(Theorem~\ref{th:perm-thm}) lacks:
%the point-and-line 
configurations appearing in Theorem~\ref{th:generalized-perm-theorem}
can~be~constructed, step by step, using a straightedge alone.
By contrast, Theorem~\ref{th:perm-thm}~assumes a certain condition 
(\emph{viz.}, that three of the four lines $(P_1Q_2)$, $(R_1S_2)$, $(R_2S_1)$, $(P_2Q_1)$ are concurrent)
that cannot be guaranteed by a straightedge-only construction. 
\end{remark}

\begin{example}
\label{eg:generalized-perm-theorem-4}
A further generalization of the permutation theorem 
can be obtained by inserting two additional tiles into the tiling in Figure~\ref{fig:torus-perm-generalized}
(thus, inserting four tiles into the tiling in Figure~\ref{fig:torus-perm}),
as shown in Figure~\ref{fig:torus-perm-generalized-further}. 
The resulting generalization of the permutation theorem is illustrated in Figure~\ref{fig:perm3}. 
We omit a formal statement of this generalization, which can be recovered from the figure.
\enlargethispage{1.5cm}
\end{example}

\begin{figure}[ht]
\vspace{-.1in}
\begin{equation*}
\begin{tikzpicture}[baseline= (a).base]
\node[scale=1] (a) at (0,0){
\begin{tikzcd}[arrows={-stealth}, cramped, sep=10]
& t \edge{rrr, dotted} \edge{ldd, dotted} \edge{rd} \edge{rrd} &&& r \edge{rrr, dotted} \edge{rd} \edge{rrd} \edge{ld} &&& u \edge{rdd, dotted} \edge{ld} \\
&& Q_1 \edge{rrd} \edge{lld} & Q_1' \edge{rd} && Q_2 \edge{ld} \edge{rrrd} & Q_2' \edge{rrd}  \\
u \edge{rdd, dotted} \edge{rrd} \edge{rrrd} &&&& s \edge{rd} \edge{rrd} \edge{ld} &&&& t \edge{ldd, dotted} \edge{lld} \\
&& P_1 \edge{rrd} \edge{ld} & P_1' \edge{rd}&& P_2 \edge{ld} \edge{rrd} & Q_2' \edge{rd} \\
& t \edge{rrr, dotted} &&& r \edge{rrr, dotted} &&& u
\end{tikzcd}
};
\end{tikzpicture}
\end{equation*}
\vspace{-.15in}
\caption{The tiling of the torus obtained by inserting four tiles into the tiling in 
Figure~\ref{fig:torus-perm}. }
\label{fig:torus-perm-generalized-further}
\end{figure}
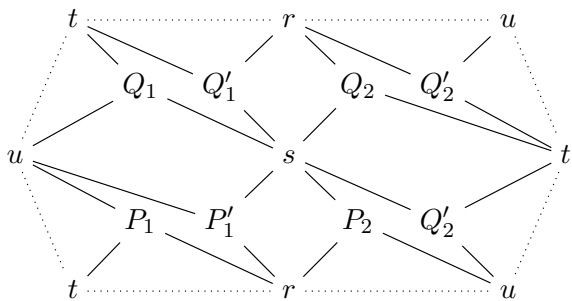

\begin{figure}[ht]
\vspace{-.1in}
\begin{center}
\includegraphics[scale=0.42, trim=0.1cm 0.2cm 0.6cm 0cm, clip]{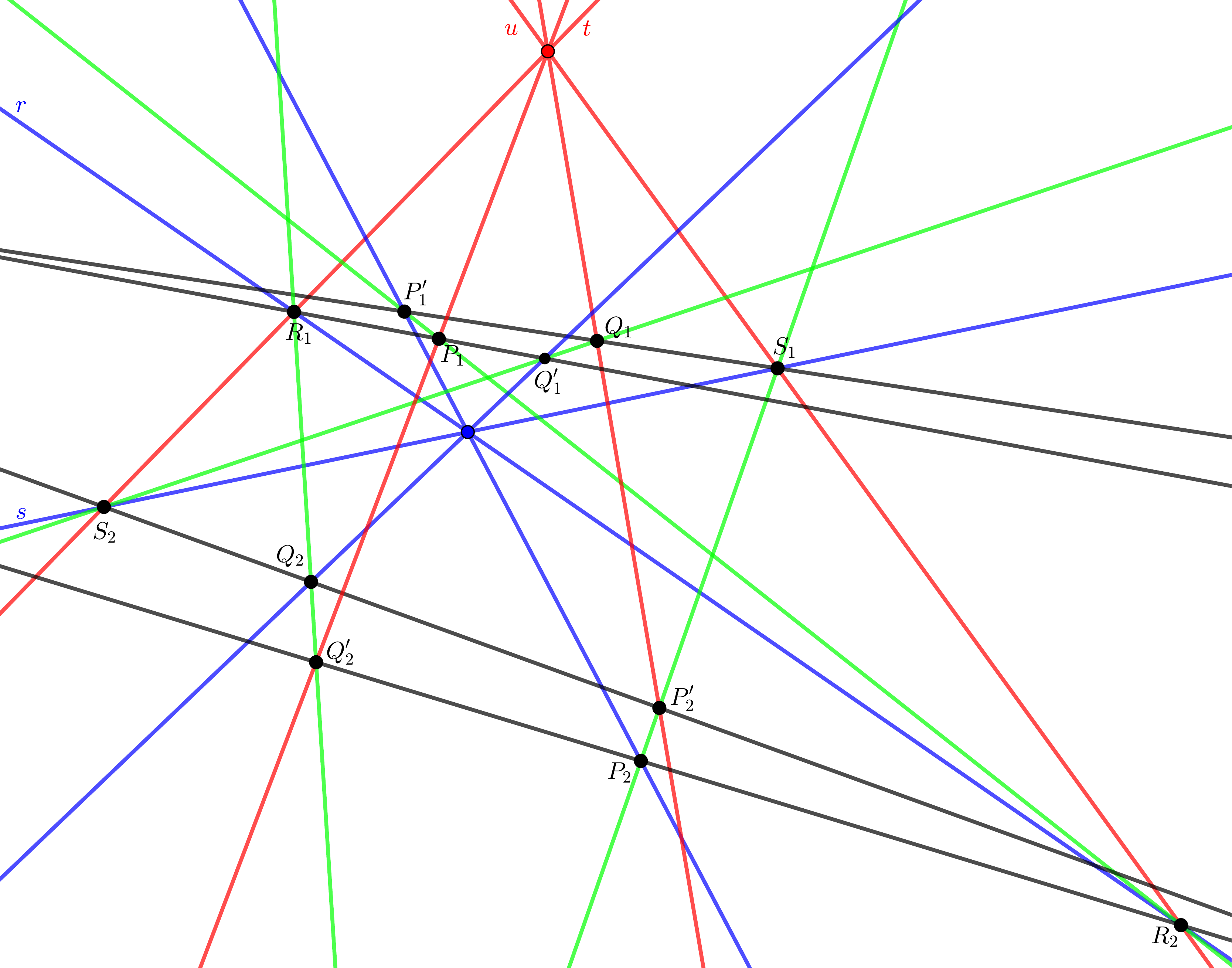}
\end{center}
\vspace{-.1in}
\caption{Further generalization of the permutation theorem. 
}
\vspace{-.15in}
\label{fig:perm3}
\end{figure}

\begin{example}
\label{eg:pappus-split1}
Recall that the Pappus theorem can be obtained from the tiling in Figure~\ref{fig:pappus-torus}. 
Insert a tile in the middle of the latter tiling to obtain the tiling 
shown in Figure~\ref{fig:pappus-torus-split1} 
The latter tiling yields a generalization of the Pappus theorem stated below in Theorem~\ref{th:pappus-split1}. 
%\enlargethispage{1.5cm}
\end{example}

\begin{figure}[ht]
\vspace{-.15in}
\begin{equation*}
\begin{tikzpicture}[baseline= (a).base]
\node[scale=1] (a) at (0,0){
\begin{tikzcd}[arrows={-stealth}, sep=5, cramped]
&& &  && b \edge{lld} \edge{rrd} \edge{rrrrd, dotted} &&  \\[15pt]
& a \edge{rr} \edge{rd} \edge{rrrru, dotted} \edge{dd, dotted}&& P_1 \edge{rd} &&&& P_2 \edge{rr} \edge{ld} && a \edge{ld} \edge{dd, dotted} \\[15pt]
 && P_6 \edge{rr} \edge{ld} && c \edge{ld} \edge{rrru} && c' \edge{rr} \edge{rd} && P_3 \edge{rd} & &  \\[15pt]
& b \edge{rr} \edge{rrrrd, dotted} && P_5 \edge{rrd} \edge{rrru} &&&& P_4 \edge{rr} \edge{lld} && b  \\[15pt]
&&  &&& a \edge{rrrru, dotted} && 
\end{tikzcd}
};
\end{tikzpicture}
\end{equation*}
\vspace{-.15in}
\caption{The tiling of the torus obtained by inserting a tile into the tiling in Figure~\ref{fig:pappus-torus}. }
\vspace{-.2in}
\label{fig:pappus-torus-split1}
\end{figure}
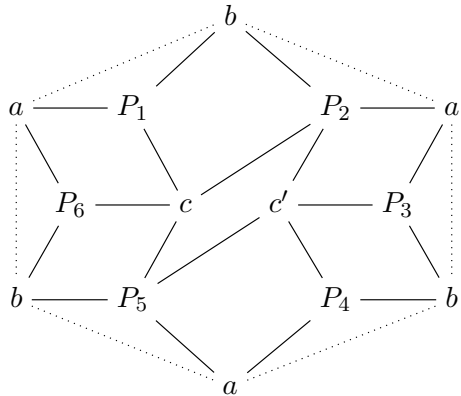

\begin{theorem}
\label{th:pappus-split1}
Consider three distinct lines $(P_1P_4)$, $(P_2P_5)$, $(P_3P_6)$ on the real or complex plane
that intersect at a common point~$C$.
(The points $P_i$ are generically chosen on the corresponding lines.) 
Set $A=(P_1P_2)\cap(P_5P_6)$, 
$B=(P_2P_3)\cap (P_4P_5)$,
$A'=(AC)\cap(P_3P_4)$, 
$B'=(BC)\cap(P_1P_6)$. 
Then the lines $(AB'), (A'B), (P_2P_5)$ are concurrent. 
See Figure~\ref{fig:pappus-1split}. 
\end{theorem}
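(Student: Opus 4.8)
The plan is to apply the master theorem (Theorem~\ref{th:master}) directly to the tiling in Figure~\ref{fig:pappus-torus-split1}, as announced in Example~\ref{eg:pappus-split1}. All of the work lies in choosing the lines attached to the four white vertices $a,b,c,c'$ so that nine of the ten tiles are coherent by construction, leaving the tenth tile to encode exactly the desired concurrence.

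First I would set
\[
a=(BC),\qquad b=(AC),\qquad c=(AB'),\qquad c'=(A'B),
\]
and compute the relevant pairwise intersections. One has $a\cap b=C$, $b\cap c=A$, and $a\cap c'=B$ directly; moreover $a\cap c=B'$, since $B'=(BC)\cap(P_1P_6)$ lies on both $a=(BC)$ and $c=(AB')$, and similarly $b\cap c'=A'$, since $A'=(AC)\cap(P_3P_4)$ lies on both $b=(AC)$ and $c'=(A'B)$. Reading the ten coherence conditions off the tiles of Figure~\ref{fig:pappus-torus-split1}, nine of them assert incidences that hold by construction: the point $b\cap c=A$ lies on $(P_1P_2)$ and on $(P_5P_6)$; the point $a\cap c'=B$ lies on $(P_2P_3)$ and on $(P_4P_5)$; the point $a\cap c=B'$ lies on $(P_1P_6)$; the point $b\cap c'=A'$ lies on $(P_3P_4)$; and the point $a\cap b=C$ lies on each of $(P_1P_4)$, $(P_2P_5)$, $(P_3P_6)$ (this last triple being exactly the hypothesis that these three lines are concurrent at $C$). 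The tenth tile is the one inserted in the middle of the Pappus tiling, with vertices $c,P_2,c',P_5$; its coherence says that $(P_2P_5)$ passes through $c\cap c'=(AB')\cap(A'B)$, i.e.\ that the lines $(AB')$, $(A'B)$, $(P_2P_5)$ are concurrent. Since the other nine tiles are coherent, the master theorem forces this tile to be coherent as well, which is the assertion of the theorem.

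The main obstacle is not the verification above but the reverse-engineering of the four line-labels from the target statement: recognizing that the inserted line $c'$ should be $(A'B)$ and $c$ should be $(AB')$ is precisely the ``art'' step. Once the labels are guessed, two routine points must be checked: that the hypotheses force the nine incidences listed (the bookkeeping sketched above), and that under the standing genericity assumptions the degenerate alternatives in the definition of coherence are excluded, so that, for instance, $c\neq c'$ and $P_2\neq P_5$, ensuring the coherence of the final tile genuinely means the generic concurrence $c\cap c'\in(P_2P_5)$ rather than a trivial coincidence. I would also remark, following Definition~\ref{def:tile-collapse-insertion}, that collapsing this inserted tile (setting $c=c'$) recovers the Pappus tiling of Figure~\ref{fig:pappus-torus}, which explains why Theorem~\ref{th:pappus-split1} specializes to the Pappus theorem.
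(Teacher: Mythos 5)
Your proposal is correct and follows exactly the route the paper intends: Example~\ref{eg:pappus-split1} derives Theorem~\ref{th:pappus-split1} by applying the master theorem to the tiling of Figure~\ref{fig:pappus-torus-split1}, and your labels $a=(BC)$, $b=(AC)$, $c=(AB')$, $c'=(A'B)$ together with the intersections $a\cap b=C$, $b\cap c=A$, $a\cap c'=B$, $a\cap c=B'$, $b\cap c'=A'$ make the nine peripheral tiles coherent by construction, leaving the inserted tile $c$--$P_2$--$c'$--$P_5$ to encode the asserted concurrence. The paper leaves these details implicit, so your write-up simply fills in the verification it omits.
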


\begin{figure}[ht]
\vspace{0pt}
\begin{center}
\includegraphics[scale=0.5, trim=0.5cm 1cm 0.6cm 0.4cm, clip]{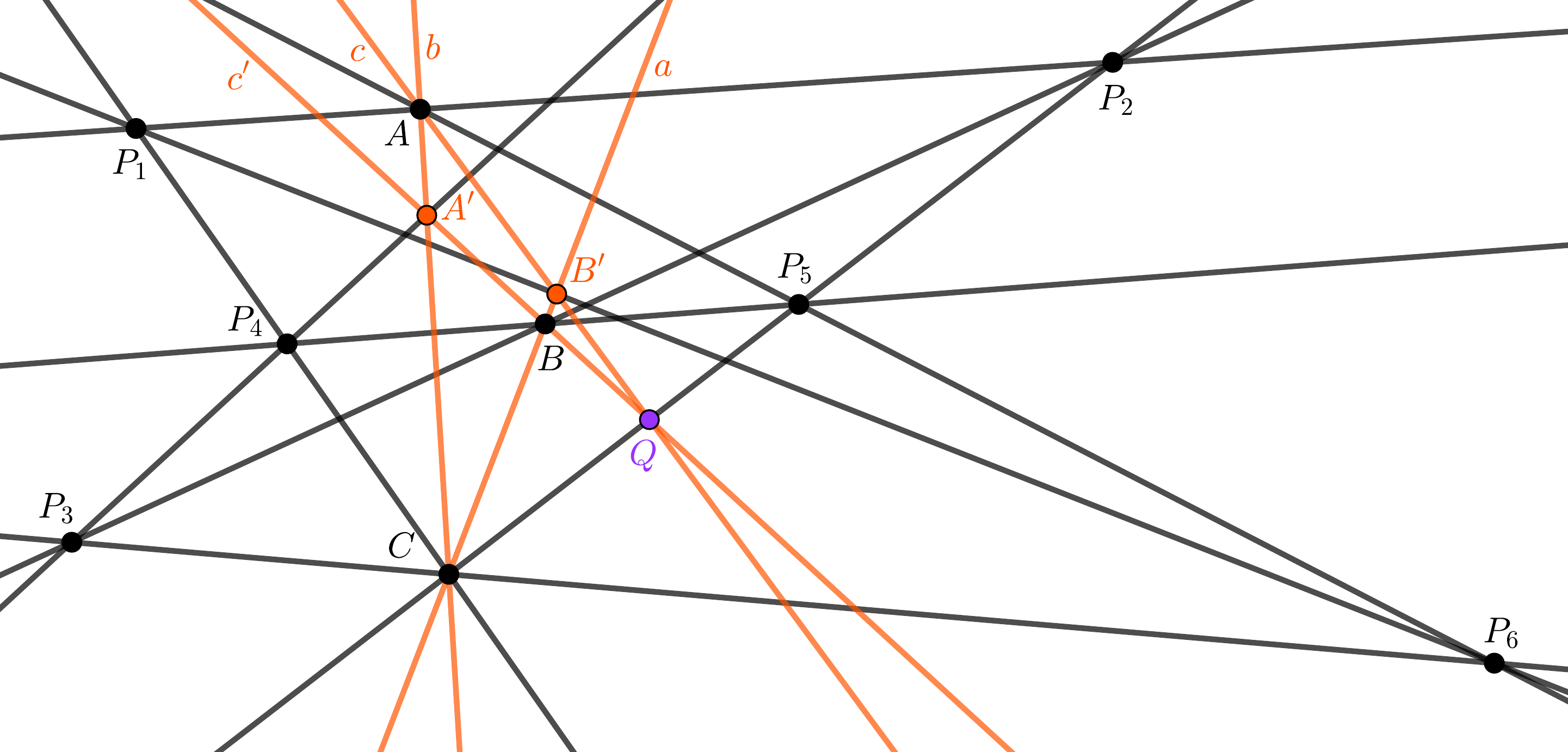}
\end{center}
\vspace{-.1in}
\caption{Generalization of the Pappus theorem. 
}
\vspace{-.15in}
\label{fig:pappus-1split}
\end{figure}

\begin{example}
\label{eg:pappus-split3}
A further generalization of the Pappus theorem 
can be obtained by inserting two additional tiles into the tiling in Figure~\ref{fig:pappus-torus-split1}
(thus, inserting three tiles into the tiling in Figure~\ref{fig:pappus-torus}),
as shown in Figure~\ref{fig:pappus-split3-tiling}. 
The resulting generalization of the Pappus theorem is illustrated in Figure~\ref{fig:pappus-3splits}. 
We omit a formal statement of this generalization, which can be recovered from the figure.
%\enlargethispage{1.5cm}
\end{example}

\begin{figure}[ht]
%\vspace{-7pt}
\begin{equation*}
\begin{tikzcd}[arrows={-stealth}, cramped, sep=small]
 && b \edge{ld} \edge{rd}  \edge{rr, dotted} && b' \edge{ld} \edge{rd} \edge{rr, dotted}  && a \edge{ld} \edge{rd} \\[5pt]
& P_4 \edge{ld} \edge{rd}  && P_1 \edge{ld} \edge{rd} && P_2 \edge{ld} \edge{rd} && P_3\edge{ld} \edge{rd} \\[5pt]
a \edge{rd} & & a' \edge{ld} \edge{rd} & & c \edge{rd}  \edge{ld} && c' \edge{ld} \edge{rd} && b \edge{ld} \\[5pt]
& P_3  \edge{rd}  && P_6 \edge{ld} \edge{rd} && P_5 \edge{ld} \edge{rd} && P_4 \edge{ld} \\[5pt]
 & & b  \edge{rr, dotted} & & b' \edge{rr, dotted} && a 
\end{tikzcd}
\end{equation*}
%\vspace{-6pt}
\caption{The tiling of the torus obtained by inserting three tiles into the tiling in Figure~\ref{fig:pappus-torus}. 
}
\label{fig:pappus-split3-tiling}
\end{figure}
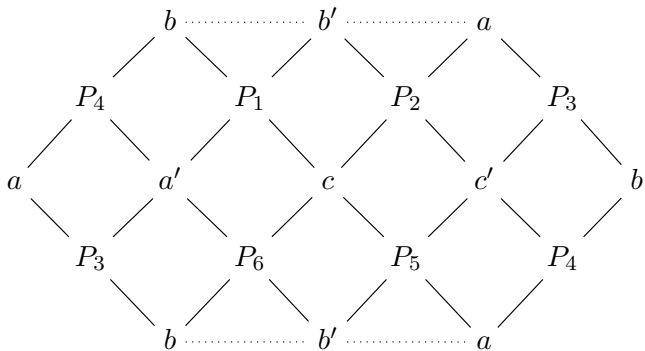

\begin{figure}[ht]
\vspace{-.05in}
\begin{center}
\includegraphics[scale=0.5, trim=0.9cm 0.3cm 0.9cm 0.5cm, clip]{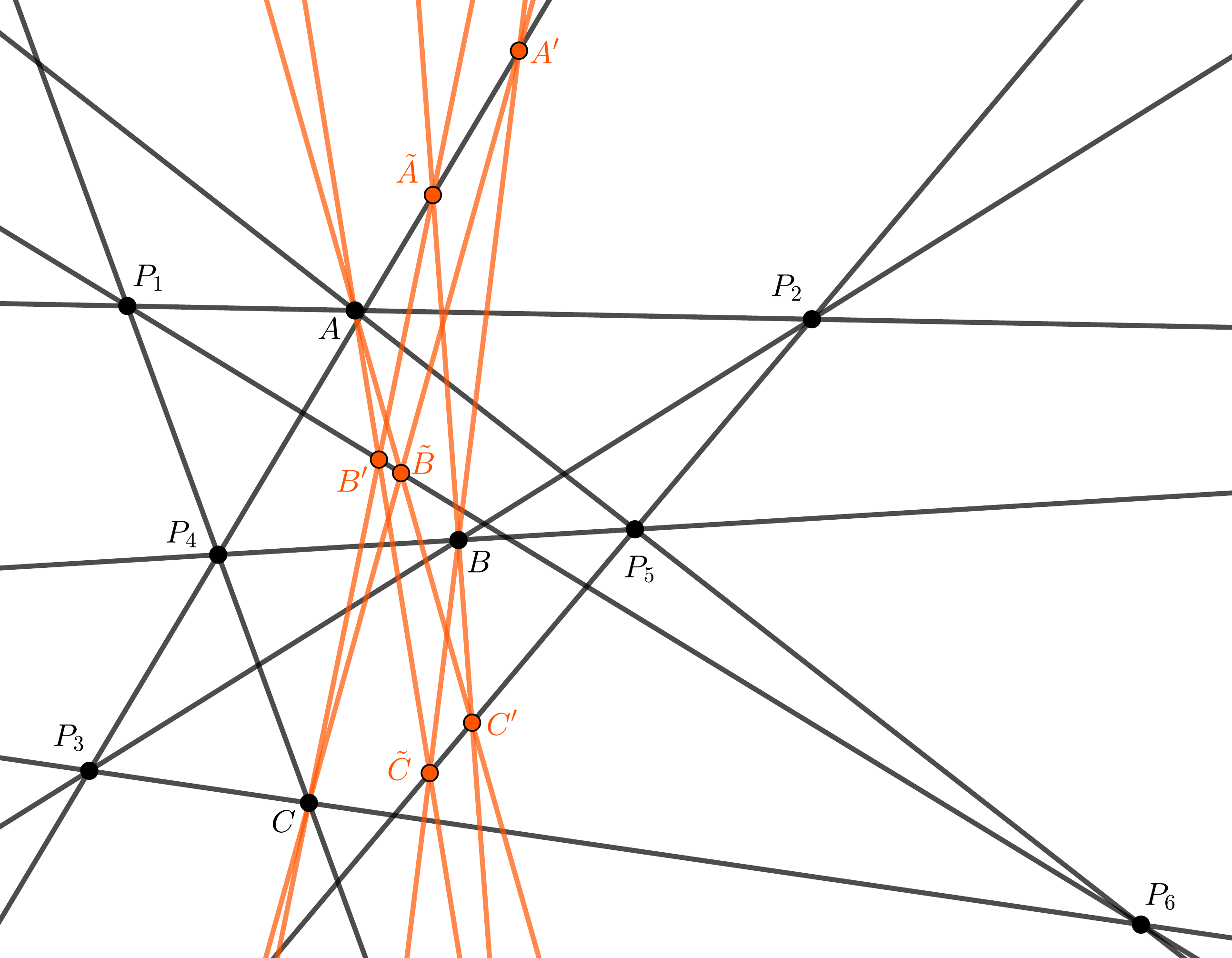}
\end{center}
\vspace{-.1in}
\caption{Further generalization of the Pappus theorem. 
}
\vspace{-.15in}
\label{fig:pappus-3splits}
\end{figure}

%\comment{discuss further generalization of Saam's theorem (split the points $A$ and $B$)?}

\begin{remark}
\label{rem:making-vertices-trivalent}
The tile insertion construction of Definition~\ref{def:tile-collapse-insertion}
can be used to ``lift'' an arbitrary bicolored quadrilateral tiling~$\TT$ of an oriented surface~$\Sigma$ 
to a tiling~$\TT'$ of~$\Sigma$ in which every black vertex is trivalent, see Figure~\ref{fig:split-vertex}. 
(If~$\TT$ contains bivalent vertices, remove them first, as explained in Definition~\ref{def:fusion-of-tiles}.)
Translating this into the language of graphs properly embedded into the surface
(see Definition~\ref{def:graphs-on-surfaces}), 
we conclude that each face of the graph~$\GG(\TT')$ is a triangle,
i.e., $\GG(\TT')$ describes a \emph{triangulation} of the surface~$\Sigma$. 
Thus, any incidence theorem associated to a tiled surface 
can be generalized to an incidence theorem associated to a triangulated surface. 
We will make the latter statement explicit in Corollary~\ref{cor:two-triangulations}, cf.\ also 
Remark~\ref{rem:polygonal-subdivisions}. 
\end{remark}

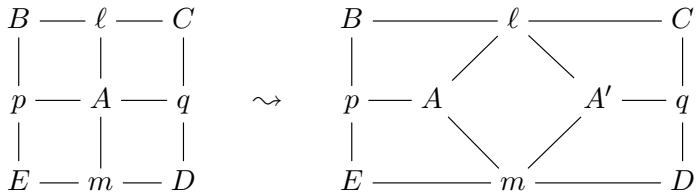
\begin{figure}[ht]
%\vspace{-.1in}
\begin{equation*}
\begin{tikzpicture}[baseline= (a).base]
\node[scale=1] (a) at (0,0){
\begin{tikzcd}[arrows={-stealth}, cramped, sep=15]
B \edge{r} \edge{d} & \ell \edge{r} \edge{d} & C \edge{d} && B \edge{rr} \edge{d} && \ell \edge{rr} \edge{dl} \edge{dr} && C \edge{d}\\
p \edge{r} \edge{d} & A \edge{d} \edge{r} & q \edge{d}  & \leadsto & p \edge{r} \edge{d} & A \edge{rd} && A' \edge{r} \edge{dl} & q \edge{d}\\
E \edge{r} & m \edge{r} & D && E \edge{rr} && m \edge{rr} && D
\end{tikzcd}
};
\end{tikzpicture}
\end{equation*}
\vspace{-.1in}
\caption{Splitting a four-valent vertex~$A$ into two, by inserting a tile. 
Iterating this operation, we can get rid of all black vertices of degree~$\ge4$.
Indeed, a black vertex of degree~$d\ge4$ can be split into two black vertices of degrees~3 and~${d-1}$, respectively. 
}
\vspace{-.1in}
\label{fig:split-vertex}
\end{figure}

\clearpage

\newpage

\section*{Collapsing around a four-valent vertex}

\begin{remark}
\label{rem:collapse-4-valent}
Another situation in which assigning identical labels to different vertices of a tiling 
gives rise to a ``collapsing''  transformation is shown in Figure~\ref{fig:another-collapse}. 
Let $\TT$ be a tiling that contains a vertex~$A$ of degree~4, as shown in Figure~\ref{fig:another-collapse} on the left. 
Removing the four tiles surrounding $A$ from~$\TT$ 
and replacing them by the two tiles shown in Figure~\ref{fig:another-collapse} on the right
produces a tiling~$\TT'$. 
We claim that the incidence theorem associated with~$\TT'$ is immediate from 
%the special case $p=p'$ of 
the incidence theorem associated with~$\TT$, 
and moreover this implication does not rely on the master theorem.  
Suppose that all tiles in~$\TT'$, with the exception of one tile~$\tau$, are known to be coherent. 
There are two possible cases, depending on whether~$\tau$ 
is one of the two tiles shown in Figure~\ref{fig:another-collapse} on the right, or not. 
In each case, the claim follows from the special case $p=p'$ 
of the incidence theorem associated with~$\TT$ by setting $A=(BC)\cap (DE)$. 
%To demonstrate this implication (which does not rely on the master theorem), consider two cases. 
%Suppose that all tiles in~$\TT'$, with the exception of one tile~$\tau$, are known to be coherent. 
%If $\tau$ is not one of the two tiles shown in Figure~\ref{fig:another-collapse} on the right,
%then its coherence follows from the coherence of the 
\end{remark}

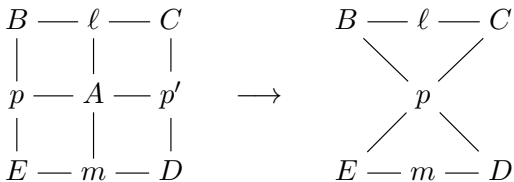
\begin{figure}[ht]
\vspace{-.1in}
\begin{equation*}
\begin{tikzpicture}[baseline= (a).base]
\node[scale=1] (a) at (0,0){
\begin{tikzcd}[arrows={-stealth}, cramped, sep=13]
B \edge{r} \edge{d} & \ell \edge{r} \edge{d} & C \edge{d} && B \edge{r} \edge{dr} & \ell \edge{r} & C \edge{dl}\\
p \edge{r} \edge{d} & A \edge{d} \edge{r} & p' \edge{d}  & \longrightarrow & & p \edge{dr} \edge{dl} \\
E \edge{r} & m \edge{r} & D && E \edge{r} & m \edge{r} & D 
\end{tikzcd}
};
\end{tikzpicture}
\end{equation*}
\vspace{-.15in}
\caption{Tile collapse around a four-valent vertex. 
%Left: a portion of a tiling surrounding a four-valent vertex~$A$. 
%When~${p=p'}$, the coherence of these four tiles implies the coherence of the two tiles shown on the right. 
}
\vspace{-.15in}
\label{fig:another-collapse}
\end{figure}

\enlargethispage{0.5cm}

\begin{example}
%\label{eg:}
Both the complete quad theorem (Theorem~\ref{th:complete-quad})
an its generalization given in Theorem~\ref{th:complete-quad-generalization} 
can be deduced from Desargues' theorem, as follows. 
Consider the tiling in Figure~\ref{fig:P14-twice--tiling},
which can be viewed as invoking Desargues' theorem twice (cf.\ Remark~\ref{rem:gluing-tilings} below). 
Apply the collapsing transformation from Remark~\ref{rem:collapse-4-valent}
around vertex~$\ell$ to obtain the tiling in Figure~\ref{fig:complete-quadrilateral-tiling}, 
which was used to establish Theorem~\ref{th:complete-quad-generalization}. 
\end{example}

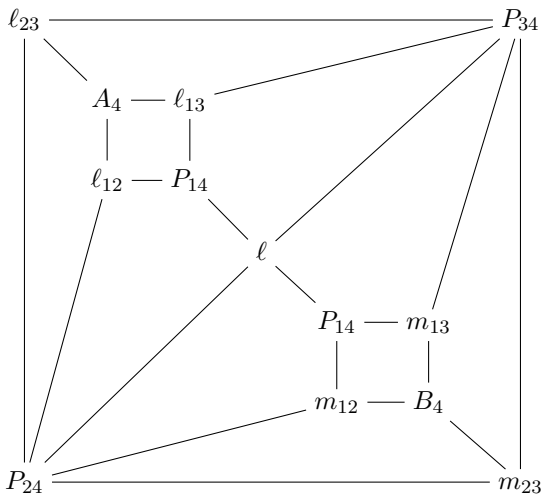
\begin{figure}[ht]
\vspace{-5pt}
\begin{equation*}
\begin{tikzpicture}[baseline= (a).base]
\node[scale=.9] (a) at (0,0){
\begin{tikzcd}[arrows={-stealth}, cramped, sep=small]
\ell_{23} \edge{rrrrrr} \edge{rd} \edge{dddddd} &&&&&& P_{34} \edge{lllld}\edge{lllddd}\edge{ldddd} \edge{dddddd} \\[8pt]
& A_4 \edge{r} \edge{d} & \ell_{13} \edge{d}& & \\[8pt]
& \ell_{12} \edge{r} \edge{ldddd} & P_{14} \edge{dr}  \\[5pt]
&&& \ell \edge{lllddd} \edge{rd}\\[5pt]
& & && P_{14} \edge{r} \edge{d} & m_{13} \edge{d} \\[8pt]
&&&& m_{12} \edge{r} \edge{lllld}& B_4 \edge{rd} & \\[8pt]
P_{24}  \edge{rrrrrr} &&&&&& m_{23}
\end{tikzcd}
};
\end{tikzpicture}
\end{equation*}
\vspace{-10pt}
\caption{
%This tiling collapses to the tiling in Figure~\ref{fig:complete-quadrilateral-tiling} if we use the fact that the edge $\ell-P_{14}$ appears twice in the tiling. 
Deducing the complete quad theorem from the Desargues theorem. 
}
\label{fig:P14-twice--tiling}
\end{figure}

\clearpage

\newpage

\section*{Gluing of tilings}

\begin{remark}[\emph{Gluing of tilings}]
\label{rem:gluing-tilings}
Given a pair of tilings of closed surfaces, we can get a new tiling by gluing the given tilings along disks. 
The theorems associated to the two original tilings will accordingly imply the theorem associated to the third. 

The simplest version of this procedure glues two tilings along an arbitrary pair of quadrilateral tiles
(one in each tiling). 
To give an example, start with a tiling of the sphere into three tiles, see~\eqref{eq:sphere-3-tiles},
and glue in two cubes (each corresponding to an instance of Desargues' theorem) 
to obtain the tiling in Figure~\ref{fig:P14-twice--tiling}. 

A bit more complicated version of gluing involves locating, in each of the two input tilings, 
a hexagon tiled by three tiles. 
We can then glue the two tilings along these hexagons, potentially using Corollary~\ref{cor:tileable-hexagons} (Desargues' theorem)
in between. 
A further generalization of this construction involves gluing along $2n$-gons bounding a disk.
%More generally: in each tiling, locate a $2n$-gon bounding a disk. 
%If the two tilings of these $2n$-gons are isomorphic, glue the surfaces along them.
%Otherwise, try to use the construction of Proposition~\ref{pr:coherence=tilability}
%to synchronize the tilings. 
\end{remark}

\section*{Pappus implies Desargues}

It is well known that Pappus' theorem implies Desargues'; 
the first proof of this fact was given by G.~Hessenberg~\cite{hessenberg}. 
It was later discovered that his proof had gaps. 
In~particular, it only established a ``generic'' version of the statement (which is sufficient for our purposes). 
For the history of this result and its proofs, see \cite{pambuccian-2004, seidenberg-monthly}.  
As pointed out in~\cite{pambuccian-2004},
all known arguments require (at least) three applications of Pappus to get Desargues. 

We next provide a tiling-based proof of Hessenberg's theorem. 
It is inspired by A.~Seidenberg's presentation~in \cite[pp.~92--94]{seidenberg-book}. 
We will invoke, three times in a row, the version of the Pappus theorem given in 
Theorem~\ref{th:pappus-dual}/Figures~\ref{fig:pappus2-tiling-dual}--\ref{fig:pappus-dual}. 
As a result, we will obtain Desargues' theorem, in the formulation  
given in Theorem~\ref{th:desargues}. %. 

%\enlargethispage{0.5cm}

\begin{figure}[ht]
\vspace{-5pt}
\begin{center}
\includegraphics[scale=0.5, trim=1cm 0.3cm 0.6cm 0.2cm, clip]{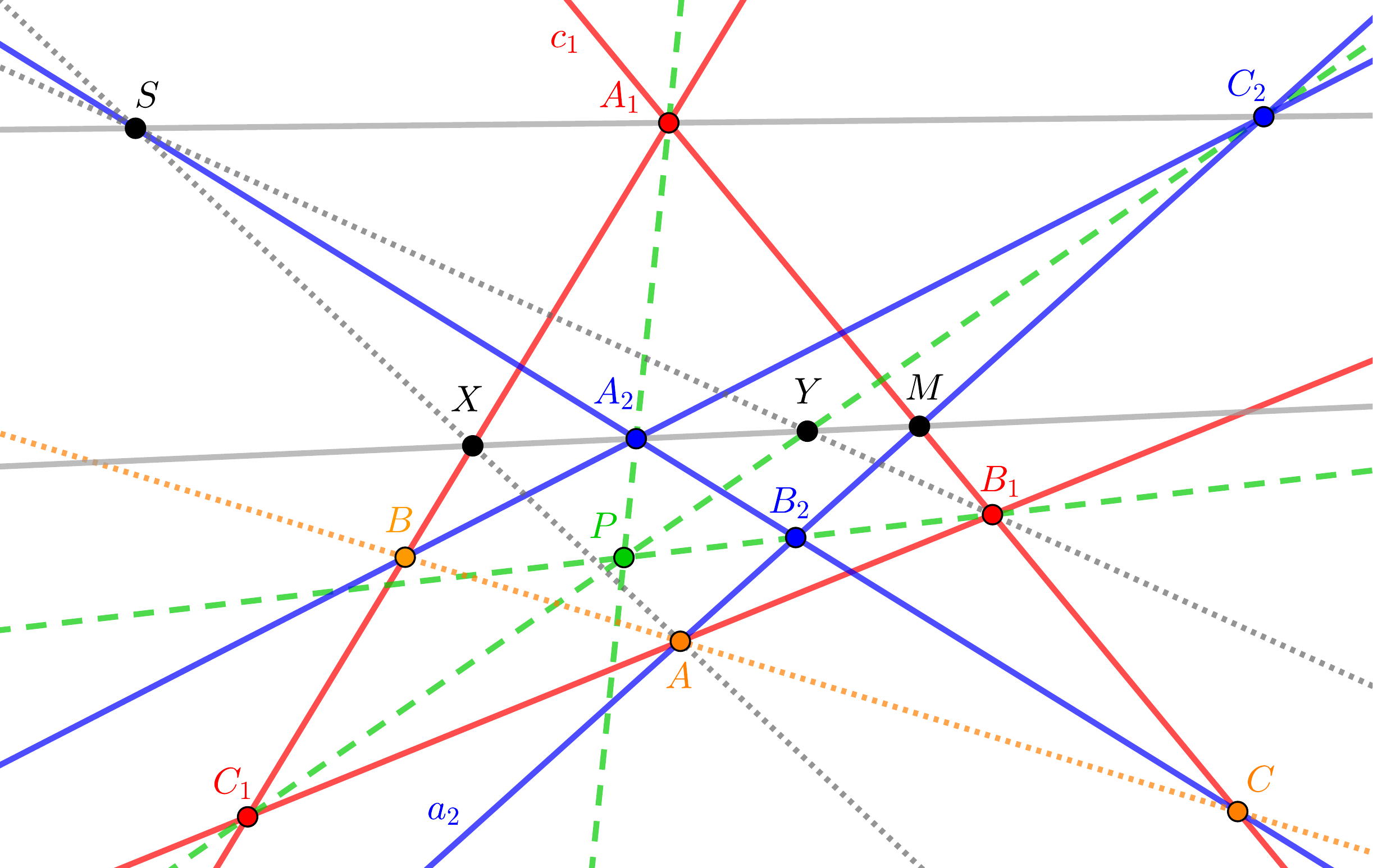}
\end{center}
\vspace{-5pt}
\caption{Pappus implies Desargues: 	the points-and-lines configuration. 
}
\vspace{-5pt}
\label{fig:pappus-to-desargues}
\end{figure}

We refer the reader to Figure~\ref{fig:pappus-to-desargues}. 
Let $A_1B_1C_1$ and $A_2B_2C_2$ be triangles  perspective from a point~$P$. 
As before (cf.\ Theorem~\ref{th:desargues} and \eqref{eq:a1b1c1}--\eqref{eq:a2b2c2}),
denote 
\begin{align*}
a_i=(B_iC_i), \quad b_i=(A_iC_i), \quad c_i=(A_iB_i), \\
A=a_1\cap a_2, \quad B=b_1\cap b_2, \quad C=c_1\cap c_2. 
\end{align*}
%$c_1=(A_1B_1)$, $a_2=(B_2C_2)$. 
Our goal is to prove that the points $A,B,C$ are collinear. 

We set 
\begin{equation*}
S=c_2\cap(A_1C_2), \quad 
M=c_1\cap a_2, \quad 
X=b_1 \cap (A_2M),\quad 
Y=(A_2M)\cap(C_1C_2).
\end{equation*} 
We begin by applying the tiling version of the Pappus theorem (more specifically, Theorem~\ref{th:pappus-dual})
to the following tiling of the torus: 
\begin{equation*}
\begin{tikzpicture}[baseline= (a).base]
\node[scale=.9] (a) at (0,0){
\begin{tikzcd}[arrows={-stealth}, cramped, sep=15]
& & & & a_2 \edge{rrd} \edge{lld}  \\[-19pt]
& & P\edge{lld} \edge{dd} &&& & S \edge{rrd} \edge{dd}  \\[-12pt]
c_1 \edge{d} \edge{dr} &&&&&&&&  c_1 \edge{d} \\
Y \edge{d}  & A_2 \edge{r} \edge{dl} & (A_1B_2) \edge{dd} &&&& (B_1C_2) \edge{rr} \edge{dd} & & Y \edge{d} \\
a_2 \edge{rrd} &&&&&&&&  a_2 \edge{lld} \\[-19pt]
& & S \edge{rrd} &&&& P \edge{lld} \\[-15pt]
& & & & c_1 
\end{tikzcd}
};
\end{tikzpicture}
\end{equation*}
It is straightforward to check that all tiles except for the one in the upper-right~corner are coherent.
(For the octagonal tile, use Proposition~\ref{pr:octagon-special-dual}.)
It follows that the latter tile is coherent as well. 
Again invoking Theorem~\ref{th:pappus-dual} and Proposition~\ref{pr:octagon-special-dual}
for the tiling 
\begin{equation*}
\begin{tikzpicture}[baseline= (a).base]
\node[scale=.9] (a) at (0,0){
\begin{tikzcd}[arrows={-stealth}, cramped, sep=15]
& & & & a_2 \edge{rrd} \edge{lld}  \\[-19pt]
& & C_1 \edge{lld} \edge{dd} &&& & S \edge{rrd} \edge{dd}  \\[-12pt]
c_1 \edge{d} \edge{dr} &&&&&&&&  c_1 \edge{d} \\
Y \edge{d}  & X \edge{r} \edge{dl} & (AA_1) \edge{dd} &&&& (B_1C_2) \edge{rr} \edge{dd} & & Y \edge{d} \\
a_2 \edge{rrd} &&&&&&&&  a_2 \edge{lld} \\[-19pt]
& & S \edge{rrd} &&&& C_1 \edge{lld} \\[-15pt]
& & & & c_1 
\end{tikzcd}
};
\end{tikzpicture}
\end{equation*}
we conclude that the tile with vertices $a_2, X, (AA_1), S$ is coherent. 
We then verify that all five quadrilateral tiles appearing in the tiling 
\begin{equation*}
\begin{tikzpicture}[baseline= (a).base]
\node[scale=.9] (a) at (0,0){
\begin{tikzcd}[arrows={-stealth}, cramped, sep=15]
& & & & a_2 \edge{rrd} \edge{lld}  \\[-19pt]
& & B \edge{lld} \edge{dd} &&& & S \edge{rrd} \edge{dd}  \\[-12pt]
c_1 \edge{d} \edge{dr} &&&&&&&&  c_1 \edge{d} \\
A_2 \edge{d}  & X \edge{r} \edge{dl} & (AA_1) \edge{dd} &&&& (CC_2) \edge{rr} \edge{dd} & & A_2 \edge{d} \\
a_2 \edge{rrd} &&&&&&&&  a_2 \edge{lld} \\[-19pt]
& & S \edge{rrd} &&&& B \edge{lld} \\[-15pt]
& & & & c_1 
\end{tikzcd}
};
\end{tikzpicture}
\end{equation*}
are coherent, so the octagonal tile is coherent as well. 
Applying Proposition~\ref{pr:octagon-special-dual} to the latter tile,
we conclude that the points $A, B, C$ are collinear. 
\qed

\newpage

\section{Geometric ramifications of the master theorem
}
\label{sec:geometric-ramifications}

In this section, we discuss various geometric interpretations and corollaries of our master theorem. 

\section*{The polygon trick}

On a plane, the vertices of a polygon encode as much information as its sides.
This~simple observation can be used to reformulate incidence theorems: 

\begin{remark}
\label{rem:polygon-trick}
Any theorem stated in terms of $m$~points $P_1,\dots,P_m$
and $n\ge 3$ generic lines $\ell_1,\dots,\ell_n$ in the plane can be restated entirely in terms of points 
by replacing the lines $\ell_1,\dots,\ell_n$ by the points $Q_1,\dots, Q_n$ defined by
$Q_i=\ell_i\cap\ell_{i+1}$, where the subscripts are taken modulo~$n$. 
Put differently, we set $\ell_i=(Q_{i-1}Q_i)$---again, with subscripts viewed modulo~$n$. 

To illustrate, Figure~\ref{fig:pappus1} yields an incidence theorem stated in terms of the points
$P_1,\dots,P_6$ and the lines $a,b,c$. 
The above observation allows us to restate this theorem in terms of the nine points 
$P_1,\dots,P_6, A=c\cap b, B=a\cap c, C=a\cap b$, resulting in the traditional version of the Pappus theorem. 

For $n\ge4$, the above reformulation depends on the choice of a cyclic ordering among the lines
$\ell_1,\dots, \ell_n$ (more precisely, cyclic ordering viewed up to global reversal). 

For $n\ge6$, we may alternatively split the lines into collections of size~$\ge3$ and apply the above 
substitution procedure to each of these collections. 
\end{remark}

The construction in Remark~\ref{rem:polygon-trick} straightforwardly extends to higher dimensions.
In particular, in 3-space we get the following: 

\begin{remark}
\label{rem:polygon-trick-3D}
Any theorem stated in terms of $m$~points $P_1,\dots,P_m$
and $n\ge 3$ generic planes $h_1,\dots,h_n$ in 3-space can be restated entirely in terms of points 
by replacing the planes $h_1,\dots,h_n$ by the points $Q_1,\dots, Q_n$ defined by
$Q_i=h_i\cap h_{i+1}\cap h_{i+2}$, where the subscripts are taken modulo~$n$. 
Equivalently, set $h_i=(Q_{i-2}Q_{i-1}Q_i))$---again, with subscripts viewed modulo~$n$.

To illustrate, the tiling in Figure~\ref{fig:cube-thm-tiling} yields an incidence theorem stated in terms of four points
and four planes. 
Replacing the four planes by four points defined as above yields 
the cube theorem of M\"obius (Theorem~\ref{th:cube}). 

As in Remark~\ref{rem:polygon-trick}, the construction depends on the choice of a cyclic ordering (indexing)
of the $n$ given planes~$h_1,\dots,h_n$.
It can also be generalized by utilizing a grouping of these $n$ planes into $n$~overlapping blocks of size~$3$ 
such that each plane appears in exactly $3$~blocks. 
The points $Q_i$ are then defined by taking intersections of the planes that make up each block. 
To illustrate, we can use the Fano block design to replace $7$~generic planes $h_1,\dots,h_7$ by
$7$ generic points $Q_1,\dots,Q_7$ as follows:
\begin{equation*}
\begin{array}{ll}
Q_{i} = h_i \cap h_{i-1} \cap h_{i-3}\quad &\text{($i=1,\dots,7$, all indices viewed $\bmod 7$);}\\[5pt]
h_{i} = (Q_i Q_{i+1} Q_{i+3})          \quad &\text{($i=1,\dots,7$, all indices viewed $\bmod 7$).}
\end{array}
\end{equation*}
\end{remark}

\medskip

We note that in the particular case when $n=\dim\PP+1$, the reformulation described in 
Remarks~\ref{rem:polygon-trick}--\ref{rem:polygon-trick-3D} yields  
D.~G.~Glynn's construction~\cite{glynn} discussed in Remark~\ref{rem:glynn}.

\newpage

\section*{Triangulated surfaces and 2D incidence theorems}

Returning to the construction of Definition~\ref{def:graphs-on-surfaces} 
%(and its inverse described in )
and assuming that all faces of an embedded graph are triangles (cf.\ Remark~\ref{rem:making-vertices-trivalent}),
we arrive at the following corollary of our master theorem. 

\begin{corollary}
\label{cor:two-triangulations}
Let $T$ be a triangulation of a closed oriented surface.
For each vertex $v$ in~$T$, choose a different point~$P_v$ on the real/complex plane. 
For each edge~$u\shortedge{e} v$ in~$T$, choose a point $P_e$ on the line~$(P_u P_v)$.
Assume that all these points are distinct. 
For each triangle in~$T$ with sides $a, b, c$, consider the condition
\begin{equation}
\label{eq:PaPbPc-collinear}
\text{the points $P_a$, $P_b$, and $P_c$ are collinear.}
\end{equation}
If \eqref{eq:PaPbPc-collinear} holds for all triangles in $T$ except one, then it holds for the remaining triangle.
\end{corollary}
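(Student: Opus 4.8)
The plan is to reduce Corollary~\ref{cor:two-triangulations} to the master theorem (Theorem~\ref{th:master}) by constructing an explicit bicolored quadrilateral tiling from the triangulation~$T$. The natural construction is the one implicit in Definition~\ref{def:graphs-on-surfaces} and Remark~\ref{rem:making-vertices-trivalent}: starting from the triangulation $T$, I would pass to the associated bicolored quadrilateral tiling~$\TT$ whose white vertices are indexed by the faces (triangles) of~$T$ and whose black vertices are indexed by the vertices of~$T$, with the quadrilateral tiles corresponding to the edges of~$T$. More concretely, since each face of $\GG(\TT)$ is to be a triangle, I would take $\GG = \GG(\TT)$ to be the graph embedded in the surface whose $1$-skeleton realizes $T$ itself (vertices and edges of~$T$), so that the faces of $\GG$ are exactly the triangles of~$T$. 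By Proposition~\ref{pr:graphs-on-surfaces}, this embedded graph corresponds to a unique bicolored quadrilateral tiling~$\TT$.

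First I would pin down the labeling dictionary. Under $\GG \mapsto \TT$, the black vertices of~$\TT$ are the faces of~$\GG$, i.e.\ the triangles of~$T$, and the white vertices are the vertices of~$\GG$, i.e.\ the vertices of~$T$; each quadrilateral tile of~$\TT$ surrounds one edge of~$T$, having as its two black corners the two triangles adjacent to that edge and as its two white corners the two endpoints of that edge. To apply Theorem~\ref{th:master} I must decide which color carries points and which carries hyperplanes. Here a subtlety arises: the corollary attaches a point $P_v$ to each vertex of~$T$, a point $P_e$ to each edge, and a collinearity condition to each triangle. So I would instead use the tiling~$\TT$ in which the white vertices are the \emph{edges} of~$T$ and the black vertices are the vertices-and-triangles of~$T$ — that is, I take $\GG$ to be the graph whose vertices are the edge-midpoints of~$T$, with two such midpoints joined whenever the corresponding edges share a triangle \emph{and} a vertex, so that the tiles of~$\TT$ are indexed by incident (vertex, triangle) pairs. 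Each tile then has the shape: two points $P_u$, $P_v$ (a triangle-vertex and a face-center, both black) and two lines $\ell$, $m$ given by the two edges bounding the relevant corner. The key computation is that the coherence of such a tile translates exactly into the collinearity condition~\eqref{eq:PaPbPc-collinear} for the corresponding triangle.

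The core of the argument is verifying this translation tile-by-tile. For a triangle of~$T$ with edges $a,b,c$, the three points $P_a, P_b, P_c$ lie by hypothesis on the lines $(P_uP_v)$ joining the triangle's vertices, so the assertion ``$P_a, P_b, P_c$ collinear'' is precisely a statement that a certain mixed cross-ratio equals~$1$. I would use Proposition~\ref{pr:coherence-algebraic}, which identifies coherence of a tile with the condition $(A,B;\ell,m)=1$, to match each triangle's collinearity statement with the coherence of the corresponding tile, and to confirm that the non-incidence hypotheses (each $P_v \notin$ adjacent line) follow from the distinctness assumptions in the corollary. Once every tile of~$\TT$ is shown to encode exactly one of the triangle conditions~\eqref{eq:PaPbPc-collinear}, the corollary is immediate: Theorem~\ref{th:master} asserts that if all tiles but one are coherent, the last is too, which is exactly ``if \eqref{eq:PaPbPc-collinear} holds for all triangles but one, it holds for the last.''

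The main obstacle I anticipate is getting the combinatorics of the tiling exactly right so that tiles biject with triangles of~$T$ while the vertex labels of each tile are the correct two points and two lines. There is genuine bookkeeping in deciding the precise incidence structure — in particular, ensuring that the quadrilateral tiles, their four corners, and the $2$-coloring are consistent with a \emph{single} globally-defined bicolored tiling of the surface, and that each collinearity condition is counted once and only once. A secondary point to handle carefully is the genericity/distinctness hypotheses: I would need to check that ``all points distinct'' together with the triangulation structure rules out the degenerate incidences forbidden in Definition~\ref{def:coherent-tile}, so that Theorem~\ref{th:master} genuinely applies. The algebraic verification via Proposition~\ref{pr:coherence-algebraic} should then be routine.
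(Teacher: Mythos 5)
There is a genuine gap. Your first instinct---take $\GG$ to be the $1$-skeleton of $T$ itself, so that the tiles of the associated bicolored tiling~$\TT$ are in bijection with the \emph{edges} of~$T$, with black vertices the vertices of~$T$ and white vertices the triangles---is exactly the right construction (it is the one the paper uses, with points on black and lines on white). But you abandon it at the crucial moment, because you cannot see what geometric data to attach to the white vertices, and you replace it with a tiling whose tiles are indexed by incident (vertex, triangle) pairs. That replacement cannot work: there are three such corners per triangle, so tiles no longer biject with the conditions~\eqref{eq:PaPbPc-collinear}, and the ``all tiles but one'' logic of Theorem~\ref{th:master} no longer matches ``all triangles but one.'' Worse, with the only natural labels available (the corner vertex $P_u$ and the two side-lines $(P_uP_v)$, $(P_uP_w)$ through it), each such tile is degenerate: the two lines meet at $P_u$ itself, violating the non-incidence requirement of Definition~\ref{def:coherent-tile}, so coherence is not even defined, let alone equivalent to~\eqref{eq:PaPbPc-collinear}.

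The missing idea is the labeling of the white vertices in the original construction: to each triangle with sides $a,b,c$ satisfying~\eqref{eq:PaPbPc-collinear} you assign the \emph{line through $P_a$, $P_b$, $P_c$} (the hypothesis is what makes this line exist), and to the one exceptional triangle, with sides $a_\circ,b_\circ,c_\circ$, you assign the line $(P_{a_\circ}P_{b_\circ})$. The points $P_e$ are not vertex labels at all; they reappear as the intersection points of the two lines labeling adjacent white vertices. Concretely, the tile for an edge $u\shortedge{e}v$ shared by triangles $f$ and $g$ has corners $P_u$, $\ell_f$, $P_v$, $\ell_g$, and it is coherent because $\ell_f\cap\ell_g=P_e$ lies on $(P_uP_v)$ by hypothesis---this works for every edge except~$c_\circ$. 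Theorem~\ref{th:master} then forces the tile for $c_\circ$ to be coherent, and since the line of the non-exceptional triangle adjacent to $c_\circ$ meets $(P_uP_v)$ at $P_{c_\circ}$, the line $(P_{a_\circ}P_{b_\circ})$ must pass through $P_{c_\circ}$, which is the desired conclusion. Your remark that the collinearity of $P_a,P_b,P_c$ ``is precisely a statement that a certain mixed cross-ratio equals~$1$'' is also not how the reduction goes: no single tile encodes a triangle's collinearity condition; rather the conditions are consumed in defining the line labels, and the cross-ratio identity of the master theorem does the rest.
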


Figure~\ref{fig:2triangles} illustrates condition~\eqref{eq:PaPbPc-collinear} for two adjacent triangles in a triangulation.

\begin{figure}[ht]
\begin{center}
%\vspace{-.1in}
\includegraphics[scale=0.5, trim=0.2cm 0.6cm 0.2cm 0.6cm, clip]{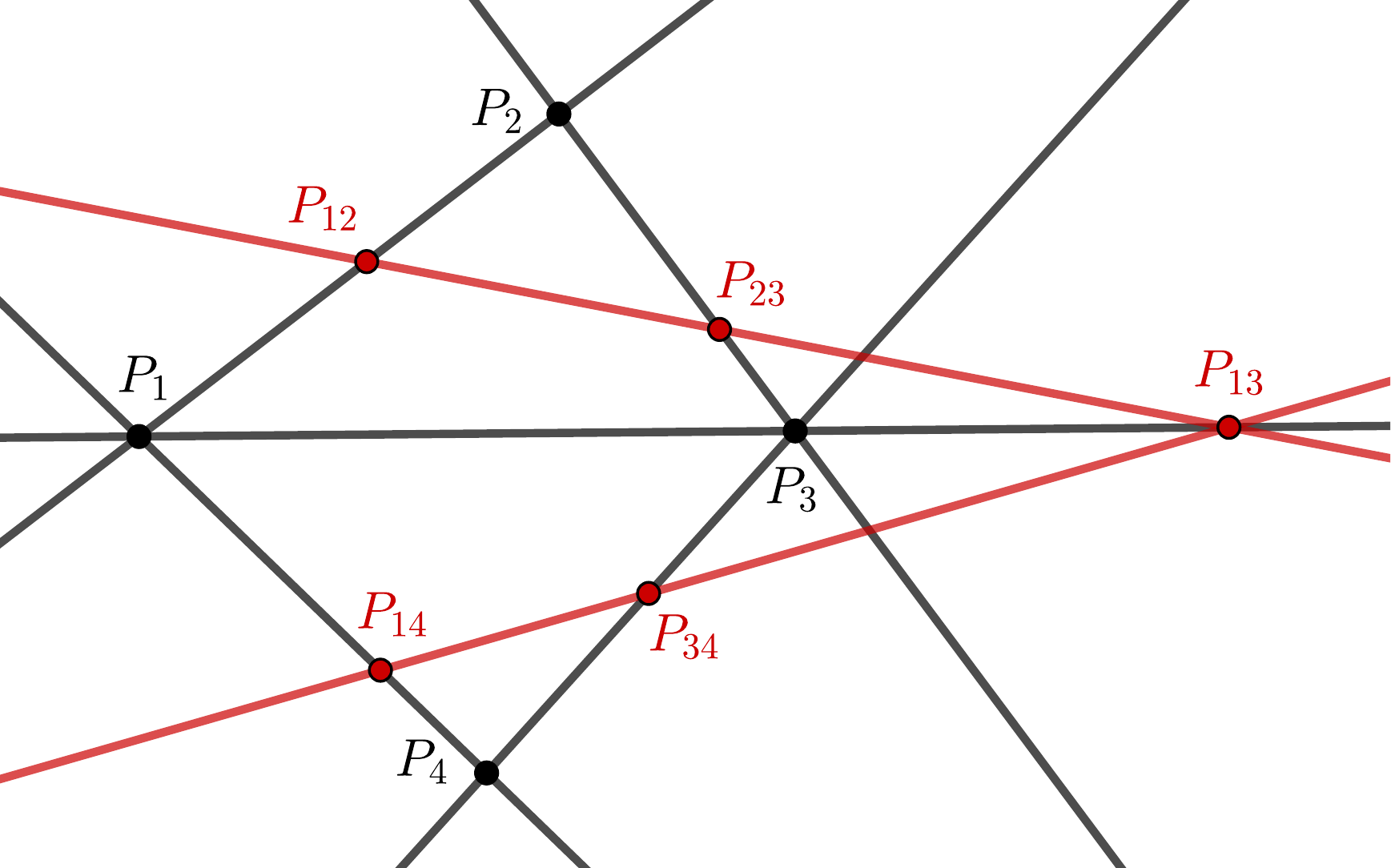}
\end{center}
\vspace{-.1in}
\caption{Condition~\eqref{eq:PaPbPc-collinear} for two adjacent triangles. 
The points $P_{ij}, P_{ik}, P_{jk}$ chosen on the sides of each triangle must be collinear. 
}
\vspace{-.2in}
\label{fig:2triangles}
\end{figure}

\begin{proof}
Construct a tiling of the surface as explained in the proof of Proposition~\ref{pr:graphs-on-surfaces},
but with points and lines interchanged: 
\begin{itemize}[leftmargin=.2in]
\item 
color the vertices of~$T$ black; 
\item
place a white vertex inside each triangle of~$T$;
\item 
connect this vertex to the three vertices of the triangle;
\item
remove the edges of $T$ from the resulting graph. 
\end{itemize}
The tiles $\square_e$ of the resulting tiling are in bijection with the edges~$e$ of the original triangulation~$T$.
We then label the vertices of the tiling:
\begin{itemize}[leftmargin=.2in]
\item 
to each black vertex~$v$ of the tiling, we associate the point~$P_v$; 
\item
to each white vertex of the tiling corresponding to a triangle in~$T$ satisfying~\eqref{eq:PaPbPc-collinear},
we assign the line passing through the points $P_a$, $P_b$, and $P_c$; 
\item
to the remaining triangle, say with sides $a_\circ, b_\circ, c_\circ$, we assign the line~$(P_{a_\circ}P_{b_\circ})$. 
\end{itemize}
For each edge $u\shortedge{e} v$ in~$T$ different from~$c_\circ$, 
the coherence of the corresponding tile~$\square_e$ 
follows from the above definitions: 
the lines associated to the two white vertices of the tile intersect at the point~$P_e$,
which lies on the line $(P_uP_v)$. 
The master theorem implies that the remaining tile is coherent, implying that the line~$(P_{a_\circ}P_{b_\circ})$
passes through~$P_{c_\circ}$, as desired. 
\end{proof}

\begin{example}[\emph{Desargues' Theorem}]
\label{eg:triangulation-desargues}
Triangulate the sphere into four triangles, as in a tetrahedron.
Corresponding to the four vertices of this triangulation, 
pick four points $P_1, P_2, P_3, P_4$ on the plane. 
Corresponding to the six edges of the triangulation, 
pick six points $P_{ij}$, each lying on its respective line $(P_iP_j)$. 
Corresponding to the four faces of the triangulation, 
we have four instances of condition~\eqref{eq:PaPbPc-collinear}, shown as red lines in  Figure~\ref{fig:desargues-triangulation}. 
%each of them requires that the points $P_{ij}, P_{ik}, P_{jk}$ are collinear (for $1\le i<j<k\le 4$). 
If three of these instances hold, then the forth one holds as well.
This statement is a reformulation of the Desargues theorem. % see Figure~\ref{fig:desargues-triangulation}. 
\end{example}

\newcommand{\rowsep}{-9pt}

\begin{figure}[ht]
\begin{equation*}
%\vspace{-.1in}
\hspace{5pt}\begin{array}{cc}
\hspace{-20pt}\begin{array}{c}
\ \\[-130pt]
\begin{tikzpicture}[baseline= (a).base]
\node[scale=0.8] (a) at (0,0){
\begin{tikzcd}[arrows={-stealth}, cramped, sep=15]
&& P_2 \edge{ddl} \edge{d} \edge{ddr} \\[15pt]
& & P_{12}\edge{dd} \\[\rowsep]
& P_{24} \edge{dddl} &  & P_{23} \edge{dddr} \\[\rowsep]
& & P_1 \edge{dl} \edge{dr} &  \\[\rowsep]
& P_{14} \edge{dl} && P_{13} \edge{dr} \\[\rowsep]
P_4 \edge{rr} && P_{34} \edge{rr} && P_3
\end{tikzcd}
};
\end{tikzpicture}
\end{array}
\hspace{10pt}
&\includegraphics[scale=0.45, trim=0.2cm 0.6cm 0.2cm 0.6cm, clip]{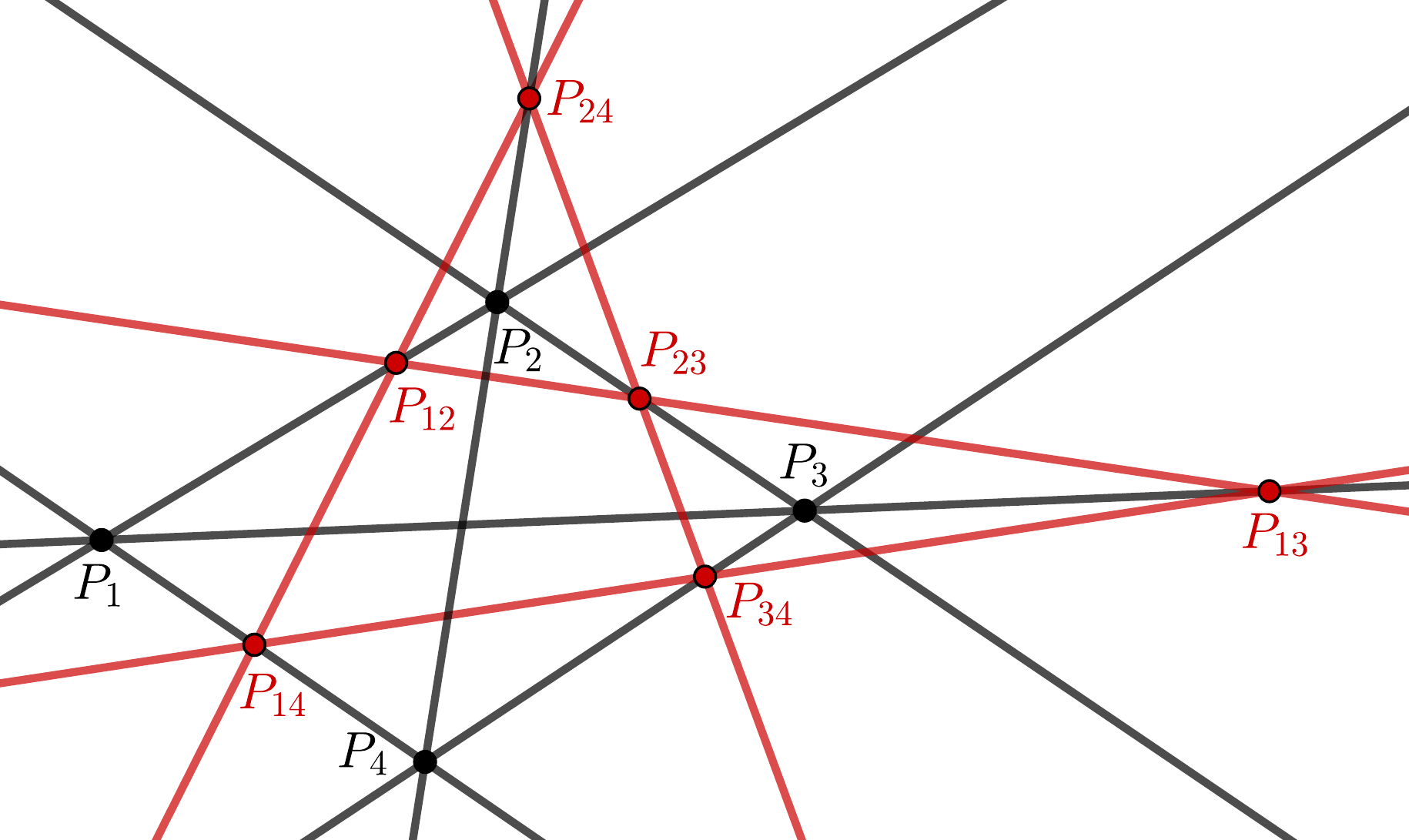}
\end{array}
\end{equation*}
\vspace{-.2in}
\caption{Obtaining the Desargues configuration from a triangulation of the sphere. 
}
\vspace{-.2in}
\label{fig:desargues-triangulation}
\end{figure}

\begin{example}[\emph{Pappus' Theorem}]
\label{eg:triangulation-pappus}
Triangulate the torus into six triangles as shown in Figure~\ref{fig:pappus-triangulation}.
Corresponding to the three vertices of this triangulation, 
pick three~points $A, B, C$ on the plane. 
Draw the lines $(AB), (AC), (BC)$. 
Corresponding to the nine edges of the triangulation, 
place nine points $A_i\in (BC), B_i\in(AC), C_i\in(AB)$. 
Corresponding to the six faces of the triangulation, 
we have six instances of condition~\eqref{eq:PaPbPc-collinear}, shown as red lines in  Figure~\ref{fig:pappus-triangulation}. 
%each of them requires that the points $P_{ij}, P_{ik}, P_{jk}$ are collinear (for $1\le i<j<k\le 4$). 
If five of these instances hold, then the sixth one holds as well.
This statement is a reformulation of the Pappus theorem. % see Figure~\ref{fig:pappus-triangulation}. 
\end{example}

\begin{figure}[ht]
\vspace{-.1in}
\begin{equation*}
\hspace{5pt}\begin{array}{cc}
\hspace{-20pt}\begin{array}{c}
\ \\[-10pt]
\begin{tikzpicture}[baseline= (a).base]
\node[scale=0.8] (a) at (0,0){
\begin{tikzcd}[arrows={-stealth}, cramped, sep=1]
&& A \edge{rr} \edge{ld} \edge{rd}&& C_1  \edge{rr}&& B \edge{rd}\edge{ld} \\[15pt]
& C_2 \edge{ld}  && B_3 \edge{rd} && A_2  \edge{ld} && C_3  \edge{rd}\\[15pt]
B \edge{rd}\edge{rr} && A_1 \edge{rr}  && C \edge{rr} \edge{rd} \edge{ld}&& B_1\edge{rr} & & A\edge{ld} \\[15pt]
& C_3  \edge{rd} && B_2 \edge{ld} && A_3  \edge{rd} && C_2 \edge{ld} \\[15pt]
&& A \edge{rr} && C_1 \edge{rr} && B 
\end{tikzcd}
};
\end{tikzpicture}
\end{array}
\hspace{20pt}
&
\begin{array}{c}
\ \\[-5pt]
\includegraphics[scale=0.5, trim=0.45cm 0.5cm 0.5cm 0.5cm, clip]{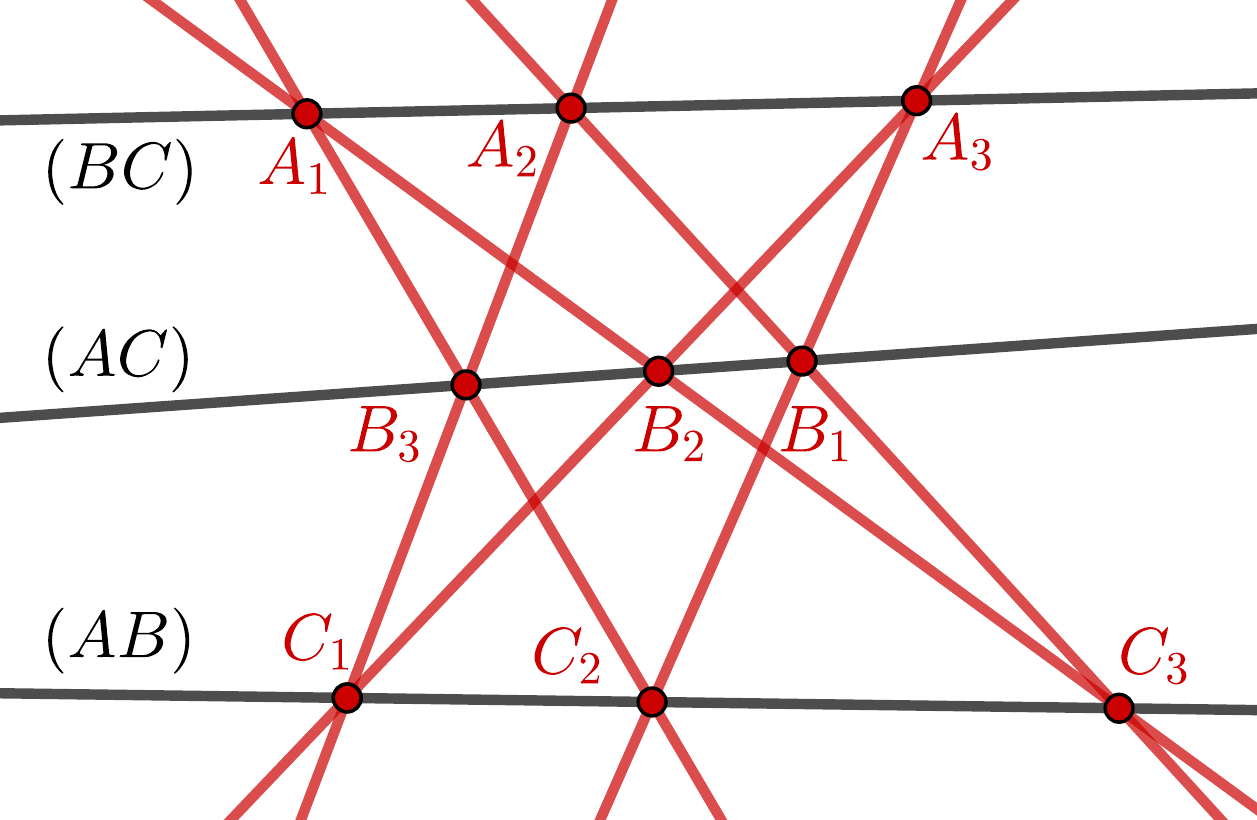}
\end{array}
\end{array}
\end{equation*}
\vspace{-10pt}
\caption{Obtaining the Pappus configuration from a triangulation of the torus.
The opposite sides of the hexagonal fundamental domain should be glued to each other. 
}
\vspace{-.2in}
\label{fig:pappus-triangulation}
\end{figure}

\begin{remark}
\label{rem:polygonal-subdivisions}
Corollary~\ref{cor:two-triangulations} generalizes from triangulations to arbitrary polygonal subdivisions 
of a surface. (The proof remains essentially the same.)  
On the other hand, the resulting theorem 
follows from Corollary~\ref{cor:two-triangulations} by considering a triangulation obtained by subdividing
the polygonal faces into triangles. 
(Here we assume that the conclusion of the theorem refers to a triangular face.)
\end{remark}

\newpage

\section*{Hamiltonian cycles and closure theorems}

Most theorems of classical planar linear incidence geometry %, including most theorems appearing in this paper, 
can be presented in the form of a \emph{closure theorem} (Schlie\ss ungs\-satz),
an assertion that a particular straightedge-only construction closes up, 
producing a periodic sequence of points. (Cf.\ Remark~\ref{rem:sequential-construction}.) 
%Most theorems appearing in this paper have such presentations. 
We next describe a combinatorial technique that can be used~to interpret individual instances of 
Corollary~\ref{cor:two-triangulations} as closure theorems. 
This technique applies whenever the dual graph of a triangulation 
satisfies the relatively mild combinatorial condition of \emph{Hamiltonicity}.
(For simplicial polyhedra, this condition features in a famous conjecture of P.~G.~Tait,
disproved by W.~Tutte, see, e.g., \cite[Section~18.2]{bondy-murty}.) 

\begin{theorem}
\label{th:hamiltonicity}
Let $T$ be a triangulation of a closed oriented surface.
Assume that each pair of triangles in~$T$ have at most one side in common. 
Let $G$~be the dual~tri\-valent graph of~$T$. 
Suppose that $G$ has a Hamiltonian cycle. Let ${\Delta_1,\dots,\Delta_N}$ 
be the corresponding sequence of triangles in~$T$. 
In other words, consecutive triangles $\Delta_i$~and $\Delta_{i+1}$ (all our indexing is modulo~$N$) 
are adjacent for every~$i$, 
and each triangle in~$T$ appears exactly once in the sequence~$(\Delta_i)$. 
Then the following closure theorem holds: 

For each vertex $v$ in~$T$, choose a different point~$P_v$ on the real/complex plane. 
For each edge~$u\shortedge{e} v$ in~$T$ shared by two consecutive triangles $\Delta_i$ and~$\Delta_{i+1}$, 
denote $\ell_i=(P_u P_v)$.
For each of the remaining edges~$u\shortedge{e} v$ in~$T$, % not shared by any two consecutive triangles $\Delta_i$ and~$\Delta_{i+1}$, 
choose a point $P_e\in (P_u P_v)$.

Pick a point $Q_1\in \ell_1$. 
Assume that all the chosen points are distinct. 
Recursively define the points $Q_i\in \ell_i$, $i\ge 2$, 
by $Q_i=\ell_i\cap (Q_{i-1}P_e)$, where $e$ is the unique side of the triangle~$\Delta_i$ not shared with 
the triangles~$\Delta_{i\pm1}$. 
Then $Q_{N+1}=Q_1$. 
\end{theorem}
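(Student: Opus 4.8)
The plan is to recognize that the recursive construction of the points $Q_i$ is nothing more than the assertion that collinearity condition~\eqref{eq:PaPbPc-collinear} holds for each triangle $\Delta_i$, and then to invoke Corollary~\ref{cor:two-triangulations} directly. The whole argument is thus a matter of translating the data of the closure theorem into the data required by that corollary.

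First I would set up the dictionary. The vertex points $P_v$ are given, and I must assign a point to every edge of~$T$. The edges split into two kinds relative to the Hamiltonian cycle: the $N$ edges shared by consecutive triangles $\Delta_i$, $\Delta_{i+1}$ (one per step of the cycle, and well-defined because by hypothesis any two triangles of~$T$ share at most one side), and the remaining $E-N$ edges carrying the chosen points~$P_e$. To the shared edge between $\Delta_i$ and $\Delta_{i+1}$, whose supporting line is $\ell_i=(P_uP_v)$, I assign the point $Q_i\in\ell_i$; to each remaining edge I assign its chosen $P_e$. This labels every edge exactly once. Since $\Delta_i$ shares one side with $\Delta_{i-1}$ and one (necessarily distinct, as $\Delta_{i-1}\neq\Delta_{i+1}$ for $N\ge 3$) side with $\Delta_{i+1}$, its third side is precisely the unique edge~$e$ occurring in the recursion.

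Next I would observe that the three edge-points of $\Delta_i$ are $Q_{i-1}$ (on the side shared with $\Delta_{i-1}$), $Q_i$ (on the side shared with $\Delta_{i+1}$), and $P_e$ (on the third side). Hence condition~\eqref{eq:PaPbPc-collinear} for $\Delta_i$ reads ``$Q_{i-1},Q_i,P_e$ are collinear,'' which is exactly the relation encoded by $Q_i=\ell_i\cap(Q_{i-1}P_e)$. So, by construction, condition~\eqref{eq:PaPbPc-collinear} holds for every one of $\Delta_2,\dots,\Delta_N$. Applying Corollary~\ref{cor:two-triangulations}, which guarantees that if the condition holds for all triangles but one then it holds for the last, I conclude that it also holds for $\Delta_1$: the points $Q_N$, $Q_1$, and $P_{e_1}$ are collinear. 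Since $Q_1\in\ell_1$ and $Q_1$ now lies on the line $(Q_NP_{e_1})$, it coincides with $\ell_1\cap(Q_NP_{e_1})=Q_{N+1}$, which is the asserted closure $Q_{N+1}=Q_1$.

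I do not expect a genuine obstacle here, since the theorem is essentially a repackaging of Corollary~\ref{cor:two-triangulations}; the one thing to get right rather than to overcome is the bookkeeping of the two edge-types and the verification that the recursion step matches condition~\eqref{eq:PaPbPc-collinear} under the correct assignment of the three side-points. The distinctness and genericity hypotheses ensure that each intersection $\ell_i\cap(Q_{i-1}P_e)$ is a single well-defined point and that the various points labelling vertices and edges are distinct, so the hypotheses of Corollary~\ref{cor:two-triangulations} are met and no degeneracies intervene.
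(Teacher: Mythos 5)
Your proposal is correct and matches the paper's intended argument: the paper gives no separate proof of this theorem, introducing it explicitly as a way to ``interpret individual instances of Corollary~\ref{cor:two-triangulations} as closure theorems,'' and your translation---labelling the edge shared by $\Delta_i$ and $\Delta_{i+1}$ with $Q_i$, the remaining edges with $P_e$, noting that the recursion encodes condition~\eqref{eq:PaPbPc-collinear} for $\Delta_2,\dots,\Delta_N$, and applying the corollary to get it for $\Delta_1$---is exactly that reduction.
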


\begin{example}
%\label{eg:}
In the case of Desargues' theorem (cf.\ Example~\ref{eg:triangulation-desargues}),
one choice of~a Hamiltonian cycle is shown in Figure~\ref{fig:desargues-pappus-hamiltonian} on the left.
Then Theorem~\ref{th:hamiltonicity} asserts the following.
Pick generic points $P_1, P_2, P_3, P_4$ on the plane. 
Set $\ell_1\!=\!(P_1P_3)$, $\ell_2\!=\!(P_1P_4)$, $\ell_3\!=\!(P_2P_4)$, $\ell_4\!=\!(P_2P_3)$. 
Choose $P_{12}\in (P_1P_2)$ and $P_{34}\in(P_3P_4)$. 
Pick $P_{13}\in \ell_1$. 
Set $P_{14}\!=\!\ell_2\cap (P_{13}P_{34})$, $P_{24}\!=\!\ell_3\cap(P_{12}P_{14})$, $P_{23}\!=\!\ell_4\cap(P_{24}P_{34})$. 
Then $P_{13}\!=\!\ell_1\cap (P_{12}P_{23})$. 
\end{example}

\begin{example}
%\label{eg:}
For the Pappus theorem (cf.\ Example~\ref{eg:triangulation-pappus}),
we can choose a Hamiltonian cycle as shown in Figure~\ref{fig:desargues-pappus-hamiltonian} on the right. 
Set ${Q_1\!=\!A_1}$, 
so that ${\ell_1\!=\!(BC)}$, ${\ell_2\!=\!(AC)}$, $\ell_3\!=\!(AB)$, $\ell_4\!=\!(BC)$, $\ell_5\!=\!(AC)$, $\ell_6\!=\!(AB)$. 
Theorem~\ref{th:hamiltonicity} asserts~that, given $A_2\in(BC)$, $B_2\in(AC)$, $C_2\in(AC)$, 
the recursively defined sequence $A_1$, $B_3$, $C_1$, $A_3$, $B_1$, $C_3$,\dots (cf.\ Figure~\ref{fig:pappus-triangulation}) 
is 6-periodic. The details are left to the reader. 
\end{example}

\enlargethispage{.7cm}

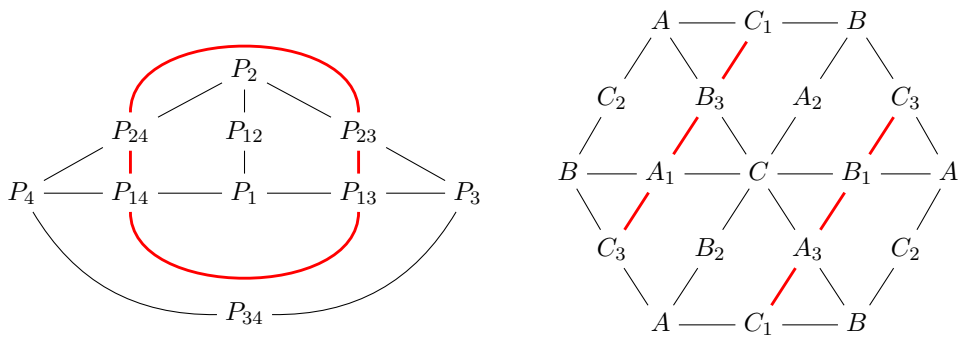
\begin{figure}[ht]
\vspace{-10pt}
\begin{equation*}
\begin{tikzpicture}[baseline= (a).base]
\node[scale=0.9] (a) at (0,0){
\begin{tikzcd}[arrows={-stealth}, cramped, sep=25]
&& P_2 \edge{dl} \edge{d} \edge{dr} \\[-15pt]
& P_{24} \edge{dl} \edge{d, color=red, very thick} \edge{rr, color=red, very thick, bend left=90} 
                & P_{12}\edge{d} & P_{23} \edge{d, color=red, very thick} \edge{dr} \\[-15pt]
P_4 \edge{r} & P_{14} \edge{r} \edge{rr, color=red, very thick, bend right=90} & P_1 \edge{r} & P_{13} \edge{r} &P_3 \\[10pt]
&& P_{34} \edge{rru, bend right=30} \edge{llu, bend left=30} 
\end{tikzcd}
};
\end{tikzpicture}
\qquad
\begin{tikzpicture}[baseline= (a).base]
\node[scale=0.9] (a) at (0,0){
\begin{tikzcd}[arrows={-stealth}, cramped, sep=1]
&& A \edge{rr} \edge{ld} \edge{rd}&& C_1  \edge{rr}  \edge{dl, color=red, very thick} && B \edge{rd}\edge{ld} \\[15pt]
& C_2 \edge{ld}  && B_3 \edge{rd} \edge{dl, color=red, very thick} && A_2  \edge{ld} && C_3  \edge{rd} \edge{dl, color=red, very thick} \\[15pt]
B \edge{rd}\edge{rr} && A_1 \edge{rr} \edge{dl, color=red, very thick} && C \edge{rr} \edge{rd} \edge{ld}&& B_1\edge{rr} \edge{dl, color=red, very thick} & & A\edge{ld} \\[15pt]
& C_3  \edge{rd} && B_2 \edge{ld} && A_3  \edge{rd} \edge{dl, color=red, very thick} && C_2 \edge{ld} \\[15pt]
&& A \edge{rr} && C_1 \edge{rr} && B 
\end{tikzcd}
};
\end{tikzpicture}
\end{equation*}
\vspace{-15pt}
\caption{Left: A triangulation from Figure~\ref{fig:desargues-triangulation}
and a closed curve through it 
that corresponds to a Hamiltonian cycle in the dual trivalent graph. 
%the Desargues configuration from a triangulation of the sphere. 
Right: a similar closed curve through the triangulation from Figure~\ref{fig:pappus-triangulation}. 
}
\vspace{-.2in}
\label{fig:desargues-pappus-hamiltonian}
\end{figure}

\newpage

\section*{Quadrangulated surfaces and 3D incidence theorems}

The construction of Corollary~\ref{cor:two-triangulations} extends to 3D, with essentially the same proof:

\begin{corollary}
\label{cor:quad-3D}
Let $T$ be a tiling of a closed oriented surface by quadrilateral tiles.
For~each vertex $v$ in~$T$, choose a different point~$P_v$ in the real/complex 3-space. 
For~each edge~$u\shortedge{e} v$ in~$T$, choose a point $P_e$ on the line~$(P_u P_v)$.
Assume that all these points are distinct. 
For each quadrilateral tile in~$T$ with sides $a, b, c, d$ (listed consecutively), consider the condition
\begin{equation}
\label{eq:PaPbPcPd-coplanar}
\text{the points $P_a$, $P_b$, $P_c$, and $P_d$ are coplanar.}
\end{equation}
If \eqref{eq:PaPbPcPd-coplanar} holds for all quadrilateral tiles in $T$ except one, then it holds for the remaining quadrilateral tile.
\end{corollary}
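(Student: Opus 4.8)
The plan is to mimic the proof of Corollary~\ref{cor:two-triangulations}, replacing the points-and-lines setup on the plane by the dual (lines-and-planes) setup appropriate to the 3-dimensional master theorem. The key structural fact is that Corollary~\ref{cor:quad-3D} concerns a quadrilateral tiling~$T$ and coplanarity of the four points $P_a,P_b,P_c,P_d$ chosen on the sides of each quadrilateral tile. Just as before, I would build an auxiliary bicolored quadrilateral tiling~$\TT$ from~$T$ by placing a white vertex inside each quadrilateral face of~$T$, connecting it to the four vertices of that face, colouring the original vertices of~$T$ black, and discarding the edges of~$T$. The resulting tiles $\square_e$ of~$\TT$ are in bijection with the edges~$e$ of~$T$: each edge $u\shortedge{e}v$ of~$T$ is shared by two faces, whose interior white vertices together with $P_u$ and $P_v$ form a quadrilateral tile of~$\TT$.

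Next I would assign labels in~$\PP$ (with $\dim\PP=3$) to the vertices of~$\TT$. To each black vertex~$v$ I attach the point~$P_v\in\PP$. To each white vertex corresponding to a quadrilateral face of~$T$ whose chosen points $P_a,P_b,P_c,P_d$ are coplanar, I attach the plane (hyperplane in~$\PP$) through those four points. To the single exceptional face, say with consecutive sides $a_\circ,b_\circ,c_\circ,d_\circ$, I attach the plane $(P_{a_\circ}P_{b_\circ}P_{c_\circ})$ through three of the four points. Then I would verify coherence of the tiles $\square_e$. For an edge~$e$ of~$T$ other than the side of the exceptional face that is omitted from its defining plane, the two white vertices of~$\square_e$ are labelled by planes both of which contain the point $P_e$; since $P_e$ lies on the line $(P_uP_v)$, the line through the two black vertices meets the codimension-$2$ subspace (the intersection of the two planes), which is exactly the coherence condition from Definition~\ref{def:coherent-tile} in the 3D case. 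Applying Theorem~\ref{th:master} forces the one remaining tile to be coherent, which says precisely that the plane $(P_{a_\circ}P_{b_\circ}P_{c_\circ})$ also passes through $P_{d_\circ}$, i.e.\ that the four points of the last face are coplanar.

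The main obstacle, and the step deserving the most care, is the verification that for each interior edge~$e$ the tile $\square_e$ is genuinely coherent in the sense of the 3D Definition~\ref{def:coherent-tile}. In two dimensions, coherence reduced to collinearity of three points; here I must check that the line $(P_uP_v)$ joining the two black labels meets the codimension-$2$ subspace $h_1\cap h_2$, where $h_1,h_2$ are the two planes labelling the white vertices. The crucial point is that both planes were chosen to contain $P_e$, and $P_e\in(P_uP_v)$ by construction, so $P_e$ itself furnishes a point of $(P_uP_v)\cap h_1\cap h_2$; I would also note the non-incidence conditions (no $P_v$ lying on an adjacent plane) follow from the genericity assumption that all the chosen points are distinct, as in the statement. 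Finally, I would remark, as in Remark~\ref{rem:polygonal-subdivisions}, that the argument is the verbatim analogue of the proof of Corollary~\ref{cor:two-triangulations} with points and planes interchanged and collinearity replaced by coplanarity, so no genuinely new ingredient beyond the 3D master theorem is required.
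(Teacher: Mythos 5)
Your proposal is correct and follows exactly the route the paper intends: the paper's own proof of Corollary~\ref{cor:quad-3D} consists of the single remark that the construction of Corollary~\ref{cor:two-triangulations} ``extends to 3D, with essentially the same proof,'' and your write-up is precisely that extension (white vertex per face labelled by the plane through the four chosen points, tiles indexed by edges of~$T$, coherence witnessed by $P_e\in (P_uP_v)\cap h_1\cap h_2$, master theorem applied at the end). The only cosmetic quibble is that the non-incidence hypotheses of Theorem~\ref{th:master} follow from the paper's standing genericity conventions rather than from distinctness of the points alone, but the paper is equally terse on this point.
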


While the tiling in Corollary~\ref{cor:quad-3D}
is an object of the same kind as the tilings appearing in our master theorem,
the geometric constructions associated with these tilings differ between the two contexts.
In the master theorem, black vertices correspond~to points and white vertices to planes,
whereas in Corollary~\ref{cor:quad-3D}, all vertices are labeled by points 
and moreover we associate additional points to the edges of the tiling. 

\begin{example}[\emph{The M\"obius configuration}]
%\label{eg:}
Tile the torus by four quadrilaterals as shown in Figure~\ref{fig:quad-moebius}.
Corresponding to the four vertices of this tiling, 
pick four~points $A, B, C, D$ in 3-space. 
Draw the lines $(AB), (BC), (CD), (DA)$. 
Corresponding to~the eight edges of the tiling, 
pick eight points $P_5, P_8\!\in\!(AB)$, $P_2, P_3\!\in\!(BC)$, $P_6, P_7\!\in\!(CD)$, $P_1, P_4\!\in\! (AD)$. 
Corresponding to the four faces of the tiling, 
we have four instances of condition~\eqref{eq:PaPbPcPd-coplanar}.
Now state everything in terms of $P_1,\dots,P_8$. 
For example, the existence of the point~$C$ means that the lines $(P_2P_3)\!=\!(BC)$ and $(P_6P_7)\!=\!(CD)$
intersect, i.e., the points $P_2, P_3, P_6,  P_7$ are coplanar. 
Adding four coplanarity conditions corresponding to the points $A, B, C, D$
to the four coplanarity conditions corresponding to the four instances of~\eqref{eq:PaPbPcPd-coplanar},
we recover the M\"obius configuration (Theorem~\ref{th:cube}). 
\end{example}

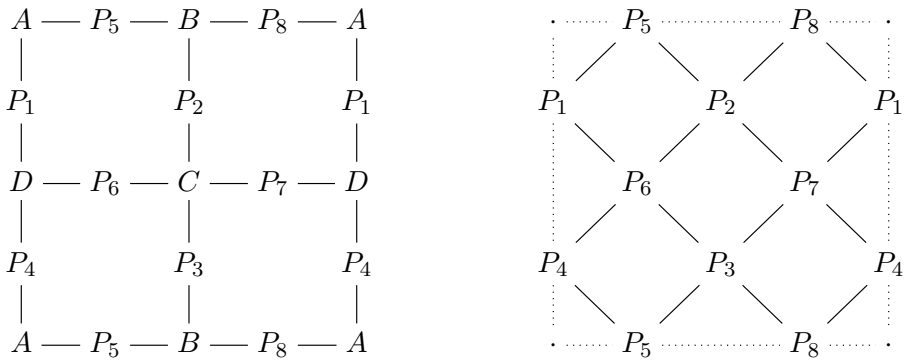
\begin{figure}[ht]
%\vspace{-5pt}
\begin{equation*}
\begin{tikzpicture}[baseline= (a).base]
\node[scale=1] (a) at (0,0){
\begin{tikzcd}[arrows={-stealth}, cramped, sep=small]
A \edge{r} \edge{d} & P_5 \edge{r} & B \edge{r} \edge{d} & P_8 \edge{r} & A \edge{d} \\[5pt]
P_1 \edge{d} && P_2 \edge{d} && P_1 \edge{d} \\[5pt]
D \edge{r} \edge{d} & P_6 \edge{r} & C \edge{r} \edge{d} & P_7 \edge{r} & D \edge{d} \\[5pt]
P_4 \edge{d} && P_3 \edge{d} && P_4 \edge{d} \\[5pt]
A \edge{r} & P_5 \edge{r} & B \edge{r} & P_8 \edge{r} & A
\end{tikzcd}
};
\end{tikzpicture}
\qquad\qquad\quad
\begin{tikzpicture}[baseline= (a).base]
\node[scale=1] (a) at (0,0){
\begin{tikzcd}[arrows={-stealth}, cramped, sep=small]
\cdot\edge{r, dotted} \edge{d, dotted} & P_5 \edge{dl} \edge{dr} \edge{rr, dotted} &  & P_8  \edge{dl} \edge{dr} \edge{r, dotted} & \cdot\edge{d, dotted} \\[5pt]
P_1 \edge{dd, dotted}  \edge{dr} && P_2  \edge{dl} \edge{dr} && P_1 \edge{dl} \edge{dd, dotted} \\[5pt]
 & P_6  \edge{dl} \edge{dr} & & P_7  \edge{dl} \edge{dr} &  \\[5pt]
P_4  \edge{d, dotted} \edge{dr} && P_3  \edge{dl} \edge{dr} && P_4  \edge{dl} \edge{d, dotted} \\[5pt]
\cdot \edge{r, dotted} & P_5 \edge{rr, dotted} &  & P_8\edge{r, dotted}   & \cdot
\end{tikzcd}
};
\end{tikzpicture}
\end{equation*}
\vspace{-10pt}
\caption{Left: Quadrilateral tiling of the torus that illustrates Corollary~\ref{cor:quad-3D}.
The opposite sides of the rectangular fundamental domain should be glued to each other. 
Right: the vertices of each of the eight tiles are coplanar, cf.~\eqref{eq:8-quadruples}. 
}
\vspace{-15pt}
\label{fig:quad-moebius}
\end{figure}

\begin{remark}
%\label{rem:}
Corollary~\ref{cor:quad-3D} has a generalization to polygonal tilings, 
similar to the one discussed in Remark~\ref{rem:polygonal-subdivisions} in the context of 2D incidence geometry. 

\end{remark}
\newpage

\section*{Edges in polytopes}

%Geometric corollaries of our master theorem include incidence theorems involving edges in polytopes. 

\begin{corollary}
\label{cor:edge-crosses-edge}
Let $\mathbf{P}$ be a convex polytope in~$\RR^3$ with vertices $V_1,\dots,V_n$. 
Pick $n$ distinct points $V_1',\dots,V_n'\in\RR^3$ not lying on the planes containing the faces of~$\mathbf{P}$. 
For~each edge $\{V_i,V_j\}$ of~$\mathbf{P}$, consider the condition
\begin{equation}
\label{eq:edge-crosses-edge}
\text{the lines $(V_iV_j)$ and $(V_i'V_j')$ intersect.} 
\end{equation}
If this condition is satisfied for all edges of~$\mathbf{P}$ except one,
then it is also satisfied for the remaining edge. 
\end{corollary}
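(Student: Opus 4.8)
\textbf{Proof proposal for Corollary~\ref{cor:edge-crosses-edge}.}

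The plan is to recognize this as an instance of Corollary~\ref{cor:quad-3D}, which already handles coplanarity conditions on the faces of a quadrilateral tiling of a surface. The key observation is that condition~\eqref{eq:edge-crosses-edge}---that the lines $(V_iV_j)$ and $(V_i'V_j')$ intersect---is precisely the coplanarity of the four points $V_i, V_j, V_i', V_j'$. So each edge of~$\mathbf{P}$ ought to correspond to a quadrilateral face of a suitable tiling, with the four corners labeled by $V_i$, $V_i'$, $V_j$, $V_j'$ (in an appropriate cyclic order so that the opposite-side structure of the quadrilateral reads off the two lines $(V_iV_j)$ and $(V_i'V_j')$).

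First I would build the tiled surface from the polytope. The boundary $\partial\mathbf{P}$ is topologically a sphere, and the $1$-skeleton of~$\mathbf{P}$ (vertices and edges) is a graph embedded in that sphere whose faces are the $2$-faces of~$\mathbf{P}$. I want a quadrilateral \emph{tiling}~$T$ of the sphere whose tiles are in bijection with the \emph{edges} of~$\mathbf{P}$. The natural construction is to take the tiling associated, via Definition~\ref{def:graphs-on-surfaces}/Proposition~\ref{pr:graphs-on-surfaces}, to the embedded graph $\GG$ given by the $1$-skeleton of~$\mathbf{P}$: place a black vertex at each polytope vertex $V_i$ and a black vertex inside each $2$-face, and let the white vertices sit on the edges. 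Under this recipe each edge $\{V_i,V_j\}$ of $\mathbf{P}$ becomes one quadrilateral tile whose four corners are $V_i$, one of the two incident faces, $V_j$, and the other incident face. I then need to reconcile this with the labeling scheme of Corollary~\ref{cor:quad-3D}, where \emph{all} tiling vertices carry points in $\RR^3$ and each edge of the tiling carries an auxiliary point. The cleanest route is to apply Corollary~\ref{cor:quad-3D} directly: assign to each vertex of~$T$ a point (the $V_i$ at polytope-vertices, and suitably chosen points at the face-centers), assign to each edge of~$T$ the auxiliary point $P_e$ on the relevant line, and arrange the four consecutive sides $a,b,c,d$ of each tile so that condition~\eqref{eq:PaPbPcPd-coplanar} becomes exactly~\eqref{eq:edge-crosses-edge}.

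The main obstacle I anticipate is the bookkeeping that realizes~\eqref{eq:edge-crosses-edge} as a genuine instance of~\eqref{eq:PaPbPcPd-coplanar}: I must introduce the primed points $V_i'$ into the tiling so that the coplanarity of a tile's four side-points coincides with the coplanarity of the quadruple $\{V_i, V_j, V_i', V_j'\}$. Concretely, for the tile corresponding to edge $\{V_i,V_j\}$, two opposite sides should carry data encoding the line $(V_iV_j)$ and the other two sides the line $(V_i'V_j')$, so that their meeting is equivalent to the two lines intersecting. I expect this to force a specific labeling of the face-center vertices and of the edge-points $P_e$ in terms of the $V_i'$, and I would verify that the genericity hypothesis (the $V_i'$ avoiding the face-planes, all points distinct) guarantees the non-incidence conditions required by the master theorem so that every tile is legitimately a coherent-or-not tile. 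Once the dictionary is set up, the conclusion is immediate: coherence of all tiles but one, i.e.\ validity of~\eqref{eq:edge-crosses-edge} for all edges but one, forces coherence of the last tile by Corollary~\ref{cor:quad-3D}, which is the remaining instance of~\eqref{eq:edge-crosses-edge}.
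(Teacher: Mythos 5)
You have the right combinatorial skeleton---a quadrilateral tiling of the sphere $\partial\mathbf{P}$ whose tiles correspond to the edges of $\mathbf{P}$, the tile for $\{V_i,V_j\}$ having corners $V_i$, $v_f$, $V_j$, $v_g$ with $f,g$ the two faces meeting along that edge---but the mechanism you propose for extracting \eqref{eq:edge-crosses-edge} from it does not work. You route the argument through Corollary~\ref{cor:quad-3D}, which requires every vertex of the tiling to carry a point and every \emph{side} of the tiling to carry a point on the line joining the labels of its two endpoints, with \eqref{eq:PaPbPcPd-coplanar} read off from the four side-points. For the tile of $\{V_i,V_j\}$ to encode the coplanarity of $V_i,V_j,V_i',V_j'$, its four side-points must be exactly these four points; since each side at the tiling-vertex $V_i$ is shared with the tile of a neighboring edge $\{V_i,V_k\}$, the shared side-point must lie in $\{V_i,V_j,V_i',V_j'\}\cap\{V_i,V_k,V_i',V_k'\}=\{V_i,V_i'\}$, so the two sides of each tile at $V_i$ must receive $V_i$ and $V_i'$ respectively. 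Going around $V_i$, the labels on its $\deg_{\mathbf{P}}(V_i)$ incident sides are then forced to alternate $V_i,V_i',V_i,V_i',\dots$, which is impossible when $\deg_{\mathbf{P}}(V_i)$ is odd---already for the tetrahedron, i.e.\ the very case that recovers the bundle theorem. (Even for even degrees you would still owe a construction of the auxiliary vertex-points making each $V_i$ or $V_i'$ lie on the required lines.)

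The step you are missing is that the face-vertices $v_f$ should not be labeled by points at all. Apply Theorem~\ref{th:master} with $\dim\PP=3$ directly to this tiling, coloring the $V_i$-vertices black and the $v_f$-vertices white: label the black vertex at $V_i$ by the \emph{primed} point $V_i'$, and label the white vertex $v_f$ by the plane $h_f$ containing the face~$f$. The hypothesis that the $V_i'$ avoid all face-planes supplies the non-incidence requirement, and coherence of the tile with vertex labels $V_i', h_f, V_j', h_g$ asks that the line $(V_i'V_j')$ meet the codimension-two subspace $h_f\cap h_g$, which is precisely the line $(V_iV_j)$ supporting the shared edge. That identification, $h_f\cap h_g=(V_iV_j)$, is the crux: it makes the coherence condition literally equal to \eqref{eq:edge-crosses-edge}, with the unprimed $V_i$ entering only through the face-planes rather than as vertex labels. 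With this labeling the corollary is an immediate instance of the master theorem, which is how the paper proves it.
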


\begin{proof}
Once again, we adapt the construction from (the proof of) Proposition~\ref{pr:graphs-on-surfaces}. 
Viewing $\mathbf{P}$ as a PL-manifold, we tile it by quadrangular tiles as follows.
The vertices of the tiling include the vertices of~$\mathbf{P}$
plus an additional vertex $v_f$ for each two-dimensional face~$f$ of~$\mathbf{P}$. 
The edges of~$\mathbf{P}$ do \emph{not} include the edges of~$\mathbf{P}$.
Instead, we connect each vertex~$v_f$ with all the vertices $V_i$ lying on the boundary of the face~$f$. 
The tiles of the resulting tiling of the sphere correspond to the edges of~$\mathbf{P}$.

We now associate points and planes to the vertices of the tiling, as follows.
To a vertex of the tiling that comes from a vertex~$V_i$ of the polytope~$\mathbf{P}$, 
we associate the point~$V_i'$. 
To a vertex $v_f$ of the tiling that comes from a face~$f$ of~$\mathbf{P}$, 
we associate the plane containing the face~$f$.  
Then the coherence conditions associated to the tiles are precisely the conditions~\eqref{eq:edge-crosses-edge},
and the claim follows. 
\end{proof}

\begin{example}
Let $\mathbf{P}$ be a tetrahedron with vertices $P_1,P_2,P_3,P_4$. 
%In this case, 
Corollary~\ref{cor:edge-crosses-edge} asserts that given four generic points $P_1',P_2',P_3',P_4'$,
five of the conditions 
\begin{equation*}
\text{the lines $(P_iP_j)$ and $(P_i'P_j')$ intersect}
\end{equation*}
%(equivalently, the lines $\ell_i=(P_iP_i')$ and $\ell_j=(P_jP_j')$ intersect)
imply the sixth. 
We thus recover Theorem~\ref{th:bundle}. 
\end{example}

Corollary~\ref{cor:edge-crosses-edge} straightforwardly generalizes to arbitrary oriented two-dimensional
PL-manifolds~$\mathbf{P}$ immersed in~$\RR^3$ 
so that each facet of~$\mathbf{P}$ is homeomorphic to a disk embedded into~$\RR^3$ 
as a flat polygon (not necessarily convex). 
Even more generally:

\begin{corollary}
%\label{cor:}
Consider a tiling of a closed oriented surface by polygons.
Associate a point $P_v$ in 3-space to each vertex~$v$.
Associate a plane~$h_f$ to each polygonal face~$f$.
Associate two lines to each edge~$u\shortedge{e} v$ separating two faces $f$ and~$g$:
\begin{itemize}[leftmargin=.2in]
\item 
the line $(P_uP_v)$ connecting the vertices associated with the endpoints of~$e$; 
\item
the line $h_f\cap h_g$ where the planes associated with the faces separated~by~$e$ meet. 
\end{itemize}
If these two lines intersect for each edge on the surface except one,
then this condition is also satisfied for the remaining edge. 
\end{corollary}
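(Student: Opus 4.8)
The plan is to reduce this final statement directly to the master theorem (Theorem~\ref{th:master}) by constructing an appropriate bicolored quadrilateral tiling from the given polygonal tiling of the surface, mimicking the construction in the proof of Corollary~\ref{cor:edge-crosses-edge}. Since the ambient projective space here is three-dimensional, the notion of coherence I must reproduce is precisely that of two lines in 3-space meeting, which is the $\dim\PP=3$ content of Definition~\ref{def:coherent-tile}: a tile with point-vertices $A,B$ and plane-vertices $\ell,m$ is coherent exactly when the line $(AB)$ meets the codimension-2 subspace $\ell\cap m$, i.e.\ the line where the two planes intersect. This is the matching I want to exploit.

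First I would build the tiling~$\TT$. Starting from the given polygonal tiling of the closed oriented surface~$\Sigma$, I color each vertex~$v$ black and place a white vertex $w_f$ inside each polygonal face~$f$; I then discard the original edges of the tiling and instead connect each $w_f$ to all black vertices lying on the boundary of~$f$. As in Proposition~\ref{pr:coherence=tilability}/Corollary~\ref{cor:edge-crosses-edge}, the resulting cells are quadrilaterals in natural bijection with the edges of the original tiling: an edge $u\shortedge{e}v$ separating faces $f$ and $g$ gives a quadrilateral tile with black vertices $P_u,P_v$ and white vertices $w_f,w_g$. This $\TT$ is a bicolored quadrilateral tiling of~$\Sigma$, so Theorem~\ref{th:master} applies once I attach the projective data.

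Next I would assign the labels: to a black vertex~$v$ associate the point $P_v\in\PP$, and to a white vertex $w_f$ associate the plane $h_f$. The non-incidence hypothesis of Theorem~\ref{th:master}, namely that $P_v$ does not lie on $h_f$ for each edge $P_v - w_f$, follows from the assumption that no vertex-point lies on any of the planes associated to the faces meeting at that vertex (this is the natural genericity analogue of ``$V_i'$ not on the face planes'' in Corollary~\ref{cor:edge-crosses-edge}). With these labels, the tile for edge~$e$ has point-vertices $P_u,P_v$ and plane-vertices $h_f,h_g$, so its coherence means exactly that the line $(P_uP_v)$ meets $h_f\cap h_g$. Because $h_f\cap h_g$ is the line attached to~$e$, this is precisely condition~\eqref{eq:edge-crosses-edge}'s analogue here: the two lines $(P_uP_v)$ and $h_f\cap h_g$ intersect. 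Thus coherence of the tile for~$e$ is logically identical to the stated intersection condition for~$e$, and the conclusion follows by applying Theorem~\ref{th:master}.

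The only genuine subtlety—which I expect to be the main point requiring care rather than a deep obstacle—is the degenerate-coherence bookkeeping: Definition~\ref{def:coherent-tile} declares a tile coherent also when $A=B$ or $\ell=m$. In the present setup, $\ell=m$ would mean $h_f=h_g$ (two adjacent faces sharing a plane) and $A=B$ would force two polygon-vertices to coincide; under the standing genericity/distinctness assumptions these do not occur, so the two versions of the condition (``line meets line'' versus ``tile coherent'') coincide on every edge. I would remark on this to confirm that the equivalence between coherence and the intersection condition is exact, so that ``all edges but one'' translates verbatim into ``all tiles but one,'' and the master theorem delivers the remaining edge.
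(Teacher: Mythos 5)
Your proposal is correct and follows exactly the route the paper intends: the paper states this corollary as an immediate generalization of Corollary~\ref{cor:edge-crosses-edge}, whose proof constructs precisely the bicolored quadrilateral tiling you describe (white vertex per face joined to the black boundary vertices, tiles indexed by edges, points on black and planes on white vertices) and then invokes the master theorem, with coherence in $\dim\PP=3$ matching the condition that $(P_uP_v)$ meets $h_f\cap h_g$. Your closing remark on the degenerate clauses of Definition~\ref{def:coherent-tile} is a reasonable extra check and does not change the argument.
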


\section*{More geometric reformulations}

A general correspondence principle established by S.~Tabachnikov~\cite{tabachnikov-skewers}
transforms any incidence theorem in the plane 
(in particular, any instance of our master theorem)
into a theorem about \emph{skewers}, i.e., common perpendiculars to lines in 3-space. 

In a similar spirit, \emph{stereographic projection} links incidence theorems
about circles in the plane and their counterparts involving circles on a sphere or planes in 3-space. \linebreak[3]
Thus, the cube theorem of M\"obius (Theorem~\ref{th:cube})
together with Hesse's associated points theorem yields Wallace's theorem, 
cf.\ \cite[Section~7]{glynn-2007} \cite[vol.~4, Section~1, 18--20]{baker}.

\iffalse
\begin{remark}
%\label{rem:}
Recall that a convex polytope in~$\RR^3$ is \emph{simple} if each of its vertices is incident to 
precisely three facets of~$\mathbf{P}$. 
For simple polytopes, Corollary~\ref{cor:edge-crosses-edge} is essentially equivalent
to the corresponding instance of Theorem~??? (main theorem for arbitrary tilings)
When $\mathbf{P}$ is not simple, the configuration behind Corollary~\ref{cor:edge-crosses-edge} 
satisfies an additional restriction that is not included in Theorem~???, namely:
\begin{itemize}[leftmargin=.2in]
\item for each vertex~$V_i$ of~$\mathbf{P}$, the planes $v_f$ associated to the facets~$f$ incident to~$V_i$ 
have a common point. 
\end{itemize}
\end{remark}
\fi

\newpage

\section{%3D and 4D consistency
Bobenko-Suris consistency of coherent tilings
}
\label{sec:consistency}

In this section, we discuss the phenomena of 3D and 4D consistency of coherent tilings. 
They can be viewed as analogues of the notions of 3D and 4D consistency of 2D discrete dynamical systems that 
play important roles in the context of the 
``integrability as consistency'' paradigm developed by 
A.~Bobenko and Yu.~Suris~\cite{bobenko-suris-book}. 

\section*{%Desargues' theorem as 
3D consistency for points and lines in the plane}

\begin{proposition}
\label{pr:3D-consistency-Desargues}
On the real/complex projective plane, let $f, f_{12}, f_{13}, f_{23}$ be four points and let  $f_1,f_2,f_3$ be three lines. 
Assume that the three tiles 
\begin{equation}
\label{eq:f-fi-fij}
\begin{tikzpicture}[baseline= (a).base]
\node[scale=1] (a) at (0,0){
\begin{tikzcd}[arrows={-stealth}, sep=small, cramped]
& f_{12} \edge{ld} \edge{rd} \\[3pt]
f_1  \edge{rd}  && f_2 \edge{ld} \\[3pt]
& f 
\end{tikzcd}
};
\end{tikzpicture}
\qquad
\begin{tikzpicture}[baseline= (a).base]
\node[scale=1] (a) at (0,0){
\begin{tikzcd}[arrows={-stealth}, sep=small, cramped]
& f_{13} \edge{ld} \edge{rd} \\[3pt]
f_1  \edge{rd}  && f_3 \edge{ld} \\[3pt]
& f 
\end{tikzcd}
};
\end{tikzpicture}
\qquad
\begin{tikzpicture}[baseline= (a).base]
\node[scale=1] (a) at (0,0){
\begin{tikzcd}[arrows={-stealth}, sep=small, cramped]
& f_{23} \edge{ld} \edge{rd} \\[3pt]
f_2  \edge{rd}  && f_3 \edge{ld} \\[3pt]
& f 
\end{tikzcd}
};
\end{tikzpicture}
\end{equation}
%(here $1\le i<j\le 3$) 
are coherent and that the seven inputs $f, f_i, f_{ij}$ are generic subject to this condition. \pagebreak[3]
Then there exists a unique line $f_{123}$ such that the three tiles
\begin{equation}
\label{eq:fi-fij-fijk}
\begin{tikzpicture}[baseline= (a).base]
\node[scale=1] (a) at (0,0){
\begin{tikzcd}[arrows={-stealth}, sep=10, cramped]
& f_{123} \edge{ld} \edge{rd} \\[5pt]
f_{12}  \edge{rd}  && f_{13} \edge{ld} \\[5pt]
& f_1
\end{tikzcd}
};
\end{tikzpicture}
\qquad
\begin{tikzpicture}[baseline= (a).base]
\node[scale=1] (a) at (0,0){
\begin{tikzcd}[arrows={-stealth}, sep=10, cramped]
& f_{123} \edge{ld} \edge{rd} \\[5pt]
f_{12}  \edge{rd}  && f_{23} \edge{ld} \\[5pt]
& f_2
\end{tikzcd}
};
\end{tikzpicture}
\qquad
\begin{tikzpicture}[baseline= (a).base]
\node[scale=1] (a) at (0,0){
\begin{tikzcd}[arrows={-stealth}, sep=10, cramped]
& f_{123} \edge{ld} \edge{rd} \\[5pt]
f_{13}  \edge{rd}  && f_{23} \edge{ld} \\[5pt]
& f_3
\end{tikzcd}
};
\end{tikzpicture}
\end{equation}
are coherent. 
The same result holds with the words ``point'' and ``line'' interchanged. 
%See Figure~\ref{fig:3D-consistency-Desargues}. 
\end{proposition}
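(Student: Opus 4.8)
The plan is to recognize the six tiles in \eqref{eq:f-fi-fij}--\eqref{eq:fi-fij-fijk} as the six faces of a combinatorial cube, i.e.\ as a bicolored quadrilateral tiling of the sphere, and then reduce the whole statement to Theorem~\ref{th:master}. I would label the eight vertices by the subsets of $\{1,2,3\}$ (so $f\leftrightarrow\varnothing$, $f_i\leftrightarrow\{i\}$, $f_{ij}\leftrightarrow\{i,j\}$, $f_{123}\leftrightarrow\{1,2,3\}$); the three tiles \eqref{eq:f-fi-fij} are the three faces through $\varnothing$, and the three tiles \eqref{eq:fi-fij-fijk} are the three faces through $\{1,2,3\}$. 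Coloring a vertex by the parity of $|S|$ makes $f,f_{12},f_{13},f_{23}$ black (points) and $f_1,f_2,f_3,f_{123}$ white (lines), exactly as the labels demand, and along each face the vertices alternate between points and lines, so this is a legitimate instance of the master theorem (the sphere always admits such a $2$-coloring).

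First I would translate coherence of each face into a geometric incidence via Remark~\ref{rem:coherence-plane}/Proposition~\ref{pr:coherence-algebraic}. The three given coherences in \eqref{eq:f-fi-fij} say that $(f_{12}f)$ passes through $f_1\cap f_2$, that $(f_{13}f)$ passes through $f_1\cap f_3$, and that $(f_{23}f)$ passes through $f_2\cap f_3$. For the sought line $f_{123}$, the three coherences in \eqref{eq:fi-fij-fijk} say that $f_{123}\cap f_1\in(f_{12}f_{13})$, that $f_{123}\cap f_2\in(f_{12}f_{23})$, and that $f_{123}\cap f_3\in(f_{13}f_{23})$. I would then introduce the three points
\[
P_1=f_1\cap(f_{12}f_{13}),\qquad P_2=f_2\cap(f_{12}f_{23}),\qquad P_3=f_3\cap(f_{13}f_{23}),
\]
which are well defined and pairwise distinct for generic input, and observe that coherence of the $k$-th tile of \eqref{eq:fi-fij-fijk} is equivalent to the single condition $P_k\in f_{123}$.

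Next I would construct $f_{123}$ and invoke the master theorem. Set $f_{123}=(P_1P_2)$; by the previous paragraph this makes the first two tiles of \eqref{eq:fi-fij-fijk} coherent. Together with the three given coherent tiles \eqref{eq:f-fi-fij}, we now have five of the six faces of the cube coherent, and the edge non-incidence hypotheses $A\notin h$ required by Theorem~\ref{th:master} hold for generic input. The master theorem then forces the sixth face -- the third tile of \eqref{eq:fi-fij-fijk} -- to be coherent, i.e.\ $P_3\in f_{123}$. This proves existence (and incidentally exhibits the collinearity of $P_1,P_2,P_3$, which is the Desargues content hiding in the statement). Uniqueness is immediate: any line making \eqref{eq:fi-fij-fijk} coherent contains both $P_1$ and $P_2$, and since $P_1\neq P_2$ it must equal $(P_1P_2)$. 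Finally, the point/line-interchanged version follows by applying the result just proved in the dual projective plane, which amounts to swapping the two vertex colors of the cube; since the mixed cross-ratio, and hence coherence, is self-dual, no separate argument is needed.

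The argument is essentially routine once the cube picture is in place; the only delicate point is the genericity bookkeeping -- ensuring that $P_1,P_2,P_3$ are genuine distinct points, that $(P_1P_2)$ is a well-defined line distinct from $f_1,f_2,f_3$, and that all the edge non-incidence conditions of Theorem~\ref{th:master} persist after the three initial coherences are imposed. I expect this to be the main (though minor) obstacle, and I would dispatch it by noting that the subvariety cut out by the three given coherence conditions is irreducible and that each of the finitely many required open conditions is nonempty on it, so all of them hold simultaneously on a Zariski-dense subset.
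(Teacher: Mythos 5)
Your proposal is correct and takes essentially the same route as the paper, whose entire proof is the one-line remark that the statement is a restatement of Desargues' theorem via the cube tiling of Figure~\ref{fig:desargues}; you have simply made explicit the identification of the six tiles with the faces of that cube, the construction $f_{123}=(P_1P_2)$ forcing two of the three new tiles to be coherent, and the application of Theorem~\ref{th:master} to obtain the third. The uniqueness and duality remarks are also correct, so there is nothing to fix.
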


To rephrase, if five vertices of a cube are labeled 
by points and lines so that the three tiles that do not involve the unlabeled vertex are coherent,
then the sixth vertex can be labeled, in a unique way, 
so that the remaining three tiles are coherent as well. 
See Figure~\ref{fig:3D-consistency-Desargues}. 
 
\begin{figure}[ht]
\vspace{-3pt}
\begin{equation*}
\begin{tikzcd}[arrows={-stealth}, cramped, sep=small]
& &  f_{123} \arrow[lld, no head] \arrow[rrd, no head] \arrow[d, no head]  & & \\[10pt]
f_{23} \arrow[d, no head] \arrow[rrd, no head] && f_{13} \arrow[rrd, no head] \arrow[lld, no head]   
     && f_{12} \arrow[d, no head] \arrow[lld, no head] \\[10pt]
f_1 \arrow[rrd, no head] && f_2 \arrow[d, no head] && f_3\arrow[lld, no head] \\[10pt]
&& f
\end{tikzcd}
\end{equation*}
\vspace{-5pt}
\caption{3D consistency of coherent tilings for points and lines in the plane. 
}
\vspace{-10pt}
\label{fig:3D-consistency-Desargues}
\end{figure}
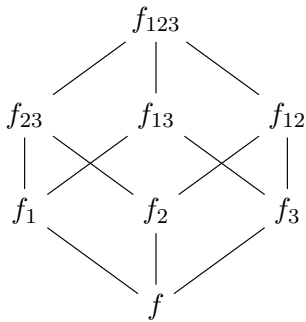

\vspace{-10pt}

\begin{proof}
This statement can be easily seen to be a restatement of Desargues' theorem, cf.\ Figure~\ref{fig:desargues}.
%The line $f_{123}$ needs to pass through the point of intersection of the lines $f_i$ and $(f_{ij} f_{ik}$
\end{proof}

\newpage

\section*{3D consistency for points and lines in 3-space}

\begin{definition}%[Coherent tiles]
\label{def:coherent-tile-3D}
Let $A,B$ (resp., $\ell,m$) be distinct points (resp., distinct lines) in the 3-dimensional real/complex projective space. 
We call the tile with vertices labeled ${A, \ell, B,m}$ (see~\eqref{eq:AlBm}) \emph{coherent} if 
\begin{itemize}[leftmargin=.2in]
\item 
neither $A$ nor $B$ is incident to either $\ell$ or~$m$; 
\item
the lines $\ell$ and $m$ intersect at some point~$P$; 
\item
the line $(AB)$ passes through~$P$. 
%$(AB)\cap\ell\cap m\neq \varnothing$. 
\end{itemize}
\end{definition}

\begin{remark}
%\label{rem:}
The original coherence property introduced in Definition~\ref{def:coherent-tile} is a codimension~1 condition:
it imposes a single equation on the labels of a tile. 
By contrast, the coherence property in Definition~\ref{def:coherent-tile-3D} is a codimension~3 condition.
It can be shown that this condition is equivalent to requiring that for any point $O$ in 3-space,
the tile whose vertices are labeled by $A$, $B$, and the spans of $\{O,\ell\}$ and $\{O,m\}$ is coherent
in the original sense. % of Definition~\ref{def:coherent-tile-3D}. 
It follows that if the vertices of a tiling of a closed oriented surface are labeled by points and lines in 3-space
so that all tiles but one are coherent (as in Definition~\ref{def:coherent-tile-3D}), then the remaining tile is coherent as well. 
\end{remark}

\begin{proposition}
\label{pr:3D-consistency-pts+lines-3D}
In the real/complex projective 3-space, let $f, f_{12}, f_{13}, f_{23}$ be four points in general position.
Let $f_1,f_2,f_3$ be three lines. 
Assume that %the lines $f_1,f_2,f_3$ lie in one plane, 
the three tiles \eqref{eq:f-fi-fij} are coherent (cf.\ Definition~\ref{def:coherent-tile-3D})
and the seven inputs $f, f_i, f_{ij}$ are generic subject to the above conditions. 
(In particular, the three lines $f_1,f_2,f_3$ lie in one plane~$h$.) \linebreak[3]
Then~there exists a unique line $f_{123}$ such that the three tiles \eqref{eq:fi-fij-fijk} are coherent.  \linebreak[3]
%(so in particular $f_{123}$ intersects each of the lines $f_1,f_2,f_3$). 
Specifically, $f_{123}=h\cap (f_{12}f_{13}f_{23})$, the ``Desargues line''
for the perspective triangles \linebreak[3]
$f_{12}f_{13}f_{23}$ and $f_1f_2f_3$. 
\end{proposition}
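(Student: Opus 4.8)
The plan is to recognize Proposition~\ref{pr:3D-consistency-pts+lines-3D} as the three-dimensional avatar of the 3D-consistency statement for the plane (Proposition~\ref{pr:3D-consistency-Desargues}), which was itself a restatement of Desargues' theorem. First I would unpack what coherence (in the sense of Definition~\ref{def:coherent-tile-3D}) of the three input tiles~\eqref{eq:f-fi-fij} tells us. Each tile of the form $f_{ij}, f_i, f, f_j$ being coherent means that the lines $f_i$ and $f_j$ meet at a point through which the line joining the points $f_{ij}$ and $f$ passes. Applying this to all three pairs $\{1,2\},\{1,3\},\{2,3\}$ forces the three lines $f_1,f_2,f_3$ to be pairwise intersecting, hence (being in general position otherwise) concurrent or coplanar; a short genericity argument shows they are coplanar, lying in a common plane~$h$. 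This establishes the parenthetical claim and sets up the triangle $f_1f_2f_3$ living in~$h$.

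Next I would identify the geometric content of the desired output tiles~\eqref{eq:fi-fij-fijk}. The line $f_{123}$ must be such that, for each pair, the two lines $f_{ij}, f_{ik}$ meet at a point lying on the line through the points $f_{123}\cap f_i$; more precisely, reading coherence of the tile $f_{123}, f_{ij}, f_i, f_{ik}$, the two lines $f_{ij}$ and $f_{ik}$ intersect at a point, and the line joining $f_{123}$ to $f_i$ (these are a line and a line in the 3D coherence convention) threads that point. The clean way to see the answer is to observe that the triangles $f_{12}f_{13}f_{23}$ and $f_1f_2f_3$ are in perspective: the vertex $f_{ij}$ and the two lines $f_i,f_j$ together with the apex $f$ record exactly that the line $(f\,f_{ij})$ meets $f_i\cap f_j$, i.e.\ corresponding sides of the two triangles meet in collinear points lying on the pencil through~$f$. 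By the Desargues theorem (Theorem~\ref{th:desargues}), perspectivity from the point configuration forces a perspectivity from a line: the three intersection points of corresponding sides are collinear, and that line of collinearity is $f_{123}=h\cap(f_{12}f_{13}f_{23})$, the Desargues axis.

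The cleanest route, and the one I would actually write, is to reduce everything to the master theorem rather than re-derive Desargues by hand. Using the remark preceding the proposition, coherence in the codimension-3 sense of Definition~\ref{def:coherent-tile-3D} is equivalent, after coning from an auxiliary point $O$, to ordinary planar coherence. Concretely I would project from a generic point $O$, or intersect with the plane $h$, converting the six 3D tiles into the six planar tiles of the cube in Figure~\ref{fig:desargues}/Figure~\ref{fig:3D-consistency-Desargues}; the three input tiles~\eqref{eq:f-fi-fij} become three coherent planar tiles, and the master theorem (Theorem~\ref{th:master}) applied to the cube guarantees a unique labeling of the sixth vertex making the other three tiles coherent. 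Unwinding the projection identifies this label as the line $f_{123}=h\cap(f_{12}f_{13}f_{23})$, and uniqueness follows from the genericity of the inputs (the intersection $h\cap(f_{12}f_{13}f_{23})$ is a well-defined single line since the plane $(f_{12}f_{13}f_{23})$ is distinct from~$h$).

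The main obstacle I anticipate is bookkeeping the passage between the codimension-3 coherence of Definition~\ref{def:coherent-tile-3D} and ordinary coherence, and checking that the ``Desargues line'' really is the intersection $h\cap(f_{12}f_{13}f_{23})$ rather than some other line of the pencil. I would pin this down by verifying directly that for this choice of $f_{123}$, each output tile $f_{123},f_{ij},f_i,f_{ik}$ satisfies all three bullets of Definition~\ref{def:coherent-tile-3D}: that $f_{ij}$ and $f_{ik}$ genuinely intersect (they do, as two sides of the triangle $f_{12}f_{13}f_{23}$), and that the line $f_{123}$ meets $f_i$ at that intersection point, which is exactly the statement that the Desargues axis passes through the point where side $f_i$ of the triangle $f_1f_2f_3$ hits the opposite configuration. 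Establishing that these points lie on $h\cap(f_{12}f_{13}f_{23})$ is the crux, and it is precisely the conclusion of Desargues, so the argument closes.
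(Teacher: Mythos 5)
You correctly identify the answer $f_{123}=h\cap(f_{12}f_{13}f_{23})$ and the underlying perspective-triangles picture, and your fallback plan (verify the three bullets of Definition~\ref{def:coherent-tile-3D} directly for this line) is in substance the paper's proof. However, the route you say you would actually write --- coning from a point $O$ and invoking the master theorem on the resulting cube --- has a genuine gap. Coning turns the lines $f_i$ into \emph{planes} through $O$, so what you obtain is a cube labeled by points and planes in $3$-space, and Theorem~\ref{th:master} for such a cube only says ``five coherent tiles force the sixth''; it does not produce, let alone uniquely, a label for the missing vertex. Worse, the paper points out (in the discussion before Theorem~\ref{th:4d-3D-planes}, via the bundle theorem, Theorem~\ref{th:bundle}) that 3D consistency \emph{fails} for points and planes in $3$-space: the completing label is a whole pencil of planes. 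So the coned picture cannot by itself deliver existence or uniqueness of the line $f_{123}$; the point is that coherence in the sense of Definition~\ref{def:coherent-tile-3D} is a codimension-$3$ condition, and three such conditions on the $4$-parameter family of lines are generically unsatisfiable --- existence genuinely requires the geometric construction, not a cross-ratio identity. (``Intersecting with $h$'' fares no better, since $f$ and the $f_{ij}$ need not lie in~$h$.)

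Two further points. Uniqueness is not established by observing that $h\cap(f_{12}f_{13}f_{23})$ is a well-defined line; you must show any admissible $f_{123}$ equals it. The paper does this by noting that $f_{123}$ must meet $f_1,f_2,f_3$ (hence lie in $h$, as the three lines are not concurrent) and must meet the three lines $(f_{ij}f_{ik})$ (hence lie in $(f_{12}f_{13}f_{23})$). For existence, the crux you defer to ``precisely the conclusion of Desargues'' is not the planar Theorem~\ref{th:desargues}: the two triangles live in different planes of $3$-space, so what is needed is the (easier) non-coplanar Desargues statement. Its proof is the one-line coplanarity argument the paper gives: since $f$, $f_{ij}$, $f_i\cap f_j$ are collinear for each pair, the four points $f_{12}$, $f_1\cap f_2$, $f_{13}$, $f_1\cap f_3$ lie in the plane spanned by the two lines through $f$, so $f_1$ and $(f_{12}f_{13})$ meet, necessarily at a point of $h\cap(f_{12}f_{13}f_{23})$. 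Also note that in the output tiles the $f_{ij}$ are \emph{points} (vertices of the triangle), so the relevant object is the side $(f_{ij}f_{ik})$, not an intersection of $f_{ij}$ with $f_{ik}$. With these repairs your argument coincides with the paper's.
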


\begin{proof}
%The coherence of the tiles \eqref{eq:f-fi-fij} implies that the lines $f_1,f_2,f_3$ intersect pairwise.
%They cannot intersect in a single point, since in that case, all lines $(f\,f_{ij})$ would pass through that point, 
%so the four points $f, f_{12}, f_{13}, f_{23}$ would be collinear. 
%We conclude that the lines $f_1,f_2,f_3$ lie in one plane~$h$. 
%
For the tiles \eqref{eq:fi-fij-fijk} to be coherent, the line $f_{123}$ must intersect each of the lines
$f_1,f_2,f_3$, hence lie in the plane~$h$. 
These conditions also imply~that $f_{123}$ intersects each of the lines $(f_{12}f_{13}), (f_{12}f_{23}), (f_{13}f_{23})$,
hence lies in the plane~$(f_{12}f_{13}f_{23})$. 
Thus $f_{123}=h\cap (f_{12}f_{13}f_{23})$.

Conversely, let $f_{123}=h\cap (f_{12}f_{13}f_{23})$.
Set $f_{12}'=f_1\cap f_2$, $f_{13}'=f_1\cap f_3$, ${f_{23}'=f_2\cap f_3}$. 
The coherence of the tiles \eqref{eq:f-fi-fij} means that each triple $\{f, f_{ij}, f'_{ij}\}$ is collinear.
Thus the points $f_{12}, f_{12}', f_{13}, f_{13}'$ are coplanar, 
so the lines $f_1=(f_{12}'f_{13}')$ and $(f_{12}f_{13})$ intersect (necessarily at a point lying on~$h\cap (f_{12}f_{13}f_{23})=f_{123}$). 
We conclude that the first tile in \eqref{eq:fi-fij-fijk} is coherent, and so are the other two. 
\end{proof}

%\begin{remark}
%In Proposition~\ref{pr:3D-consistency-pts+lines-3D}, $f_{123}$ is the ``Desargues line''
%for the perspective triangles $f_{12}f_{13}f_{23}$ and $f_1f_2f_3$ in 3-space.
%\end{remark}

Interchanging points and lines in Proposition~\ref{pr:3D-consistency-pts+lines-3D}
requires a coplanarity assumption: 

\begin{proposition}
\label{pr:3D-consistency-pts+lines-3D-l}
In the real/complex projective 3-space, let $f, f_{12}, f_{13}, f_{23}$ be four lines and let $f_1,f_2,f_3$ be three points. 
Assume that the lines $f_{12},f_{13},f_{23}$ lie in one plane, 
the three tiles \eqref{eq:f-fi-fij} are coherent (cf.\ Definition~\ref{def:coherent-tile-3D}), 
and the seven inputs $f, f_i, f_{ij}$ are generic subject to the above conditions. \pagebreak[3]
Then there exists a unique point $f_{123}$ such that the three tiles \eqref{eq:fi-fij-fijk} are coherent. 
\end{proposition}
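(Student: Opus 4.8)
The plan is to show that the whole configuration degenerates to a planar one on which a single application of Desargues' theorem settles the claim. First I would pin down the position of the line~$f$. Since each of the three tiles in~\eqref{eq:f-fi-fij} is coherent (Definition~\ref{def:coherent-tile-3D}), the line~$f$ meets each of the three coplanar lines $f_{12},f_{13},f_{23}$; write $P_{12}=f\cap f_{12}$, $P_{13}=f\cap f_{13}$, $P_{23}=f\cap f_{23}$, and let $h$ be the plane containing $f_{12},f_{13},f_{23}$. As $P_{12},P_{13},P_{23}$ lie both on $f$ and in $h$, either $f\subset h$, or $f$ meets $h$ in a single point common to all three lines $f_{ij}$; the latter forces $f_{12},f_{13},f_{23}$ to be concurrent, which is excluded generically. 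Hence $f\subset h$, and the three points $P_{12},P_{13},P_{23}$ are distinct and collinear on~$f$.

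Next I would rewrite the coherence of the tiles \eqref{eq:f-fi-fij} as incidence data for two triangles. Put $Q_1=f_{12}\cap f_{13}$, $Q_2=f_{12}\cap f_{23}$, $Q_3=f_{13}\cap f_{23}$, so that $Q_1Q_2Q_3$ is the triangle in~$h$ with sides $f_{12}=(Q_1Q_2)$, $f_{13}=(Q_1Q_3)$, $f_{23}=(Q_2Q_3)$. The coherence of \eqref{eq:f-fi-fij} says exactly that $P_{12}\in(f_1f_2)$, $P_{13}\in(f_1f_3)$, $P_{23}\in(f_2f_3)$; equivalently, the corresponding sides of the triangles $Q_1Q_2Q_3$ and $f_1f_2f_3$ meet at $P_{12},P_{13},P_{23}$. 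Since these three meeting points lie on~$f$, the two triangles are perspective from the axis~$f$. (The plane $(f_1f_2f_3)$ also contains $P_{12},P_{13},P_{23}$, hence contains~$f$, so generically it meets $h$ exactly along~$f$.)

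The key step is then to invoke Desargues' theorem in its three-dimensional form (cf.\ Theorem~\ref{th:desargues}): two triangles perspective from a line are perspective from a point. Applying this to $Q_1Q_2Q_3$ and $f_1f_2f_3$ shows that the three lines $(f_1Q_1)$, $(f_2Q_2)$, $(f_3Q_3)$ are concurrent, and I would define $f_{123}$ to be their common point. By construction $f_{123}$ lies on $(f_1Q_1)$, so the triple $f_{123},f_1,Q_1$ is collinear, which is precisely the coherence of the first tile in~\eqref{eq:fi-fij-fijk} (its two lateral lines $f_{12},f_{13}$ meet at $Q_1$, and $(f_{123}f_1)$ passes through~$Q_1$); the remaining two tiles are handled identically. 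Uniqueness is immediate, since any admissible $f_{123}$ must lie on all three lines $(f_iQ_i)$, whose common point is generically unique.

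I expect the only real content to be the concurrency of $(f_1Q_1),(f_2Q_2),(f_3Q_3)$, that is, the Desargues step; everything else is bookkeeping that recognizes the configuration as a pair of perspective triangles. The point requiring care is the genericity argument that forces $f\subset h$ and guarantees that the auxiliary points $P_{ij}$, $Q_i$, and the final common point are all well defined and distinct.
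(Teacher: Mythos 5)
Your argument is correct and matches the paper's proof in all essentials: your $Q_i$ are the paper's $f_i'$, and your appeal to the spatial converse of Desargues (axial perspectivity of two triangles spanning distinct planes implies central perspectivity) is exactly the paper's observation that the three pairwise-coplanar lines $(f_if_i')$ must be concurrent once the all-coplanar case is set aside. The only cosmetic differences are that you first establish $f\subset h$ (which neither proof strictly needs, since pairwise coplanarity of the joins already follows from tile coherence) and that you exclude the fully planar case by genericity, whereas the paper treats it as a separate case settled by the planar Desargues theorem.
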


\begin{proof}
Set $f_1'=f_{12}\cap f_{13}$, $f_2'=f_{12}\cap f_{23}$, $f_3'=f_{13}\cap f_{23}$. 
The tiles \eqref{eq:f-fi-fij} are coherent, 
so each pair $(f_if_j), (f_i'f_j')$ is coplanar. 
Hence each pair $(f_if_i'), (f_jf_j')$ is coplanar. 
If all three lines $(f_if_i')$ are coplanar, then everything lies in a plane and the claim follows~by Desargues' theorem.
Otherwise, the three lines $(f_if_i')$ are concurrent and the point of their intersection is the unique point $f_{123}$ satisfying  
the coherence conditions~\eqref{eq:fi-fij-fijk}.
\end{proof}

\newpage

\section*{4D consistency for points and lines in 3-space}

\begin{theorem}
\label{th:4d-3D}
In the real/complex projective 3-space, 
let $f, f_{12}, f_{13}, f_{14}, f_{23}, f_{24}, f_{34}$ be seven points and let $f_1, f_2, f_3, f_4$ be four lines. 
Assume that the six tiles 
\begin{equation}
\label{eq:f-fi-fij-4}
\begin{tikzpicture}[baseline= (a).base]
\node[scale=1] (a) at (0,0){
\begin{tikzcd}[arrows={-stealth}, sep=6, cramped]
& f_{ij} \edge{ld} \edge{rd} \\[-2pt]
f_i  \edge{rd}  && f_j \edge{ld} \\[-2pt]
& f 
\end{tikzcd}
};
\end{tikzpicture}
\end{equation}
%(here $1\le i<j\le 4$, cf.~\eqref{eq:f-fi-fij})
are coherent and the eleven inputs $f, f_i, f_{ij}$ are generic subject to this condition. 
For each triple $1\le i<j<k\le 4$, let $f_{ijk}$ be the unique line that makes all tiles in 
\begin{equation}
\label{eq:fi-fij-fijk-4}
\begin{tikzcd}[arrows={-stealth}, cramped, sep=7]
& &  f_{ijk} \arrow[lld, no head] \arrow[rrd, no head] \arrow[d, no head]  & & \\[5pt]
f_{jk} \arrow[d, no head] \arrow[rrd, no head] && f_{ik} \arrow[rrd, no head] \arrow[lld, no head]   
     && f_{ij} \arrow[d, no head] \arrow[lld, no head] \\[5pt]
f_i \arrow[rrd, no head] && f_j \arrow[d, no head] && f_k\arrow[lld, no head] \\[5pt]
&& f
\end{tikzcd}
\end{equation}
coherent, cf.\ Proposition~\ref{pr:3D-consistency-pts+lines-3D}. 
Then there exists a unique point~$f_{1234}$ that makes all tiles in Figure~\ref{fig:4D-consistency-tikz} coherent. 
\end{theorem}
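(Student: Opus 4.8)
The statement is a 4D consistency claim: it asserts that after the six input tiles \eqref{eq:f-fi-fij-4} are coherent, the four lines $f_{123}, f_{124}, f_{134}, f_{234}$ constructed via Proposition~\ref{pr:3D-consistency-pts+lines-3D} can be capped off by a single point $f_{1234}$ making all four of the remaining hypercube facets coherent. The natural strategy is to mirror the 3D case: first establish what $f_{1234}$ \emph{must} be by intersecting the coherence constraints coming from the four cubical facets of the 4-cube, then show this candidate point actually satisfies all of them. The hard part is checking consistency, i.e.\ that the four independently-determined constraints are mutually compatible; this is exactly where a genuine incidence theorem (the octahedron theorem, Theorem~\ref{th:octahedron}) should enter.

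\textbf{Identifying the candidate.}
First I would record the explicit formula for each $f_{ijk}$ from Proposition~\ref{pr:3D-consistency-pts+lines-3D}: writing $h_{ij\ldots}$ for the relevant spanning planes, we have $f_{ijk} = (f_if_jf_k)\cap(f_{ij}f_{ik}f_{jk})$, the Desargues line of the perspective triangles $f_{ij}f_{ik}f_{jk}$ and $f_if_jf_k$. (Here I use that the coherence of \eqref{eq:f-fi-fij-4} forces $f_i,f_j,f_k$ to be coplanar for each triple, which in turn forces all four lines $f_1,\dots,f_4$ into a single plane once genericity is imposed.) For the top facet \eqref{eq:fi-fij-fijk-4} with apex $f_{ijk}$ to admit a coherent completion to $f_{1234}$, the point $f_{1234}$ must be collinear with the appropriate pairs among $f_{123},f_{124},f_{134},f_{234}$; tracing the three tiles meeting at $f_{1234}$ in Figure~\ref{fig:4D-consistency-tikz}, each such facet forces $f_{1234}$ to lie on the intersection of two of the lines $f_{ijk}$. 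Thus the candidate is $f_{1234} = f_{123}\cap f_{124}$ (say), and I must show this point also lies on $f_{134}$ and $f_{234}$.

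\textbf{Reducing to the octahedron theorem.}
The key step is proving these four Desargues lines $f_{123}, f_{124}, f_{134}, f_{234}$ are \emph{concurrent}. I expect this to be precisely the content of the octahedron theorem. The four lines $f_{ijk}$ are each the intersection of two planes, and the six planes involved are $(f_{ij}f_{ik}f_{jk})$ for the four triples together with the common plane $h=(f_1f_2f_3f_4)$; organizing the six points $f_{12},f_{13},f_{14},f_{23},f_{24},f_{34}$ as the vertices of an octahedron, the four planes $(f_{ij}f_{ik}f_{jk})$ are exactly one of the two four-coloring classes of octahedron faces. The coherence of the input tiles supplies the hypothesis that the \emph{other} colour class of four faces is concurrent (meeting at the point $f$, or at a point derived from it). Theorem~\ref{th:octahedron} then yields concurrence of the first colour class, from which concurrence of the four lines $f_{ijk}$ follows after intersecting with $h$. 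The anticipated main obstacle is bookkeeping: matching the combinatorial labelling of the octahedron's faces in Theorem~\ref{th:octahedron} to the four triples $\{i,j,k\}$ here, and verifying that the concurrence of the input planes is genuinely equivalent to the coherence of the six tiles \eqref{eq:f-fi-fij-4}. Once the four lines are shown concurrent, their common point is the desired $f_{1234}$, and coherence of the four remaining facets is immediate from the construction; uniqueness follows since any valid $f_{1234}$ must lie on two of these lines, whose intersection is a single point under the genericity assumption.
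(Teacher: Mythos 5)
Your reduction to the octahedron theorem is the right engine and matches the paper's: the coherence of the six bottom tiles forces each quadruple $\{f, f_{ij}, f_{ik}, f_{i\ell}\}$ into the plane spanned by $f$ and $f_i$, so the four planes $(f_{ij}f_{ik}f_{i\ell})$ meet at $f$, and Theorem~\ref{th:octahedron} then makes the four planes $(f_{ij}f_{ik}f_{jk})$ concurrent. But the use you make of this is wrong, because you have misread the coherence condition for the tiles adjacent to $f_{1234}$. In the tile with vertices $f_{ij}$, $f_{ijk}$, $f_{1234}$, $f_{ij\ell}$, the labels $f_{ij}$ and $f_{1234}$ are points and $f_{ijk}$, $f_{ij\ell}$ are lines; coherence (Definition~\ref{def:coherent-tile-3D}) requires that the line $(f_{ij}\,f_{1234})$ pass through the intersection point $f_{ijk}\cap f_{ij\ell}$, and it explicitly requires $f_{1234}$ \emph{not} to lie on $f_{ijk}$ or $f_{ij\ell}$. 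So each facet constrains $f_{1234}$ to a \emph{line} joining $f_{ij}$ to $f_{ijk}\cap f_{ij\ell}$, not to the intersection point itself. Your candidate $f_{1234}=f_{123}\cap f_{124}$ violates the non-incidence requirement outright, and your key intermediate claim --- that the four lines $f_{123},f_{124},f_{134},f_{234}$ are concurrent --- is false in general: these are four lines in the common plane $h$ of $f_1,\dots,f_4$, and their six pairwise intersections are the six distinct points $h\cap(f_{ij}\,f_{1234})$, so generically no three of them meet. (They would all pass through $f_{1234}$ only if $f_{1234}$ happened to lie in $h$, which is not implied by the hypotheses.)

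The correct conclusion to draw from the octahedron theorem is that the common point of the four planes $(f_{ij}f_{ik}f_{jk})$ --- a point of $3$-space generally off the plane $h$ --- is itself the desired $f_{1234}$. Coherence of each top tile then follows from the computation
$f_{ijk}\cap f_{ij\ell}=h\cap(f_{ij}f_{ik}f_{jk})\cap(f_{ij}f_{i\ell}f_{j\ell})=h\cap(f_{ij}\,f_{1234})$,
since both planes contain the two points $f_{ij}$ and $f_{1234}$ and hence intersect in the line through them; this shows $(f_{ij}\,f_{1234})$ meets $f_{ijk}\cap f_{ij\ell}$ as required. Uniqueness then comes from the fact that any valid $f_{1234}$ must lie on each of the lines $(f_{ij},\,f_{ijk}\cap f_{ij\ell})$, equivalently on each plane $(f_{ij}f_{ik}f_{jk})$, and these meet in a single point.
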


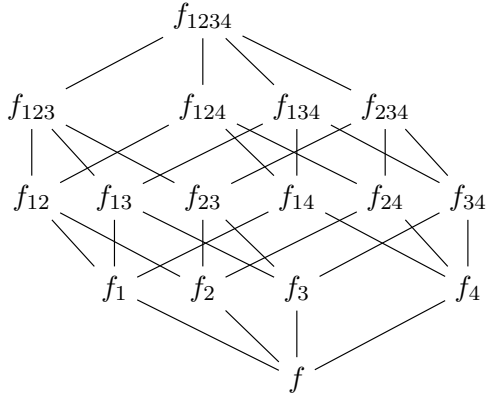
\begin{figure}[ht]
\vspace{-8pt}
\begin{equation*}
\begin{tikzcd}[arrows={-stealth}, cramped, sep=8]
&& f_{1234} \edge{lld} \edge{d} \edge{rd} \edge{rrd} 
\\[10pt]
f_{123} \edge{d} \edge{rd} \edge{rrd} & & f_{124} \edge{lld} \edge{rd} \edge{rrd} & f_{134} \edge{lld} \edge{d} \edge{rrd} & f_{234} \edge{lld} \edge{d} \edge{rd}  \\[10pt]
f_{12} \edge{rd} \edge{rrd} & f_{13} \edge{d} \edge{rrd} & f_{23} \edge{d} \edge{rd} & f_{14} \edge{lld} \edge{rrd} & f_{24} \edge{lld} \edge{rd} & f_{34} \edge{lld} \edge{d} \\[10pt]
& f_1 \edge{rrd} & f_2 \edge{rd} & f_3 \edge{d} && f_4 \edge{lld} \\[10pt]
&&& f
\end{tikzcd}
\end{equation*}
\vspace{-10pt}
\caption{4D consistency of coherent tilings. 
}
\vspace{-10pt}
\label{fig:4D-consistency-tikz}
\end{figure}

\vspace{-5pt}

\begin{proof}
We will use the notational conventions $f_{ij}=f_{ji}$ and $f_{ijk}=f_{jik}=f_{ikj}=\cdots$. 
The lines $(f f_{ij})$ and $f_i$ intersect (cf.~\eqref{eq:f-fi-fij-4}), so the point~$f_{ij}$ lies in~the plane~$(f f_i)$. 
Hence each~quadruple $\{f, f_{ij}, f_{ik}, f_{i\ell}\}$ is coplanar (all points lie in~$(f f_i)$),
so the four planes $(f_{ij}, f_{ik}, f_{i\ell})$ have a common point~$f$. 
By the octahedron theorem (or by the cube theorem), 
the four planes $(f_{ij}, f_{ik}, f_{jk})$ have a common point~$f_{1234}$. 

%The coherence of the tiles~\eqref{eq:f-fi-fij-4} implies that the four lines~$f_i$ intersect pairwise, hence lie in a common plane~$h$.
The coherence of~\eqref{eq:f-fi-fij-4}--\eqref{eq:fi-fij-fijk-4} and
Proposition~\ref{pr:3D-consistency-pts+lines-3D} imply that 
the four lines~$f_i$ lie in a common plane~$h$ and moreover 
$f_{ijk}=h\cap (f_{ij}f_{ik}f_{jk})$. 
Thus the four lines $f_{ijk}$ lie in~$h$ 
and we have $f_{ijk}\cap f_{ij\ell}=h\cap (f_{ij}f_{ik}f_{jk})\cap (f_{ij}f_{i\ell}f_{j\ell})=h\cap (f_{ij} f_{1234})$,
so the tile with labels $f_{ij}, f_{ijk}, f_{ij\ell}, f_{1234}$ is coherent. 

Uniqueness follows by arguing that $f_{1234}$ must lie on the planes~$(f_{ij}, f_{ik}, f_{jk})$. 
\end{proof}

\newpage

\section*{4D consistency for points and lines in the plane}

\begin{theorem}
\label{th:4d}
Theorem~\ref{th:4d-3D} holds verbatim for points and lines in the plane. 

More explicitly, let $f$ be a point on the real/complex plane and let $f_1, f_2, f_3, f_4$ be four generic lines. 
For each of the six instances of $1\le i<j\le 4$, pick a point $f_{ij}$
on the line passing through $f$ and $f_i\cap f_j$. 
For each triple $1\le i<j<k\le 4$, the triangles  $f_i f_j f_k$ and $f_{ij}, f_{ik}, f_{jk}$ are in perspective;
let~$f_{ijk}$ be their Desargues line. 
Then the six lines obtained by connecting $f_{ij}$ with $f_{ijk}\cap f_{ij\ell}$ intersect in a point. 
See Figure~\ref{fig:4D2D}. 
\end{theorem}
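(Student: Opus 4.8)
The statement is the planar ($\dim\PP=2$) analogue of Theorem~\ref{th:4d-3D}, and my strategy is to mirror that proof, replacing the octahedron/cube theorem input with its planar substitute and replacing the 3-dimensional incidence analysis by the plane version of 3D-consistency already established in Proposition~\ref{pr:3D-consistency-Desargues}. First I would fix the convention $f_{ij}=f_{ji}$, $f_{ijk}$ symmetric in its three indices, and record the given data: a point $f$, four generic lines $f_1,f_2,f_3,f_4$, and six points $f_{ij}$ each lying on the line through $f$ and $f_i\cap f_j$ (so each tile~\eqref{eq:f-fi-fij-4} is coherent in the planar sense of Remark~\ref{rem:coherence-plane}). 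For each triple $i<j<k$, Proposition~\ref{pr:3D-consistency-Desargues} (which is just Desargues' theorem) produces the unique line $f_{ijk}$ making all three tiles in~\eqref{eq:fi-fij-fijk-4} coherent; concretely $f_{ijk}$ is the Desargues axis of the perspective triangles $f_if_jf_k$ and $f_{ij}f_{ik}f_{jk}$.

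The core of the argument is then to produce the point $f_{1234}$ and verify coherence of the six tiles $f_{ij}\,f_{ijk}\,f_{ij\ell}\,f_{1234}$ appearing in Figure~\ref{fig:4D2D}. I would define $f_{1234}$ as the common intersection point of the four lines $f_{ijk}$. The key claim is that these four lines are concurrent; this is exactly where an incidence theorem must be invoked. Since each $f_{ijk}$ is constructed as a Desargues line, the concurrence of the four lines $f_{123},f_{124},f_{134},f_{234}$ should follow from a suitable instance of the master theorem applied to a tiling encoding the whole configuration on the cube-like diagram of Figure~\ref{fig:4D-consistency-tikz} — equivalently, from the planar projection of the octahedron/cube reasoning used in the 3D proof. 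Once concurrence is established and $f_{1234}$ is set equal to that common point, each remaining tile $f_{ij}\,f_{ijk}\,f_{ij\ell}\,f_{1234}$ is coherent because $f_{ijk}\cap f_{ij\ell}$ lies on the line through $f_{ij}$ and $f_{1234}$: both $f_{ijk}$ and $f_{ij\ell}$ pass through $f_{ij}$'s partner point by construction, and they also pass through $f_{1234}$, so their intersection is forced. Uniqueness of $f_{1234}$ follows from genericity, since any point making even two of the six tiles coherent must lie on two distinct lines of the form $f_{ijk}$.

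The cleanest and most uniform way to organize this is not to argue concurrence by hand but to exhibit a single bicolored quadrilateral tiling whose tiles are precisely the coherence conditions appearing in~\eqref{eq:f-fi-fij-4}, \eqref{eq:fi-fij-fijk-4}, and Figure~\ref{fig:4D2D}, and then invoke Theorem~\ref{th:master} directly. This tiling is the planar reading of the same combinatorial object underlying Theorem~\ref{th:4d-3D}; since the master theorem is dimension-independent, the identical tiling that yields the 3D statement yields the planar one verbatim, which is exactly the assertion ``Theorem~\ref{th:4d-3D} holds verbatim.'' Thus I would phrase the proof as: the eleven input coherences plus the twelve derived coherences from the four Desargues instances account for all tiles of this fixed tiling except the six in question, and applying Theorem~\ref{th:master} (together with the uniqueness argument from genericity) forces those six to be coherent for a single common vertex label $f_{1234}$.

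\textbf{Main obstacle.} The delicate point is establishing concurrence of the four Desargues lines $f_{ijk}$ in the plane without accidentally reusing the very statement being proved. In the 3D proof this concurrence is precisely where the octahedron theorem (Theorem~\ref{th:octahedron}) enters, and that theorem is genuinely three-dimensional. So the hard part is checking that the planar specialization of the supporting tiling is \emph{not} degenerate in the sense of Lemma~\ref{lem:four-tiles-triv} — i.e.\ that the planar incidence theorem coming from this tiling does not collapse to a tautology about six concurrent lines or six collinear points — and confirming that the master theorem's genericity hypotheses (no point incident to an adjacent line, all labels distinct) can be simultaneously satisfied by a generic choice of $f$ and $f_1,\dots,f_4$. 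I expect this to require a short verification that the constructed points $f_{ij}$, $f_{ijk}$ and the putative $f_{1234}$ are pairwise distinct and avoid the forbidden incidences for generic input, after which the master theorem applies cleanly.
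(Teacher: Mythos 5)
There is a genuine gap, and it sits exactly where you flagged your ``main obstacle'': the concurrence of the four Desargues lines $f_{123}, f_{124}, f_{134}, f_{234}$ is the entire content of the theorem, and your proposal never closes it. The route you suggest---exhibit ``the identical tiling that yields the 3D statement'' and apply Theorem~\ref{th:master}---does not exist: Theorem~\ref{th:4d-3D} is not itself a single instance of the master theorem. The two-dimensional faces of the 4-cube in Figure~\ref{fig:4D-consistency-tikz} do not form a quadrilateral tiling of a closed surface (each edge of the 4-cube lies on three squares, not two), and the proof of Theorem~\ref{th:4d-3D} instead invokes the octahedron/cube theorem, a genuinely three-dimensional input. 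Worse, the tiling that supports the cube theorem (Figure~\ref{fig:cube-thm-tiling}) is precisely the one shown in Example~\ref{eg:cube-in-dim-2} to collapse to a triviality in the plane via Lemma~\ref{lem:four-tiles-triv}. So the degeneracy you propose to ``verify does not occur'' does occur, and the master theorem cannot be applied along these lines; this is not a short genericity check but a dead end for that strategy.

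The paper's proof takes a different and essentially analytic route: lift the point $f$ (and the points $f_{ij}$, keeping each line $(ff_{ij})$ through $f_i\cap f_j$) out of the plane $h$ containing $f_1,\dots,f_4$, run the construction of Theorem~\ref{th:4d-3D} in 3-space where the octahedron theorem is available, and then continuously move the lifted points back into $h$. Concurrence of lines is a closed condition, so it survives the limit, and Proposition~\ref{pr:3D-consistency-Desargues} identifies the limiting $f_{ijk}$ with the planar Desargues lines. If you want to salvage your write-up, you should replace the ``single tiling'' paragraph with this lifting-and-degeneration argument (or supply some other independent proof of the four-line concurrence in the plane).
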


\begin{figure}[ht]
\begin{center}
%\vspace{-.1in}
\includegraphics[scale=0.54, trim=0cm 0cm 0cm 0.2cm, clip]{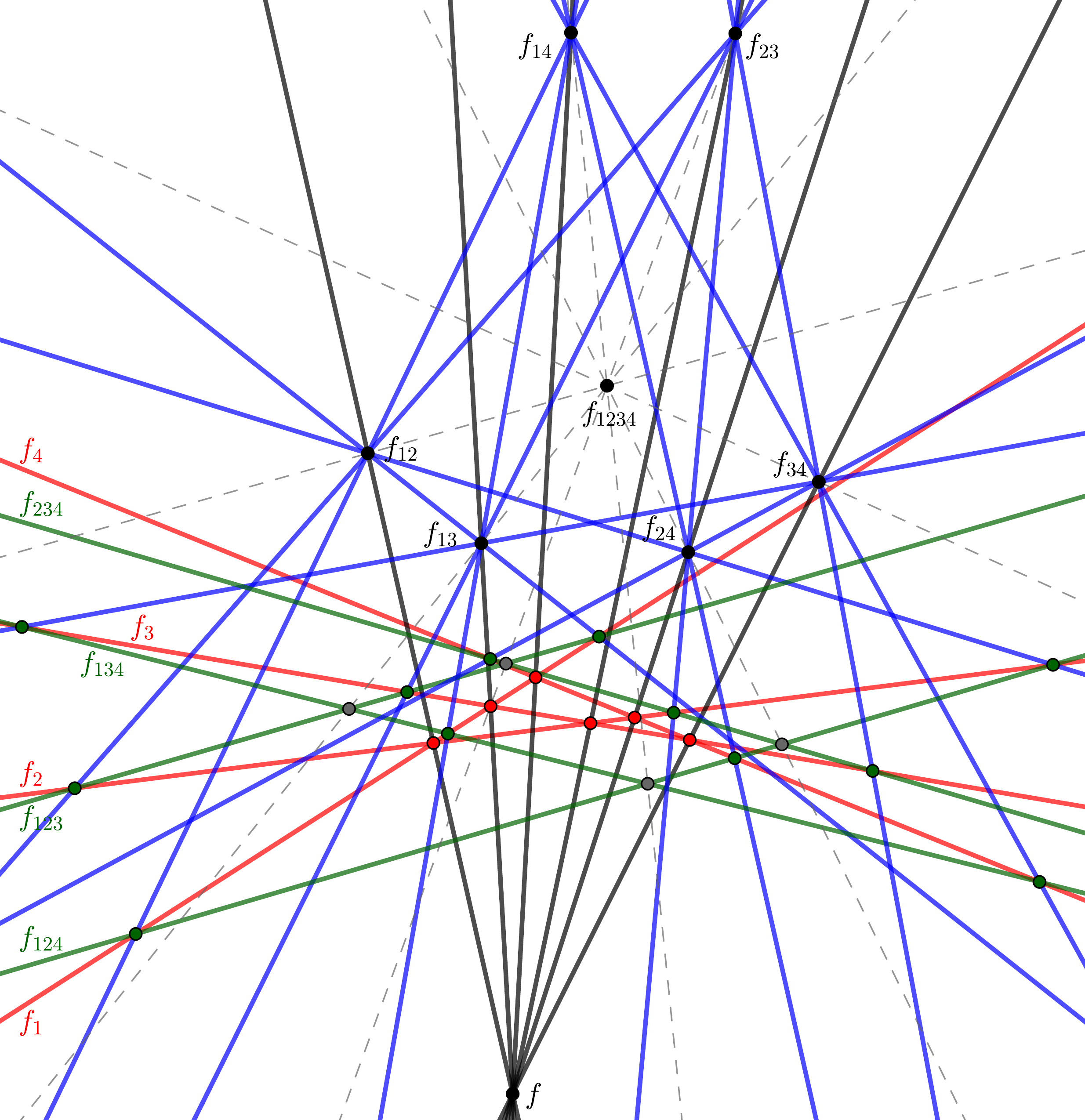}
\end{center}
\vspace{-.05in}
\caption{4D consistency of coherent tilings for points and lines in the plane. 
Two relevant points, namely $f_{123}\cap f_{124}$ and 
$f_{134}\cap f_{234}$, are outside the picture frame. 
}
\label{fig:4D2D}
\end{figure}

\vspace{-.2in}

\begin{proof}
Lift point $f$ out of the plane~$h$ containing the lines $f_1,f_2,f_3,f_4$ 
and reproduce the construction of Theorem~\ref{th:4d-3D}.
Continuously move~$f$ back into~$h$ and apply Proposition~\ref{pr:3D-consistency-Desargues}
and Theorem~\ref{th:4d-3D} to obtain the desired result.
\iffalse
Lift point $f$ out of the plane, and also lift each point $f_{ij}$ out of the plane so that line $ff_{ij}$ still passes through the point of intersection of $f_i$ with $f_j$. We can assume we lifted each of the points orthogonally to the plane by a marginally small distance, which eventually we will let go to $0$ in the limit. 
 
 The rest of the construction proceeds as before, with each line $f_{ijk}$ being just the line of intersection of the original plane $\Omega$ with the plane determined by $f_{ij}, f_{ik}, f_{jk}$. 
 Now we have six points $f_{ij}$ in space such that four planes determined by $f_{ij}, f_{ik}, f_{i\ell}$ (for each choice of $i$) intersect in one point (point $f$). By Octahedron Theorem \ref{th:octahedron} this means that planes determined by $f_{ij}, f_{ik}, f_{jk}$ (for each choice of $\ell$) also intersect in one point. 
 
 Now it remains to do two things. First, observe that the line obtained by connecting $f_{ij}$ with the point of intersection of $f_{ijk}$ with $f_{ij\ell}$ is the intersection of plane determined by $f_{ij}, f_{ik}, f_{jk}$ with the plane determined by $f_{ij}, f_{i\ell}, f_{j\ell}$. Thus, since the four planes intersect in one point, the six lines of their pairwise intersection also intersect in one point. Second, taking the limit that puts the points $f$, $f_{ij}$ back on the original plane $\Omega$, we get the desired planar statement, as the condition of intersecting at one point is preserved by taking limit. 
\fi
\end{proof}

\newpage

\section*{4D consistency for points and planes in 3-space}

The phenomenon of 3D consistency, in a version analogous to Proposition~\ref{pr:3D-consistency-Desargues}
(cf.\ \cite[Definition~4.1]{bobenko-suris-book}), 
does not hold for points and planes in 3-space---not because we cannot complete the 3D cube,
but because there are infinitely many ways to do~it. 
Indeed, if a plane $f_{123}$ satisfies two of the three coherence conditions~\eqref{eq:fi-fij-fijk},
then the third condition is automatic by the bundle theorem (Theorem~\ref{th:bundle}). 
Since each condition requires the plane $f_{123}$ to pass through a certain point, 
the planes satisfying two such conditions form a one-dimensional pencil, so uniqueness of~$f_{123}$ fails. 

On the other hand, 4D consistency holds for points and planes in 3-space, 
even without relying on the coherence of the bottom tiles~\eqref{eq:f-fi-fij-4}: 

\begin{theorem}
\label{th:4d-3D-planes}
In the real/complex projective 3-space, 
let $f_{12}, f_{13}, f_{14}, f_{23}, f_{24}, f_{34}$ be 6~points  
and let $f_1, f_2, f_3, f_4, f_{123}, f_{124}, f_{134}, f_{234}$ be 8~planes such that the 12~tiles 
\begin{equation}
\label{eq:f-fi-fij-4-planes}
\begin{tikzpicture}[baseline= (a).base]
\node[scale=1] (a) at (0,0){
\begin{tikzcd}[arrows={-stealth}, sep=5, cramped]
& f_{ijk} \edge{ld} \edge{rd} \\[2pt]
f_{ij}  \edge{rd}  && f_{ik} \edge{ld} \\[2pt]
& f_i
\end{tikzcd}
};
\end{tikzpicture}
\end{equation}
%(here $1\le i<j\le 4$, cf.~\eqref{eq:f-fi-fij})
are coherent and the 14 inputs $f_i, f_{ij}, f_{ijk}$ are generic subject to these conditions. 
(Here we use the conventions $f_{ij}=f_{ji}$ and $f_{ijk}=f_{jik}=f_{ikj}=\cdots$)
Then there exist unique points $f$ and~$f_{1234}$ that make all tiles in Figure~\ref{fig:4D-consistency-tikz} coherent. 
\end{theorem}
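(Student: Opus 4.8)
The goal is to prove 4D consistency for six points $f_{ij}$ and eight planes $f_i, f_{ijk}$ in projective 3-space: namely, that the twelve tiles \eqref{eq:f-fi-fij-4-planes} being coherent forces the existence of a unique point $f$ making the six bottom tiles \eqref{eq:f-fi-fij-4} coherent and a unique point $f_{1234}$ making the six top tiles coherent. The plan is to extract, from the coherence of the tiles in \eqref{eq:f-fi-fij-4-planes}, a collection of incidences that feed into the octahedron theorem (Theorem~\ref{th:octahedron}), applied twice: once to produce $f$ and once to produce $f_{1234}$.

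First I would unwind the coherence conditions in Definition~\ref{def:coherent-tile-3D}. Each tile in \eqref{eq:f-fi-fij-4-planes} with vertices $f_{ijk}, f_{ij}, f_{ik}, f_i$ asserts (for points-and-planes in 3-space, the dual of Definition~\ref{def:coherent-tile-3D}) that the line joining the two points $f_{ij}, f_{ik}$ and the line of intersection of the two planes $f_{ijk}\cap f_i$ meet. Concretely this means $f_{ij}$ and $f_{ik}$ both lie on a common plane through the line $f_{ijk}\cap f_i$; equivalently, fixing $i$, the three planes $f_i, f_{ijk}$ pass through appropriately aligned points. The cleanest route is to observe that for each fixed index $i$, the three tiles with apex $f_i$ say that the planes $f_{123}, f_{124}, f_{134}, f_{234}$ containing the index $i$, together with $f_i$, arrange the three points $f_{ij}, f_{ik}, f_{i\ell}$ into a configuration where $f_i$ passes through all three of the points $f_{ijk}\cap(\text{line } f_{ij}f_{ik})$. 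I would then argue that the four planes $f_{ijk}$ (for fixed $i$, varying the other two indices) together with $f_i$ meet in a single point, which I would call~$f$.

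The main structural step is the double application of the octahedron theorem. The four planes $(f_{12}f_{13}f_{14})$, $(f_{12}f_{23}f_{24})$, $(f_{13}f_{23}f_{34})$, $(f_{14}f_{24}f_{34})$ arise naturally as the planes $f_1, f_2, f_3, f_4$ respectively (each $f_i$ contains the three points $f_{ij}$ with $j\neq i$, which follows once the tiles are coherent). The coherence of \eqref{eq:f-fi-fij-4-planes} arranges that these four planes have a common point — this is where I would verify the hypothesis of Theorem~\ref{th:octahedron}, identifying the six points $f_{ij}$ with the vertices of the octahedron and the four planes $f_i$ with one color class of faces. The octahedron theorem then yields that the other four planes — which are exactly $f_{123}, f_{124}, f_{134}, f_{234}$, each containing a triple $f_{ij}, f_{ik}, f_{jk}$ — also share a common point. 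That common point is the desired $f_{1234}$: the coherence of the six top tiles (with vertices $f_{1234}, f_{ijk}, f_{ij\ell}, f_{ij}$) is then exactly the statement that $f_{ij}$, $f_{1234}$, and $f_{ijk}\cap f_{ij\ell}$ are suitably collinear, which holds because $f_{1234}$ lies on all four planes $f_{ijk}$. The point $f$ comes from the same theorem applied with the color classes swapped, or equivalently from the dual octahedron built on the planes $f_{ijk}$ with $f_{1234}$ playing the symmetric role to $f$.

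The hard part will be the bookkeeping that correctly matches the combinatorics of Definition~\ref{def:coherent-tile-3D} (which governs one point and one plane meeting a shared line) to the incidence hypotheses of the octahedron theorem, and in particular checking that the coherence of the twelve tiles really does certify that each quadruple of planes $\{f_1,f_2,f_3,f_4\}$ (resp.\ $\{f_{123},f_{124},f_{134},f_{234}\}$) is concurrent rather than merely generic. I would handle this by reducing each tile's coherence to the statement ``point $f_{ij}$ lies on plane $f_i$'' plus a codimension-one incidence, and then noting that ``the four planes $f_i$ have a common point'' is precisely the hypothesis consumed by Theorem~\ref{th:octahedron}; the conclusion ``the four planes $f_{ijk}$ have a common point'' delivers $f_{1234}$ directly. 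Uniqueness of both $f$ and $f_{1234}$ follows since each is forced to lie on four distinct planes whose intersection is generically a single point. I expect the genericity hypothesis on the 14 inputs to be exactly what guarantees these intersections are zero-dimensional, ruling out the pencil-type degeneracy noted for 3D consistency of points and planes just before the theorem.
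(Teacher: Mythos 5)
Your strategy founders on a misidentification that the coherence hypotheses explicitly rule out. In each tile of \eqref{eq:f-fi-fij-4-planes}, the point $f_{ij}$ is \emph{adjacent} to the plane $f_i$ (and to $f_{ijk}$), and the first clause of Definition~\ref{def:coherent-tile} requires adjacent labels to be non-incident: $f_{ij}\notin f_i$ and $f_{ij}\notin f_{ijk}$. So the given plane $f_i$ is \emph{not} the span $(f_{ij}f_{ik}f_{i\ell})$ of the three points, and $f_{ijk}$ is not $(f_{ij}f_{ik}f_{jk})$; your sentence ``each $f_i$ contains the three points $f_{ij}$ with $j\neq i$, which follows once the tiles are coherent'' asserts the opposite of what coherence demands. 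This breaks both halves of your octahedron argument: the four planes you feed into Theorem~\ref{th:octahedron} are not the data of the problem, and even if the four spanned planes $(f_{ij}f_{ik}f_{jk})$ were concurrent at some point $P$, there is no route from that to the actual conclusion, which is that $f_{1234}$ lies on each of the six planes $h_{ij}$ spanned by $f_{ij}$ and the line $f_{ijk}\cap f_{ij\ell}$, where $f_{ijk}$ and $f_{ij\ell}$ are the \emph{given} planes. Note also that if $f_{1234}$ lay on all four given planes $f_{ijk}$, as your penultimate paragraph suggests, the top tiles would be \emph{incoherent} by the same non-incidence clause. You appear to be transplanting the proof of Theorem~\ref{th:4d-3D}, where the $f_i$ are lines, $f$ is part of the data, and the coplanarity of $\{f,f_{ij},f_{ik},f_{i\ell}\}$ genuinely follows from the bottom tiles; none of that carries over to the present setting.

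The paper's proof proceeds differently and does not use the octahedron theorem at all. For each pair $ij$ it introduces the plane $h_{ij}$ through $f_{ij}$ and the line $f_{ijk}\cap f_{ij\ell}$, so that the six top tiles are coherent precisely when $f_{1234}\in\bigcap_{ij} h_{ij}$. For each fixed index $i$, the three given coherent tiles with apex $f_i$, combined with the bundle theorem (this is exactly the content of the discussion of 3D consistency for points and planes immediately preceding the theorem), show that a point lying on two of $h_{ij},h_{ik},h_{i\ell}$ automatically lies on the third; hence each such triple of planes meets along a line. Chaining these four facts shows $h_{12}\cap h_{13}\cap h_{23}=\bigcap_{ij}h_{ij}$ is a single point, which is the desired $f_{1234}$; the point $f$ is produced by the symmetric argument, and uniqueness is clear since each point is pinned down as an intersection of planes. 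If you wish to salvage an octahedron-flavoured picture, the concurrency argument must be run on the six auxiliary planes $h_{ij}$, not on planes spanned by the six given points $f_{ij}$.
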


\begin{proof}
By symmetry, it suffices to show the existence and uniqueness of~$f_{1234}$. 
Let $h_{ij}$ be the plane through $f_{ij}$ and the line $f_{ijk}\cap f_{ij\ell}$. 
We need to show that the six planes $h_{ij}$ have a common point~$f_{1234}$. 
By the discussion preceding Theorem~\ref{th:4d-3D-planes}, each triple of planes $h_{ij}, h_{ik}, h_{i\ell}$ intersect along a line,
so $h_{ij}\cap h_{ik} = h_{ij}\cap h_{ik} \cap h_{i\ell}\,$. 
%Now set $f_{1234}=h_{12}\cap h_{13}\cap h_{23}$.
Therefore
\begin{equation*}
f_{1234}
\stackrel{\rm def}{=} h_{12}\cap h_{13}\cap h_{23}
=h_{12}\cap h_{13}\cap h_{14} \cap h_{23}
=\cdots = \bigcap h_{ij} \,. \qedhere
\end{equation*}
\end{proof}

\iffalse
Indeed, think of the tetrahedron formed by $f_{ijk}$'s. 
For each $f_{ij}$ consider the plane $h_{ij}$ that passes through $f_{ij}$ and the line $f_{ijk}\cap f_{ij\ell}$. 
I claim that for each $i$, the planes $h_{ij}, h_{ik}, h_{i\ell}$ intersect along a line (if we know that the middle tiles work). 
Indeed, let $P_{ij}=f_i \cap f_{ijk}\cap f_{ij\ell}$. 
The line $f_i \cap f_{ijk}$ is the line connecting $P_{ij}$ with $P_{ik}$, and we know that it has to intersect the line $(f_{ij} f_{ik})$. 
We conclude that $(f_{ij} P_{ij})$ intersects $(f_{ik} P_{ik})$. 
Since this has to be true for any pair among $j,k,\ell$, we conclude that all three such lines intersect, 
which means that the three planes $h_{ij}, h_{ik}, h_{i\ell}$ intersect along a line 
(the line passing through this last point of intersection and the point $f_{ijk}\cap f_{ij\ell}\cap f_{ik\ell}$. 

Now consider all six planes $h_{ij}$. 
Take the point of intersection of $h_{ij}\cap h_{ik}\cap h_{jk}$. 
By the previous paragraph, the remaining three planes also pass through this point. 
This point is our desired $f_{1234}$.
\fi

\begin{figure}[ht]
\vspace{-5pt}
\begin{equation*}
\begin{tikzcd}[arrows={-stealth}, cramped, sep=5]
f_{34} \edge{dr} &&&&&& f_{34} \edge{dl} \\
& f_{134} \edge{rr} \edge{dd} && f_{13} \edge{dl} \edge{dr} \edge{rr} && f_3 \edge{dd} \\
&& f_1 \edge{dl} \edge{dr} && f_{123} \edge{dl} \edge{dr} \\
& f_{14} \edge{dd} \edge{dr} && f_{12} \edge{dl} \edge{dr} && f_{23} \edge{dl} \edge{dd} \\
&& f_{124}  \edge{dr} && f_2 \edge{dl} \\
& f_4 \edge{dl} \edge{rr} && f_{24} \edge{rr} && f_{234} \edge{dr} \\
f_{34} &&&&&& f_{34}
\end{tikzcd}
\end{equation*}
\vspace{-15pt}
\caption{The dependence among conditions~\eqref{eq:f-fi-fij-4-planes}. 
}
\vspace{-15pt}
\label{fig:4D-cube-trimmed}
\end{figure}
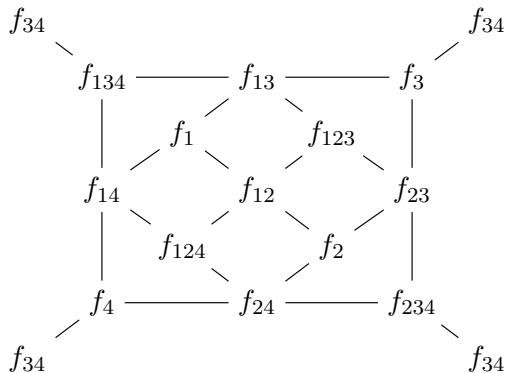

\begin{remark}
\label{rem:12-coherence-conditions}
The 12 coherence conditions~\eqref{eq:f-fi-fij-4-planes} in Theorem~\ref{th:4d-3D-planes} are not independent: 
any 11 among them imply the remaining one, cf.\ the tiling of the sphere in Figure~\ref{fig:4D-cube-trimmed}. 

Theorem~\ref{th:4d-3D-planes} can be viewed as a strengthening of a theorem of Cox 
\cite[Exercise~2.28]{bobenko-suris-book}. 

\end{remark}

\newpage

\section*{The octagon relation for Desargues flips}

\begin{definition}
%\label{def:}
A~\emph{flip} in a tiling %of a polygon (or more generally, a surface) by quadrilateral tiles 
is a local transformation that modifies three pairwise-adjacent tiles %in this tiling 
as shown 
in Figure~\ref{fig:flip-hexagon}. 
\end{definition}

\begin{figure}[ht]
\vspace{-10pt}
\begin{equation*}
\begin{array}{ccccc}
\begin{tikzpicture}[baseline= (a).base]
\node[scale=0.9] (a) at (0,0){
\begin{tikzcd}[arrows={-stealth}, cramped, outer sep=-2,sep=5]
&  \bullet \edge{ld} \edge{rd} \edge{dd}  & \\[-7pt]
\circ \edge{dd}  & & \circ \edge{dd}  \\[-7pt]
& \circ \edge{rd} \edge{ld}     \\[-7pt]
 \bullet \edge{rd}&&  \bullet\edge{ld} \\[-7pt]
& \circ
\end{tikzcd}
};
\end{tikzpicture}
& \longleftrightarrow&
\begin{tikzpicture}[baseline= (a).base]
\node[scale=0.9] (a) at (0,0){
\begin{tikzcd}[arrows={-stealth}, cramped, outer sep=-2, sep=5]
&   \bullet \edge{ld} \edge{rd}  & \\[-7pt]
\circ \edge{dd} \edge{rd} && \circ \edge{dd} \edge{ld} \\[-7pt]
&  \bullet \edge{dd} \\[-7pt]
 \bullet \edge{rd} &  &  \bullet\edge{ld} \\[-7pt]
& \circ
\end{tikzcd}
};
\end{tikzpicture}
\end{array}
\end{equation*}
\vspace{-15pt}
\caption{A flip in a tiling. (We can go either left-to-right or right-to-left.)
}
\vspace{-20pt}
\label{fig:flip-hexagon}
\end{figure}
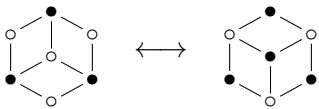

\begin{remark}
%\label{rem:}
Flips of \emph{zonotopal tilings} play an important role in the combinatorics of symmetric groups, 
as they correspond to \emph{braid relations} between reduced words. 
A~classical result of J.~Tits~\cite[\S~4.3, Proposition~4]{tits-octagon} describes 
the syzygies among braid relations. 
To reformulate it in the language of tilings, fix a centrally symmetric convex polygon~$\mathbf{P}$ (a 2-dimensional zonotope) 
and consider the following graph~$G_\mathbf{P}$.
The~vertices of~$G_\mathbf{P}$ are the tilings of~$\mathbf{P}$ 
by %the smallest possible number of 
parallelograms (zonotopal tiles). 
The edges of~$G_\mathbf{P}$ correspond to flips. 
Tits' theorem asserts that the cycles in~$G_\mathbf{P}$ are generated~by 
\begin{itemize}[leftmargin=.2in]
\item 
the 4-cycles that correspond to commuting braid moves, together with  
\item
the 8-cycles of the form shown in Figure~\ref{fig:octagon-of-flips}. 
\end{itemize}
These octagonal cycles arise in various fields of mathematics and theoretical physics 
in the context of the \emph{tetrahedron equation}
of A.~B.~Zamolodchikov~\cite{zamolodchikov-tetrahedra}, 
see, e.g., \cite{dimakis-muller-hoissen, kapranov-voevodsky, talalaev} and references therein. 
%as well as in connection with the \emph{higher Bruhat orders} of Yu.~Manin and V.~Schechtman. 
\end{remark}

%\enlargethispage{0.5cm}

\begin{figure}[ht]
\vspace{-10pt}
\begin{equation*}
\!\!
\begin{array}{ccccc}
\begin{tikzpicture}[baseline= (a).base]
\node[scale=0.8] (a) at (0,0){
\begin{tikzcd}[arrows={-stealth}, cramped, outer sep=-2, sep=5]
& \circ \edge{r} \edge{ld} \edge{rd} & \bullet \edge{rd} \\[0pt]
\bullet \edge{d} \edge{dr} & & \bullet \edge{r} \edge{d} \edge{ld}& \circ \edge{d} \\[0pt]
\circ \edge{rd} & \circ \edge{d} & \circ \edge{r} \edge{dl} & \bullet \edge{ld} \\[0pt]
& \bullet \edge{r} & \circ
\end{tikzcd}
};
\end{tikzpicture}
& \!\!\mutation{} \!\!& 
\begin{tikzpicture}[baseline= (a).base]
\node[scale=0.8] (a) at (0,0){
\begin{tikzcd}[arrows={-stealth}, cramped, outer sep=-2, sep=5]
& \circ \edge{r} \edge{ld} \edge{d} \edge{dr} & \bullet \edge{rd} \\[0pt]
\bullet \edge{d} & \bullet \edge{dl} \edge{dr} & \bullet \edge{r} \edge{d} & \circ \edge{d} \\[0pt]
\circ \edge{rd} & & \circ \edge{r} \edge{dl} & \bullet \edge{ld} \\[0pt]
& \bullet \edge{r} & \circ
\end{tikzcd}
};
\end{tikzpicture}
& \!\!\mutation{} \!\!& 
\begin{tikzpicture}[baseline= (a).base]
\node[scale=0.8] (a) at (0,0){
\begin{tikzcd}[arrows={-stealth}, cramped, outer sep=-2, sep=5]
& \circ \edge{r} \edge{ld} \edge{d} & \bullet \edge{rd} \edge{d} \\[0pt]
\bullet \edge{d} & \bullet \edge{r} \edge{dr} \edge{dl} & \circ \edge{dr} & \circ \edge{d} \\[0pt]
\circ \edge{rd} & & \circ \edge{r} \edge{dl} & \bullet \edge{ld} \\[0pt]
& \bullet \edge{r} & \circ
\end{tikzcd}
};
\end{tikzpicture}
\\[-8pt]
\\
\vmutation{}  &&&& \vmutation{} 
\\[-8pt]
\\
\begin{tikzpicture}[baseline= (a).base]
\node[scale=0.8] (a) at (0,0){
\begin{tikzcd}[arrows={-stealth}, cramped, outer sep=-2, sep=5]
& \circ \edge{r} \edge{ld} \edge{dr} & \bullet \edge{rd} \\[0pt]
\bullet \edge{d} \edge{rd} && \bullet \edge{r} \edge{dl} & \circ \edge{d} \edge{ld} \\[0pt]
\circ \edge{rd} & \circ \edge{r} \edge{d} & \bullet \edge{d} & \bullet \edge{ld} \\[0pt]
& \bullet \edge{r} & \circ
\end{tikzcd}
};
\end{tikzpicture}
&  & 
&  & 
\begin{tikzpicture}[baseline= (a).base]
\node[scale=0.8] (a) at (0,0){
\begin{tikzcd}[arrows={-stealth}, cramped, outer sep=-2, sep=5]
& \circ \edge{r} \edge{ld} \edge{d} & \bullet \edge{d} \edge{rd} \\[0pt]
\bullet \edge{d} & \bullet \edge{r} \edge{ld} & \circ \edge{dr} \edge{dl} & \circ \edge{d} \\[0pt]
\circ \edge{rd} \edge{r} & \bullet \edge{dr} & & \bullet \edge{ld} \\[0pt]
& \bullet \edge{r} & \circ
\end{tikzcd}
};
\end{tikzpicture}
\\[-8pt]
\\
\vmutation{}  &&&& \vmutation{}
\\[-8pt]
\\
\begin{tikzpicture}[baseline= (a).base]
\node[scale=0.8] (a) at (0,0){
\begin{tikzcd}[arrows={-stealth}, cramped, outer sep=-2, sep=5]
& \circ \edge{r} \edge{ld} & \bullet \edge{rd} \edge{ld} \\[0pt]
\bullet \edge{d} \edge{r} \edge{dr} & \circ \edge{rd} & & \circ \edge{d} \edge{dl} \\[0pt]
\circ \edge{rd} & \circ \edge{r} \edge{d} & \bullet \edge{d} & \bullet \edge{ld} \\[0pt]
& \bullet \edge{r} & \circ
\end{tikzcd}
};
\end{tikzpicture}
& \!\!\mutation{} \!\!& 
\begin{tikzpicture}[baseline= (a).base]
\node[scale=0.8] (a) at (0,0){
\begin{tikzcd}[arrows={-stealth}, cramped, outer sep=-2, sep=5]
& \circ \edge{r} \edge{ld} & \bullet \edge{rd} \edge{ld} \\[0pt]
\bullet \edge{d} \edge{r} & \circ \edge{d} \edge{dr} & & \circ \edge{d} \edge{dl} \\[0pt]
\circ \edge{rd} \edge{r} & \bullet \edge{dr} & \bullet \edge{d} & \bullet \edge{ld} \\[0pt]
& \bullet \edge{r} & \circ
\end{tikzcd}
};
\end{tikzpicture}
& \!\!\mutation{}\!\! & 
\begin{tikzpicture}[baseline= (a).base]
\node[scale=0.8] (a) at (0,0){
\begin{tikzcd}[arrows={-stealth}, cramped, outer sep=-2, sep=5]
& \circ \edge{r} \edge{ld} & \bullet \edge{rd} \edge{d} \edge{ld}\\[0pt]
\bullet \edge{d} \edge{r} & \circ \edge{d} & \circ \edge{dl} \edge{dr} & \circ \edge{d} \\[0pt]
\circ \edge{rd} \edge{r} & \bullet \edge{rd} & & \bullet \edge{ld} \\[0pt]
& \bullet \edge{r} & \circ
\end{tikzcd}
};
\end{tikzpicture}
\end{array}
\!\!
\end{equation*}
\vspace{-15pt}
\caption{The octagon of flips. 
}
\vspace{-20pt}
\label{fig:octagon-of-flips}
\end{figure}
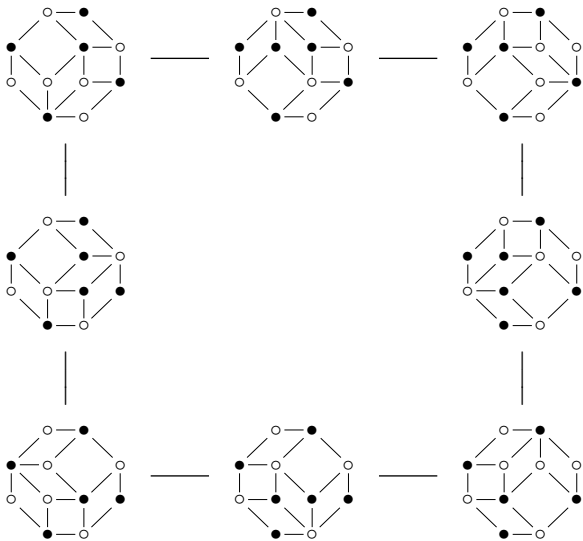

%\newpage

\begin{definition}
%\label{def:}
Let $\TT$ be a tiling of a polygon by coherent tiles.
Take three tiles in~$\TT$ that form a hexagon as in Figure~\ref{fig:flip-hexagon}.
Construct a new tiling~$\TT'$ by performing a flip and labeling the new vertex %inside the hexagon
so that the tiling remains coherent, cf.\ \hbox{Figure~\ref{fig:6-cycle-tiling}(ii)--(iii)}. 
Under suitable genericity assumptions, Corollary~\ref{cor:tileable-hexagons} ensures that
this can be done in a unique way.
We then say that~$\TT'$ is obtained from~$\TT$ by a \emph{Desargues flip}. 
\end{definition}

We next show that Desargues flips satisfy the tetrahedron equation:  

\begin{theorem}
%\label{th:}
Take a zonotopal tiling of an octagon by coherent tiles, cf.\ Figure~\ref{fig:octagon-of-flips}. 
Apply a sequence of eight Desargues flips corresponding to going around the perimeter of the octagon.
Then the resulting  coherent labeling coincides with the original one. 
\end{theorem}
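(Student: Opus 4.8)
The plan is to reduce the tetrahedron relation for Desargues flips to the corresponding consistency statement for coherent labelings, which has already been established in the 4D consistency results of this section. Concretely, a zonotopal tiling of the octagon corresponds combinatorially to a choice of reduced word for the longest element, and the eight flips around the perimeter of Figure~\ref{fig:octagon-of-flips} realize the classical Tits octagon. The key observation is that each Desargues flip is, by definition, the \emph{unique} coherent relabeling produced by Corollary~\ref{cor:tileable-hexagons}; so composing flips produces a well-defined coherent labeling at each stage, and the only thing to verify is that the two ways of traversing the octagon agree.

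First I would set up the correspondence between the zonotopal tiling being flipped and a tiling of a sphere (or disk) by coherent tiles in the sense of our master theorem, so that the three interior tiles participating in each flip are exactly the hexagonal configuration of Figure~\ref{fig:flip-hexagon}, and the newly created vertex is labeled by a point or line as dictated by coherence. The eight intermediate tilings of the octagon, together with the boundary labels held fixed, assemble into a single combinatorial object: a tiling of a closed surface whose ``fourth-direction'' structure is precisely the cube/4D pattern appearing in Figure~\ref{fig:4D-consistency-tikz}. The central claim is that the final vertex label produced after all eight flips is determined by the boundary data alone, independently of the order of flips.

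Next I would invoke the 4D consistency theorem (Theorem~\ref{th:4d} for points and lines in the plane, or its 3D counterpart Theorem~\ref{th:4d-3D}), which guarantees existence and uniqueness of the top label $f_{1234}$ making all tiles of Figure~\ref{fig:4D-consistency-tikz} coherent. Each Desargues flip in the octagon sequence corresponds to filling in one of the $f_{ijk}$ vertices via Proposition~\ref{pr:3D-consistency-Desargues}, and the closure of the octagon corresponds exactly to the statement that the multiple paths computing $f_{1234}$ coincide. Thus the coherent labeling obtained after going around the perimeter is forced to equal the one obtained by the opposite traversal, and in particular returns to the original labeling. Because every intermediate flip is uniquely determined by coherence (under the genericity assumptions), there is no ambiguity to track, and the tetrahedron equation follows.

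The main obstacle I anticipate is bookkeeping: one must verify precisely that the eight flips around the octagon, when translated into the language of the master theorem, recover the full combinatorial pattern of the four-dimensional cube, matching the two triangulations of the $4$-cube boundary that underlie the tetrahedron equation. In other words, the real work is the dictionary between Tits' octagon of flips and the $4$D consistency cube of Figure~\ref{fig:4D-consistency-tikz}, confirming that ``going around the perimeter'' corresponds to one such decomposition and that both decompositions produce the same $f_{1234}$. Once this identification is made carefully, the uniqueness half of Theorem~\ref{th:4d}/Theorem~\ref{th:4d-3D} closes the argument immediately.
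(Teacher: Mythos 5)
Your proposal is correct and follows essentially the same route as the paper: the authors likewise identify the tiles arising in the octagon of flips with 2D faces of the 4D cube of Figure~\ref{fig:4D-consistency-tikz} and then invoke the uniqueness part of Theorem~\ref{th:4d} to force the two traversals around the perimeter to agree. The only difference is that the paper carries out the bookkeeping you defer, observing that after the first two flips one obtains eleven labels forming a Hamming ball of radius~2 that satisfy the hypotheses of Theorem~\ref{th:4d}, whereupon the remaining five labels are uniquely determined.
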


\begin{figure}[ht]
\vspace{-10pt}
\begin{equation*}
\!\!
\begin{array}{ccccc}
\begin{tikzpicture}[baseline= (a).base]
\node[scale=0.9] (a) at (0,0){
\begin{tikzcd}[arrows={-stealth}, cramped, sep=5]
& f_{123} \edge{r} \edge{ld} \edge{rd} & f_{1234} \edge{rd} \\[7pt]
f_{12} \edge{d} \edge{dr} & & f_{23} \edge{r} \edge{d} \edge{ld}& f_{234} \edge{d} \\[7pt]
f_1 \edge{rd} & f_2 \edge{d} & f_3 \edge{r} \edge{dl} & f_{34} \edge{ld} \\[7pt]
\TT & \varnothing \edge{r} & f_4
\end{tikzcd}
};
\end{tikzpicture}
& \mutation{123} & 
\begin{tikzpicture}[baseline= (a).base]
\node[scale=0.9] (a) at (0,0){
\begin{tikzcd}[arrows={-stealth}, cramped, sep=5]
& f_{123} \edge{r} \edge{ld} \edge{d} \edge{dr} & f_{1234} \edge{rd} \\[7pt]
f_{12} \edge{d} & f_{13} \edge{dl} \edge{dr} & f_{23} \edge{r} \edge{d} & f_{234} \edge{d} \\[7pt]
f_1 \edge{rd} & & f_3 \edge{r} \edge{dl} & f_{34} \edge{ld} \\[7pt]
\TT' & \varnothing \edge{r} & f_4
\end{tikzcd}
};
\end{tikzpicture}
& \mutation{124} & 
\begin{tikzpicture}[baseline= (a).base]
\node[scale=0.9] (a) at (0,0){
\begin{tikzcd}[arrows={-stealth}, cramped, sep=5]
& f_{123} \edge{r} \edge{ld} \edge{d} & f_{1234} \edge{rd} \edge{d} \\[7pt]
f_{12} \edge{d} & f_{13} \edge{r} \edge{dr} \edge{dl} & f_{134} \edge{dr} & f_{234} \edge{d} \\[7pt]
f_1 \edge{rd} & & f_3 \edge{r} \edge{dl} & f_{34} \edge{ld} \\[7pt]
& \varnothing \edge{r} & f_4
\end{tikzcd}
};
\end{tikzpicture}
\\[-5pt]
\\
\vmutation{234}\qquad  &&&& \vmutation{134}\qquad 
\\[-5pt]
\\
\begin{tikzpicture}[baseline= (a).base]
\node[scale=0.9] (a) at (0,0){
\begin{tikzcd}[arrows={-stealth}, cramped, sep=5]
& f_{123} \edge{r} \edge{ld} \edge{dr} & f_{1234} \edge{rd} \\[7pt]
f_{12} \edge{d} \edge{rd} && f_{23} \edge{r} \edge{dl} & f_{234} \edge{d} \edge{ld} \\[7pt]
f_1 \edge{rd} & f_2 \edge{r} \edge{d} & f_{24} \edge{d} & f_{34} \edge{ld} \\[7pt]
\TT'' & \varnothing \edge{r} & f_4
\end{tikzcd}
};
\end{tikzpicture}
&  & 
&  & 
\begin{tikzpicture}[baseline= (a).base]
\node[scale=0.9] (a) at (0,0){
\begin{tikzcd}[arrows={-stealth}, cramped, sep=5]
& f_{123} \edge{r} \edge{ld} \edge{d} & f_{1234} \edge{d} \edge{rd} \\[7pt]
f_{12} \edge{d} & f_{13} \edge{r} \edge{ld} & f_{134} \edge{dr} \edge{dl} & f_{234} \edge{d} \\[7pt]
f_1 \edge{rd} \edge{r} & f_{14} \edge{dr} & & f_{34} \edge{ld} \\[7pt]
& \varnothing \edge{r} & f_4
\end{tikzcd}
};
\end{tikzpicture}
\\[-5pt]
\\
\vmutation{134}\qquad  &&&& \vmutation{234}\qquad
\\[-5pt]
\\
\begin{tikzpicture}[baseline= (a).base]
\node[scale=0.9] (a) at (0,0){
\begin{tikzcd}[arrows={-stealth}, cramped, sep=5]
& f_{123} \edge{r} \edge{ld} & f_{1234} \edge{rd} \edge{ld} \\[7pt]
f_{12} \edge{d} \edge{r} \edge{dr} & f_{124} \edge{rd} & & f_{234} \edge{d} \edge{dl} \\[7pt]
f_1 \edge{rd} & f_2 \edge{r} \edge{d} & f_{24} \edge{d} & f_{34} \edge{ld} \\[7pt]
& \varnothing \edge{r} & f_4
\end{tikzcd}
};
\end{tikzpicture}
& \mutation{124} & 
\begin{tikzpicture}[baseline= (a).base]
\node[scale=0.9] (a) at (0,0){
\begin{tikzcd}[arrows={-stealth}, cramped, sep=5]
& f_{123} \edge{r} \edge{ld} & f_{1234} \edge{rd} \edge{ld} \\[7pt]
f_{12} \edge{d} \edge{r} & f_{124} \edge{d} \edge{dr} & & f_{234} \edge{d} \edge{dl} \\[7pt]
f_1 \edge{rd} \edge{r} & f_{14} \edge{dr} & f_{24} \edge{d} & f_{34} \edge{ld} \\[7pt]
& \varnothing \edge{r} & f_4
\end{tikzcd}
};
\end{tikzpicture}
& \mutation{123} & 
\begin{tikzpicture}[baseline= (a).base]
\node[scale=0.9] (a) at (0,0){
\begin{tikzcd}[arrows={-stealth}, cramped, sep=5]
& f_{123} \edge{r} \edge{ld} & f_{1234} \edge{rd} \edge{d} \edge{ld}\\[7pt]
f_{12} \edge{d} \edge{r} & f_{124} \edge{d} & f_{134} \edge{dl} \edge{dr} & f_{234} \edge{d} \\[7pt]
f_1 \edge{rd} \edge{r} & f_{14} \edge{rd} & & f_{34} \edge{ld} \\[7pt]
& \varnothing \edge{r} & f_4
\end{tikzcd}
};
\end{tikzpicture}
\end{array}
\!\!
\end{equation*}
\vspace{-10pt}
\caption{The octagon relation for Desargues flips. 
}
\vspace{-15pt}
\label{fig:frenkel-moore}
\end{figure}
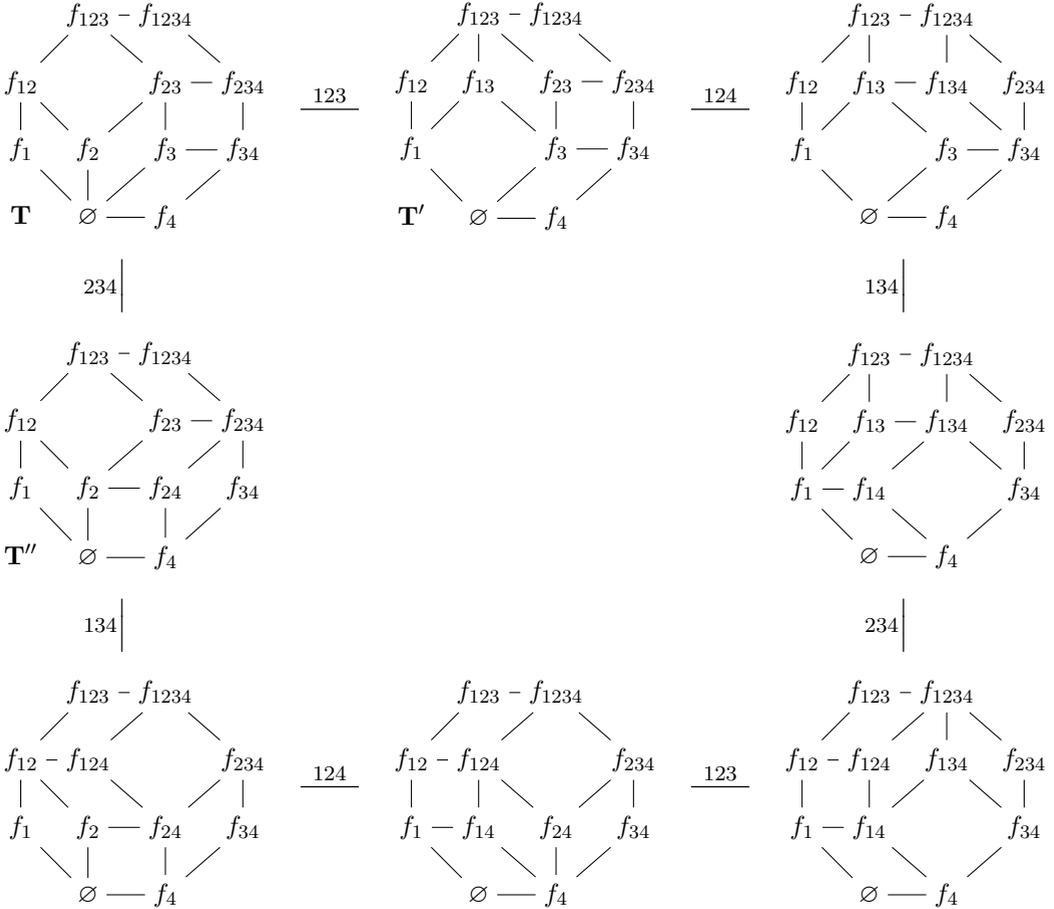

%\enlargethispage{0.5cm}

\begin{proof}
As we will explain, this result is a consequence of the 4D consistency property discussed above. 
Let us start with the coherent tiling~$\TT$ in %the upper-left corner of 
Figure~\ref{fig:frenkel-moore}. 
Our goal is to show that applying Desargues flips along the top and right rims of Figure~\ref{fig:frenkel-moore}
produces the same outcome as the flips along the left and bottom rims. 
We note~that all the~tiles appearing in Figure~\ref{fig:frenkel-moore} correspond to 2D faces of the 4D cube, 
cf.\ Figure~\ref{fig:4D-consistency-tikz}. 

Apply Desargues flips $\TT\to\TT'$ and $\TT\to\TT''$ %along the top (resp., left) rim of Figure~\ref{fig:frenkel-moore} 
to determine $f_{13}$ and~$f_{24}$. 
We then~get 13 labels, namely $f_1$, $f_4$, and 11~labels in the Hamming ball of radius~2 centered at~$f_{23}$. \linebreak[3]
%plus (b) $f_1$ and~$f_4$.~More\-over the labels in~(a) 
The latter 11~labels satisfy the conditions in Theorem~\ref{th:4d}
(with suitable re-indexing of the 4D cube). 
The remaining 5~labels %$f_1, f_4, f_{124}, f_{134}, f_{14}$
are then uniquely determined by the coherence conditions in the 4D cube,
so they must match the original labels $f_1, f_4$ and the labels $f_{124}, f_{134}, f_{14}$
arising from Desargues flips in Figure~\ref{fig:frenkel-moore}. 
\end{proof}

\clearpage

\newpage

\section{Anticoherent polygons
%Cross-ratio theorems via tilings
}
\label{sec:anticoherent}

Many classical theorems of projective geometry that involve cross-ratios 
can be obtained using the tiling technique combined with the following notion, which may be viewed as a 
``negative counterpart'' of Definition~\ref{def:coherent-polygon}. 

\begin{definition}
\label{def:anticoherent-polygon}
Let $n\ge2$.
Let $A_1,\dots,A_n$ (resp., $\ell_1,\dots,\ell_n$) be an $n$-tuple of~points (resp., lines) on 
the real/complex projective plane. 
Assume that each point~$A_i$ does not lie on either of the lines~$\ell_i$ and~$\ell_{i-1}$
(with the indexing understood modulo~$n$). 
Let $\mathbf{P}$ be a $2n$-gon with vertices labeled $A_1, \ell_1, \dots, A_n,\ell_n$, 
in this order, see Figure~\ref{fig:coherent-polygon}.  
We call $\mathbf{P}$  \emph{anticoherent} if 
the associated generalized cross-ratio \eqref{eq:coherent-polygon} is equal to~$-1$: 
\begin{equation*}
(A_1,\dots,A_n;\ell_1,\dots,\ell_n)=-1. 
\end{equation*}
The anticoherence property is invariant under rotations and reflections of a polygon~$\mathbf{P}$. 
\end{definition}

We will repeatedly make use of the following simple observation. 

\begin{lemma}
\label{lem:doubling}
A $2n$-gon $\mathbf{P}$ with vertex labels $A_1, \ell_1, \dots, A_n,\ell_n$ 
(cf.\ Definition~\ref{def:anticoherent-polygon})
is anticoherent
if and only if $\mathbf{P}$ is not coherent
but the $4n$-gon $\mathbf{P}^{(2)}$ 
with vertices labeled by 
$A_1, \ell_1, \dots, A_n,\ell_n,A_1, \ell_1, \dots, A_n,\ell_n$, in this order, %() 
is coherent. 
\end{lemma}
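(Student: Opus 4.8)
The plan is to recognize Lemma~\ref{lem:doubling} as a direct consequence of the multiplicativity of the generalized mixed cross-ratio \eqref{eq:coherent-polygon} under concatenation, so the proof should be short and essentially algebraic. First I would observe that for any covectors and vectors defining the labels, the generalized mixed cross-ratio of the doubled polygon $\mathbf{P}^{(2)}$ is literally the square of the generalized mixed cross-ratio of $\mathbf{P}$:
\begin{equation*}
(A_1,\dots,A_n,A_1,\dots,A_n;\ell_1,\dots,\ell_n,\ell_1,\dots,\ell_n)
=(A_1,\dots,A_n;\ell_1,\dots,\ell_n)^2.
\end{equation*}
This is immediate from the product formula \eqref{eq:coherent-polygon}: traversing the boundary of $\mathbf{P}^{(2)}$ just repeats each of the $2n$ pairing factors in numerator and denominator exactly twice, so the value multiplies. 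I should check that the noncollinearity/non-incidence hypotheses needed to even form the cross-ratio for $\mathbf{P}^{(2)}$ are inherited from those assumed for $\mathbf{P}$; this is routine, since the vertex sequence of $\mathbf{P}^{(2)}$ reuses the same points and lines and the same adjacencies modulo the period, so the condition that $A_i$ lies on neither $\ell_i$ nor $\ell_{i-1}$ transfers verbatim.

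Writing $x=(A_1,\dots,A_n;\ell_1,\dots,\ell_n)$, the squaring identity gives that $\mathbf{P}^{(2)}$ is coherent precisely when $x^2=1$, i.e.\ when $x\in\{+1,-1\}$, using Proposition~\ref{pr:coherence-algebraic}'s polygonal analogue built into Definition~\ref{def:coherent-polygon} (coherence means the generalized mixed cross-ratio equals $1$). Meanwhile $\mathbf{P}$ is coherent exactly when $x=1$, and $\mathbf{P}$ is anticoherent exactly when $x=-1$ by Definition~\ref{def:anticoherent-polygon}. Therefore the conjunction ``$\mathbf{P}$ is not coherent and $\mathbf{P}^{(2)}$ is coherent'' says exactly ``$x\neq 1$ and $x^2=1$,'' which is equivalent to $x=-1$, i.e.\ to $\mathbf{P}$ being anticoherent. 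This chain of equivalences is the whole proof.

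The only subtlety I anticipate—and the step I would treat as the main (though still minor) obstacle—is making sure the squaring identity is stated for a fixed common choice of representative vectors and covectors, so that no spurious scalar ambiguity intrudes. Since the generalized mixed cross-ratio is independent of the choice of representatives (as noted right after \eqref{eq:coherent-polygon} in Definition~\ref{def:coherent-polygon}), I can pick any representatives once and for all and compute both cross-ratios with that same choice; the factors then literally coincide term-by-term with multiplicity two. I would phrase the argument so that this independence is invoked explicitly, after which the equality $(\,\cdot\,)_{\mathbf{P}^{(2)}}=(\,\cdot\,)_{\mathbf{P}}^2$ is a one-line cancellation-free computation. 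With that in hand, the logical equivalence ``not coherent, but doubled is coherent $\iff$ value $=-1$'' follows formally, completing the proof of Lemma~\ref{lem:doubling}.
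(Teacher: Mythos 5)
Your proposal is correct and follows exactly the paper's argument: the paper proves Lemma~\ref{lem:doubling} by the same squaring identity $(A_1,\dots,A_n,A_1,\dots,A_n;\ell_1,\dots,\ell_n,\ell_1,\dots,\ell_n)=(A_1,\dots,A_n;\ell_1,\dots,\ell_n)^2$, from which the equivalence is immediate. Your additional remarks on well-definedness and the case analysis $x^2=1$, $x\neq 1$ merely spell out what the paper leaves implicit.
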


We call the $4n$-gon $\mathbf{P}^{(2)}$ the \emph{double} of the $2n$-gon~$\mathbf{P}$. 

\begin{proof}
This statement is immediate from the identity
\begin{equation*}
(A_1,\dots,A_n,A_1,\dots,A_n;\ell_1,\dots,\ell_n,\ell_1,\dots,\ell_n)=(A_1,\dots,A_n;\ell_1,\dots,\ell_n)^2. \qedhere
\end{equation*}
\end{proof}

\section*{Harmonic property of a complete quadrangle}

In the case $n=2$, a quadrilateral tile 
\begin{equation}
\label{eq:anticoherent-tile}
\begin{tikzpicture}[baseline= (a).base]
\node[scale=1] (a) at (0,0){
\begin{tikzcd}[arrows={-stealth}, sep=20, cramped]
P_1  \edge{r}  \edge{d}& \ell_1 \edge{d} \\[0pt]
\ell_2 \edge{r} & P_2
\end{tikzcd}
};
\end{tikzpicture}
\end{equation}
is anticoherent if and only if the quadruple of points $P_1$, $P_2$, $(P_1P_2)\cap\ell_1$, ${(P_1P_2)\cap \ell_2}$ 
is \emph{harmonic}. \linebreak[3]
In the language of incidence geometry, this property can be expressed 
via a classical construction involving a complete quadrangle, 
%(Theorem~\ref{th:harmonic-quad} below), 
which we~derive below (in two different versions, see~\eqref{eq:harmonic-quad}) using the tiling method. 
Cf.~also Theorem~\ref{th:harmonic-points-theorem}.

\begin{theorem}
\label{th:harmonic-quad}
Let $A_1, A_2, A_3,A_4$ be four generic points on the real/complex projective plane. 
Draw six lines $\ell_{ij}\!=\!(A_iA_j)$, ${1\le i<j\le 4}$. % through various pairs of these points.
Let $P_{12,34}=\ell_{12}\cap\ell_{34}$~and $P_{14, 23}\!=\!\ell_{14}\cap\ell_{23}$. 
Draw the line ${\ell_\circ\!=\!(P_{12,34}P_{14, 23})}$. 
Set ${P_{13}\!=\!\ell_{13}\cap\ell_\circ}$ and ${P_{24}\!=\!\ell_{24}\cap\ell_\circ}$.
(See Figure~\ref{fig:harmonic-quad-config}.)
Then the tiles
\begin{equation}
\label{eq:harmonic-quad}
\begin{tikzpicture}[baseline= (a).base]
\node[scale=1] (a) at (0,0){
\begin{tikzcd}[arrows={-stealth}, sep=16, cramped]
P_{13}  \edge{r}  \edge{d}& \ell_{14} \edge{d} \\[5pt]
\ell_{12} \edge{r} & P_{24}
\end{tikzcd}
};
\end{tikzpicture}
\qquad\qquad
\begin{tikzpicture}[baseline= (a).base]
\node[scale=1] (a) at (0,0){
\begin{tikzcd}[arrows={-stealth}, sep=5, cramped]
P_{12,34}  \edge{r}  \edge{d}& \ell_{13} \edge{d} \\[15pt]
\ell_{24} \edge{r} & P_{14,23}
\end{tikzcd}
};
\end{tikzpicture}
\end{equation}
are anticoherent. 
\end{theorem}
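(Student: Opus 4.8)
The statement asserts that two specific tiles are \emph{anti}coherent, i.e.\ that certain generalized cross-ratios equal $-1$ rather than $1$. By Lemma~\ref{lem:doubling}, anticoherence of each tile is equivalent to (a)~the tile not being coherent, together with (b)~its \emph{double} being coherent. Since coherence of quadrilateral tiles is exactly the hypothesis handled by the master theorem (Theorem~\ref{th:master}), the natural strategy is to exhibit a single bicolored quadrilateral tiling of a closed oriented surface in which all tiles but one are manifestly coherent, where the doubled version of the target tile appears as the remaining tile. Concretely, I would tile a surface using the complete-quadrangle combinatorics coming from the four points $A_1,A_2,A_3,A_4$ and the six lines $\ell_{ij}$, arranging that the octagon obtained by doubling the quadrilateral \eqref{eq:harmonic-quad} bounds a region that is filled by coherent tiles built from the incidences $P_{13}\in\ell_{13}$, $P_{24}\in\ell_{24}$, $P_{12,34}\in\ell_{12}\cap\ell_{34}$, $P_{14,23}\in\ell_{14}\cap\ell_{23}$, and the collinearities that define $\ell_\circ$.

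\textbf{Key steps.} First I would set up the doubled octagon for, say, the left tile: its vertices read $P_{13},\ell_{14},P_{24},\ell_{12},P_{13},\ell_{14},P_{24},\ell_{12}$, and by Lemma~\ref{lem:doubling} its coherence together with non-coherence of the single tile yields the claim. Second, I would tile this octagon by coherent quadrilaterals whose interior vertices are labeled by the ``auxiliary'' points and lines of the complete-quadrangle configuration ($A_1,\dots,A_4$, the remaining $\ell_{ij}$, and the points $P_{12,34},P_{14,23}$ on $\ell_\circ$). The coherence of each interior tile reduces to a concurrence or collinearity that holds \emph{by construction} of the configuration: for instance, $A_i\in\ell_{ij}\cap\ell_{ik}$ gives immediate coherence of tiles of the form $A_i,\ell_{ij},\cdot,\ell_{ik}$, and $P_{13},P_{24}\in\ell_\circ$ supplies the coherence tying $\ell_\circ$ to $P_{12,34},P_{14,23}$. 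Third, applying the master theorem to this tiled surface forces coherence of the doubled octagon. Fourth, I would check non-coherence of the single (undoubled) tile directly: it fails coherence precisely because the four collinear points $P_1,P_2,(P_1P_2)\cap\ell_1,(P_1P_2)\cap\ell_2$ form a genuine harmonic (hence non-degenerate, cross-ratio $\ne 1$) quadruple, which follows from the quadrangle being in general position. The right-hand tile in \eqref{eq:harmonic-quad} is handled by the symmetric tiling obtained by interchanging the roles of the two diagonal-point pairs.

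\textbf{Main obstacle.} The genuinely delicate part will be constructing the explicit tiling of the doubled octagon and verifying that every interior tile's coherence is a \emph{tautological} incidence of the configuration, rather than something requiring an independent theorem. In the analogous coherent-polygon arguments earlier (e.g.\ the decagon of Proposition~\ref{prop:10-gon} and the octagons of Propositions~\ref{pr:octagon} and~\ref{pr:octagon-special}), the tilings genuinely invoke Pappus or Desargues at one quadrilateral; here I expect the complete-quadrangle octagon to tile \emph{tautologically}, using only the defining incidences and the harmonic construction, so the subtlety is purely combinatorial bookkeeping of vertex labels around the doubled boundary. I anticipate that Proposition~\ref{pr:octagon-special} (or its dual, Proposition~\ref{pr:octagon-special-dual}) will supply the relevant octagonal building block, since the two lines $a,b$ in that proposition play exactly the role of the two diagonal lines $\ell_{12},\ell_{14}$ meeting $\ell_\circ$. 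The remaining check—that the undoubled tile is not coherent—is short but essential, since without it Lemma~\ref{lem:doubling} would only give coherence, not anticoherence; I would dispatch it via the genericity of $A_1,\dots,A_4$, which guarantees the harmonic quadruple is not degenerate.
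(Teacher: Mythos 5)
Your plan is exactly the paper's proof: the paper applies Lemma~\ref{lem:doubling} to explicit coherent tilings of the two doubled octagons (Figures~\ref{fig:anticoherent-quad} and~\ref{fig:anticoherent-quad-2}), whose interior vertices are precisely the auxiliary labels you name ($P_{13,24}=\ell_{13}\cap\ell_{24}$, the remaining $\ell_{ij}$, and the $A_i$), with every tile coherent tautologically from the defining incidences. The only detail you would adjust in executing the plan is that the second doubled octagon is filled using the \emph{dual} of Proposition~\ref{pr:octagon} (the pattern of Figure~\ref{fig:8-gon} around $P_{13,24}$) rather than Proposition~\ref{pr:octagon-special}, and your insistence on checking non-coherence of the undoubled tile is a point the paper leaves to genericity.
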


\begin{proof}
Apply Lemma~\ref{lem:doubling} to the tilings shown in Figures~\ref{fig:anticoherent-quad}--\ref{fig:anticoherent-quad-2}. 
\end{proof}

\pagebreak[3]

\begin{figure}[ht]
%\vspace{-5pt}
\begin{center}
\includegraphics[scale=0.47, trim=0.1cm 0cm 0cm 0.5cm, clip]{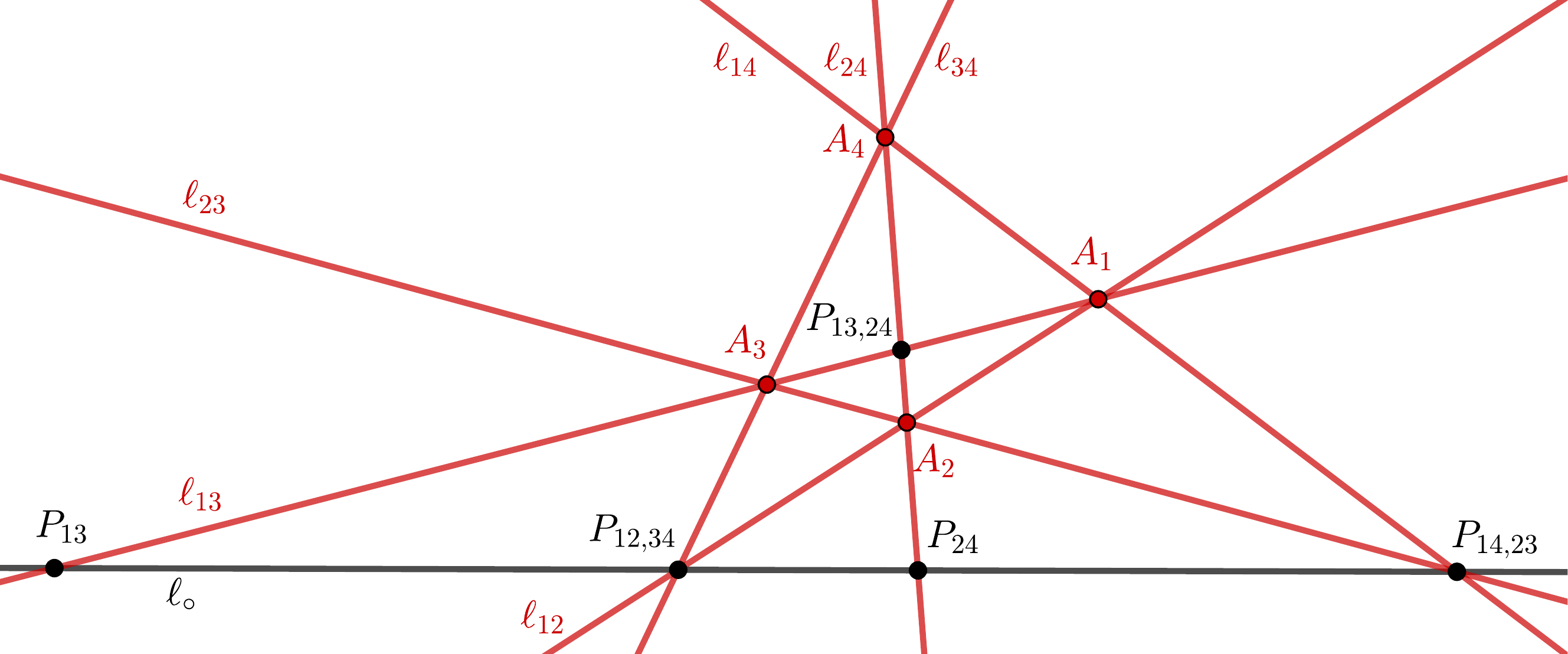}
\end{center}
\vspace{-8pt}
\caption{The harmonic property of a complete quadrangle. 
Here $P_{13,24}=\ell_{13}\cap \ell_{24}$. 
}
\label{fig:harmonic-quad-config}
\end{figure}

\begin{figure}[ht]
\vspace{-.3in}
\begin{equation*}
\begin{tikzpicture}[baseline= (a).base]
\node[scale=1] (a) at (0,0){
\begin{tikzcd}[arrows={-stealth}, cramped, sep=5]
P_{13} \edge{rr} \edge{dd} && \ell_{14} \edge{dd} & \\[3pt]
& \boxed{-1} && \\[3pt]
\ell_{12} \edge{rr} && P_{24}
\end{tikzcd}
};
\end{tikzpicture}
\qquad\qquad
\begin{tikzpicture}[baseline= (a).base]
\node[scale=1] (a) at (0,0){
\begin{tikzcd}[arrows={-stealth}, cramped, sep=15]
 & \ell_{12} \edge{r} \edge{rd} \edge{ld} & P_{13} \edge{rd} \\[8pt]
 P_{24} \edge{rd} \edge{d} && P_{13,24}\edge{ld} \edge{d} \edge{r} & \ell_{14} \edge{d} \\[6pt]
\ell_{14}  \edge{rd} & \ell_{23} \edge{d} & \ell_{34} \edge{ld} \edge{r} & P_{24} \edge{ld} \\[8pt]
 & P_{13} \edge{r} & \ell_{12} 
\end{tikzcd}
};
\end{tikzpicture}
\end{equation*}
\vspace{-.2in}
\caption{Anticoherent quadrilateral (marked $\boxed{-1}$) and a tiling of its double.
}
\vspace{-.2in}
\label{fig:anticoherent-quad}
\end{figure}
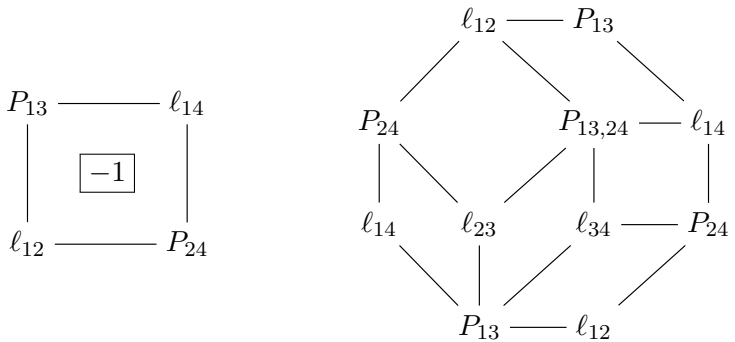

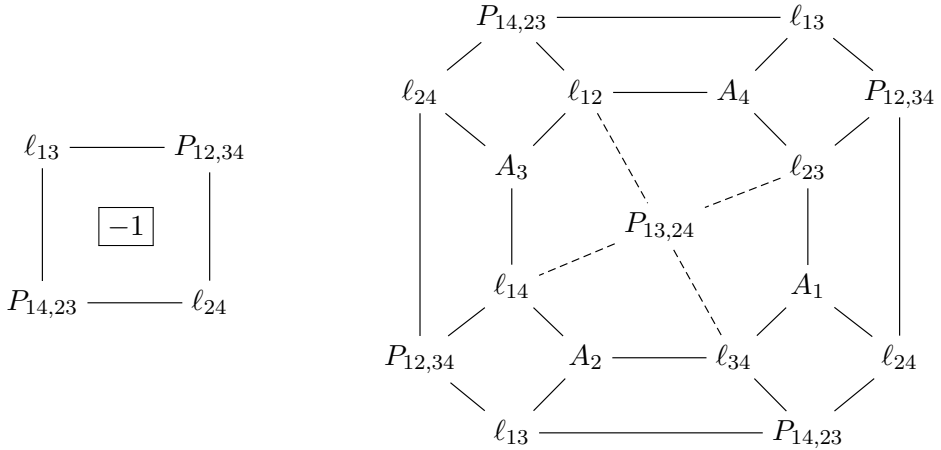
\begin{figure}[ht]
\vspace{-.1in}
\begin{equation*}
\begin{tikzpicture}[baseline= (a).base]
\node[scale=1] (a) at (0,0){
\begin{tikzcd}[arrows={-stealth}, cramped, sep=1]
\ell_{13} \edge{rr} \edge{dd} && P_{12,34} \edge{dd} & \\[9pt]
& \boxed{-1} && \\[9pt]
P_{14,23} \edge{rr} && \ell_{24}
\end{tikzcd}
};
\end{tikzpicture}
\qquad\qquad
\begin{tikzpicture}[baseline= (a).base]
\node[scale=1] (a) at (0,0){
\begin{tikzcd}[arrows={-stealth}, cramped, sep=1]
& P_{14,23} \edge{rrrr} \edge{dr} &&&& \ell_{13} \edge{dl} \edge{rd} & \\[10pt]
\ell_{24} \edge{dr} \edge{dddd} \edge{ru} & & \ell_{12} \edge{rr} \edge{dl} \edge{ddr, dashed}&& A_4 \edge{dr}  && P_{12,34} \edge{dl} \edge{dddd} \\[10pt]
& A_3 \edge{dd}  &&&& \ell_{23}\edge{dd} \edge{dll, dashed} \\[5pt]
&&& P_{13,24} \edge{ddr, dashed} \edge{dll, dashed} \\[5pt]
& \ell_{14} \edge{dl} \edge{dr} &&&& A_1 \edge{dl} \edge{dr} \\[10pt]
P_{12,34} \edge{rd} & & A_2 \edge{dl} \edge{rr} && \ell_{34} \edge{dr} & & \ell_{24} \\[10pt]
& \ell_{13} \edge{rrrr} &&&& P_{14,23} \edge{ru}
\end{tikzcd}
};
\end{tikzpicture}
\end{equation*}
\vspace{-.2in}
\caption{Another anticoherent quadrilateral and a tiling of its double.
}
\vspace{-.2in}
\label{fig:anticoherent-quad-2}
\end{figure}

\begin{remark}
%\label{rem:}
Over a field of characteristic~2, the notions of coherence and anti\-coherence coincide---so 
the tile shown in Figure~\ref{fig:anticoherent-quad-2} on the left is coherent, 
meaning that the point $P_{13,24}$ where $\ell_{13}$ and~$\ell_{24}$ intersect 
lies on the line $(P_{12,34} P_{14,23})=\ell_\circ$.
Thus in characteristic 2 we have $P_{13}=P_{24}=P_{13,24}$, yielding the \emph{Fano configuration}. 
\end{remark}

\newpage

\section*{Ceva's theorem}

\begin{proposition}
\label{prop:hex-tile}
Let $\ell_1,\ell_2,\ell_3$ be three concurrent lines on the real/complex plane. 
Let $P_1,P_2,P_3$ be points such that $P_1\in\ell_1$, $P_2\in\ell_2$, $P_3\in\ell_3$
(but no other incidences). 
Then the hexagon $\mathbf{P}$ shown in Figure~\ref{fig:hex-tile} is anticoherent. 
\end{proposition}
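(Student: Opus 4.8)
The plan is to evaluate the generalized mixed cross-ratio \eqref{eq:coherent-polygon} of the hexagon $\mathbf{P}$ directly and show that it equals $-1$; by Definition~\ref{def:anticoherent-polygon} this is exactly the assertion of anticoherence. The hexagon carries the labels $P_1,P_2,P_3$ (points) and $\ell_1,\ell_2,\ell_3$ (lines) arranged so that $P_i$ is never adjacent to $\ell_i$, so that the stated incidences $P_i\in\ell_i$ do not violate the non-incidence requirement between neighboring labels, and so that all pairings in \eqref{eq:coherent-polygon} are defined. The concurrency hypothesis is what I would exploit: it lets me represent all three line-covectors through a single common vector.

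\smallskip

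Concretely, let $O$ denote the common point of $\ell_1,\ell_2,\ell_3$, let $\mathbf{o}\in\VV$ represent $O$, and let $\mathbf{P}_1,\mathbf{P}_2,\mathbf{P}_3\in\VV$ represent $P_1,P_2,P_3$. Fix a volume form $[\,\cdot\,,\cdot\,,\cdot\,]$ on the $3$-dimensional space~$\VV$. Since $\ell_i$ passes through both $O$ and $P_i$, I may choose the representing covector $\boldsymbol{\ell}_i\in\VV^*$ so that $\langle \mathbf{A},\boldsymbol{\ell}_i\rangle=[\,\mathbf{o}\,,\mathbf{P}_i\,,\mathbf{A}\,]$ for all $\mathbf{A}\in\VV$; the genericity built into ``no other incidences'' guarantees that $O,P_i,P_j$ are never collinear, so each bracket $[\,\mathbf{o}\,,\mathbf{P}_i\,,\mathbf{P}_j\,]$ is nonzero. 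Substituting these expressions into \eqref{eq:coherent-polygon}, every factor becomes a bracket of the form $[\,\mathbf{o}\,,\mathbf{P}_i\,,\mathbf{P}_j\,]$, and the numerator and denominator turn out to consist of the same three brackets, except that the two entries $\mathbf{P}_i,\mathbf{P}_j$ are transposed within each of the three factors. For instance, after substitution the quotient takes the form
\[
\frac{[\,\mathbf{o}\,,\mathbf{P}_1\,,\mathbf{P}_2\,]\,[\,\mathbf{o}\,,\mathbf{P}_2\,,\mathbf{P}_3\,]\,[\,\mathbf{o}\,,\mathbf{P}_3\,,\mathbf{P}_1\,]}{[\,\mathbf{o}\,,\mathbf{P}_2\,,\mathbf{P}_1\,]\,[\,\mathbf{o}\,,\mathbf{P}_3\,,\mathbf{P}_2\,]\,[\,\mathbf{o}\,,\mathbf{P}_1\,,\mathbf{P}_3\,]}=(-1)^{3}=-1,
\]
since the volume form is alternating. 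The three sign changes — one per factor — are the whole point, and this is where concurrency is essential: it is precisely the shared vector $\mathbf{o}$ that forces all three denominators to be the ``transposes'' of the numerators. This computation already constitutes a complete proof; it is the direct analogue of the algebraic ``Proof~1'' of Proposition~\ref{prop:10-gon}.

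\smallskip

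To keep the argument within the tiling philosophy of this section, I would then record an alternative route via Lemma~\ref{lem:doubling}. On the one hand, $\mathbf{P}$ is \emph{not} coherent: over a field of characteristic $\neq 2$ its cross-ratio $-1$ differs from $1$. On the other hand, the double $\mathbf{P}^{(2)}$ has cross-ratio $(-1)^2=1$, so it is coherent; by Proposition~\ref{pr:coherence=tilability} it should therefore admit a tiling by coherent quadrilateral tiles. Producing such a tiling explicitly is the genuinely combinatorial task, and I expect it to be the main obstacle: the natural attempt introduces the concurrency point $O$ together with the cevian feet $\ell_i\cap(P_jP_k)$ as interior vertices, and one must arrange the quadrilateral tiles of the $12$-gon so that each verifies its coherence condition while the concurrency of $\ell_1,\ell_2,\ell_3$ is ``consumed'' exactly once — in effect encoding Ceva's theorem combinatorially. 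The bracket computation above serves as a reliable check on any candidate tiling, since it fixes the value of the cross-ratio unambiguously.
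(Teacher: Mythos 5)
Your proposal is correct, but it takes a different route from the paper's printed proof. Your main argument is the direct bracket computation: writing $\langle \mathbf{A},\boldsymbol{\ell}_i\rangle=[\,\mathbf{o},\mathbf{P}_i,\mathbf{A}\,]$ and observing that each of the three denominator factors of the Ceva ratio is the transpose of a numerator factor, giving $(-1)^3=-1$. This is exactly the ``simple algebraic proof, which we omit'' that the paper mentions immediately after its proof, and it is complete as written: it directly establishes the value $-1$ required by Definition~\ref{def:anticoherent-polygon}, with the hypothesis of ``no other incidences'' guaranteeing that every bracket is nonzero. The paper instead proves the statement by exhibiting an explicit tiling of the double $\mathbf{P}^{(2)}$ by coherent quadrilaterals (Figure~\ref{fig:hex-tile}, right), with interior vertices given by the concurrency point $P_0$, the cevian feet $Q_i=\ell_i\cap(P_jP_k)$, and the lines $m_i=(Q_jQ_k)$, and then invokes Lemma~\ref{lem:doubling} --- precisely the combinatorial construction you sketched as ``the genuinely combinatorial task'' but did not carry out. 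Each approach buys something: yours is shorter, self-contained, and pins down the sign $-1$ directly, whereas the doubling route a priori only yields that the cross-ratio squares to $1$ and needs the separate observation that $\mathbf{P}$ is not coherent; the paper's route, on the other hand, keeps the argument inside the tiling formalism, which matters for the program of showing that such statements are reachable by tilings alone. Since the proposition only asserts anticoherence, your algebraic computation suffices and no gap remains.
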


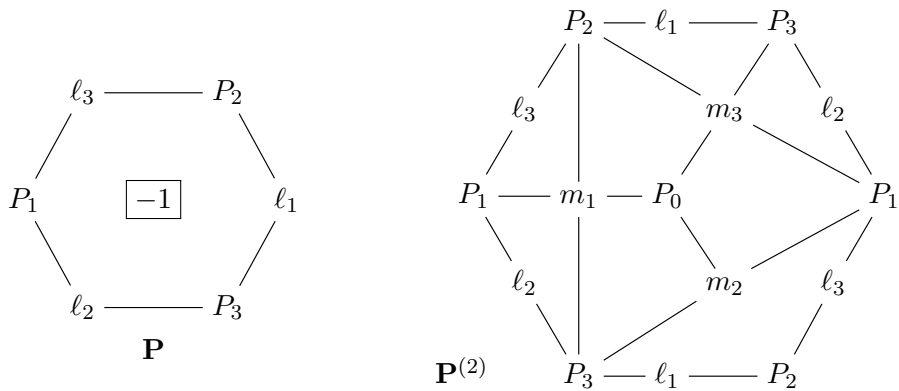
\begin{figure}[ht]
\vspace{-.15in}
\begin{equation*}
\begin{tikzpicture}[baseline= (a).base]
\node[scale=1] (a) at (0,0){
\begin{tikzcd}[arrows={-stealth}, cramped, sep=5]
{\ }&&&& \\[4pt]
& \ell_3 \edge{rr} && P_2 \edge{rd} & \\[17pt]
P_1 \edge{ru} \edge{rd} & &\boxed{-1}& & \ell_1 \\[17pt]
& \ell_2 \edge{rr}&& P_3 \edge{ru} \\[-4pt]
&& \mathbf{P}
\end{tikzcd}
};
\end{tikzpicture}
\qquad\qquad
\begin{tikzpicture}[baseline= (a).base]
\node[scale=1] (a) at (0,0){
\begin{tikzcd}[arrows={-stealth}, cramped, sep=2]
&&& P_2 \edge{rr} \edge{ld} \edge{dd} \edge{rrrd} && \ell_1 \edge{rr} && P_3 \edge{ld}\edge{rd} \\[15pt]
&& \ell_3 \edge{ld}  &&  && m_3 \edge{rrrd} \edge{ld} && \ell_2  \edge{rd}\\[15pt]
&P_1 \edge{rd} \edge{rr} && m_1 \edge{rr} \edge{dd}  & \hspace{3pt} & P_0 \edge{rd} &&  & & P_1\edge{ld} \\[15pt]
&& \ell_2 \edge{rd} &&  && m_2 \edge{rrru} \edge{llld} && \ell_3 \edge{ld} \\[15pt]
\mathbf{P}^{(2)} \hspace{-20pt}&&& P_3 \edge{rr} && \ell_1 \edge{rr} && P_2 
\end{tikzcd}
};
\end{tikzpicture}
\end{equation*}
\vspace{-.15in}
\caption{An anticoherent hexagon $\mathbf{P}$ and a tiling of its double~$\mathbf{P}^{(2)}$.
}
\vspace{-.2in}
\label{fig:hex-tile}
\end{figure}

\begin{proof}
Denote $Q_1=\ell_1\cap (P_2P_3)$,  $Q_2=\ell_2\cap (P_1P_3)$,  $Q_3=\ell_3\cap (P_1P_2)$,
$m_1=(Q_2Q_3)$, $m_2=(Q_1Q_3)$, $m_3=(Q_1Q_2)$. 
Also let $P_0$ be the point where $\ell_1,\ell_2,\ell_3$ meet, see Figure~\ref{fig:anticoherent-hexagon}. 
Then the claim follows from the tiling shown in Figure~\ref{fig:hex-tile}. 
\end{proof}

\begin{figure}[ht]
%\vspace{-5pt}
\begin{center}
\includegraphics[scale=0.5, trim=0.1cm 0cm 0cm 0.2cm, clip]{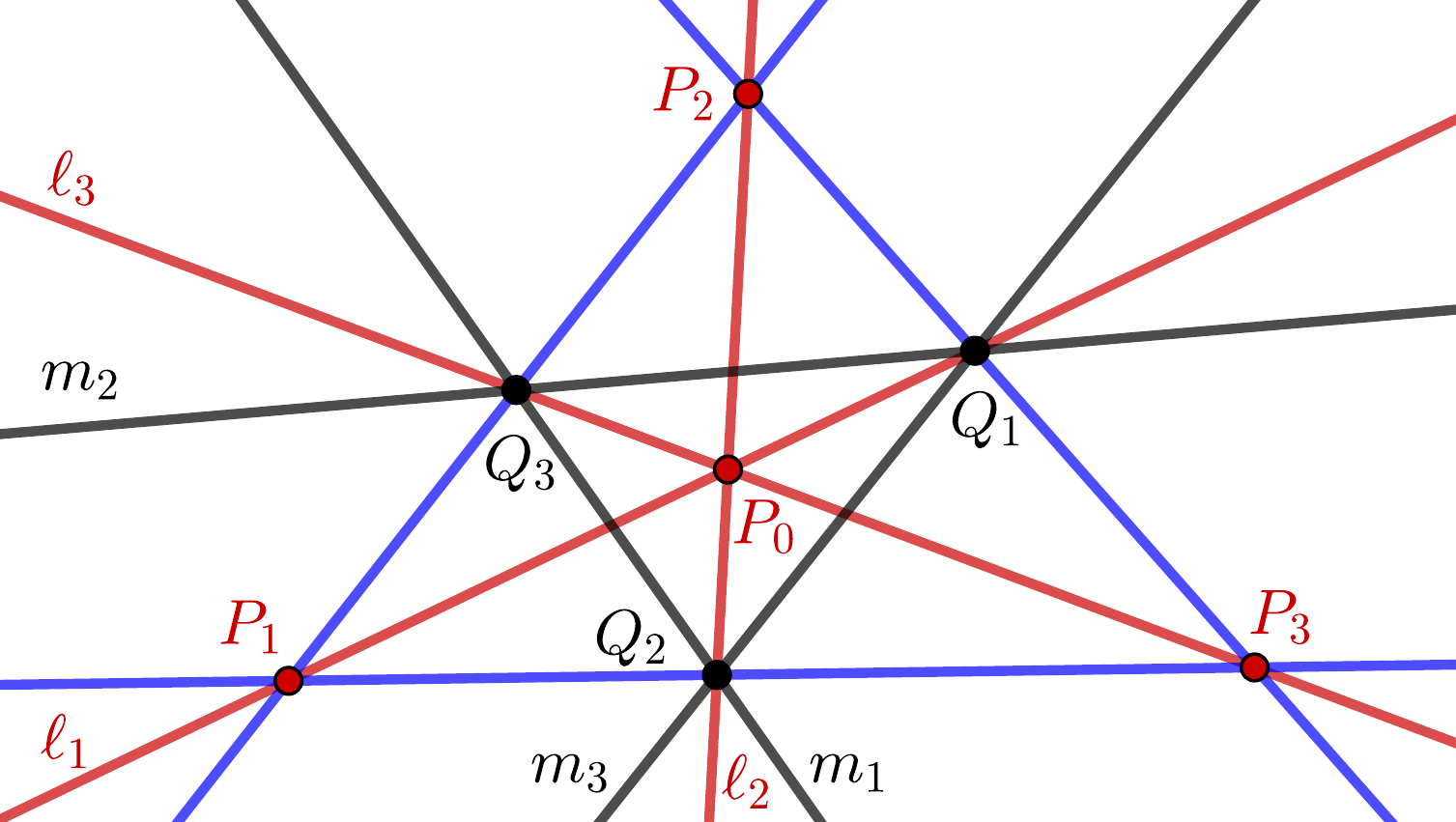}
\end{center}
\vspace{-8pt}
\caption{Three concurrent lines, with points on them. 
}
\label{fig:anticoherent-hexagon}
\end{figure}

Proposition~\ref{prop:hex-tile} also has a simple algebraic proof, which we omit. 

\begin{remark}
%\label{rem:}
Proposition~\ref{prop:hex-tile} is essentially a reformulation of 
Ceva's theorem
(proved by G.~Ceva in 1678 and by al-Mu'taman ibn H\=ud in the 11th century~\cite{hogendijk}). 
Indeed, the mixed cross-ratio associated with the hexagon~$\mathbf{P}$ in Figure~\ref{fig:hex-tile} 
is equal, up to sign, to the ``Ceva ratio'' of six Euclidean lengths: 
\begin{equation*}
(P_1,P_2,P_3;\ell_3,\ell_1,\ell_2)
=
\dfrac{
\langle \mathbf{P_1}, \mathbf{\boldsymbol \ell_3} \rangle
\langle \mathbf{P_2}, \mathbf{\boldsymbol \ell_1} \rangle
\langle \mathbf{P_3}, \mathbf{\boldsymbol \ell_2} \rangle
}{
\langle \mathbf{P_2}, \mathbf{\boldsymbol \ell_3} \rangle
\langle \mathbf{P_3}, \mathbf{\boldsymbol \ell_1} \rangle
\langle \mathbf{P_1}, \mathbf{\boldsymbol \ell_2} \rangle
}
=
-\frac
{|P_1Q_3|\cdot |P_2Q_1| \cdot |P_3Q_2|}
{|P_2Q_3|\cdot |P_3Q_1|\cdot |P_1Q_2|}\,.
\end{equation*}
\end{remark}
%\pagebreak[3]

\iffalse
\begin{proof}
The two cases are completely analogous. 
Suppose that $\ell_1,\ell_2,\ell_3$ all pass through a point~$Q$, so that
$\ell_1=(QP_1)$, $\ell_2=(QP_2)$, $\ell_3=(QP_3)$. 
Then 
\begin{equation*}
\frac{\langle\ell_1,P_3\rangle\langle\ell_2,P_1\rangle\langle\ell_3,P_2\rangle}{\langle\ell_1,P_2\rangle\langle\ell_2,P_3\rangle\langle\ell_3,P_1\rangle} 
=
\frac{[QP_1P_3][QP_2P_1][QP_3P_2]}{[QP_1P_2][QP_2P_3][QP_3P_1]}
=-1.
\qedhere
\end{equation*}
\end{proof}
\fi

\newpage

\section*{Another proof of Proposition~\ref{prop:10-gon}}

The use of anticoherent polygons allows us to simplify some of the tiling-based proofs of incidence theorems
that appeared earlier in this paper.
Instead of tiling an oriented surface with coherent tiles, we can tile it with both coherent and anticoherent polygons,
making sure that the number of anticoherent polygons is even.
Then any one of these (anti)coherence conditions is implied by the rest. 

We illustrate this approach by providing a simpler proof of Proposition~\ref{prop:10-gon}
that avoids using the rather complicated tilings in Figures~\ref{fig:10-gon-tiling-no-holes} and~\ref{fig:10-gon-tiling-holes}.
By extension, this also simplifies the tiling-based proofs of Theorems~\ref{th:goodman-pollack} and \ref{th:saam-5},
both of which relied on Proposition~\ref{prop:10-gon}. 

\begin{proof}[Proof~4 of Proposition~\ref{prop:10-gon}]
Consider the tiling of a torus by four polygons shown in Figure~\ref{fig:decagon-via-negative}. 
Two of these polygons (the hexagons marked with $\boxed{-1}$ in the picture) 
are anti\-coherent by Proposition~\ref{prop:hex-tile}. 
The octagon at the top of the picture is coherent by Proposition~\ref{pr:octagon}. 
It follows that the remaining polygon, which appears at the bottom of the picture, is coherent. 
This is precisely the decagon from Figure~\ref{fig:10-gon}. 
\end{proof}

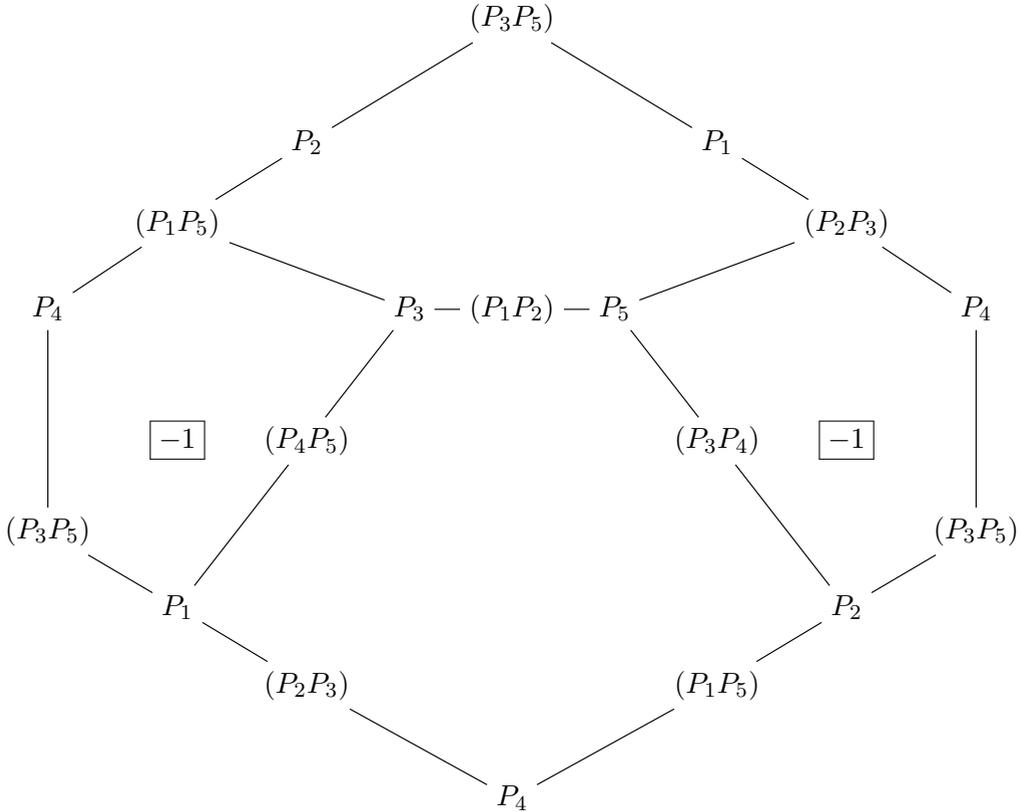
\begin{figure}[ht]
%\vspace{-.1in}
\begin{equation*}
\begin{tikzpicture}[baseline= (a).base]
\node[scale=1] (a) at (0,0){
\begin{tikzcd}[arrows={-stealth}, cramped, sep=10]
&& && (P_3P_5) \edge{lld}\edge{rrd} \\[20pt]
&&  P_2 \edge{ld} &&&& P_1 \edge{rd} \\[4pt]
& (P_1P_5) \edge{ld} \edge{rrd} &&& &&& (P_2P_3) \edge{lld} \edge{rd} \\[5pt]
P_4 \edge{dd} & && P_3 \edge{ld} \edge{r} &(P_1P_2)\edge{r} & P_5 \edge{rd} &&& P_4 \edge{dd} \\[20pt]
 &\boxed{-1}& (P_4P_5) \edge{ldd} &&&& (P_3P_4) \edge{rdd} &\boxed{-1}&  \\[5pt]
(P_3P_5) \edge{rd} &&  &&&&  && (P_3P_5) \edge{ld} \\[2pt]
& P_1 \edge{rd} &&&&&& P_2 \edge{ld} \\[3pt]
&& (P_2P_3) \edge{rrd} &&&& (P_1P_5) \edge{lld} \\[16pt]
&&&& P_4
\end{tikzcd}
};
\end{tikzpicture}
\end{equation*}
\vspace{-.1in}
\caption{Another proof of Proposition~\ref{prop:10-gon}. 
The opposite sides of the hexagonal fundamental domain should be glued to each other. 
}
\label{fig:decagon-via-negative}
\end{figure}

\newpage

\section*{Coherent annuli}

\begin{lemma}
\label{lem:coherent-annulus}
Let $P, A, B, C, D$ be five generic points in the real/complex plane.
Then the generalized mixed cross-ration associated to the annulus
\begin{equation}
\label{eq:coherent-annulus}
\begin{tikzpicture}[baseline= (a).base]
\node[scale=1] (a) at (0,0){
\begin{tikzcd}[arrows={-stealth}, cramped, sep=3]
(PA) \edge{rrr}  \edge{ddd}&&& B  \edge{ddd}\\[8pt]
& C  \edge{r}  \edge{d}& (PB) \edge{d}& \\[8pt]
& (PD) \edge{r}  & A & \\[8pt]
D \edge{rrr} & & & (PC)
\end{tikzcd}
};
\end{tikzpicture}
\end{equation}
is equal to 1:
\begin{equation*}
(B, D; (PC), (PA))\cdot (A, C; PB, PD) =1. 
\end{equation*}
\end{lemma}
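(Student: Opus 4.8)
The plan is to prove the displayed identity by a direct computation with the pairings, exploiting the fact that all four lines entering the two factors pass through the common point $P$. Since $\dim\PP=2$, I work in a $3$-dimensional vector space $\VV$ equipped with a fixed volume form (a nonzero alternating $3$-linear form, written $[\mathbf{X}\,\mathbf{Y}\,\mathbf{Z}]$), and represent each of $P,A,B,C,D$ by a vector. For a line $(PX)$ joining $P$ to another of the points $X$, I take as representing covector the functional $\mathbf{Y}\mapsto[\mathbf{Y}\,\mathbf{P}\,\mathbf{X}]$, whose kernel is exactly the plane spanned by $\mathbf{P},\mathbf{X}$, i.e.\ the line $(PX)$. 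This is legitimate because, as noted right after \eqref{eq:mixed-cross-ratio}, a mixed cross-ratio is unchanged if its covectors are rescaled, so any nonzero representative will do.

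The single observation driving the argument is the antisymmetry of the determinant in the two slots occupied by $X$ and $Y$: for any two of the points $X,Y$,
\begin{equation*}
\langle \mathbf{X},(PY)\rangle=[\mathbf{X}\,\mathbf{P}\,\mathbf{Y}]=-[\mathbf{Y}\,\mathbf{P}\,\mathbf{X}]=-\langle \mathbf{Y},(PX)\rangle .
\end{equation*}
Writing out $(A,C;(PB),(PD))$ via \eqref{eq:mixed-cross-ratio} and applying this relation to each of its four factors, namely $\langle \mathbf{A},(PB)\rangle=-\langle \mathbf{B},(PA)\rangle$, $\langle \mathbf{C},(PD)\rangle=-\langle \mathbf{D},(PC)\rangle$, $\langle \mathbf{A},(PD)\rangle=-\langle \mathbf{D},(PA)\rangle$, and $\langle \mathbf{C},(PB)\rangle=-\langle \mathbf{B},(PC)\rangle$, the four sign changes cancel in pairs, giving
\begin{equation*}
(A,C;(PB),(PD))=\frac{\langle \mathbf{B},(PA)\rangle\,\langle \mathbf{D},(PC)\rangle}{\langle \mathbf{D},(PA)\rangle\,\langle \mathbf{B},(PC)\rangle}=\bigl(B,D;(PC),(PA)\bigr)^{-1}.
\end{equation*}
Multiplying the two factors of the statement then yields $1$, as claimed.

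The genericity hypothesis enters only to make the construction well-posed: the five points must be distinct and in general position so that the lines $(PA),(PB),(PC),(PD)$ are genuinely defined and distinct, and so that none of the eight pairings $\langle \mathbf{X},(PY)\rangle$ vanishes—precisely the nonvanishing needed for each mixed cross-ratio to be defined in the sense of Definition~\ref{def:mixed-cross-ratio}. I do not expect any real obstacle; the only point requiring care is the sign bookkeeping, and the mechanism (an even number of determinant transpositions, so that the second factor is exactly the reciprocal of the first) makes the cancellation transparent. As a sanity check, I would note that this is the same cancellation-of-pairings principle that underlies the proof of the master theorem (Theorem~\ref{th:master}): reading around the annulus \eqref{eq:coherent-annulus}, each pairing $\langle \mathbf{X},(PY)\rangle$ is matched with $\langle \mathbf{Y},(PX)\rangle$, and the two occurrences sit on opposite sides of the two fractions.
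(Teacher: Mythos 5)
Your proof is correct and is essentially the paper's first proof of this lemma: the paper simply asserts that, by the definition \eqref{eq:coherent-polygon}, each pairing in the numerator cancels a pairing in the denominator, and your determinant representation $\langle \mathbf{X},(PY)\rangle=[\mathbf{X}\,\mathbf{P}\,\mathbf{Y}]=-\langle \mathbf{Y},(PX)\rangle$ is exactly the sign bookkeeping that makes that cancellation precise. (The paper also offers two alternative, tiling-based proofs via anticoherent hexagons and via a torus tiling, but your algebraic route is the intended direct one.)
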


\begin{proof}[Proof~1]
Applying the definition~\eqref{eq:coherent-polygon} of the generalized mixed cross-ratio,
we see that each of the four terms in the numerator cancels out a term in the denominator. 
\end{proof}

\begin{proof}[Proof~2]
Each of the two hexagons in the tiling
\begin{equation*}
\begin{tikzpicture}[baseline= (a).base]
\node[scale=1] (a) at (0,0){
\begin{tikzcd}[arrows={-stealth}, cramped, sep=6]
(PA) \edge{rrr} \edge{rd} \edge{ddd}&&& B  \edge{ddd}\\[8pt]
& C  \edge{r}  \edge{d}& (PB) \edge{d}& \\[8pt]
& (PD) \edge{r}  & A \edge{rd}& \\[8pt]
D \edge{rrr} & & & (PC)
\end{tikzcd}
};
\end{tikzpicture}
\end{equation*}
of the given annulus is anticoherent by Proposition~\ref{prop:hex-tile}. The claim follows. 
\end{proof}

\begin{proof}[Proof~3]
The statement follows from the tiling of the torus shown in Figure~\ref{fig:annulus-from-decagon}. 
\end{proof}

\enlargethispage{2cm}

\begin{figure}[ht]
\vspace{-5pt}
\begin{equation*}
\begin{tikzcd}[arrows={-stealth}, cramped, sep=9]
&& A \edge{r} \edge{ld} & (BD) \edge{r} \edge{r} & P \edge{r} \edge{ddd}& (AC) \edge{r} & B \edge{rd} \\[3pt]
& (PD) \edge{ld} &&&&&& (PC)\edge{rd}  \\[3pt]
B \edge{r} \edge{rd} &(PA)\edge{rd}  &&&&&&(PB) \edge{ld} \edge{r} & A \edge{ld} \\[3pt]
& (PC) \edge{r} \edge{rd} & D \edge{rr} && (AB) \edge{rr} && C \edge{r} & (PD)\edge{ld}  \\[3pt]
&& A \edge{r} & (BD) \edge{r} & P \edge{r} & (AC) \edge{r} & B 
\end{tikzcd}
\end{equation*}
\vspace{-5pt}
\caption{Third proof of Lemma~\ref{lem:coherent-annulus}. 
Opposite sides of the hexagonal fundamental domain should be glued to each other.
Two octagons at the top of the picture and the decagon at the bottom of it are coherent 
by Propositions~\ref{pr:octagon} and~\ref{prop:10-gon}, respectively. 
The remaining quadrilaterals form the boundary of the annulus in~\eqref{eq:coherent-annulus}. 
}
\vspace{-15pt}
\label{fig:annulus-from-decagon}
\end{figure}
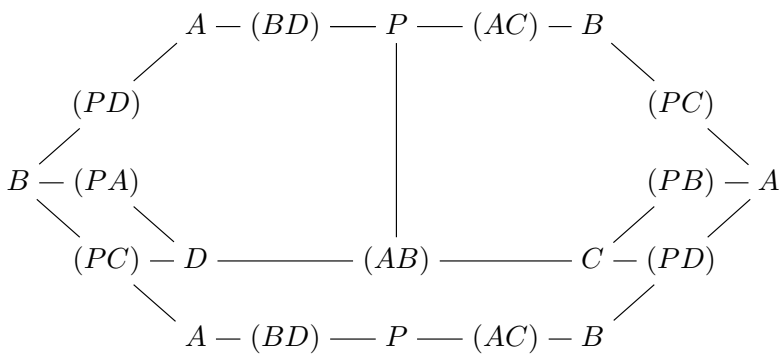

\newpage

\section*{Coherent 14-gons and 16-gons}

Lemma~\ref{lem:coherent-annulus} can be used to provide a tiling-based proof of 
the coherence of a $2n$-gon 
with boundary labels $P_1$\,-\,$(P_2P_3)$\,-\,$P_4$\,-\,$(P_6P_7)$\,-\,$\cdots$
for ${n\not\equiv 0\bmod 3}$, cf.\ Remark~\ref{rem:12-3-45-6-}. 
The cases $n=4$ and $n=5$ have been treated in 
Propositions~\ref{pr:octagon} and \ref{prop:10-gon}, respectively.  
We illustrate the general argument by showing the appropriate tilings for $n=7$ and $n=8$ in
Figures~\ref{fig:14-gon} and~\ref{fig:16-gon} below. 

\begin{figure}[ht]
\vspace{-10pt}
\begin{equation*}
\begin{tikzpicture}[baseline= (a).base]
\node[scale=.9] (a) at (0,0){
\begin{tikzcd}[arrows={-stealth}, cramped, sep=5.5]
&&&& (P_1P_7) \edge{r} \edge{dl} & P_6 \edge{r} & (P_4P_5) \edge{r} \edge{ddddd} & P_3 \edge{r} & (P_1P_2) \edge{dr} \\[3pt]
&&& P_4 \edge{dl} &&&&&& P_5 \edge{dr} \\[3pt]
&& (P_5P_6) \edge{dl} &&&&&&&& (P_3P_4) \edge{dr} \\[3pt]
& P_7 \edge{dr} \edge{dl} &&&&&&&&&& P_2 \edge{dl} \edge{dr} \\[3pt]
(P_1P_2) \edge{dr} && (P_1P_4) \edge{dl} &&&&&&&& (P_1P_5) \edge{dr} && (P_1P_7) \edge{dl} \\[3pt]
& P_5 \edge{dr} \edge{rrr} &&& (P_6P_7) \edge{rr} && P_1 \edge{rr} && (P_2P_3) \edge{rrr} &&& P_4 \edge{dl} \\[3pt]
&& (P_3P_4) \edge{dr} &&&&&&&& (P_5P_6) \edge{dl} \\[3pt]
&&& P_2 \edge{dr} &&&&&& P_7 \edge{dl} \\[3pt]
&&&& (P_1P_7) \edge{r} & P_6 \edge{r} & (P_4P_5) \edge{r} & P_3 \edge{r} & (P_1P_2) 
\end{tikzcd}
};
\end{tikzpicture}
\end{equation*}
\vspace{-10pt}
\caption{A coherent 14-gon appears of the bottom of the picture. 
Opposite sides of the hexagonal fundamental domain should be glued to each other.
The decagons at the top of the picture are coherent 
by Proposition~\ref{prop:10-gon}. 
The two quadrilaterals at the left and right ends form the boundary of the annulus in~\eqref{eq:coherent-annulus}. 
}
\vspace{-15pt}
\label{fig:14-gon}
\end{figure}
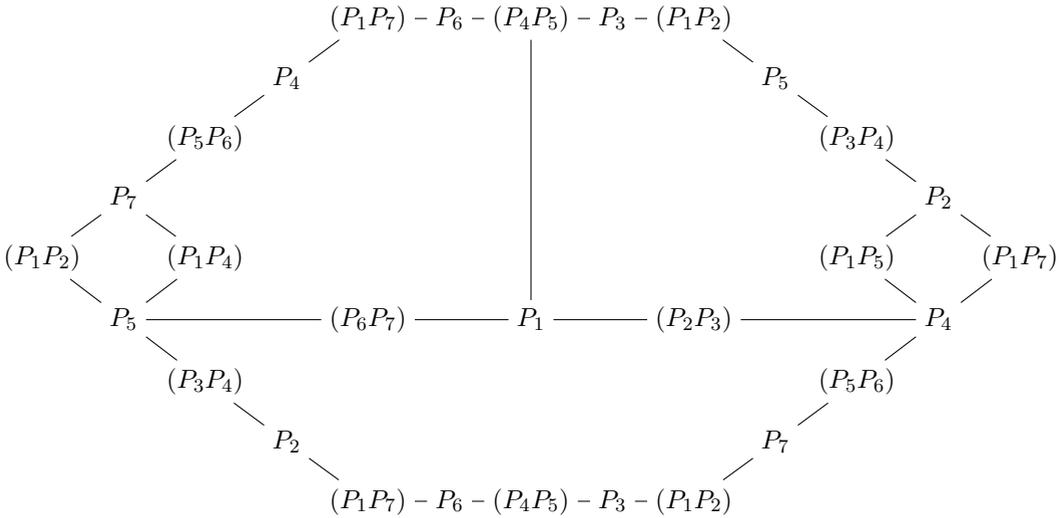

\begin{figure}[ht]
\vspace{-10pt}
\begin{equation*}
\begin{tikzpicture}[baseline= (a).base]
\node[scale=.9] (a) at (0,0){
\begin{tikzcd}[arrows={-stealth}, cramped, sep=5.5]
&& P_2 \edge{r} \edge{dl} & (P_1P_8) \edge{r} & P_7 \edge{r} \edge{rrrddddd}& (P_5P_6) \edge{r} & P_4 \edge{r} & (P_2P_3) \edge{r}& P_1  \edge{dr} \\[3pt]
& (P_7P_8) \edge{dl} &&&&&&&& (P_6P_7) \edge{dr} \\[3pt]
P_1 \edge{dr}  \edge{r} & (P_2P_7) \edge{dr} &&&&&&&&& P_5 \edge{dr} \\[3pt]
& (P_6P_7)\edge{dr}  \edge{r} &P_8 \edge{ddrrrrr} &&&&&&&& & (P_3P_4) \edge{dr} \\[3pt]
&& P_5 \edge{dr} &&&&&&&& & (P_1P_7) \edge{r} \edge{dl} & P_2 \edge{dl} \\[3pt]
&&& (P_3P_4)\edge{dr} &&&&(P_1P_2)\edge{r} &P_3\edge{r} &(P_4P_5)\edge{r} &P_6 \edge{r} & (P_7P_8) \edge{dl} \\[3pt]
&&&& P_2 \edge{r} & (P_1P_8) \edge{r} & P_7 \edge{r} & (P_5P_6) \edge{r} & P_4 \edge{r} & (P_2P_3) \edge{r} & P_1
\end{tikzcd}
};
\end{tikzpicture}
\end{equation*}
\vspace{-10pt}
\caption{A coherent 16-gon appears of the bottom of the picture. 
Opposite sides of the hexagonal fundamental domain should be glued to each other.
The octagon at the upper-left is coherent 
by Proposition~\ref{prop:10-gon}. 
The 14-gon at the upper-right is coherent 
by Figure~\ref{fig:14-gon}. 
The two quadrilaterals at the left and right ends form the boundary of the annulus in~\eqref{eq:coherent-annulus}. 
}
\vspace{-15pt}
\label{fig:16-gon}
\end{figure}
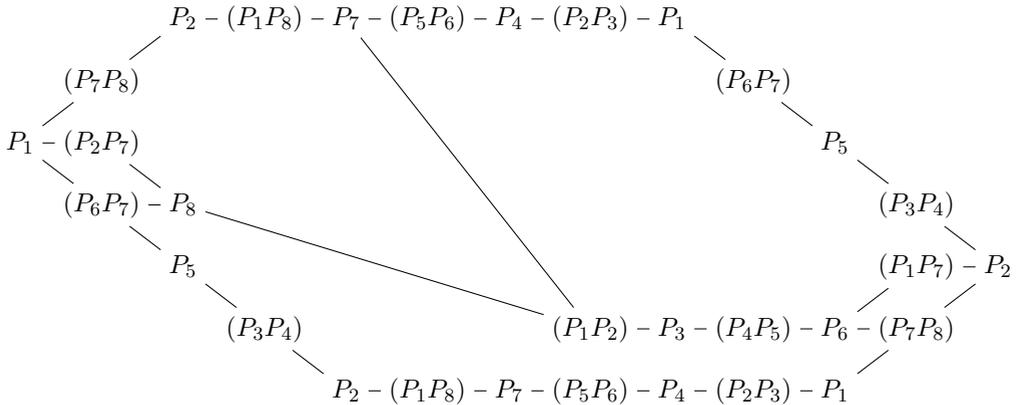

\newpage

\section{Variations of the master theorem
}
\label{sec:generalizations}

\section*{Master theorem for arbitrary dimensions}

The notions of mixed cross-ratio and coherent tile 
can be naturally extended to involve 
projective subspaces of arbitrary dimensions, as explained below. 
This leads to a generalization of our master theorem to tilings labeled by such subspaces. 

\medskip

We continue using the notation from Definition~\ref{def:mixed-cross-ratio}. 
In particular, $\PP\cong\CC\PP^d$ is the projectivization of a complex vector space~$\VV$ of dimension $d+1$. 
%We set $d=\dim\PP$, so that $\dim\VV=d+1$. 
We fix a skew-symmetric multilinear \emph{volume form} $\operatorname{vol}: \bigwedge^{d+1}\VV\to\CC$. 

\begin{definition}
\label{def:extensors}
A (decomposable) \emph{extensor}  of step $k$ in~$\VV$  is a wedge product 
\begin{equation*}
\mathbf{e}=e_1\wedge  \cdots \wedge e_k \in \textstyle\bigwedge^k \VV 
\end{equation*}
of $k$ linearly independent vectors $e_1,\dots,e_k\in\VV$. 
The projectivization of~$\mathbf{e}$ is uniquely determined by 
the $k$-dimensional subspace $E\!=\!\operatorname{span}(e_1,\dots,e_k)$. 
That is, the extensor~$\mathbf{e}$ does not depend on the choice of a basis $e_1,\dots,e_k$ in~$E$,
up to a scalar~factor. 

Let $\mathbf{e}$ and $\mathbf{f}$ be extensors of steps $k$ and~$\ell$, respectively, with $k+\ell=\dim\VV=d+1$.
We then denote by $\langle\mathbf{e},\mathbf{f}\rangle = \operatorname{vol}(\mathbf{e}\wedge\mathbf{f})$ 
the natural pairing of $\mathbf{e}$ and $\mathbf{f}$,
\end{definition}

\begin{definition}
\label{def:mixed-cross-ratio-generalized}
Let $r$ and $s$ be positive integers such that $r+s=d-1$. 
Let $a_1, a_2, a_3, a_4$ be projective subspaces of~$\PP\cong\CC\PP^d$ of dimensions $r,s,r,s$, respectively. \linebreak[3]
Let $\mathbf{A_1}, \mathbf{A_2}, \mathbf{A_3}, \mathbf{A_4}$ be 
arbitrary choices of extensors associated with $a_1, a_2, a_3, a_4$, respectively, cf.\ Definition~\ref{def:extensors}.
Assume that all four intersections 
\begin{equation*}
\text{
$a_1\cap a_2$, 
$a_2\cap a_3$, 
$a_3\cap a_4$, 
$a_4\cap a_1$
}
\end{equation*}
are empty;
equivalently, all four pairings 
\begin{equation*}
\text{
$\langle \mathbf{A_1}, \mathbf{A_2} \rangle$, 
$\langle \mathbf{A_2}, \mathbf{A_3} \rangle$, 
$\langle \mathbf{A_3}, \mathbf{A_4} \rangle$, 
$\langle \mathbf{A_4}, \mathbf{A_1} \rangle$
}
\end{equation*}
are~nonzero.
The (generalized) \emph{mixed cross-ratio} $(a_1,a_3;a_2,a_4)$ is defined~by 
\begin{equation}
\label{eq:mixed-cross-ratio-generalized}
(a_1,a_3;a_2,a_4)=
\dfrac
{\langle \mathbf{A_1}, \mathbf{A_2} \rangle \langle \mathbf{A_3}, \mathbf{A_4} \rangle}
{\langle \mathbf{A_3}, \mathbf{A_2} \rangle \langle \mathbf{A_1}, \mathbf{A_4} \rangle}. 
\end{equation}
As in Definition~\ref{def:mixed-cross-ratio}, the mixed cross-ratio 
$(a_1,a_3;a_2,a_4)$ does not depend on the choice of extensors 
$\mathbf{A_{\boldsymbol i}}$ representing the subspaces~$a_i$. 

Generalizing Definition~\ref{def:coherent-tile}/Proposition~\ref{pr:coherence-algebraic}, we say that 
the tile
\begin{equation}
\label{eq:coherent-tile-generalized}
\begin{tikzpicture}[baseline= (a).base]
\node[scale=1] (a) at (0,0){
\begin{tikzcd}[arrows={-stealth}, sep=small, cramped]
a_1  \edge{r}  \edge{d}& a_2 \edge{d} \\[3pt]
a_4 \edge{r} & a_3
\end{tikzcd}
};
\end{tikzpicture}
\end{equation}
is \emph{coherent} if and only if 
\begin{equation}
\label{eq:cross-ratio=1-generalized}
(a_1,a_3;a_2,a_4)=1.
\end{equation}
\end{definition}

Our master theorem (Theorem~\ref{th:master}) generalizes---practically verbatim and with the same proof---to the
setting of Definition~\ref{def:mixed-cross-ratio-generalized}.
We label the black (resp., white) vertices of a tiling by projective subspaces of dimension~$r$ (resp.,~$s$)
such that adjacent vertices are labeled by disjoint subspaces. 
Then, if all tiles but one are coherent, then the remaining tile is coherent as well.  

\section*{Geometry of generalized mixed cross-ratios}

The geometric interpretation of the generalized mixed cross-ratio $(a_1,a_3;a_2,a_4)$ is more complicated
than the original case of vectors and covectors (cf.\ \eqref{eq:ABlm=ABLM}) might suggest. 
These complications ultimately stem from the following fact. 

\begin{lemma}
\label{lem:flats}
Let $\PP$ be a $d$-dimensional complex projective space. 
Let $a_1, a_2, a_3, a_4$ be generic subspaces of~$\PP$ of dimensions $r,s,r,s$, respectively,
where $r+s=d-1$. 
%(resp., $a_2$ and $a_4$) be generic $r$-dimensional (resp., $s$-dimensional) flats in~$\PP$. 
Then there are exactly $d-\max(r,s)$ lines that pierce all four subspaces $a_1, a_2, a_3, a_4$. 
\end{lemma}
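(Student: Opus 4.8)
The statement counts the lines meeting four generic subspaces $a_1,a_2,a_3,a_4$ of dimensions $r,s,r,s$ with $r+s=d-1$, and the clean answer $d-\max(r,s)$ strongly suggests a Schubert-calculus computation in the Grassmannian $\operatorname{Gr}(2,d+1)$ of projective lines in $\PP\cong\CC\PP^d$. The plan is to translate each piercing condition into a Schubert class and then intersect. A projective subspace $a$ of dimension $k$ imposes on a variable line $\ell$ the special Schubert condition $\ell\cap a\neq\varnothing$; in $\operatorname{Gr}(2,d+1)$ this is a special Schubert variety whose codimension is $\dim\PP - k - 1 = d-k-1$ (a line generically avoids a $k$-plane precisely when $k<d-1$, and the locus of lines meeting it has the stated codimension). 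So the four conditions have total codimension $(d-r-1)+(d-s-1)+(d-r-1)+(d-s-1) = 4d-2(r+s)-4 = 4d-2(d-1)-4 = 2(d-1)$, which matches $\dim\operatorname{Gr}(2,d+1)=2(d+1-2)=2(d-1)$. Thus the expected intersection is zero-dimensional, i.e.\ finitely many lines, and the count is the degree of the product of the four special Schubert classes.

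\textbf{Key steps.} First I would fix notation for the Grassmannian $\operatorname{Gr}(2,d+1)$ of $2$-dimensional subspaces of $\VV$ (equivalently lines in $\PP$) and record that the condition ``$\ell$ meets a subspace of projective dimension $k$'' is the special Schubert class $\sigma_{d-k-1}$ (in the convention where $\sigma_i$ has codimension $i$). Second, I would write the relevant product as $\sigma_{d-r-1}^2\,\sigma_{d-s-1}^2$ and evaluate it against the fundamental class using Pieri's rule together with the known structure of the cohomology of $\operatorname{Gr}(2,d+1)$: the Schubert classes are indexed by partitions $(\lambda_1,\lambda_2)$ with $d-1\ge\lambda_1\ge\lambda_2\ge 0$, and the top class is $\sigma_{(d-1,d-1)}$. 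Third, carrying out the Pieri multiplications, I expect the coefficient of the top class to come out to $d-\max(r,s)$; concretely, after pairing $\sigma_{d-r-1}\sigma_{d-s-1}$ into a sum of $\sigma_{(\lambda_1,\lambda_2)}$ and multiplying by the second copy, the number of ways the partitions fit inside the $(d-1)\times 2$ bounding box yields exactly $d-\max(r,s)$ terms. Finally, I would invoke genericity (Kleiman transversality) to guarantee that the set-theoretic count of piercing lines equals this intersection number, so that all solutions are distinct reduced points.

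\textbf{Alternative and main obstacle.} As a sanity check I would verify the boundary case $r=s$ with $d=3$ (so $r=s=1$), where the formula gives $d-\max(r,s)=2$, recovering the classical fact---already used in the paper in Remark~\ref{rem:16-lines-Schubert}---that four generic lines in $\CC\PP^3$ are met by exactly two transversals. I would also double-check the degenerate case $\max(r,s)=d-1$, where the formula predicts a single line; here one of the subspaces is a hyperplane, every line meets it, and the count reduces to a lower-dimensional problem. The main obstacle is the bookkeeping in the Pieri computation: ensuring that the bounding-box constraint $\lambda_1\le d-1$ is applied correctly so that the overcounting from the naive product is trimmed to exactly $d-\max(r,s)$, and that the asymmetry between $r$ and $s$ is handled so the answer depends only on $\max(r,s)$ and not on $r,s$ separately. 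A cleaner route that sidesteps some of this is to argue geometrically: project from the larger subspace (say $\dim = \max(r,s)$) to reduce the dimension of $\PP$ and induct, tracking how the piercing conditions transform; but the Schubert-calculus count is the most transparent and is the one I would present.
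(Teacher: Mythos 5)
Your proposal is correct and follows essentially the same route as the paper: the paper likewise reduces the count to the coefficient of the top Schubert class $s_{(d-1,d-1)}$ in $(s_{(d-r-1)}s_{(d-s-1)})^2$ in $\operatorname{H}^*(\operatorname{Gr}_{2,d+1}(\CC))$ and evaluates it by Pieri (phrased there as counting semistandard tableaux of shape $(d-1,d-1)$ with content $(d\!-\!r\!-\!1,d\!-\!s\!-\!1,d\!-\!s\!-\!1,d\!-\!r\!-\!1)$), obtaining $d-\max(r,s)$. Your codimension check, bounding-box bookkeeping, and appeal to Kleiman transversality fill in exactly the steps the paper leaves as "an exercise in Schubert Calculus."
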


\begin{proof}
This is an exercise in Schubert Calculus, see, e.g., \cite{fulton-young-tableaux, manivel}.
We need to count 2-dimensional subspaces of ~$\VV\cong\CC^{d+1}$ 
that nontrivially intersect given generic subspaces of dimensions $r+1, s+1, r+1, s+1$. 
This translates into computing the product of the corresponding special Schubert classes
in the cohomology ring $\operatorname{H}^*(\operatorname{Gr}_{2,d+1}(\CC))$. 
To be specific, the intersection number in question is equal to the coefficient of 
$s_{(d-1,d-1)}$ in the Schur function expansion of the product $(s_{(d-r-1)} s_{(d-s-1)})^2$. 
Assuming that $r\le s$ and using the Pieri rule, we see that the above number
is the number of semistandard Young tableaux of shape $(d\!-\!1,d\!-\!1)$ 
and content $({d\!-\!r\!-\!1},{d\!-\!s\!-\!1},{d\!-\!s\!-\!1},{d\!-\!r\!-\!1})$.
This number is easily seen to be equal to~${d\!-\!s}$. 
\end{proof}

%\section*{Tilings whose vertices are labeled by lines in 3-space}

The simplest instance of the above setting that provides new examples not covered before
is the case $d\!=\!3$, $r\!=\!s\!=\!1$. 
In that case, Lemma~\ref{lem:flats} asserts that four generic lines in~$\CC\PP^3$
can be pierced by exactly two lines, cf.\ Remark~\ref{rem:16-lines-Schubert}. 
For each of these lines, the corresponding quadruple of points of intersection 
gives rise to a cross-ratio.
The product of these cross-ratios yields the mixed cross-ratio of the original four lines: 

\iffalse
Let $\PP$ be a 3-dimensional projective space. 
We identify~$\PP$ with the space of 1-dimen\-sional subspaces in a 4-dimensional vector space~$V$.
We assume that $V$ is endowed with a multilinear skew-symmetric volume form 
$(\vv_1,\vv_2,\vv_3,\vv_4)\mapsto \langle \vv_1\vv_2\vv_3\vv_4 \rangle$. 

\begin{definition}
\label{def:cross-ratio-4-lines}
Let $a_1, a_2, a_3, a_4\subset\PP$ be four generic lines. 
The (generalized) \emph{cross-ratio} $(a_1, a_3; a_2, a_4) $ is defined as follows. 

For $i=1,2,3,4$, let $\mathbf{A}_i\subset V$ 
be the  2-dimensional subspace corresponding to~$a_i$.
Furthermore, let $\bb_i, \cc_i\in V$ be two vectors spanning~$\mathbf{A}_i$. 
We then set
\begin{equation}
\label{eq:cross-ratio-4-lines}
(a_1, a_3; a_2, a_4) 
=
\frac{\langle\bb_1\cc_1 \bb_2\cc_2\rangle \langle\bb_3\cc_3 \bb_4\cc_4\rangle}
       {\langle\bb_2\cc_2 \bb_3\cc_3\rangle\langle\bb_4\cc_4\bb_1\cc_1\rangle}
%=\frac{\langle\bb_1\bb_2\cc_1\cc_2\rangle\langle\bb_3\bb_4\cc_3\cc_4\rangle}{\langle\bb_2\bb_3\cc_2\cc_3\rangle\langle\bb_4\bb_1\cc_4\cc_1\rangle}
\,.
\end{equation}
It is not hard to see that the right-hand side of~\eqref{eq:cross-ratio-4-lines} does not depend on 
the choice of the bases $(\bb_i, \cc_i)$ in the spaces~$\mathbf{A}_i$. 
\end{definition}
\fi

\begin{proposition}
\label{pr:tile-schubert}
Let $b$ and~$c$ be two distinct lines in~$\PP$ piercing four generic lines $a_1, a_2, a_3, a_4$. 
For $i=1,2,3,4$, let $B_i=b\cap a_i$ and $C_i=c\cap a_i$. 
%\begin{equation*}
%B_1,C_1\in a_1, \ \ B_2,C_2\in a_2, \ \ B_3,C_3\in a_3, \ \ B_4,C_4\in a_4
%\end{equation*}
%be four pairs of points chosen on these four lines. 
We then have 
\begin{equation}
\label{eq:tile-schubert}
(a_1, a_3; a_2, a_4) 
=
(B_1, B_3; B_2, B_4)\cdot (C_1, C_3; C_2, C_4). 
\end{equation}
Consequently, the tile \eqref{eq:coherent-tile-generalized} is coherent if and only if the quadruples
$(B_1,B_2,B_3,B_4)$ and $(C_2,C_3,C_4,C_1)$ are projectively equivalent. 
\end{proposition}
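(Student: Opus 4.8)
The identity \eqref{eq:tile-schubert} is the crux, and I would prove it by a direct computation in the exterior algebra, choosing bases adapted to the two piercing lines. Since $b$ pierces each $a_i$ at the point $B_i$ and $c$ pierces each $a_i$ at the point $C_i$, each line $a_i$ is spanned by $B_i$ and $C_i$. The plan is therefore to pick vectors $\mathbf{b_i}, \mathbf{c_i}\in\VV$ representing the points $B_i, C_i$, so that the extensor for $a_i$ is $\mathbf{A_i}=\mathbf{b_i}\wedge\mathbf{c_i}$. Then by Definition~\ref{def:mixed-cross-ratio-generalized},
\begin{equation*}
(a_1,a_3;a_2,a_4)=
\frac{\operatorname{vol}(\mathbf{b_1}\wedge\mathbf{c_1}\wedge\mathbf{b_2}\wedge\mathbf{c_2})\,\operatorname{vol}(\mathbf{b_3}\wedge\mathbf{c_3}\wedge\mathbf{b_4}\wedge\mathbf{c_4})}
{\operatorname{vol}(\mathbf{b_3}\wedge\mathbf{c_3}\wedge\mathbf{b_2}\wedge\mathbf{c_2})\,\operatorname{vol}(\mathbf{b_1}\wedge\mathbf{c_1}\wedge\mathbf{b_4}\wedge\mathbf{c_4})}.
\end{equation*}
The key structural fact is that the four points $B_1,B_2,B_3,B_4$ all lie on the single line~$b$, and likewise the $C_i$ lie on~$c$; so both families are collinear, and a suitable factorization of each $4\times4$ volume should separate the ``$b$-part'' from the ``$c$-part.''

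The main step is to verify that each pairing $\langle\mathbf{A_i},\mathbf{A_j}\rangle=\operatorname{vol}(\mathbf{b_i}\wedge\mathbf{c_i}\wedge\mathbf{b_j}\wedge\mathbf{c_j})$ factors, up to a common constant, as a product of a $2$-dimensional determinant in the $B$'s and one in the $C$'s. Concretely, I would fix a two-dimensional complement and write $\mathbf{b_i}=\beta_i\mathbf{u}+\beta_i'\mathbf{u}'$ in a basis $\mathbf{u},\mathbf{u}'$ spanning the plane through $b$ (viewed in $\VV$), and similarly $\mathbf{c_i}=\gamma_i\mathbf{w}+\gamma_i'\mathbf{w}'$ for $c$. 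Expanding the wedge $\mathbf{b_i}\wedge\mathbf{c_i}\wedge\mathbf{b_j}\wedge\mathbf{c_j}$ and using multilinearity, the only surviving term (since we have a $4$-dimensional space and four vectors, two from each $2$-plane) picks out the product $(\beta_i\beta_j'-\beta_j\beta_i')(\gamma_i\gamma_j'-\gamma_j\gamma_i')$ times $\operatorname{vol}(\mathbf{u}\wedge\mathbf{u}'\wedge\mathbf{w}\wedge\mathbf{w}')$. The first factor is exactly the $2\times2$ determinant measuring $B_i,B_j$ on the line $b$, and the second does the same for $C_i,C_j$ on $c$. Substituting this factorization into the cross-ratio, the ambient volume constant cancels in the ratio, and the $B$-determinants assemble into the ordinary cross-ratio $(B_1,B_3;B_2,B_4)$ while the $C$-determinants assemble into $(C_1,C_3;C_2,C_4)$, giving \eqref{eq:tile-schubert}.

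For the final assertion, I would argue as follows. By Definition~\ref{def:mixed-cross-ratio-generalized}, coherence of the tile \eqref{eq:coherent-tile-generalized} means $(a_1,a_3;a_2,a_4)=1$, which by \eqref{eq:tile-schubert} is equivalent to
\begin{equation*}
(B_1,B_3;B_2,B_4)=(C_1,C_3;C_2,C_4)^{-1}=(C_2,C_3;C_4,C_1),
\end{equation*}
using the standard symmetry of the cross-ratio under the transposition that inverts it. Two collinear quadruples are projectively equivalent precisely when they share a cross-ratio, so this is exactly the condition that $(B_1,B_2,B_3,B_4)$ and $(C_2,C_3,C_4,C_1)$ are projectively equivalent.

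\textbf{Main obstacle.} The genuinely delicate point is the factorization step: one must confirm that the $4$-vector wedge $\mathbf{b_i}\wedge\mathbf{c_i}\wedge\mathbf{b_j}\wedge\mathbf{c_j}$ collapses cleanly into the product of two $2\times2$ minors with a \emph{single} ambient constant independent of $i,j$, so that this constant truly cancels in the ratio. This hinges on both lines $b$ and $c$ being fixed (hence $\mathbf{u},\mathbf{u}',\mathbf{w},\mathbf{w}'$ are the same for all four indices), and on $b\neq c$ guaranteeing that $\mathbf{u}\wedge\mathbf{u}'\wedge\mathbf{w}\wedge\mathbf{w}'\neq0$. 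I would also need to track carefully the sign/index bookkeeping in passing from $(C_1,C_3;C_2,C_4)^{-1}$ to $(C_2,C_3;C_4,C_1)$ so that the cyclic shift in the statement matches exactly; this is a routine but error-prone application of the cross-ratio symmetries rather than a conceptual difficulty.
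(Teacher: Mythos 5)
Your proposal is correct and follows essentially the same route as the paper: both proofs take the extensor $\mathbf{A_i}=\mathbf{b_i}\wedge\mathbf{c_i}$ with $\mathbf{b_i},\mathbf{c_i}$ representing $B_i,C_i$, and exploit the fact that the $\mathbf{b_i}$ span one $2$-plane and the $\mathbf{c_i}$ another to factor each pairing $\langle\mathbf{A_i},\mathbf{A_j}\rangle$ into a $B$-minor times a $C$-minor times a fixed constant that cancels in the ratio (the paper does this via an affine rescaling and column operations, you via explicit bases $\mathbf{u},\mathbf{u}',\mathbf{w},\mathbf{w}'$ of the two planes, which is the same computation). The only point to tidy is the final permutation identity: under the paper's convention one has $(C_1,C_3;C_2,C_4)^{-1}=(C_2,C_4;C_3,C_1)$ rather than $(C_2,C_3;C_4,C_1)$, but as you yourself flag this is routine bookkeeping and the stated conclusion about the projective equivalence of $(B_1,B_2,B_3,B_4)$ and $(C_2,C_3,C_4,C_1)$ is unaffected.
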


\vspace{-8pt}

%\enlargethispage{8mm}

\begin{proof}
We will use the shorthand
$\langle \mathbf{a}, \mathbf{b}, \mathbf{c}, \mathbf{d}\rangle \stackrel{\rm def}{=} 
\operatorname{vol}(\mathbf{a}\wedge \mathbf{b}\wedge \mathbf{c}\wedge \mathbf{d})$.
Let $\bb_i$ (resp.,~$\cc_i$) be a vector in~$\VV$ representing the point~$B_i$ (resp.,~$C_i$). 
Since the points $B_1, \dots, B_4$ lie on the line~$b$, 
the vectors $\bb_1, \dots, \bb_4$ belong to a 2-dimensional subspace of~$V$.
We~can therefore rescale these four vectors so that their endpoints lie on a line.
In other words, we may assume that all the vectors $\bb_i-\bb_j$ are collinear. 
We may similarly assume that all the vectors $\cc_i-\cc_j$ are collinear. 
It follows that
\begin{align*}
(a_1, a_3; a_2, a_4) 
&=
\frac{\langle\bb_1\wedge \cc_1, \bb_2\wedge \cc_2\rangle \langle\bb_3\wedge \cc_3, \bb_4\wedge \cc_4\rangle}
       {\langle\bb_2\wedge \cc_2, \bb_3\wedge \cc_3\rangle \langle\bb_4\wedge \cc_4, \bb_1\wedge \cc_1\rangle} 
\\[-1pt]
&=
\frac{\langle\bb_1, \cc_1, \bb_2, \cc_2\rangle \langle\bb_3, \cc_3, \bb_4, \cc_4\rangle}
       {\langle\bb_2, \cc_2, \bb_3, \cc_3\rangle \langle\bb_4, \cc_4, \bb_1, \cc_1\rangle} 
\\[-1pt]
&=
\frac{\langle\bb_1, \cc_1, \bb_2-\bb_1, \cc_2-\cc_1\rangle \langle\bb_3, \cc_3, \bb_4-\bb_3, \cc_4-\cc_3\rangle}
       {\langle\bb_2, \cc_2, \bb_3-\bb_2, \cc_3-\cc_2\rangle \langle\bb_4, \cc_4, \bb_1-\bb_4, \cc_1-\cc_4\rangle}
\\[-1pt]
&=
\frac{\langle\bb_1, \cc_1, \bb_2-\bb_1, \cc_2-\cc_1\rangle \langle\bb_1, \cc_1, \bb_4-\bb_3, \cc_4-\cc_3\rangle}
       {\langle\bb_1, \cc_1, \bb_3-\bb_2, \cc_3-\cc_2\rangle \langle\bb_1, \cc_1, \bb_1-\bb_4, \cc_1-\cc_4\rangle}
       \\[-1pt]
&=
(B_1, B_3; B_2, B_4)\cdot (C_1, C_3; C_2, C_4). \qedhere
\end{align*}
\end{proof}

\newpage

\section*{Real cross-ratios}

Yet another variation of the master theorem arises from the following beautiful observation, 
see, e.g., \cite[\S5, Theorem~A]{schwerdtfeger} \cite[Exercise~3.2]{bobenko-suris-book}. 

\begin{proposition}
\label{pr:real-concyclicity}
Identify the real plane $\RR^2$ with the field of complex numbers~$\CC$ in the usual way. 
Then four distinct points in~$\RR^2$ lie on a circle if and only if their cross-ratio is real. 
\end{proposition}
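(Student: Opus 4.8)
The plan is to prove the classical characterization that four points are concyclic if and only if their cross-ratio is real, relying on the elementary interaction between the cross-ratio and the geometry of circles in the extended complex plane. First I would recall the relevant definition: for four distinct complex numbers $z_1, z_2, z_3, z_4$, their cross-ratio is
\begin{equation*}
(z_1, z_2; z_3, z_4) = \frac{(z_1 - z_3)(z_2 - z_4)}{(z_1 - z_4)(z_2 - z_3)}.
\end{equation*}
The key structural fact I would invoke is that M\"obius transformations $z \mapsto \frac{az+b}{cz+d}$ (with $ad - bc \neq 0$) preserve the cross-ratio, and that they act transitively on ``circlines'' (circles and lines in $\CC \cup \{\infty\}$), mapping each circline to a circline.

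The main argument proceeds as follows. I would first establish that the cross-ratio of a quadruple is real precisely when the four points lie on a common circline (a circle or a straight line in the extended plane). To see this, fix an orientation-preserving M\"obius transformation $\varphi$ sending $z_2 \mapsto 1$, $z_3 \mapsto 0$, $z_4 \mapsto \infty$; then $(z_1, z_2; z_3, z_4) = \varphi(z_1)$, so the cross-ratio is real exactly when $\varphi(z_1)$ lies on the real axis. Since $\varphi^{-1}$ maps the real axis (a circline through $1, 0, \infty = \varphi(z_2), \varphi(z_3), \varphi(z_4)$) to the unique circline through $z_2, z_3, z_4$, realness of the cross-ratio is equivalent to $z_1$ lying on that circline, hence to all four points being on a common circline.

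The remaining step is to reconcile ``circline'' with ``circle.'' The statement concerns four \emph{distinct finite} points in $\RR^2 \cong \CC$, so the degenerate case to address is when the common circline is a straight line rather than a genuine circle. I would argue this directly: collinearity of $z_1, \dots, z_4$ forces each difference quotient $\frac{z_i - z_j}{z_k - z_l}$ to be real, so the cross-ratio is automatically real in the collinear case as well. Thus both a proper circle and a line yield a real cross-ratio, and conversely a real cross-ratio yields a circline that is either a genuine circle or a line. For the intended reading of the proposition (``lie on a circle''), I would note that a straight line may be regarded as a circle of infinite radius through the point at infinity, so the two formulations coincide under the standard convention on the Riemann sphere.

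The main obstacle I anticipate is purely expository rather than mathematical: being careful about the degenerate collinear case and the convention that identifies lines with circles through $\infty$, so that the biconditional is stated cleanly without ambiguity. The computation that M\"obius maps preserve cross-ratios is routine, and the transitivity of the M\"obius group on circlines is standard; the only place requiring attention is ensuring that the uniqueness of the circline through three distinct points is used correctly and that the case analysis (proper circle versus line) is handled so that the final equivalence matches the phrasing ``lie on a circle.''
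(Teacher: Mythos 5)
Your argument is correct and complete; note that the paper itself gives no proof of Proposition~\ref{pr:real-concyclicity}, citing it as a classical fact from Schwerdtfeger and Bobenko--Suris, so your proposal supplies the standard argument that those references contain: normalize three of the points to $1,0,\infty$ by a M\"obius transformation, use invariance of the cross-ratio and the fact that M\"obius maps carry circlines to circlines, and conclude that reality of the cross-ratio is equivalent to the fourth point lying on the circline through the other three. Your care with the degenerate collinear case is appropriate and consistent with the paper's own convention, stated in the remark immediately following the proposition, that on the M\"obius plane $\RR^2\cup\{\infty\}$ straight lines are regarded as circles through~$\infty$; the paper's companion observation that the equivalence is independent of the ordering of the four points (since the six cross-ratios are related by rational maps preserving reality) is also implicit in your setup and could be added in one line.
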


We note that the order among the four points in Proposition~\ref{pr:real-concyclicity} does not matter
since all possible cross-ratios are related to each other by rational transformations that preserve reality.  

%Proposition~\ref{pr:real-concyclicity} directly implies the following result. 

\begin{corollary}
\label{cor:concyclic-tilings}
Consider a tiling of a closed oriented surface by quadrilateral~tiles. 
Associate to the vertices in the tiling distinct points on the Euclidean plane. 
Suppose that for all tiles but one, 
the four points associated with the vertices of the tile lie on a circle.
Then the same property holds for the remaining~tile. 
\end{corollary}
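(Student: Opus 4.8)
The plan is to reduce Corollary~\ref{cor:concyclic-tilings} to the master theorem (Theorem~\ref{th:master}) by reinterpreting the concyclicity condition on each tile as a coherence condition for a suitable labeling of the tiling by points and hyperplanes. The bridge is Proposition~\ref{pr:real-concyclicity}: four distinct points of $\RR^2\cong\CC$ are concyclic precisely when their cross-ratio is real. So the statement ``for all tiles but one, the four vertex-points lie on a circle'' is equivalent to ``for all tiles but one, the cross-ratio of the four vertex-points is real,'' and we want to conclude the remaining cross-ratio is real too.

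First I would set up the algebra. For each tile with vertices labeled (clockwise) $z_1, z_2, z_3, z_4$ in $\CC$, form the classical cross-ratio
\begin{equation*}
\chi(z_1,z_2,z_3,z_4) = \frac{(z_1-z_2)(z_3-z_4)}{(z_2-z_3)(z_4-z_1)}.
\end{equation*}
The key structural observation, exactly parallel to the proof of Theorem~\ref{th:master}, is a global product identity: the product of these cross-ratios over all tiles of the tiling equals $1$. This holds because each edge of the tiling is shared by exactly two tiles, and its associated difference $z_i - z_j$ appears once in a numerator and once in a denominator (with opposite roles) according to the clockwise-versus-counterclockwise traversal of the two adjacent tiles, just as the pairings $\langle P,h\rangle$ cancel in the master-theorem argument. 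Thus the tile cross-ratios multiply to $1$.

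Next I would exploit reality. The product of all tile cross-ratios is $1$, hence real. If every tile except one has real cross-ratio, then the quotient $1/\prod_{\text{other tiles}}\chi$ equals the remaining tile's cross-ratio, and this quotient is a ratio of reals, hence real (here we use that all cross-ratios are nonzero and finite, guaranteed by the points being distinct). Therefore the remaining tile's cross-ratio is real, and by Proposition~\ref{pr:real-concyclicity} its four vertices are concyclic. This is really the same two-line argument as the master theorem, with $\RR\subset\CC$ replacing the condition ``value $=1$,'' and with the multiplicative group $\CC^*$ replaced in the relevant step by the quotient $\CC^*/\RR^*$.

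The main obstacle, and the only place requiring care, is establishing the global product identity cleanly in the cross-ratio formulation, i.e.\ verifying that the edge-cancellation bookkeeping works out to give exactly $1$ (rather than $\pm1$ or some root of unity) once a consistent clockwise convention is fixed on every tile. This is precisely the combinatorial heart already carried out in the proof of Theorem~\ref{th:master}; I would either invoke that argument mutatis mutandis or, to be self-contained, observe that replacing each point $z\in\CC$ by the vector $(z,1)\in\CC^2$ (so that $z_i-z_j$ becomes, up to a common scalar, the $2\times 2$ determinant $\langle (z_i,1),(z_j,1)\rangle$) makes the present cross-ratio literally an instance of the mixed cross-ratio machinery, at which point the cancellation is identical to the one in Theorem~\ref{th:master}. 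I expect the genericity/distinctness hypotheses to suffice to keep all factors nonzero, so no degenerate cases intrude.
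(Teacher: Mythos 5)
Your proposal is correct and follows essentially the same route as the paper: the paper's proof likewise combines Proposition~\ref{pr:real-concyclicity} with the observation that the product of the tile cross-ratios equals~$1$, so that if all but one are real the last must be real as well. Your extra remarks --- that reality (rather than the exact value~$1$) is all that is needed, and that the edge-cancellation is literally the master-theorem bookkeeping after identifying $z$ with the vector $(z,1)$ so that differences become $2\times2$ determinants --- are exactly the details the paper leaves implicit.
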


\begin{proof}
This corollary is a direct consequence of  Proposition~\ref{pr:real-concyclicity}. 
The product of mixed cross-ratios associated with individual tiles is equal to~1.
If all of these cross-ratios, with the exception of one, are real, then the remaining cross-ratio must be real as well. 
\end{proof}

\begin{remark}
%\label{rem:}
Corollary~\ref{cor:concyclic-tilings} immediately extends to the \emph{M\"obius plane}~$\RR^2\cup\{\infty\}$, a one-point
compactification of the Euclidean plane. (This can be seen by applying an inversion transformation.) 
On the M\"obius plane, 
straight lines are viewed as circles passing through~$\infty$. 
If we label one of the vertices of the tiling by~$\infty$, then the condition associated with a tile with labels $A, B, C, \infty$ 
will simply say that the points $A, B, C$ are collinear. 
\end{remark}

In the special case of the tiling of the sphere by six quadrilaterals 
(cf.\ Figure~\ref{fig:16-points-cube}), Corollary~\ref{cor:concyclic-tilings}  yields \emph{Miquel's theorem} reproduced below,
see, e.g., \cite[Theorem~9.21]{bobenko-suris-book} \cite[Theorem~18.5]{richter-gebert-book}. 

%\enlargethispage{1cm}

\begin{corollary}
\label{cor:miquel}
Let $P_1, \dots, P_6$ be distinct points on the Euclidean plane. 
If five of the six quadruples 
\begin{equation}
\label{eq:6-quadruples}
\begin{array}{l}
\{P_1, P_2, P_7, P_8\}, \{P_1, P_2, P_5, P_6\}, \{P_1, P_4, P_5, P_8\}, %\{P_1, P_4, P_6, P_7\}, 
\\[.05in]
%\{P_2, P_3, P_5, P_8\}, 
\{P_2, P_3, P_6, P_7\}, \{P_3, P_4, P_7, P_8\}, \{P_3, P_4, P_5, P_6\}
\end{array}
\end{equation}
are concyclic (i.e., lie on a circle), then the remaining quadruple is also concyclic. 
See Figure~\ref{fig:miquel}. 
\end{corollary}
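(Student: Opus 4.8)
Miquel's theorem (Corollary~\ref{cor:miquel}) is to be obtained exactly as the concyclicity analogue of the M\"obius/cube theorem (Theorem~\ref{th:cube}). The key structural fact is that the six quadruples in~\eqref{eq:6-quadruples} are precisely six of the eight quadruples in~\eqref{eq:8-quadruples}, namely the six that correspond to the \emph{faces} of the cube in Figure~\ref{fig:16-points-cube}, as opposed to the two antipodal-vertex quadruples. Equivalently, these are the six quadruples read off from the six tiles in the tiling of the sphere that topologically realizes the surface of the cube (the same tiling appearing in Figures~\ref{fig:desargues} and~\ref{fig:bundle}). So the first step is to invoke this tiling: a bicolored quadrilateral tiling of $S^2$ with six tiles, one per quadruple in~\eqref{eq:6-quadruples}.

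\textbf{Key steps.} First I would apply Corollary~\ref{cor:concyclic-tilings} directly to that six-tile tiling of the sphere. Concretely, I associate to each of the eight vertices of the tiling one of the points $P_1,\dots,P_8$, matching the vertex-labeling so that the four points surrounding each tile are exactly the four points of one of the six quadruples in~\eqref{eq:6-quadruples}. Corollary~\ref{cor:concyclic-tilings} then asserts immediately that if the concyclicity (``the four points lie on a circle'') holds for five of the six tiles, it holds for the sixth. This is the entire content of Corollary~\ref{cor:miquel}, so the proof is essentially a single sentence once the correspondence with the cube tiling is in place. The only thing to verify is the bookkeeping: that the six tiles of the cube-surface tiling, under the chosen vertex assignment, produce exactly the six quadruples listed in~\eqref{eq:6-quadruples}.

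\textbf{The main obstacle.} The genuine work is entirely combinatorial/bookkeeping rather than geometric. I expect the one subtle point to be confirming that the six quadruples of~\eqref{eq:6-quadruples} really are the six \emph{face} quadruples of the cube (and not, say, mixing in one of the two antipodal quadruples $\{P_2,P_3,P_5,P_8\}$ or $\{P_1,P_4,P_6,P_7\}$ that appear in the full M\"obius list~\eqref{eq:8-quadruples} but are absent here). Reading off Figure~\ref{fig:16-points-cube}, I would check each quadruple against a face of the cube: the point assignment places $P_2$ at the bottom vertex, $P_1,P_3,P_4$ around it, and $P_5,P_6,P_7,P_8$ on the top, and each of the six faces collects exactly four of these eight points, reproducing the list~\eqref{eq:6-quadruples}. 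Having matched them, the reality-of-cross-ratio criterion of Proposition~\ref{pr:real-concyclicity}---which is what underlies Corollary~\ref{cor:concyclic-tilings}---closes the argument, since the product of the six mixed cross-ratios over the tiles equals~$1$ and five of them being real forces the sixth to be real, hence concyclic.
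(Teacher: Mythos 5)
Your proposal is correct and follows essentially the same route as the paper: the paper likewise obtains Miquel's theorem by applying Corollary~\ref{cor:concyclic-tilings} to the six-tile tiling of the sphere given by the surface of the cube, whose six faces yield exactly the six quadruples in~\eqref{eq:6-quadruples}. (Your description of the vertex placement in Figure~\ref{fig:16-points-cube} is slightly off---the neighbors of $P_2$ are $P_1,P_6,P_7$, not $P_1,P_3,P_4$---but the face-quadruple correspondence you rely on is right.)
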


\begin{figure}[ht]
%\vspace{-10pt}
\begin{center}
\includegraphics[scale=0.6, trim=0.1cm 0.1cm 0cm 0cm, clip]{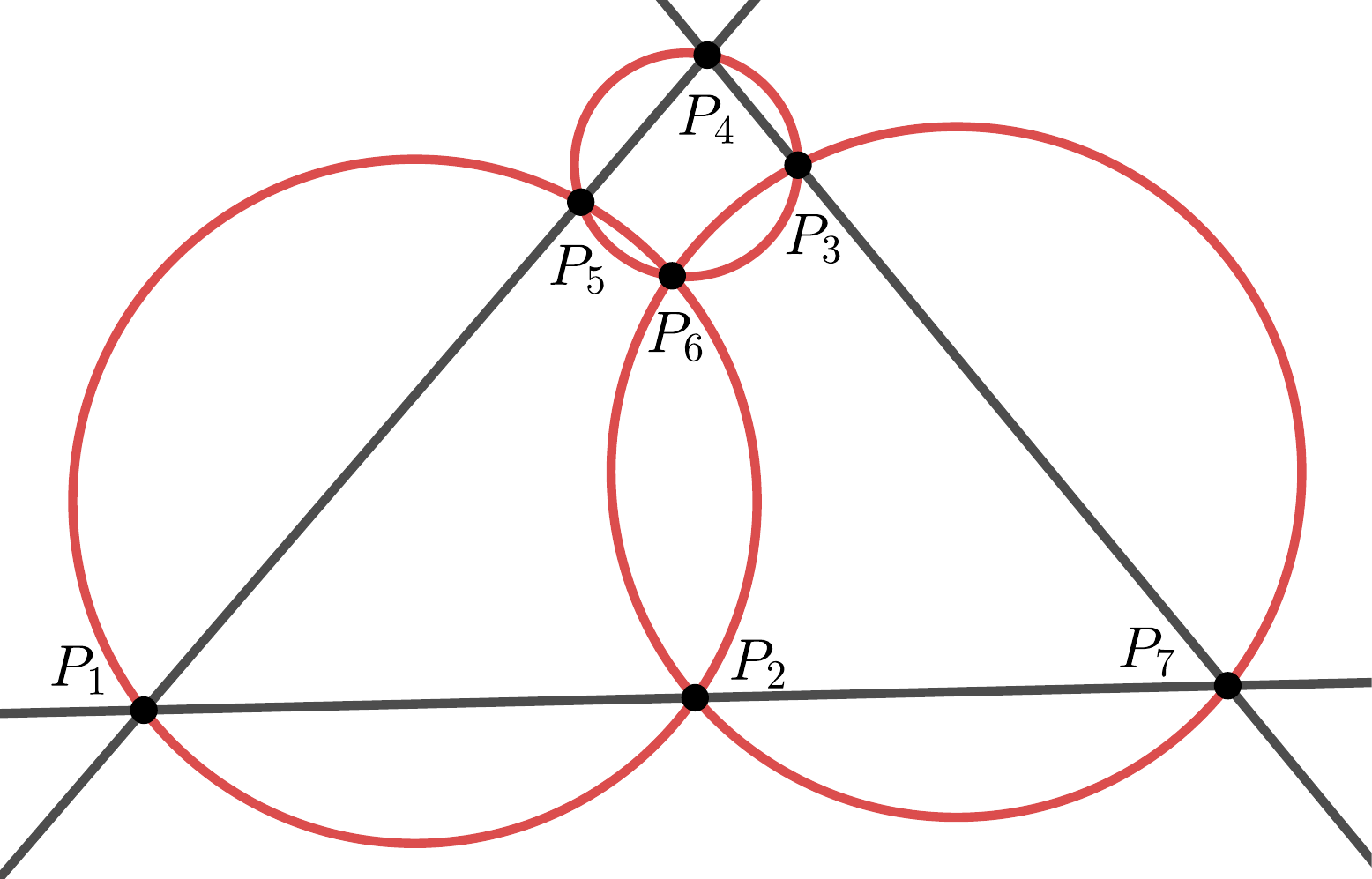}
\end{center}
\vspace{-10pt}
\caption{Miquel's theorem. Here $P_8=\infty$. 
}
\label{fig:miquel}
\end{figure}

While the proof of Miquel's theorem based on Proposition~\ref{pr:real-concyclicity}
is by no means new, 
applications of Corollary~\ref{cor:concyclic-tilings} that are based on other tilings can produce new results 
about configurations of circles and lines. 

\medskip

As we just saw, Miquel's theorem comes from the tiling of the sphere that was used to prove the Desargues theorem. 
Using instead a tiling of the torus that gave rise to the Pappus theorem, 
we obtain the following statement, cf.\ Figure~\ref{fig:nine-circles}. 

\begin{theorem}
\label{th:nine-circles}
Consider two circles on the plane passing through points $a$ and~$b$.
Draw a straight line through $a$ and denote by $P_1, P_2$ the points (other than~$a$) where this line intersects the two circles. 
Draw a straight line through $b$ and denote by $P_4, P_5$ the points (other than~$b$) where this line intersects the two circles. 
Set $P_3=(AP_4)\cap (BP_2)$ and $P_6=(AP_5)\cap (bP_1)$. 
Then the points $a, P_3, P_6, b$ are concyclic. 
\end{theorem}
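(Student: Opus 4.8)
The plan is to derive Theorem~\ref{th:nine-circles} from Corollary~\ref{cor:concyclic-tilings}, using the same torus tiling (Figure~\ref{fig:pappus-torus}) that yields the Pappus theorem, but now reading each of its nine vertices as a point of the M\"obius plane $\RR^2\cup\{\infty\}$. Recall that this tiling has three ``white'' vertices $a,b,c$, six ``black'' vertices $P_1,\dots,P_6$, and nine quadrilateral tiles, and that Corollary~\ref{cor:concyclic-tilings} asserts that if the four vertices of every tile but one are concyclic, then so are the four vertices of the last tile.

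First I would set up the dictionary between the tiling and the data of the theorem. I place the white vertex $c$ at $\infty$, identify the other two white vertices with the intersection points $a,b$ of the two given circles (in the appropriate order), and identify the black vertices with $P_1,\dots,P_6$. By the remark following Corollary~\ref{cor:concyclic-tilings}, a tile one of whose corners is $\infty$ imposes only that its other three corners be collinear (a circle through $\infty$ being a straight line). Exactly six of the nine tiles are incident to $c=\infty$, and I would check that the six collinearities they impose are precisely those built into the configuration: the two auxiliary lines through $a,P_1,P_2$ and through $b,P_4,P_5$, the two collinearities $a,P_3,P_4$ and $b,P_2,P_3$ expressing $P_3=(aP_4)\cap(bP_2)$, and the two collinearities $a,P_5,P_6$ and $b,P_1,P_6$ expressing $P_6=(aP_5)\cap(bP_1)$.

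The remaining three tiles each demand concyclicity of a quadruple $\{a,b,P_i,P_j\}$. Two of them are $\{a,b,P_1,P_4\}$ and $\{a,b,P_2,P_5\}$, which hold by hypothesis since $P_1,P_4$ lie on the first circle and $P_2,P_5$ on the second; the third is $\{a,b,P_3,P_6\}$, which is exactly the desired conclusion. Thus eight of the nine tiles satisfy their concyclicity condition, and Corollary~\ref{cor:concyclic-tilings} forces the ninth, i.e.\ the points $a,P_3,P_6,b$ are concyclic.

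The entire argument reduces to this matching of tiles, so the only thing to be careful about is the bookkeeping: confirming the vertex identification above (which swaps the roles of $a$ and $b$ relative to the original Pappus labeling) and checking, under the genericity hypotheses, that the nine associated points are pairwise distinct so that Corollary~\ref{cor:concyclic-tilings} genuinely applies. I expect no geometric obstacle beyond this correspondence, since all of the substantive content is already packaged into Corollary~\ref{cor:concyclic-tilings} and Proposition~\ref{pr:real-concyclicity}.
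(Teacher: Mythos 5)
Your proposal is correct and is exactly the paper's proof: the paper likewise obtains Theorem~\ref{th:nine-circles} by applying Corollary~\ref{cor:concyclic-tilings} to the Pappus tiling of Figure~\ref{fig:pappus-torus} with $c=\infty$, and your accounting of which six tiles degenerate to the given collinearities and which three encode the two hypothesis circles and the conclusion (including the needed $a\leftrightarrow b$ relabeling) checks out.
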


\begin{proof}
This result is obtained by applying Corollary~\ref{cor:concyclic-tilings} to the tiling in Figure~\ref{fig:pappus-torus},
where we set $c=\infty$. 
\end{proof}

\begin{figure}[ht]
%\vspace{-5pt}
\begin{center}
\includegraphics[scale=0.5, trim=0.1cm 12cm 0cm 0.2cm, clip]{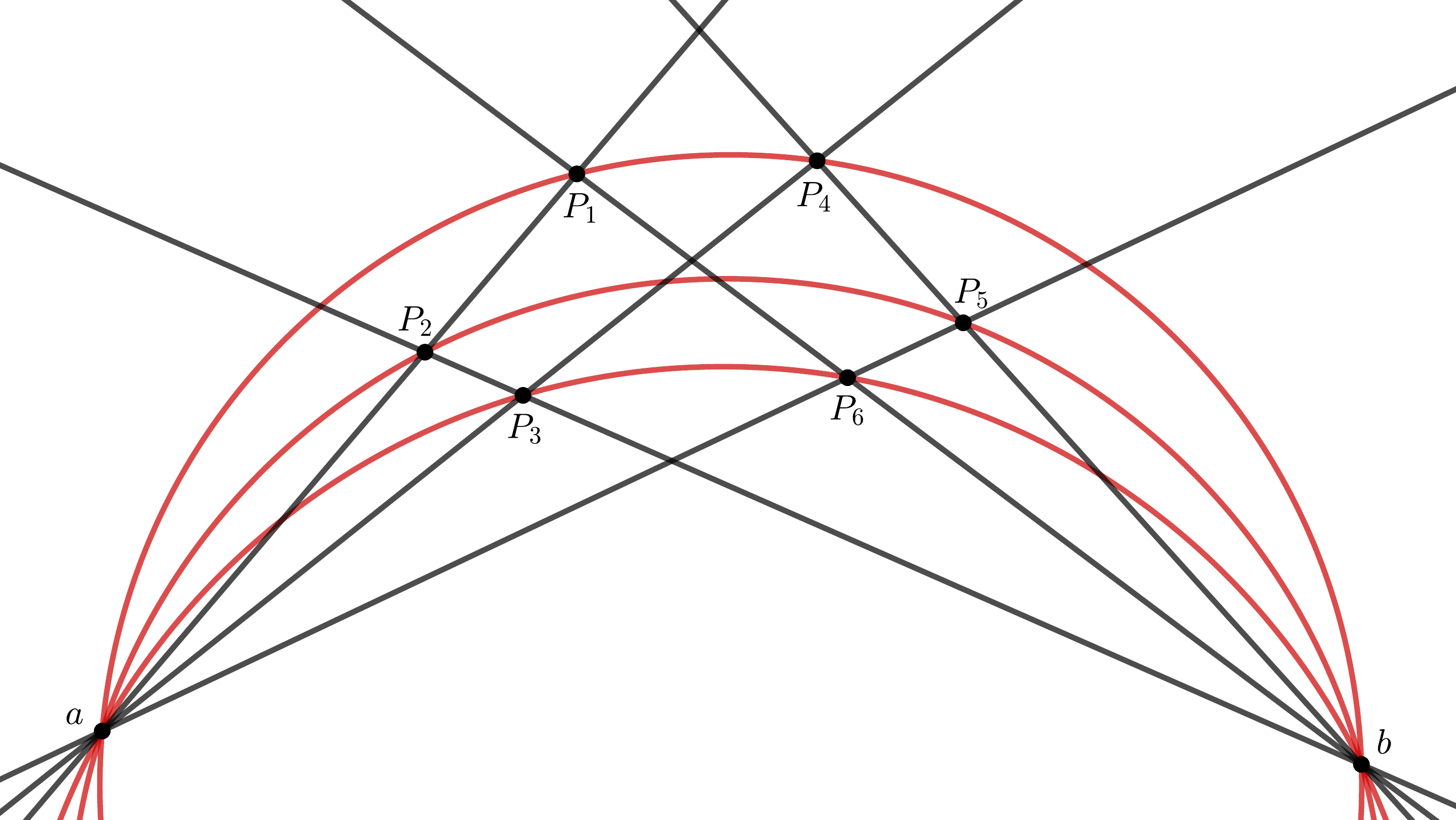}
\end{center}
\vspace{-8pt}
\caption{The configuration of circles and lines in Theorem~\ref{th:nine-circles}. 
}
\vspace{-10pt}
\label{fig:nine-circles}
\end{figure}

%\pagebreak[3]

\clearpage

\newpage

%\vspace{-.2in}

\end{document}